\documentclass[12pt]{article}
\usepackage{mathrsfs}
\usepackage{amsmath}
\usepackage{amssymb}
\usepackage{amsfonts}
\usepackage{mathtools}
\usepackage[top=2cm,bottom=2cm,left=2.5cm,right=2cm]{geometry}
\usepackage{graphicx}
\usepackage[colorinlistoftodos]{todonotes}
\setuptodonotes{
  inline,
  backgroundcolor=blue!8!white,
  bordercolor=blue!65!black,
  linecolor=blue!65!black
}
\usepackage{amsthm}
\usepackage{bbm}
\usepackage{enumerate}
\usepackage{enumitem}
\usepackage{verbatim}
\usepackage[backend=biber,style=alphabetic,sorting=ynt,maxnames=99]{biblatex}
\usepackage{float}
\usepackage{tikz-cd}
\usepackage[all]{xy}
\usepackage[OT2,T1]{fontenc}
\usepackage{xcolor}
\usepackage[most]{tcolorbox}
\usepackage{adjustbox}
\usepackage[bottom]{footmisc}

\newtcolorbox{commentbox}[1][]{colback=yellow!10!white, colframe=red!50!black, title=#1}

\usepackage[affil-it]{authblk}
\usepackage{commands}
\usepackage[colorlinks=true, allcolors=blue]{hyperref}

\usepackage{xparse}

\newif\ifshowcomments{}
\showcommentstrue{}

\title{Arithmetic Wu Formulas and the Generalized Hecke Theorem}
\author{Shachar Carmeli\thanks{shachar.carmeli@gmail.com}}
\author{Mark Shusterman\thanks{mark.shusterman@weizmann.ac.il}}
\author{Sa'ar Zehavi\thanks{saarzehavi@gmail.com}}
\affil{Weizmann Institute of Science}

\begin{document}
\maketitle

\begin{abstract}
We construct canonical stable Steenrod squares on modified
compactly supported \'etale cohomology, in the sense of Milne
and Geisser--Schmidt, for separated finite-type schemes over
rings of \(S\)-integers in number fields with \(2\) invertible.
This allows us to extend Feng's absolute \'etale Wu classes from finite
fields to arithmetic bases. Using a modified compactly
supported relative Wu formula extending Benoist's theorem,
we prove that for a flat projective morphism \(f\colon X\to B\)
of pure relative dimension, with \(X\) regular and \(B\) such
an arithmetic base,
\[
        v_X
        =
        \Sq^{-1}\!\bigl(
                w_{\mathrm{et}}(\tau_f+\mathcal O_X^{\oplus3})
        \bigr)
\]
in completed mod-\(2\) \'etale cohomology. Here \(\tau_f\) is
the virtual relative tangent bundle, \(w_{\mathrm{et}}\) is
the total \'etale Stiefel--Whitney class, and \(\Sq^{-1}\)
is the inverse total Steenrod square. Over finite fields
of odd characteristic, the formula holds without the three
trivial summands. As an application, we obtain a generalized
Hecke theorem giving universal mod-\(2\) congruences involving
Chern classes and the Kummer class of \(-1\), governed by an
arithmetic deformation of Hirzebruch's \(2\)-Todd series.
These congruences hold modulo an explicit archimedean ideal
and become vanishing identities over finite fields, over
totally imaginary arithmetic bases, or when \(-1\) is a
square on \(X\). We describe the real-place defects and show
that nonempty real loci force infinitely many nonzero Wu
components. Applications recover Hecke's theorem on the
different away from \(2\) and the finite-field analog of
Atiyah's theorem on theta characteristics, and yield new
higher-dimensional congruences. We also revisit Serre's
Riemann--Hurwitz theorem for spin bundles and recover the
Shusterman--Sawin theorem for smooth branched covers of
closed \(3\)-manifolds via the topological Wu formula.
Finally, we prove a function-field analog of the classical
Lusztig--Milnor--Peterson formula, expressing the difference
between mod-\(2\) and \(2\)-adic semicharacteristics as a
characteristic number involving the middle Wu class and
its Tate-twisted Bockstein.
\end{abstract}

\tableofcontents
\section{Introduction}
At first glance, a number of results scattered across topology, geometry, and arithmetic seem to have little to do with one another. They are proved in different categories and by rather different methods. Representative examples include:
\begin{enumerate}\label{enum:examples}
    \item Hecke's theorem that the ideal class of the different is a square;
    \item Atiyah's theorem on the existence of a square root of the canonical bundle of a compact Riemann surface;
    \item Serre's Riemann--Hurwitz formula for spin bundles on compact Riemann surfaces;
    \item the theorem of the second author and Sawin that the branch locus of a branched cover of closed \(3\)-manifolds is trivial mod \(2\); and
    \item the Lusztig--Milnor--Peterson formula expressing the semicharacteristic defect of a closed \(4d+1\)-manifold.
\end{enumerate}
We refer to this family of statements as \emph{Hecke-type theorems}. One of the guiding themes of this paper is that Hecke-type theorems admit a common explanation: they arise from Wu-type formulas. More precisely, once an appropriate analog of Wu's formula is available in a given cohomological setting, the corresponding Hecke-type theorem follows from the mod-\(2\) vanishing of certain universal polynomial expressions in characteristic classes.

This should not be understood literally as a direct consequence of Wu's original theorem in algebraic topology---except for the last couple of examples; see \S~\ref{sec:applications_and_examples}. Rather, there is a family of analogs of Wu's formula, each living in a different cohomological theory, and the theorems listed above can be recovered by parallel arguments once the relevant Wu formula has been established.
Furthermore, this framework allows results that were previously isolated to either topology, geometry, or arithmetic to be transported to the remaining settings via their underlying Wu formulas. We provide a concrete example of this in \S\ref{sssec:function-field-semicharacteristic-defect}, where we deduce a function-field analog of the Lusztig--Milnor--Peterson formula which, to the best of our knowledge, is new to the literature.

The first aim of this paper is to develop the arithmetic incarnation of this picture in the \'{e}tale cohomology of projective regular flat schemes over arithmetic bases on which \(2\) is invertible. Our second main result is a complete description, in every dimension, of the mod-\(2\) congruences among Chern classes that arise formally from this arithmetic Wu formula. We refer to this collection of congruences as the \emph{generalized Hecke theorem}. Before stating our results, we briefly recall Wu's formula in topology.

Let \(M\) be a closed connected smooth \(n\)-manifold. The total Steenrod square
\[
\Sq \colon H^*(M;\mathbf{Z}/2)\longrightarrow H^*(M;\mathbf{Z}/2)
\]
is an automorphism of the cohomology ring, and decomposes as \(\Sq=\sum_{i\ge 0}\Sq^i\), where \(\Sq^i\) raises degree by \(i\). The instability relations assert that for \(x_j\in H^j(M;\mathbf{Z}/2)\),
\begin{equation}\label{eq:instability_intro}
i>j \implies \Sq^i(x_j)=0.    
\end{equation}
Moreover, if \(i=j\), then \(\Sq^i(x_j)=x_j^2\). Let
\[
\int_M \colon H^*(M;\mathbf{Z}/2)\to \mathbf{Z}/2
\]
denote the trace map furnished by Poincar\'e duality. The total Wu class of \(M\) is the unique element \(v_M\in H^*(M;\mathbf{Z}/2)\) such that
\[
\int_M \Sq(x)=\int_M x\smile v_M,\qquad \text{for all } x\in H^*(M;\mathbf{Z}/2).
\]
Write
\[
v_M=\sum_{i=0}^n v_{M,i},\qquad v_{M,i}\in H^i(M;\mathbf{Z}/2).
\]
Combining the instability relations~\eqref{eq:instability_intro} with Poincar\'e duality, one obtains the familiar ``top-half'' vanishing:
\[
v_{M,i}=0\qquad \text{for all } i>n/2.
\]

To extract concrete congruences from this vanishing, one expresses the Wu class in terms of the Stiefel--Whitney class. Let
\[
\phi \colon H^*(M;\mathbf{Z}/2)\to H^{*+n}(TM,TM\setminus M;\mathbf{Z}/2)
\]
be the Thom isomorphism for the tangent bundle \(TM\), where \(M\subset TM\) denotes the zero section. The total Stiefel--Whitney class \(w_M\) is defined by
\[
w_M:=\phi^{-1}(\Sq(\phi(1))).
\]
Wu's theorem then reads as follows.

\begin{theorem}[Wu's theorem~{\cite{WU-PAPER}}, see also~{\cite[\S~11.6, Thm.~11.14]{MILNOR_STASHEFF}}]
With notation as above, the total Wu and Stiefel--Whitney classes satisfy
\[
\Sq(v_M)=w_M.
\]
\end{theorem}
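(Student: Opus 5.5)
The plan is to follow the classical Thom-class argument via the diagonal. Let \(\Delta\colon M\to M\times M\) be the diagonal and \(U\in H^n(M\times M, M\times M\setminus\Delta;\mathbf{Z}/2)\) the Thom class of the normal bundle of \(\Delta\). This normal bundle is identified with \(TM\), so \(U\) corresponds to \(\phi(1)\) under excision to a tubular neighbourhood. Let \(U'\in H^n(M\times M;\mathbf{Z}/2)\) be its image, the diagonal class. The first step is the duality characterisation of \(U'\): choose a basis \(\{b_i\}\) of \(H^*(M;\mathbf{Z}/2)\) and its Poincar\'e dual basis \(\{b_i^\#\}\), so that \(\int_M b_i\smile b_j^\#=\delta_{ij}\). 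Then
\[
U'=\sum_i b_i\times b_i^\#,
\]
and for every \(x\) one has the slant-product identity \(\mathrm{pr}_{2*}\bigl(U'\smile(x\times1)\bigr)=x\). Here \(\mathrm{pr}_{2*}\) is integration over the first factor. Also \(\Delta^*U'=\phi^{-1}(\phi(1)\smile\phi(1))\) pulled back, i.e.\ the Euler class, but we will not need that.

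Next, compute \(\Sq(U')\) in two ways. Geometrically, naturality of \(\Sq\) and the definition \(w_M=\phi^{-1}\Sq(\phi(1))\) give
\[
\Sq(U)=\mathrm{pr}_1^*(w_M)\smile U,
\]
since the Thom isomorphism for the normal bundle of the diagonal is \(y\mapsto \mathrm{pr}_1^*y\smile U\). Restricting to \(M\times M\) gives \(\Sq(U')=(w_M\times 1)\smile U'\). Algebraically, the Cartan formula gives \(\Sq(U')=\sum_i \Sq(b_i)\times\Sq(b_i^\#)\).

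The final step is to pair against test classes. Apply \(\int_{M\times M}(-)\smile(1\times a)\) for arbitrary \(a\in H^*(M;\mathbf{Z}/2)\). Using \(U'\smile(1\times a)=U'\smile(a\times1)\), which holds because \(U'\) is supported on the diagonal, the geometric side gives \(\int_M w_M\smile a\). On the algebraic side, expand \(\Sq(b_i)\) in the basis and use the defining property \(\int_M\Sq(y)=\int_M y\smile v_M\) together with the Cartan formula. This reduces the algebraic side to \(\int_M \Sq(v_M)\smile a\); the identity needed is \(\sum_i \int\Sq(b_i)\,c\cdot \int \Sq(b_i^\#)\,a=\int \Sq(v_M)\,a\) after the standard manipulation with the dual basis. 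Nondegeneracy of the Poincar\'e pairing then yields \(\Sq(v_M)=w_M\).

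I expect the main obstacle to be this last bookkeeping. The cleanest route I know is Milnor--Stasheff's: first show \(\Sq(U')\) slant \(1\) recovers \(w_M\), then use \(\int_M \Sq(x)=\int_M v_M x\) together with the fact that \(U'\) is symmetric. Concretely, write \(\Sq(U')=(1\times w_M)\smile U'\), push forward along the first projection, and compare with \(\sum_i \bigl(\int \Sq b_i\bigr)\Sq(b_i^\#)=\Sq\bigl(\sum_i(\int v_M b_i)b_i^\#\bigr)=\Sq(v_M)\). The left side equals \(w_M\smile\mathrm{pr}_{2*}U'=w_M\). This direct pushforward computation is the version I would actually carry out. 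It avoids any test-class pairing, and the only delicate points are the symmetry of \(U'\) and that \(\mathrm{pr}_{2*}U'=1\).
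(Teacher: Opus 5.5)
Your argument is correct and is precisely the Milnor--Stasheff proof that the paper cites for this theorem; the paper gives no proof of its own. That proof writes the mod-$2$ diagonal class as $\sum_i b_i\times b_i^\#$, computes its total square once via the Thom class of the normal bundle $TM$ of the diagonal and once via the Cartan formula, and then slants with the fundamental class using $\mathrm{pr}_{2*}U'=1$ and the expansion $v_M=\sum_i\bigl(\int_M v_M b_i\bigr)b_i^\#$. The pushforward version in your last paragraph is the one to write up; the test-class sketch before it contains a stray factor $c$ but is superseded.
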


Since \(\Sq\) is invertible, Wu's formula may be written
as \(v_M=\Sq^{-1}(w_M)\).
For a compact Riemann surface \(M\), the complex structure gives
\[
        w_{M,1}=0,
        \qquad
        w_{M,2}=\overline c_1(TM).
\]
Comparing degrees in Wu's formula yields
\(v_{M,1}=0\) and \(v_{M,2}=\overline c_1(TM)\).
Thus top-half vanishing implies \(\overline c_1(TM)=0\),
the characteristic-class expression of
\[
        \chi(M)=2-2g\equiv0\pmod2.
\]
Section~\ref{sec:applications_and_examples} develops these
connections, including a derivation of Hecke's theorem
on the different away from \(2\).

Arithmetic duality uses modified compactly supported cohomology,
which incorporates a Tate correction at infinity.
For a separated morphism of finite type
\(s\colon X\to\Spec\mathbf Z\), write
\(R\Gamma_{c,\mathrm{fin}}(X;\FF_2)
:=R\Gamma_{\acute et}(\Spec\mathbf Z;Rs_!\FF_2)\).
Let \(G_{\mathbf R}:=\Gal(\mathbf C/\mathbf R)\) act by
complex conjugation on the geometric compact-support complex
\(M_X:=R\Gamma_c(X_{\mathbf C};\FF_2)\).
Modified compact support is defined by
\begin{equation}
\label{eq:intro-modified-compact-support}
        \widehat R\Gamma_c(X;\FF_2)
        :=
        \operatorname{Fib}\!\left(
                R\Gamma_{c,\mathrm{fin}}(X;\FF_2)
                \longrightarrow M_X^{tG_{\mathbf R}}
        \right),
\end{equation}
where \(tG_{\mathbf R}\) denotes the Tate construction.
Its cohomology groups are denoted \(\widehat H_c^q(X;\FF_2)\).
Replacing the Tate construction by homotopy fixed points
\(M_X^{hG_{\mathbf R}}\) gives ordinary arithmetic compact
support, with cohomology groups \(H_c^q(X;\FF_2)\).
See Definition~\ref{def:compact-support-over-X}
in~\S\ref{sec:modified-compact-support}.
We identify mod-\(2\) Tate twists with \(\FF_2\) using
\(1\mapsto-1\), and write
\(\beta_Z\in H_{\acute et}^1(Z;\FF_2)\) for the Kummer
class of \(-1\), equivalently the Bockstein class of
Definition~\ref{def:bockstein_class}.
Write
\[
        H_{\acute et}^*(Z;\FF_2)^{\wedge,+}
        :=
        \prod_{n\geq0}H_{\acute et}^n(Z;\FF_2)
\]
for the forward-degree completion.
All identities involving total operations and characteristic
classes are interpreted degreewise in these completions.

\begin{theorem}[Arithmetic Wu formula]
\label{thm:main-wu}
Let \(B=\Spec\mathcal O_{K,S}\), where \(K\) is a number
field and \(2\in\mathcal O_{K,S}^{\times}\).
Let \(f\colon X\to B\) be flat and projective of pure
relative dimension \(d\), with \(X\) regular.

Modified compactly supported cohomology carries canonical
stable Steenrod operations
\[
        \widehat\Sq_c^r\colon
        \widehat H_c^q(X;\FF_2)
        \longrightarrow
        \widehat H_c^{q+r}(X;\FF_2)
        \qquad(q\in\mathbf Z,\ r\geq0).
\]
Under Artin--Verdier duality, these determine the total
absolute Wu class
\[
        v_X=\sum_{n\geq0}v_{X,n}
        \in H_{\acute et}^*(X;\FF_2)^{\wedge,+},
\]
characterized by
\[
        \int_X\widehat\Sq_c^n(x)
        =
        \int_X x\cdot v_{X,n}
        \qquad
        \bigl(x\in\widehat H_c^{2d+3-n}(X;\FF_2)\bigr).
\]
One has
\[
        v_X
        =
        \Sq^{-1}\!\left(
                w^{\acute et}\!\left(
                        \tau_f+\mathcal O_X^{\oplus3}
                \right)
        \right),
\]
where \(\tau_f\in K^0(X)\) is the virtual relative tangent
bundle, \(w^{\acute et}\) is the total \'etale
Stiefel--Whitney class, and \(\Sq^{-1}\) is the inverse
of the total Steenrod square on ordinary \'etale cohomology.
\end{theorem}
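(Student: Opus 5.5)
The plan has three stages. First build the operations \(\widehat\Sq_c^r\) and the Wu class. Then reduce the formula to two pieces: a relative Wu formula for \(f\) and an absolute Wu formula for the base \(B\). Finally, combine the pieces using the Cartan formula.

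\textbf{Stage 1: the operations and \(v_X\).} I would write \(\widehat R\Gamma_c(X;\FF_2)\) as the fiber in~\eqref{eq:intro-modified-compact-support}. Both source and target carry natural structures of modules over the Steenrod algebra.
\begin{itemize}
\item On \(R\Gamma_{c,\mathrm{fin}}\), the structure comes from the \(E_\infty\)-structure on \(Rs_!\FF_2\) over \(\Spec\mathbf Z\), equivalently from pushforward of the \(\FF_2\)-algebra structure.
\item On \(M_X^{tG_{\mathbf R}}\), it comes from the Tate construction of an \(E_\infty\)-\(\FF_2\)-algebra.
\end{itemize}
The operations must be \emph{stable} and compatible with the connecting map. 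So I would realize them as a natural action of the mod-\(2\) Steenrod algebra on the fiber. The cleanest route is to use that all three terms are functorial in \(X\) through symmetric monoidal functors out of étale sheaves. Taking the fiber of a map of module spectra over the \(E_\infty\) Steenrod structure (extended powers) then gives canonical operations.

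Artin--Verdier duality with \(2\) invertible identifies \(\widehat H_c^{2d+3-n}(X;\FF_2)\) as Pontryagin dual to \(H^n_{\acute et}(X;\FF_2)\). Here \(\dim B = 3\) cohomologically, and the relative dimension contributes \(2d\). Hence the linear functional \(x\mapsto\int_X\widehat\Sq^n_c(x)\) is represented by a unique class \(v_{X,n}\).

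\textbf{Stage 2: relative Wu formula.} I would invoke the modified compactly supported extension of Benoist's relative Wu formula. For \(f\) flat projective of relative dimension \(d\) with \(X\) regular, it says
\[
        f_!\bigl(\Sq(x)\bigr) = \Sq\Bigl(f_!\bigl(x\cdot \Sq^{-1}w^{\acute et}(\tau_f)\bigr)\Bigr),
\]
compatibly with \(\widehat\Sq_c\) on both sides. This is a Grothendieck--Riemann--Roch type statement for Steenrod squares. It is proved by factoring \(f\) as a regular closed embedding into \(\mathbf P^N_B\) followed by the projection. The embedding case is handled via the Thom class and deformation to the normal cone, where \(w^{\acute et}\) of the normal bundle appears. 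The projective-space case is handled via the projective bundle formula.

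\textbf{Stage 3: base case and combination.} For \(B\) itself I would show \(v_B = \Sq^{-1}w^{\acute et}(\mathcal O_B^{\oplus 3}) = 1\) after the appropriate identification. Concretely, \(\widehat\Sq^r_c\) acts as zero on \(\widehat H_c^*(B;\FF_2)\) for \(r>0\) in top degree. I would prove this by the Artin--Verdier computation. The modified cohomology is concentrated in degrees \(\le 3\), and \(\Sq^1\) into degree \(3\) is the Bockstein. Its vanishing on \(\widehat H^2_c\) must be checked using the Tate correction, which is precisely why the modified theory, rather than the ordinary one, is used.

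The three trivial summands are an artifact of how the trace normalization and Tate twists are matched: \(w^{\acute et}\) of a trivial bundle is \(1\), but the twist identification \(\mu_2\cong\FF_2\) enters. I would check this bookkeeping carefully. With \(v_B=1\), combining through the projection formula gives
\[
        \int_X \widehat\Sq_c(x) = \int_B \widehat\Sq_c\bigl(f_! (x\cdot\Sq^{-1}w)\bigr) = \int_X x\cdot \Sq^{-1}w .
\]
The first equality uses the relative Wu formula with the base Wu class \(v_B=1\), and the last uses the trace. This yields the stated formula for \(v_X\).

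\textbf{Main obstacle.} I expect the main difficulty to be the compatibility of \(\widehat\Sq_c\) with \(f_!\) and with the Tate term at real places. The Tate construction of an \(E_\infty\)-algebra has the Steenrod action twisted by the \(G_{\mathbf R}\)-action, and \(\Sq^0\) there is not the identity but Frobenius-like. I would first try to verify the relative formula separately on the finite part and on the real Tate part. For the Tate part, I would reduce to the classical equivariant Wu formula for the real manifold \(X(\mathbf R)\) via Smith theory/localization, with \(\beta_X\) playing the role of the generator \(u\) of \(H^*(BC_2)\).
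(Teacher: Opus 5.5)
The gap is in Stage 3. Your Stage 2 and the final projection-formula and trace computation follow the paper's architecture: factor \(f\) through \(\mathbf P^N_B\), handle the immersion by Thom classes and a supported mixed Cartan formula, and handle the projection by the projective bundle formula plus Benoist's ordinary formula. They correctly give \(v_X=\Sq^{-1}\bigl(w^{\acute et}(\tau_f)\bigr)\cdot f^*v_B\).

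Both \(v_B=1\) and \(\Sq^{-1}w^{\acute et}(\mathcal O_B^{\oplus3})=1\) are false. The Thom class \(s\) of a trivial line bundle lives in \(\mu_2\)-coefficients and lifts to \(\mu_4\), so its twisted Bockstein vanishes. Benoist's comparison \(\delta_0=\delta_1-\beta\cdot(-)\) then gives \(\Sq^1(s)=\beta\cdot s\), so \(w^{\acute et}(\mathcal O)=1+\beta\). Hence \(\Sq^{-1}w^{\acute et}(\mathcal O_B^{\oplus3})=T(\beta_B)^3\), where \(T(t)=1+\sum_{s\geq0}t^{2^s}\) satisfies \(T(t+t^2)=1+t\). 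The three summands are not bookkeeping. They record the genuinely nontrivial base class
\[
        v_B=\sum_{n\geq0}\binom{2-n}{n}\beta_B^n=T(\beta_B)^3=1+\beta_B+\beta_B^3+\beta_B^4+\beta_B^5+\cdots .
\]
Your claim that \(\widehat\Sq^r_c\) vanishes into top degree fails in two ways.
\begin{enumerate}
\item[(a)] In degree one, \(\widehat\Sq^1_c=\widehat\delta_0\) on \(\widehat H^2_c(B;\mu_2)\) is the Artin--Verdier transpose of \(\delta_1\colon H^0(B;\mu_2)\to H^1(B;\mu_2)\). That map sends \(-1\) to the Kummer class \(\beta_B\), so \(v_{B,1}=\beta_B\), which is nonzero unless \(i\in K\).
\item[(b)] In degrees \(n\geq3\), modified compact support is not bounded below. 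The map \(\partial_\infty\) identifies \(\widehat H^{3-n}_c(B;\mu_2)\) with \(\prod_{\nu\text{ real}}\FF_2\,\beta_\nu^{2-n}\). On these negative-degree classes \(\widehat\Sq^n_c\) acts by \(\Sq^n(\beta_\nu^{2-n})=\binom{2-n}{n}\beta_\nu^{2}\). This gives \(v_{B,n}=\binom{2-n}{n}\beta_B^n\), nonzero for example for \(n=3,4,5\) whenever \(K\) has a real place.
\end{enumerate}
The remaining case \(v_{B,2}=0\) needs surjectivity of \(H^1_c\to\widehat H^1_c\) together with ordinary instability. The failure of instability at the Tate boundary is the missing idea. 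With \(v_B=1\) your argument would output \(v_X=\Sq^{-1}w^{\acute et}(\tau_f)\). That is off by \(T(\beta_X)^3\) and contradicts the statement already for \(X=B\) when \(i\notin K\).

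Separately, the operations in Stage 1 must be the stable coefficient operations, not \(E_\infty\) extended-power operations. The paper uses relative \'etale shapes to make the whole boundary datum a functor of the coefficient spectrum: the finite term, the \(G_{\mathbf R}\)-equivariant geometric term, and the restriction map. Then \(\operatorname{sq}^r\colon H\FF_2\to\Sigma^rH\FF_2\) acts on every term, commutes with all boundary maps, and has \(\Sq^0=\mathrm{id}\) on the Tate term as well. The failure of \(\Sq^0=\mathrm{id}\) you anticipate is a feature of power operations on Tate constructions, and those are not the stable operations of the statement.

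With the coefficient construction, your ``main obstacle'' disappears. Riou's Gysin maps and Thom classes act through coefficient sheaves, so the relative formula on the Tate term follows formally from the supported mixed Cartan formula and the forget-support map. No Smith-theory analysis of \(X(\mathbf R)\) is needed.
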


\begin{remark}\label{rem:intro-base-wu}
Set \(T(t):=1+\sum_{s\geq0}t^{2^s}\in\FF_2[[t]]\).
The absolute Wu class of the arithmetic base satisfies
\[
        v_B
        =
        T(\beta_B)^3
        =
        \Sq^{-1}\!\bigl(
                w^{\acute et}(\mathcal O_B^{\oplus3})
        \bigr).
\]
Thus, the arithmetic contribution is represented by three
trivial algebraic summands, whose total \'etale
Stiefel--Whitney class is \((1+\beta_B)^3\), leaving room for imagination.
\end{remark}

The augmented formulation follows from
Corollary~\ref{cor:2td-universal-polynomials}, using the
absolute formula and base computation of
Theorems~\ref{thm:absolute-wu-formula}
and~\ref{thm:absolute-arithmetic-wu-class}.
Over finite fields of odd characteristic, the formula is
\(v_X=\Sq^{-1}(w^{\acute et}(\tau_f))\), using ordinary
compact support and duality degree \(2d+1\).

Feng proved an \'etale Wu formula for smooth proper
geometrically connected varieties over finite fields of odd
characteristic~\cite[Theorem~6.5]{FengEtaleSteenrod}, and used
it to prove Tate's conjecture on the alternation of the
Artin--Tate pairing for surfaces.
Feng and the first author subsequently treated
characteristic \(2\) using a syntomic Wu formula~\cite{CARMELI-FENG}.
Our finite-field statement remains within the scope of
Feng's methods; the arithmetic case requires the construction
and analysis of stable operations on modified compact support.

\begin{remark}
\label{rem:feng_vs_us}
One may ask whether our arithmetic Wu formula follows from
Feng's finite-field theorem. Absolute Wu classes need not
commute with restriction, as illustrated by
\(\mathbf{RP}^2\hookrightarrow S^4\).
Define the \emph{relative Wu class} by
\begin{equation}
\label{eq:rel_wu_intro}
        v_{X/B}:=v_X\cdot(f^*v_B)^{-1}.
\end{equation}
Our Wu formula identifies this quotient with
\(\Sq^{-1}(w^{\acute et}(\tau_f))\), making it compatible
with Cartesian base changes preserving the hypotheses
and recovering the finite-field Wu formula on fibers
of good reduction.

Appendix~\ref{par:feng-not-formal} studies the potential
defect of this identity and shows that the kernel of
restriction to all closed fibers can be nonzero already
in degree \(2\). Thus, even vanishing of the defect on
every closed fiber does not by itself establish the
global arithmetic Wu formula.
\end{remark}

For our second main result, let \(B\) be either
\(\Spec(\mathcal O_{K,S})\), with \(2\) invertible, or the
spectrum of a finite field of odd characteristic.
Let \(f\colon X\to B\) be flat and projective of pure
relative dimension \(d\), with \(X\) regular, and put
\[
        \epsilon_B:=\dim B,
        \qquad
        D_X:=d+\epsilon_B,
        \qquad
        N_X:=2D_X+1.
\]
Define the total \'etale \(2\)-Todd class by
\[
        2td(\xi):=\Sq^{-1}\!\bigl(w^{\acute et}(\xi)\bigr)
        \qquad(\xi\in K^0(X)).
\]

In the arithmetic case, let
\[
        \gamma_X^q\colon
        H_c^q(X;\FF_2)\longrightarrow
        \widehat H_c^q(X;\FF_2)
\]
be the ordinary-to-modified comparison, and set
\[
        J_\infty^m(X)
        :=
        \bigl(\operatorname{im}\gamma_X^{N_X-m}\bigr)^\perp,
        \qquad
        J_\infty(X):=\bigoplus_{m\geq0}J_\infty^m(X),
\]
where orthogonality is taken under Artin--Verdier duality.
This is a graded ideal determined by the real-place
comparison maps; see~\S\ref{subsubsec:hecke-comparison}.
In the finite-field case, set \(J_\infty(X):=0\).

\begin{theorem}[Generalized Hecke theorem]
\label{thm:main-hecke}
For \(m\geq0\) and \(r\in\mathbf Z\), there are canonical
polynomials
\[
        P_{m,r}\in
        \FF_2[\beta,c_1,\dots,c_{\lfloor m/2\rfloor}],
\]
homogeneous of weighted degree \(m\), where
\(\deg\beta=1\) and \(\deg c_i=2i\), such that
\[
        2td(\xi)_m
        =
        P_{m,r}\bigl(\beta_X,\overline c_1(\xi),\dots\bigr)
\]
for every virtual bundle \(\xi\in K^0(X)\) of constant rank \(r\).
In particular,
\[
        v_X
        =
        2td\!\left(\tau_f+\mathcal O_X^{\oplus3\epsilon_B}\right).
\]
For every \(m\geq0\) with \(2m>N_X\), one has
\[
        v_{X,m}
        =
        P_{m,d+3\epsilon_B}
        \bigl(\beta_X,\overline c_1(\tau_f),\dots\bigr)
        \equiv0\pmod{J_\infty(X)}.
\]
These classes vanish over finite fields and, in the arithmetic
case, whenever \(K\) is totally imaginary or \(\beta_X=0\).
\end{theorem}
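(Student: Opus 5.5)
We argue in three stages: universality of \(2td\), identification of \(v_X\), and top-half vanishing via duality.

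\emph{Universality.} First we show that \(2td(\xi)_m\) is a universal polynomial in \(\beta_X\) and the reduced Chern classes. By the splitting principle for \'etale cohomology (projective bundle formula), it suffices to treat a sum of line bundles plus trivial summands. The total \'etale Stiefel--Whitney class is multiplicative. For a line bundle \(L\) we expect \(w^{\acute et}(L)=1+\beta_X+\overline c_1(L)\), or the analogous formula fixed by the definition via the Thom class. The constant \(\beta_X\) reflects that the mod-\(2\) Thom class of \(L\) differs from the Kummer-twisted one by the Tate-twist identification \(1\mapsto-1\).

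Next, \(\Sq^{-1}\) is a ring automorphism of the completed cohomology. It acts by \(\Sq(\beta)=\beta+\beta^2\), and by \(\Sq(\overline c_1)=\overline c_1+\overline c_1^2\) on reduced first Chern classes, since these come from \(\mathbf G_m\)-torsors via Kummer theory. Hence \(\Sq^{-1}\) is computed by the inverse series \(T\) of Remark~\ref{rem:intro-base-wu}. Then
\[
2td(\xi)=\prod_i Q\bigl(\beta_X,x_i\bigr)\cdot Q(\beta_X,0)^{r-n}
\]
for an explicit two-variable series \(Q\). This product is symmetric in the Chern roots \(x_i\), so it expands as a polynomial in \(\beta_X\) and the elementary symmetric functions \(\overline c_j(\xi)\). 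The dependence on the rank \(r\) enters only through \(Q(\beta,0)^{r}\), which justifies the index \(r\) in \(P_{m,r}\). Homogeneity holds because \(\Sq^{-1}\) respects the weighting \(\deg\beta=1\), \(\deg c_i=2i\) after taking graded pieces. The polynomial has \(c_i\) only for \(i\le\lfloor m/2\rfloor\) by degree counting, and it extends to virtual bundles by multiplicativity.

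\emph{Identifying \(v_X\).} The formula \(v_X=2td(\tau_f+\mathcal O_X^{\oplus3\epsilon_B})\) is exactly Theorem~\ref{thm:main-wu} in the arithmetic case. In the finite-field case it is the stated formula \(v_X=\Sq^{-1}(w^{\acute et}(\tau_f))\). Both fall under Theorems~\ref{thm:absolute-wu-formula} and~\ref{thm:absolute-arithmetic-wu-class}, combined through Corollary~\ref{cor:2td-universal-polynomials}. Since \(\tau_f+\mathcal O^{3\epsilon_B}\) has rank \(d+3\epsilon_B\), we get \(v_{X,m}=P_{m,d+3\epsilon_B}(\beta_X,\overline c(\tau_f))\).

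\emph{Top-half vanishing modulo \(J_\infty\); this is the main obstacle.} The analogue of the instability relation \eqref{eq:instability_intro} must be established for the stable operations \(\widehat\Sq_c\). Take \(x\in\widehat H_c^{N_X-m}(X)\) with \(2m>N_X\), so \(m>N_X-m\). If \(x=\gamma_X(y)\) comes from ordinary compact support, I would show that \(\widehat\Sq_c\) is compatible with \(\gamma_X\) and with a Steenrod action on \(H_c\). I would then show that on \(H_c\), computed as \(R\Gamma(B;Rs_!\FF_2)\), the operations are unstable in the appropriate sense, so that \(\Sq^m y=0\) for \(m>\deg y\). This instability is the delicate point. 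For the pushforward one has genuine sheaf-level Steenrod operations (a cosimplicial/\(E_\infty\) structure on \(Rs_!\FF_2\)), which gives instability on \(H_c\). It fails on \(\widehat H_c\) because the Tate construction is periodic, and this failure is precisely what \(J_\infty\) measures. Granting instability, \(\int_X x\cdot v_{X,m}=\int_X\widehat\Sq_c^m\gamma(y)=0\) for all \(x\in\operatorname{im}\gamma^{N_X-m}\). Therefore \(v_{X,m}\in(\operatorname{im}\gamma^{N_X-m})^\perp=J^m_\infty(X)\).

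\emph{Vanishing cases.} Over finite fields no modification is needed, so \(J_\infty=0\) and we get genuine vanishing by the same argument. If \(K\) is totally imaginary, then \(M_X^{tG_{\mathbf R}}\) is zero, so \(\gamma\) is an isomorphism and \(J_\infty=0\). If \(\beta_X=0\), we use the explicit description of \(J_\infty\) from \S\ref{subsubsec:hecke-comparison}. Real-place Tate cohomology is \(2\)-periodic via cup product with \(\beta\), so the defect classes are divisible by \(\beta_X\). Alternatively, \(v_{X,m}\) becomes \(P_{m,d+3}(0,\overline c)\), which is the geometric \(2\)-Todd class. The congruence then lifts to a genuine vanishing because the obstruction pairs against classes on which \(\beta_X\) acts invertibly. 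I would pin down this last step by checking the comparison map on the real fibers.
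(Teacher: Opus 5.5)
Your skeleton matches the paper's: the splitting principle, then the absolute Wu formula together with the base class, then pairing $\gamma_X(y)$ against $v_{X,m}$. However, several steps fail as written.

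\textbf{The line-bundle input is false.} In general $\Sq(\overline c_1(L))\neq\overline c_1(L)+\overline c_1(L)^2$. The class $x=\overline c_1(L)$ lifts to $\mu_4$-coefficients, so it is killed by the \emph{twisted} Bockstein $\delta_1$ of $0\to\mu_2\to\mu_4\to\mu_2\to0$. But $\Sq^1$ is the untwisted Bockstein, and Lemma~\ref{lem:benoist-bockstein} gives $\Sq^1(x)=\delta_1(x)+\beta_X x=\beta_X x$. Hence $\Sq(x)=x(1+\beta_X+x)$, and $2td(L)=T(\beta_X)\,T\bigl(T(\beta_X)^{-2}x\bigr)$ (Lemma~\ref{lem:2td-line-bundle}). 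Your rule instead gives $T(\beta_X)+T(x)+1$. Already for $\mathcal O(1)$ on $\mathbf P^1_{\mathbf F_3}$ the two differ in degree $3$ by $\beta_X x\neq0$. The formal existence of the $P_{m,r}$ survives once this is fixed. But the $\beta_X x$ term is exactly the arithmetic deformation, and the polynomials you would write down are wrong.

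\textbf{The instability step is placed on the wrong group.} This is the more serious gap. The source of $\gamma_X$, which defines $J_\infty(X)$, is not $R\Gamma(B;Rs_!\FF_2)$. That is essentially $R\Gamma_{c,\mathrm{fin}}$, which receives maps from $R\Gamma_c$ and $\widehat R\Gamma_c$ rather than mapping to them. The correct source is $R\Gamma_c=\operatorname{Fib}\bigl(R\Gamma_{c,\mathrm{fin}}\to M_X^{hG_{\mathbf R}}\bigr)$.

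Instability on $H_{c,\mathrm{fin}}$ only places $\Sq_c^m(y)$ in the image of $H_\infty^{q+m-1}(X;\FF_2)$, and such classes can pair nontrivially with $v_{X,m}$. The paper gets instability on $H_c$ itself by two identifications:
\begin{itemize}
\item $M_X^{hG_{\mathbf R}}$ is modelled as cochains on $h(i_{\mathbf R})$, using that homotopy fixed points commute with uniformly bounded-above filtered colimits (Lemma~\ref{lem:ordinary-real-boundary-relative-cochains});
\item $R\Gamma_c$ is modelled as relative cochains on the corner arrow $\kappa_{X,j}$ (Lemma~\ref{lem:ordinary-compact-support-relative-cochains}).
\end{itemize}
Proposition~\ref{prop:instability-relative-pro-anima-terms} then applies. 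An $E_\infty$ structure alone is not the point, since Tate constructions are $E_\infty$ as well. What matters is that the stable coefficient operations agree with space-level squares.

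\textbf{The vanishing cases also need repair.}
\begin{itemize}
\item For totally imaginary $K$, $M_X^{tG_{\mathbf R}}=0$ does not make $\gamma_X$ an isomorphism: its cofiber is $(M_X)_{hG_{\mathbf R}}$, which is nonzero at complex places. What it does give is $\widehat H_c\cong H_{c,\mathrm{fin}}$. That group is unstable, so $v_{X,m}=0$ follows directly by duality.
\item For $\beta_X=0$, your sketch is not yet an argument. In this case $-1$ is a square of a unit on $X$, so $X\to B$ factors through $B[i]$, whose components are totally imaginary. Modified compact support depends only on $X\to\Spec\mathbf Z$, so the previous case applies.
\item Alternatively for $\beta_X=0$: $\beta_X$ acts on $\widehat H_\infty^*(X;\FF_2)$ through the invertible class $\beta_{\mathbf R}$, which forces $\widehat R\Gamma_\infty(X;\FF_2)\simeq0$.
\end{itemize}
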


The polynomial formulas and congruences are proved in
Corollary~\ref{cor:2td-universal-polynomials} and
Theorem~\ref{thm:gen_hecke}.
Real places can produce nonzero classes in infinitely many
degrees: if \(X_\nu(\mathbf R)\neq\varnothing\) for some real
place \(\nu\), then
\[
        v_{X,2^s}\neq0
        \qquad(2^s\geq d+3).
\]
This follows from the real-place description in
\S\ref{subsubsec:real-place-wu}; see
Corollary~\ref{cor:real-place-wu-nonvanishing}.

\begin{example}[See also~\S\ref{subsubsec:untwisted-d-folds}]
Let \(f\colon X\to\Spec\mathbf Z[i,1/2]\) be flat and
projective of pure relative dimension \(4\), with \(X\) regular.
Here \(\beta_X=0\), \(J_\infty(X)=0\), and \(N_X=11\).
Writing \(\overline c_i:=\overline c_i(\tau_f)\), the
degree-\(6\) and degree-\(8\) Wu relations give
\[
        \overline c_1\,\overline c_2=0,
        \qquad
        \overline c_1^4+\overline c_1\,\overline c_3
        +\overline c_2^2+\overline c_4=0.
\]
\end{example}

\begin{remark}
The series \(T(t)\) of Remark~\ref{rem:intro-base-wu} satisfies
\[
        T(t)
        =
        1+\sum_{s\geq0}t^{2^s}
        =
        \left(\frac{2t}{1-e^{-2t}}\right)\bmod 2.
\]
Here the rescaled Todd series has \(2\)-integral coefficients,
and reduction is coefficientwise; see Lemma~\ref{lem:T-series}.
We therefore call \(T\) the \emph{\(2\)-Todd series}.

For a vector bundle \(E\) of rank \(r\), let
\(\pi\colon\operatorname{Fl}(E)\to X\) be its complete flag
bundle, with mod-\(2\) Chern roots \(x_1,\dots,x_r\), and put
\(\beta:=\pi^*\beta_X\). Then
\begin{equation}
\label{eq:intro-2td-roots}
        \pi^*2td(E)
        =
        T(\beta)^r
        \prod_{i=1}^r T\!\bigl(T(\beta)^{-2}x_i\bigr).
\end{equation}
For \(\beta_X=0\), this recovers Hirzebruch's Chern-root
expression \(\prod_iT(x_i)\) for \(\Sq^{-1}(w(E))\)
for complex vector bundles, where \(w(E)\) denotes the
total Stiefel--Whitney class of the underlying real bundle;
see~\cite{HIRZEBRUCH}.
\end{remark}

\subsection{Proof strategies}

We now sketch the proofs of Theorems~\ref{thm:main-wu} and~\ref{thm:main-hecke}. The arithmetic Wu formula is discussed in \S~\ref{subsubsec:main-wu}, and the generalized Hecke theorem in \S~\ref{subsubsec:main-hecke}.

\subsubsection{The arithmetic Wu formula}
\label{subsubsec:main-wu}

Here let \(B=\Spec\mathcal O_{K,S}\), with
\(2\in\mathcal O_{K,S}^{\times}\), and let \(f\colon X\to B\)
be flat and projective of pure relative dimension \(d\), with
\(X\) regular. Generalized Artin--Verdier duality
(Theorem~\ref{thm:milne-7-6}) gives perfect pairings
\[
        H_{\acute et}^r(X;\FF_2)
        \times
        \widehat H_c^{2d+3-r}
        (X;\mu_2^{\otimes(d+1)})
        \longrightarrow\FF_2.
\]
The absolute Wu classes represent the functionals obtained
by composing stable Steenrod operations with the trace.

\paragraph{Stable operations on arithmetic cohomology.}
Chapter~\ref{chapter:background} recalls relative cohomology
of pro-anima and its comparison with \'etale cohomology
with supports. We use the relative \'etale homotopy theory
developed by Harpaz--Schlank~\cite{HarpazSchlank2013}
and Barnea--Schlank~\cite{BarneaSchlank2016}, following the
\(\infty\)-categorical treatment of
Arad--Carmeli--Schlank~\cite[\S2]{ACS}.
Chapter~\ref{sec:stable-steenrod-arithmetic-boundary}
develops the corresponding construction with coefficient spectra.

Using relative \'etale shapes, we construct the boundary
diagram underlying~\eqref{eq:intro-modified-compact-support}
functorially in the coefficient spectrum.
Consequently, the stable maps of spectra
\[
        \operatorname{sq}^r\colon
        H\FF_2\longrightarrow\Sigma^rH\FF_2
        \qquad(r\geq0)
\]
act compatibly on all its terms.
Theorem~\ref{thm:canonical-arithmetic-boundary-operations}
establishes compactification independence, naturality,
and the Adem relations; the internal and mixed Cartan
formulas are proved in
Theorem~\ref{thm:arithmetic-boundary-internal-mixed-cartan}.

On ordinary relative cohomology these operations recover
the classical Steenrod squares and satisfy instability.
On modified compact support, instability can fail:
Theorem~\ref{thm:instability-modulo-tate-boundary}
places the defects in the image of the Tate boundary.

\paragraph{The relative formula and the arithmetic base.}
The principal geometric input is the modified compact-support
relative Wu formula. Let \(g\colon Y\to Z\) be a proper
smoothable local complete intersection morphism of pure virtual
relative dimension \(-c\), where \(Y\) and \(Z\) are regular
separated schemes of finite type over \(B\) admitting ample
invertible sheaves. Riou's Gysin morphisms induce pushforwards
\[
        \widehat g_{*,c}\colon
        \widehat H_c^q(Y;\mu_2^{\otimes r})
        \longrightarrow
        \widehat H_c^{q+2c}(Z;\mu_2^{\otimes(r+c)}).
\]
Theorem~\ref{thm:modified-compactly-supported-relative-wu-declared}
gives
\begin{equation}
\label{eq:mod-bdry-rel-wu-intro}
        \widehat\Sq_c\bigl(\widehat g_{*,c}(x)\bigr)
        =
        \widehat g_{*,c}\!\left(
                \widehat\Sq_c(x)\cdot w^{\acute et}(\nu_g)
        \right),
\end{equation}
where \(\nu_g\) is the virtual normal bundle of \(g\).

Following the geometric strategy of Benoist's ordinary
relative Wu formula~\cite[Theorem~2.5]{Benoist}, we factor
\(g\) into a regular closed immersion followed by a
projective-space bundle. The immersion case follows from
the Thom-class characterization and the supported mixed
Cartan formula. The projective-space case uses the projective
bundle formula and Benoist's ordinary identity.
Compatibility with composition then gives the general formula.

Applied to \(f\colon X\to B\), this relative formula,
together with the projection and trace formulas, shows that
\[
        \Sq^{-1}\!\bigl(w^{\acute et}(\tau_f)\bigr)\cdot f^*v_B
\]
satisfies the defining Wu identity. Artin--Verdier duality
therefore identifies it with \(v_X\).

It remains to determine \(v_B\). In degrees at least \(3\),
duality reduces this computation to the real-place Tate
cohomology rings \(\FF_2[\beta_\nu^{\pm1}]\), on which
\[
        \Sq^r(\beta_\nu^a)
        =
        \binom{a}{r}\beta_\nu^{a+r}
        \qquad(a\in\mathbf Z,\ r\geq0).
\]
Together with the low-degree calculation, this yields
\[
        v_B
        =
        \sum_{n\geq0}\binom{2-n}{n}\beta_B^n
        =
        T(\beta_B)^3;
\]
see~Theorem~\ref{thm:absolute-arithmetic-wu-class} and
Lemma~\ref{lem:T-power-coefficients}.
The generalized integral binomial coefficients are read
modulo \(2\).

\subsubsection{The generalized Hecke theorem}
\label{subsubsec:main-hecke}
The Chern-root formula~\eqref{eq:intro-2td-roots}, proved in
Theorem~\ref{thm:etale-2td-splitting}, reduces to the line-bundle case
by multiplicativity and the splitting principle. The line-bundle computation itself is carried out
in Lemma~\ref{lem:2td-line-bundle}. 

Since the Chern-root formula is invariant under permutations of roots, 
the 2-Todd class can be expanded in terms of symmetric polynomials,
which yields the polynomials \(P_{m,r}\) (see Corollary~\ref{cor:2td-universal-polynomials}).

Recall our notation $\epsilon_B := \dim B$. 
Since \(2td(\mathcal O_X)=T(\beta_X)\), the base contribution
\(T(\beta_X)^{3\epsilon_B}\) is absorbed by adding
\(3\epsilon_B\) trivial summands to \(\tau_f\).
This produces our particular polynomial expression for \(v_X\).

It remains to establish the Hecke relations. Over finite
fields, the classical instability relations recalled in
Proposition~\ref{prop:classic_sq_properties}, together with
duality, give \(v_{X,m}=0\) for \(2m>N_X\).
The same argument applies in the arithmetic case when \(K\)
is totally imaginary or \(\beta_X=0\): the Tate boundary
vanishes, so modified compact support satisfies instability
by Theorem~\ref{thm:instability-modulo-tate-boundary}.

For a general arithmetic base, this vanishing holds modulo
\(J_\infty(X)\). For \(2m>N_X\), put \(q=N_X-m\).
By definition, it suffices to show
\(v_{X,m}\perp\operatorname{im}\gamma_X^q\).
If \(q<0\), the source group is zero. Otherwise, for
\(x\in H_c^q(X;\FF_2)\), the Wu identity and compatibility
with stable operations give
\[
        \int_X\gamma_X(x)\cdot v_{X,m}
        =
        \int_X\widehat\Sq_c^m(\gamma_X(x))
        =
        \int_X\gamma_X\!\bigl(\Sq_c^m(x)\bigr)
        =0.
\]
The last equality follows from \(m>q\) and ordinary
compact-support instability
(Proposition~\ref{prop:instability-relative-pro-anima-terms}).
Thus \(v_{X,m}\in J_\infty^m(X)\).

\begin{remark}
For each real place \(\nu\) of \(K\), put
\(X_\nu:=X\times_B\Spec(\mathbf R)\).
Proposition~\ref{prop:archimedean-comparison-image}
identifies \(J_\infty^m(X)\) with the image of the canonical
archimedean map of degree \(3\)
\[
        a_\infty\colon
        \bigoplus_{\nu\text{ real}}
        H_{G_{\mathbf R}}^{m-3}
        (X_\nu(\mathbf C);\FF_2)
        \longrightarrow H_{\acute et}^m(X;\FF_2),
\]
where \(H_{G_{\mathbf R}}^*\) denotes ordinary Borel
equivariant cohomology for complex conjugation.
By Lemma~\ref{lem:archimedean-comparison-ideal},
\(J_\infty^m(X)=H_{\acute et}^m(X;\FF_2)\) for \(m\geq N_X\).
Consequently, the Hecke congruences impose potentially
nontrivial restrictions only for
\[
        d+2\leq m<N_X=2d+3.
\]
In \S\ref{subsubsec:real-place-wu}, we describe the Wu classes
in degrees \(m\geq N_X\) explicitly in terms of the real loci.
\end{remark}

\subsection{Organization of the paper}
Chapter~\ref{chapter:background} recalls pro-anima, \'etale
shapes, classical relative Steenrod squares, and ordinary
and modified arithmetic compact support.

Chapter~\ref{sec:stable-steenrod-arithmetic-boundary}
constructs stable Steenrod operations on the arithmetic
boundary diagram, establishes their formal properties,
and analyzes instability at the Tate boundary.

Chapter~\ref{chapter:char_classes} develops \'etale
Stiefel--Whitney and Wu classes, proves the relative and
absolute Wu formulas, and computes the Wu class of the
arithmetic base.

Chapter~\ref{chap:generalised-hecke} establishes the generalized
Hecke theorem, describes its archimedean defects, and gives
specializations and examples.

Appendix~\ref{par:feng-not-formal} examines the limits of
recovering a global Wu formula from identities on closed fibers.

\section*{Acknowledgments}

Mark Shusterman's and Sa'ar Zehavi's research is co-funded by the European Union (ERC, Function Fields, 101161909).

Mark Shusterman is The Dr.\ A. Edward Friedmann Career Development Chair in Mathematics.

Shachar Carmeli is partially supported by ISF Beresheet grant 4093/25, BSF grant 2024766, Minerva grant 715294, and the Azrieli foundation through an Early Career Researcher Fellowship.

We thank Eric Chen, Tony Feng, and Artane Siad for helpful discussions. 
We also thank the anonymous referee for identifying a gap in an
earlier version of this manuscript concerning our use of
the instability relations for Steenrod squares.
\section{Background}\label{chapter:background}
This chapter supplies the cohomological framework for the
stable Steenrod operations constructed in
\S\ref{sec:stable-steenrod-arithmetic-boundary}.
We recall relative cohomology of pro-anima and classical
Steenrod squares in~\S\ref{subsec:steenrod_basic},
\'etale homotopy types and support comparisons
in~\S\ref{subsec:schlank-barnea}, and arithmetic compact
support and the Tate boundary
in~\S\ref{sec:modified-compact-support}.

\subsection{Anima, cochains, and classical Steenrod squares}
\label{subsec:steenrod_basic}

We fix our conventions for pro-anima and relative cohomology,
then recall the classical Steenrod squares on simplicial pairs.

Let \(\mathcal S:=\mathrm{Ani}\) denote the \(\infty\)-category
of anima, and let \(\Pro(\mathcal S)\) denote its pro-category; see~\cite[\S2.1]{CarchediElmanto}.
We work intrinsically in these categories, using simplicial
sets as concrete presentations of anima.

\begin{definition}[Derived categories;
cf.~{\cite[\S1.3.5]{LurieHA}}]
\label{def:stable-derived-category}
For a commutative ring \(R\), let \(\mathcal D(R)\) denote the
stable \(\infty\)-category obtained from unbounded complexes
of \(R\)-modules by inverting quasi-isomorphisms. Its homotopy
category is the triangulated derived category \(D(R)\).

For a Grothendieck topos \(\mathcal T\), let
\(\mathcal T_\infty\) denote its associated \(\infty\)-topos
of anima-valued sheaves, without hypercompletion.
Following~\cite[\S2.3]{ACS}, put
\[
        \mathcal D(\mathcal T,R)
        :=
        \operatorname{Shv}(\mathcal T_\infty;\mathcal D(R)),
\]
the stable \(\infty\)-category of \(\mathcal D(R)\)-valued
sheaves. A superscript \(b\) denotes the full subcategory
of objects bounded for the standard \(t\)-structure.
\end{definition}

We regard ordinary \(R\)-module sheaves as objects of the heart.
Sheaf functors, fibers, and colimits below are interpreted
in the corresponding stable \(\infty\)-categories.

\subsubsection{Relative cohomology of pro-anima}
\label{subsubsec:rel_coh_pro_space}

For an anima \(K\) and an abelian group \(\Lambda\), write
\(C^\bullet(K;\Lambda)\in\mathcal D(\mathbf Z)\) for its
ordinary cochains, computed by normalized cochains on a
simplicial presentation of \(K\).

An object \(X\in\Pro(\mathcal S)\) admits a presentation
\(X\simeq\{X_i\}_{i\in I}\) indexed by a small cofiltered
\(\infty\)-category. We put
\[
        C^\bullet(X;\Lambda)
        :=
        \varinjlim_{i\in I^{\mathrm{op}}}
        C^\bullet(X_i;\Lambda),
        \qquad
        H^n(X;\Lambda):=H^nC^\bullet(X;\Lambda).
\]
This construction is independent of the presentation:
it is the extension of
\(C^\bullet(-;\Lambda)\colon\mathcal S^{\mathrm{op}}
\to\mathcal D(\mathbf Z)\)
to \(\Pro(\mathcal S)^{\mathrm{op}}\) preserving filtered
colimits, supplied by the universal property of
Ind-completion; see~\cite[Proposition~5.3.5.10]{HTT}.
Exactness of filtered colimits gives
\[
        H^n(X;\Lambda)
        \simeq
        \varinjlim_{i\in I^{\mathrm{op}}}
        H^n(X_i;\Lambda).
\]

\begin{definition}[Relative cohomology]
Let \(f\colon A\to X\) be a morphism in
\(\Pro(\mathcal S)\). Its relative cochains and cohomology are
\[
        C^\bullet(f;\Lambda)
        :=
        \operatorname{fib}\!\left(
                C^\bullet(X;\Lambda)
                \xrightarrow{f^*}
                C^\bullet(A;\Lambda)
        \right),
        \qquad
        H^n(f;\Lambda):=H^nC^\bullet(f;\Lambda).
\]
\end{definition}

The defining fiber sequence yields the relative long exact
sequence
\[
        \cdots\longrightarrow H^{n-1}(A;\Lambda)
        \longrightarrow H^n(f;\Lambda)
        \longrightarrow H^n(X;\Lambda)
        \longrightarrow H^n(A;\Lambda)
        \longrightarrow\cdots.
\]
In particular,
\(C^\bullet(\varnothing\to X;\Lambda)
\simeq C^\bullet(X;\Lambda)\).

Since \([1]\) is a finite poset, \(f\) admits a level
presentation \(f_i\colon A_i\to X_i\) over a common small
cofiltered \(\infty\)-category \(I\);
see, dually,~\cite[Proposition~5.3.5.15]{HTT}.
Filtered colimits commute with fibers, so
\[
\begin{aligned}
        C^\bullet(f;\Lambda)
        &\simeq
        \varinjlim_{i\in I^{\mathrm{op}}}
        C^\bullet(f_i;\Lambda),
        \\
        H^n(f;\Lambda)
        &\simeq
        \varinjlim_{i\in I^{\mathrm{op}}}
        H^n(f_i;\Lambda).
\end{aligned}
\]
For a simplicial presentation of \(f_i\), its relative
cochains are computed by the mapping-cylinder pair
\((M(f_i),A_i)\). If \(f_i\) is a simplicial inclusion,
one may instead use the pair \((X_i,A_i)\).
Thus, the definition recovers ordinary relative cohomology.
In particular,
\[
        H^n(f;\Lambda)=0
        \qquad(n<0).
\]

\subsubsection{Classical relative Steenrod squares}
\label{subsubsec:classic_steenrod}

Let \(L\hookrightarrow K\) be a simplicial inclusion.
Its normalized relative cochains are
\[
        N^\bullet(K,L;\FF_2)
        :=
        \ker\!\left(
                N^\bullet(K;\FF_2)
                \longrightarrow N^\bullet(L;\FF_2)
        \right).
\]
Since restriction is degreewise surjective, this complex
represents \(C^\bullet(L\to K;\FF_2)\).

Normalized simplicial cochains carry natural cup-\(i\)
products, with \(\smile_0\) the cup product and
\(\smile_i=0\) for \(i<0\);
see~\cite[\S3.3 and Remark~3.6]{FengEtaleSteenrod}. 
We write \(\smile\,:=\,\smile_0\) and also use \(\cdot\) for the
induced product on cohomology.
Naturality implies that these products preserve relative
cochains.

\begin{definition}[Classical relative Steenrod squares;
cf.~{\cite[\S3.4]{FengEtaleSteenrod}}]
\label{def:classic_sq}
For \(n,r\geq0\) and a cocycle
\(u\in N^n(K,L;\FF_2)\), define
\[
        \Sq^r([u])
        :=
        [u\smile_{n-r}u]
        \in H^{n+r}(K,L;\FF_2).
\]
\end{definition}

We recall the standard relative forms of the classical properties.

\begin{proposition}[Classical properties;
cf.~{\cite[\S3.4]{FengEtaleSteenrod}}]
\label{prop:classic_sq_properties}
Let \(x\in H^n(K,L;\FF_2)\) and
\(y\in H^p(K,L;\FF_2)\), with \(n,p\geq0\).
The relative Steenrod squares satisfy the following properties.
\begin{enumerate}
\item[(i)] \textup{(Additivity)}
For \(r\geq0\) and \(x,x'\in H^n(K,L;\FF_2)\),
\[
        \Sq^r(x+x')=\Sq^r(x)+\Sq^r(x').
\]

\item[(ii)] \textup{(Naturality)}
For \(r\geq0\), a map of simplicial pairs \(f\colon(X,A)\to(Y,B)\)
and \(z\in H^n(Y,B;\FF_2)\), one has
\[
        f^*\Sq^r(z)=\Sq^r(f^*z).
\]

\item[(iii)] \textup{(Stability)}
For simplicial inclusions \(J\hookrightarrow L\hookrightarrow K\),
let
\[
        \delta\colon H^n(L,J;\FF_2)
        \longrightarrow H^{n+1}(K,L;\FF_2)
\]
be the connecting morphism of the triple.
Then, for \(r\geq0\) and \(z\in H^n(L,J;\FF_2)\),
\[
        \Sq^r(\delta z)=\delta(\Sq^r z).
\]

\item[(iv)] \textup{(Cartan formula)}
For \(r\geq0\),
\[
        \Sq^r(x\smile y)
        =
        \sum_{\substack{a,b\geq0\\a+b=r}}
        \Sq^a(x)\smile\Sq^b(y).
\]

\item[(v)] \textup{(Adem relations)}
For integers \(0<a<2b\),
\[
        \Sq^a\Sq^b
        =
        \sum_{j=0}^{\lfloor a/2\rfloor}
        \binom{b-j-1}{a-2j}
        \Sq^{a+b-j}\Sq^j,
\]
with binomial coefficients read modulo \(2\).

\item[(vi)] \textup{(Normalization)}
One has \(\Sq^0=\mathrm{id}\).

\item[(vii)] \textup{(Top square)}
One has \(\Sq^n(x)=x\smile x\).

\item[(viii)] \textup{(Instability)}
One has \(\Sq^r(x)=0\) for \(r>n\).

\item[(ix)] \textup{(Bockstein)}
The operation \(\Sq^1\) is the connecting morphism associated
with the coefficient sequence
\[
        0\longrightarrow\mathbf Z/2
        \xrightarrow{\,2\,}\mathbf Z/4
        \longrightarrow\mathbf Z/2
        \longrightarrow0.
\]
\end{enumerate}
\end{proposition}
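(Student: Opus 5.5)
The plan is to reduce every item to the absolute case of a simplicial set, where Definition~\ref{def:classic_sq} is literally the classical cup-\(i\) construction. There we can quote the standard theory, cf.~\cite[\S3.4]{FengEtaleSteenrod} and~\cite[\S11]{MILNOR_STASHEFF}. We then transport the statements along the relative cochain complexes.

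\textbf{Naturality and the reduction to \(K\).} The cup-\(i\) products are natural and preserve the subcomplex \(N^\bullet(K,L;\FF_2)\subset N^\bullet(K;\FF_2)\). Hence \(u\mapsto u\smile_{n-r}u\) commutes with pullback along maps of pairs, which gives (ii) at cochain level. It also makes the relative square compatible with the forgetful map \(H^*(K,L)\to H^*(K)\).

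\textbf{Cochain-level identities.} Additivity (i) follows from the coboundary formula
\[
\delta(u\smile_i v)=\delta u\smile_i v+u\smile_i\delta v+u\smile_{i-1}v+v\smile_{i-1}u.
\]
Indeed, \((u+u')\smile_i(u+u')-u\smile_i u-u'\smile_i u'=u\smile_i u'+u'\smile_i u\), and this equals \(\delta(u\smile_{i+1}u')\) for cocycles. The same formula shows the class is independent of the representative: \(\delta w\) contributes a relative coboundary, again built from natural operations that preserve relative cochains. Normalization (vi), the top square (vii) and instability (viii) are immediate. For instance, \(\smile_{n-r}=0\) when \(n-r<0\), and \(\smile_0\) is the cup product. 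For the Bockstein (ix), lift \(u\) to an integral relative cochain \(\tilde u\). The standard identity \(\delta\tilde u\equiv 2(\tilde u\smile_{n-1}\tilde u)\) modulo \(4\) (up to coboundary) holds in relative cochains by naturality, and it identifies \(\Sq^1\) with the connecting map.

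\textbf{Cartan and Adem.} The Cartan formula (iv) and the Adem relations (v) hold at the level of cohomology for pairs. Take the standard chain-homotopy proofs via the \(\Sigma_2\)-equivariant (resp.\ \(\Sigma_4\)-equivariant) acyclic-models construction. These homotopies are natural in the simplicial set, so they restrict to the relative subcomplexes. Alternatively, one can pass to \(K/L\): for a simplicial inclusion, \(N^\bullet(K,L)\cong\widetilde N^\bullet(K/L)\), compatibly with cup-\(i\) products. The relative statements then become the reduced absolute statements for the pointed simplicial set \(K/L\). I prefer this route because it uniformly imports (iv), (v) and (ix).

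\textbf{Stability (iii) is the main obstacle.} This is the one genuinely relative statement. Here is the plan. Let \(z=[u]\) with \(u\in N^n(L,J)\). Extend \(u\) to a cochain \(\bar u\in N^n(K,J)\), so that \(\delta z=[\delta\bar u]\). The expression
\[
\bar u\smile_{n-r-1}\delta\bar u+\bar u\smile_{n-r}\bar u
\]
restricts to \(u\smile_{n-r}u\) on \(L\) (because \(\delta u=0\) there). By the coboundary formula, its coboundary is \(\delta\bar u\smile_{n-r}\delta\bar u\) modulo relative coboundaries. By the definition of the connecting map, this identity gives \(\Sq^r(\delta z)=\delta\Sq^r(z)\).

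As a fallback for the bookkeeping of the cross terms \(\bar u\smile_{i-1}\delta\bar u+\delta\bar u\smile_{i-1}\bar u\), I would again use the quotient model. There \(\delta\) is the suspension isomorphism composed with the map \(K/L\to\Sigma(L/J)\), and stability of reduced squares under suspension is classical.
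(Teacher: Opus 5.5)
Your overall strategy is sound, and it is more explicit than the paper. The paper gives no argument for this proposition: it records these as the standard properties of cup-$i$ squares on simplicial pairs, citing Feng~\S3.4. It only later recovers instability on pro-anima, by reducing levelwise to mapping-cylinder pairs. Your identification $N^\bullet(K,L;\FF_2)\cong\widetilde N^\bullet(K/L;\FF_2)$, compatible with all $\smile_i$, is a clean way to import (i), (ii), (iv), (v) and (ix) from the absolute simplicial-set case. There is, however, a concrete error in your explicit proof of stability (iii).

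\textbf{The stability step has wrong indices.} With $\bar u\in N^n(K,J)$, the term $\bar u\smile_{n-r-1}\delta\bar u$ has degree $n+(n+1)-(n-r-1)=n+r+2$, whereas $\bar u\smile_{n-r}\bar u$ has degree $n+r$. So your cochain is not a homogeneous lift of $u\smile_{n-r}u$ to $N^{n+r}(K,J)$. Likewise, $\delta\bar u\smile_{n-r}\delta\bar u$ lies in degree $n+r+2$ and represents $\Sq^{r+1}(\delta z)$, while both sides of (iii) live in degree $n+r+1$.

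\textbf{The fix.} The correction term must be $\bar u\smile_{n-r+1}\delta\bar u$. Put $c:=\bar u\smile_{n-r}\bar u+\bar u\smile_{n-r+1}\delta\bar u\in N^{n+r}(K,J)$. It restricts to $u\smile_{n-r}u$ on $L$, since $\delta\bar u|_L=\delta u=0$. Your mod-$2$ coboundary formula then gives
\[
\delta c
=
\bigl(\delta\bar u\smile_{n-r}\bar u+\bar u\smile_{n-r}\delta\bar u\bigr)
+
\bigl(\delta\bar u\smile_{n-r+1}\delta\bar u+\bar u\smile_{n-r}\delta\bar u+\delta\bar u\smile_{n-r}\bar u\bigr)
=
\delta\bar u\smile_{n-r+1}\delta\bar u .
\]
Here the two $\smile_{n-r-1}$ terms from the first summand coincide and cancel. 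The cross terms occur once from each summand and also cancel. So the identity holds exactly at cochain level, not merely modulo coboundaries. Since $\delta\bar u$ has degree $n+1$, the right side represents $\Sq^r(\delta z)$, giving $\delta\Sq^r(z)=\Sq^r(\delta z)$. The cases $r\geq n+1$ are covered by $\smile_i=0$ for $i<0$.

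Your fallback through $K/L\to\Sigma(L/J)$ is also valid. Note that for simplicial sets this map exists only via a zigzag through the mapping cone of $L/J\to K/J$. That is harmless, because the squares are natural and weak equivalences induce isomorphisms; the sign discrepancy with the connecting map is invisible mod $2$.

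\textbf{A smaller point on (vi).} Normalization is not immediate from the conventions $\smile_0=\smile$ and $\smile_i=0$ for $i<0$. It uses the cochain identity $u\smile_n u=u$ for a mod-$2$ $n$-cochain, which comes from the explicit Steenrod formula: the only surviving term evaluates $u$ twice on the whole simplex. Alternatively, import it through $K/L$ as you do for the other items.
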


In~\S\ref{subsec:spectral-relative-steenrod-construction},
we realize these squares by stable maps of coefficient
spectra and extend them to relative cochain spectra.
The operations of
Definition~\ref{def:spectral-relative-steenrod-operations}
recover the classical squares on simplicial pairs and
their filtered colimits on pro-anima.
Consequently, the instability relations above hold on
ordinary relative cohomology of pro-anima;
see~Proposition~\ref{prop:instability-relative-pro-anima-terms}.

\subsection{\'Etale homotopy types and cohomology with supports}
\label{subsec:schlank-barnea}

We recall the intrinsic shape construction and its cohomology
comparison, then identify cohomology with support conditions
with relative cohomology of pro-anima. For the shape formalism, we follow
Arad--Carmeli--Schlank~\cite[\S2]{ACS}.

\subsubsection{Shapes and cohomology}
\label{subsubsec:bs_topological_type}

Let \(\mathcal T\) be a Grothendieck topos, with associated
\(\infty\)-topos \(\mathcal T_\infty\) as above.
Write
\[
        \Gamma_{\mathcal T}^*\colon
        \mathcal S\longrightarrow\mathcal T_\infty
\]
for the constant-object functor.
The induced functor \(\Pro(\Gamma_{\mathcal T}^*)\)
admits a left adjoint \((\Gamma_{\mathcal T})_\sharp\);
see~\cite[Proposition~2.0.2]{ACS}.

Below, we implicitly identify
the objects of a category \(\mathcal C\) with their images under the fully faithful
constant embedding \(\mathcal C\hookrightarrow\Pro(\mathcal C)\).
\begin{definition}[Shape]
The shape of \(\mathcal T\) is
\[
        h(\mathcal T)
        :=
        (\Gamma_{\mathcal T})_\sharp(1_{\mathcal T})
        \in\Pro(\mathcal S),
\]
where \(1_{\mathcal T}\) is the terminal object.
It is characterized by natural equivalences
\[
        \operatorname{Map}_{\Pro(\mathcal S)}
        \bigl(h(\mathcal T),K\bigr)
        \simeq
        \operatorname{Map}_{\mathcal T_\infty}
        \bigl(1_{\mathcal T},\Gamma_{\mathcal T}^*K\bigr)
        \qquad(K\in\mathcal S);
\]
see~\cite[Definition~2.8]{CarchediElmanto}.
\end{definition}

For a scheme \(Y\), write
\[
        h(Y)
        :=
        \Pi_\infty\operatorname{Shv}(Y_{\acute et})
        \in\Pro(\mathcal S)
\]
for its \emph{\'etale homotopy type}, where
\(\operatorname{Shv}(Y_{\acute et})\) is the
\(\infty\)-topos of anima-valued sheaves on the small
\'etale site and \(\Pi_\infty\) denotes the shape functor.
This construction is functorial in \(Y\);
see~\cite[Corollary~2.12]{CarchediElmanto}.

\begin{proposition}[Comparison with topos cohomology]
\label{prop:chough_comp}
For every abelian group \(\Lambda\), there is a canonical
equivalence
\[
        \chi_{\mathcal T}\colon
        C^\bullet(h(\mathcal T);\Lambda)
        \xrightarrow{\ \sim\ }
        R\Gamma(\mathcal T;\underline{\Lambda}_{\mathcal T})
\]
in \(\mathcal D(\mathbf Z)\), natural in geometric morphisms
and in \(\Lambda\).
Here \(\underline{\Lambda}_{\mathcal T}\) denotes the
constant abelian sheaf.
In particular,
\[
        H^n(h(Y);\Lambda)
        \simeq
        H_{\acute et}^n(Y;\underline{\Lambda}_Y).
\]
\end{proposition}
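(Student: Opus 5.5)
The plan is to prove the comparison in two steps, first in the universal case and then by base change.

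\emph{Step 1: universal coefficients.} Fix an injective resolution \(\underline{\Lambda}_{\mathcal T}\to I^\bullet\) (equivalently, use the stable \(\infty\)-category \(\mathcal D(\mathcal T,\mathbf Z)\)). We have
\[
R\Gamma(\mathcal T;\underline{\Lambda}_{\mathcal T})\simeq \operatorname{Map}_{\mathcal D(\mathcal T,\mathbf Z)}(\underline{\mathbf Z},\underline{\Lambda}[\ast]).
\]
Via the Dold--Kan/Eilenberg--MacLane correspondence, the degree-\(n\) cohomology is computed as \(\pi_0\operatorname{Map}_{\mathcal T_\infty}(1_{\mathcal T},\Gamma_{\mathcal T}^*K(\Lambda,n))\). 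This uses that the constant sheaf \(\Gamma^*_{\mathcal T}K(\Lambda,n)\) is the Eilenberg--MacLane object of \(\underline\Lambda\), because \(\Gamma^*\) is left exact and preserves truncations and homotopy sheaves. It also uses the standard identification of sheaf cohomology with maps into Eilenberg--MacLane objects in an \(\infty\)-topos (HTT \S7.2.2). No hypercompleteness is needed here, since \(K(\Lambda,n)\) is truncated.

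\emph{Step 2: the pro-space side.} By the defining adjunction of the shape,
\[
\operatorname{Map}_{\mathcal T_\infty}(1,\Gamma^*K(\Lambda,n))\simeq \operatorname{Map}_{\Pro(\mathcal S)}(h(\mathcal T),K(\Lambda,n)).
\]
Writing \(h(\mathcal T)=\{X_i\}\), the right-hand side is \(\varinjlim_i\operatorname{Map}(X_i,K(\Lambda,n))\). Its \(\pi_0\) is \(\varinjlim_i H^n(X_i;\Lambda)=H^n(h(\mathcal T);\Lambda)\), using exactness of filtered colimits.

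\emph{Promoting to cochain complexes.} To get an equivalence in \(\mathcal D(\mathbf Z)\) rather than just degreewise isomorphisms, I would do this spectrally. Both sides are the spectrum-level mapping objects into \(H\Lambda\), obtained from the spaces \(K(\Lambda,n)\) for all \(n\), with compatible loopings. The equivalences of Steps 1 and 2 are natural in \(K\), so they assemble into an equivalence of \(\Omega\)-spectra. This also gives naturality in \(\Lambda\). Naturality in geometric morphisms \(g\) follows because \(g^*\Gamma^*\simeq\Gamma^*\) induces the map on shapes. Specializing to \(\mathcal T=\operatorname{Shv}(Y_{\acute et})\) gives the statement for \(h(Y)\).

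The main obstacle is the unit/map-level bookkeeping: making the identification \(C^\bullet(h;\Lambda)\simeq\varinjlim\operatorname{Map}(X_i,H\Lambda)\) and the module structures coherent as objects of \(\mathcal D(\mathbf Z)\), not merely of spectra. I would handle this by working with \(H\mathbf Z\)-module spectra throughout, or simply cite~\cite[\S2]{ACS}, where this comparison is established.
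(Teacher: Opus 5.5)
Your proposal is correct and is essentially the paper's own argument. The paper applies the shape adjunction to $K(\Lambda,n)$, uses that $\Gamma_{\mathcal T}^*$ carries it to the Eilenberg--Mac Lane object of $\underline\Lambda_{\mathcal T}$, and assembles these equivalences over $n$ via the deloopings; you do the same, and additionally spell out the comparison of $\infty$-topos cohomology with derived-functor cohomology (valid without hypercompleteness because the objects are truncated) and the filtered-colimit computation on the pro side, both of which the paper leaves implicit.
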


\begin{proof}
For \(n\geq0\), apply the defining equivalence of the shape
to the Eilenberg--Mac Lane anima \(K(\Lambda,n)\).
Since inverse image preserves Eilenberg--Mac Lane objects,
this gives
\[
        \operatorname{Map}_{\Pro(\mathcal S)}
        \bigl(h(\mathcal T),K(\Lambda,n)\bigr)
        \simeq
        \operatorname{Map}_{\mathcal T_\infty}
        \bigl(1_{\mathcal T},
              K(\underline{\Lambda}_{\mathcal T},n)\bigr).
\]
These equivalences respect the Eilenberg--Mac Lane
structures, including deloopings and module structures,
and therefore assemble into the asserted equivalence
of cochain objects. Naturality follows from the
naturality of the shape construction.
\end{proof}

\subsubsection{Relative shapes and relative cohomology}
\label{subsubsec:maps_topoi_relcoh}

Let \(g\colon\mathcal Z\to\mathcal T\) be a geometric
morphism. We use the same notation for its induced
morphism of \(\infty\)-topoi.
By~\cite[Proposition~2.0.2 and Definition~2.0.3]{ACS},
there is an adjunction
\[
        g_\sharp\colon
        \Pro(\mathcal Z_\infty)
        \rightleftarrows
        \Pro(\mathcal T_\infty)
        \mathbin{:}\Pro(g^*).
\]
The \emph{relative shape} of \(g\) is
\[
        g_\sharp(1_{\mathcal Z})
        \in\Pro(\mathcal T_\infty).
\]
Its defining property is
\[
        \operatorname{Map}_{\Pro(\mathcal T_\infty)}
        \bigl(g_\sharp(1_{\mathcal Z}),F\bigr)
        \simeq
        \operatorname{Map}_{\mathcal Z_\infty}
        \bigl(1_{\mathcal Z},g^*F\bigr)
        \qquad(F\in\mathcal T_\infty).
\]

Applying absolute shape to \(g\) gives a morphism
\[
        h(g)\colon h(\mathcal Z)\longrightarrow h(\mathcal T)
\]
in \(\Pro(\mathcal S)\).
The relative cochains of this arrow are those of
\S\ref{subsubsec:rel_coh_pro_space}.
By Proposition~\ref{prop:chough_comp}, they satisfy
\[
        C^\bullet(h(g);\Lambda)
        \simeq
        \operatorname{fib}\!\left(
                R\Gamma(\mathcal T;
                        \underline{\Lambda}_{\mathcal T})
                \longrightarrow
                R\Gamma(\mathcal Z;
                        \underline{\Lambda}_{\mathcal Z})
        \right),
\]
naturally in \(g\) and \(\Lambda\).

The relative shape retains information about the base
topos. In particular, for an \(\mathbf R\)-scheme \(Y\)
with structural geometric morphism \(p_Y\), it gives
\[
        h_{\mathbf R}(Y)
        :=
        (p_Y)_\sharp(1)
        \in
        \Pro\!\left(
                \operatorname{Shv}(\mathbf R_{\acute et})
        \right)
        \simeq
        \Pro\!\left(
                \operatorname{Fun}(BG_{\mathbf R},\mathcal S)
        \right),
        \qquad
        G_{\mathbf R}:=\operatorname{Gal}(\mathbf C/\mathbf R);
\]
see~\cite[Definition~4.6 and Proposition~4.9]{CarchediElmanto}.
This is the equivariant pro-anima used in
\S\ref{subsec:arithmetic-boundary-spectra-comparison},
where its comparison with \(h(Y_{\mathbf C})\) is established.

\subsubsection{Open--closed decompositions and support conditions}
\label{subsubsec:open_closed_compact_support}

Let
\[
        j\colon\mathcal U\hookrightarrow\mathcal T,
        \qquad
        i\colon\mathcal Z\hookrightarrow\mathcal T
\]
be an open--closed decomposition of Grothendieck topoi,
and let \(\Lambda\) be an abelian group.

\begin{definition}[Cohomology with support conditions]
Define objects of \(\mathcal D(\mathbf Z)\) by
\[
\begin{aligned}
        R\Gamma_c(\mathcal U/\mathcal T;
                  \underline{\Lambda}_{\mathcal U})
        &:=
        R\Gamma(\mathcal T;
                j_!\underline{\Lambda}_{\mathcal U}),
        \\
        R\Gamma_{\mathcal Z}(\mathcal T;
                            \underline{\Lambda}_{\mathcal T})
        &:=
        \operatorname{fib}\!\left(
                R\Gamma(\mathcal T;
                        \underline{\Lambda}_{\mathcal T})
                \longrightarrow
                R\Gamma(\mathcal U;
                        \underline{\Lambda}_{\mathcal U})
        \right).
\end{aligned}
\]
Here \(j_!\) is extension by zero.
Their cohomology groups are denoted by
\(H_c^n(\mathcal U/\mathcal T;
\underline{\Lambda}_{\mathcal U})\) and
\(H_{\mathcal Z}^n(\mathcal T;
\underline{\Lambda}_{\mathcal T})\), respectively.
\end{definition}

The second complex also identifies with
\[
        R\Gamma(\mathcal Z;
                Ri^!\underline{\Lambda}_{\mathcal T}),
\]
where \(Ri^!\) is right adjoint to \(i_*\).

\begin{proposition}[Relative comparison with local cohomology]
\label{prop:relative-comparison}
There are canonical equivalences in \(\mathcal D(\mathbf Z)\)
\[
\begin{aligned}
        \alpha_c\colon
        C^\bullet(h(i);\Lambda)
        &\xrightarrow{\ \sim\ }
        R\Gamma_c(\mathcal U/\mathcal T;
                  \underline{\Lambda}_{\mathcal U}),
        \\
        \alpha_Z\colon
        C^\bullet(h(j);\Lambda)
        &\xrightarrow{\ \sim\ }
        R\Gamma_{\mathcal Z}(\mathcal T;
                            \underline{\Lambda}_{\mathcal T}).
\end{aligned}
\]
They are natural in \(\Lambda\) and in Cartesian morphisms
of open--closed decompositions.
\end{proposition}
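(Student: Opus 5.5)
Both equivalences come from the same principle. Relative cochains of a map of shapes are fibers of restriction maps. Proposition~\ref{prop:chough_comp} then converts these into fibers of restriction maps on topos cohomology, and we identify those fibers with the support-condition complexes.

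\textbf{The second equivalence.} By the computation in \S\ref{subsubsec:maps_topoi_relcoh}, applied to the geometric morphism \(j\colon\mathcal U\to\mathcal T\), we have
\[
C^\bullet(h(j);\Lambda)\simeq\operatorname{fib}\bigl(R\Gamma(\mathcal T;\underline\Lambda_{\mathcal T})\to R\Gamma(\mathcal U;\underline\Lambda_{\mathcal U})\bigr).
\]
Here we use \(j^*\underline\Lambda_{\mathcal T}=\underline\Lambda_{\mathcal U}\). The right-hand side is, by definition, \(R\Gamma_{\mathcal Z}(\mathcal T;\underline\Lambda_{\mathcal T})\), and we take \(\alpha_Z\) to be this composite. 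It is natural in \(\Lambda\) because \(\chi\) is, and natural in morphisms of decompositions because \(\chi\) is natural in geometric morphisms.

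\textbf{The first equivalence.} The same argument applied to \(i\colon\mathcal Z\to\mathcal T\) gives
\[
C^\bullet(h(i);\Lambda)\simeq\operatorname{fib}\bigl(R\Gamma(\mathcal T;\underline\Lambda_{\mathcal T})\to R\Gamma(\mathcal Z;\underline\Lambda_{\mathcal Z})\bigr).
\]
To identify this fiber with \(R\Gamma(\mathcal T;j_!\underline\Lambda_{\mathcal U})\), we use the recollement fiber sequence
\[
j_!j^*F\to F\to i_*i^*F
\]
in \(\mathcal D(\mathcal T,\mathbf Z)\), with \(F=\underline\Lambda_{\mathcal T}\). Using \(i^*\underline\Lambda_{\mathcal T}=\underline\Lambda_{\mathcal Z}\) and \(R\Gamma(\mathcal T;i_*-)\simeq R\Gamma(\mathcal Z;-)\), we apply the exact functor \(R\Gamma(\mathcal T;-)\) to this sequence. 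The result exhibits \(R\Gamma(\mathcal T;j_!\underline\Lambda_{\mathcal U})\) as the fiber of the restriction \(R\Gamma(\mathcal T)\to R\Gamma(\mathcal Z)\).

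This step needs one check: the map \(R\Gamma(\mathcal T;F)\to R\Gamma(\mathcal T;i_*i^*F)\) obtained from the unit must agree with the pullback \(i^*\) on cohomology that appears in \(\chi\). This holds because, under the adjunction, the unit map corresponds to the map \(\chi\) induces from \(h(i)\).

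\textbf{Main obstacle.} The delicate part is working in the non-hypercomplete \(\infty\)-topos setting. There we must confirm three things.
\begin{itemize}
\item The recollement sequence \(j_!j^*\to\mathrm{id}\to i_*i^*\) is a cofiber sequence for \(\mathcal D(\mathbf Z)\)-valued sheaves on \(\mathcal T_\infty\). It suffices to check this on a generating family, or stalkwise on the open and closed parts; alternatively, it follows from the standard recollement of stable \(\infty\)-topoi.
\item The identifications are natural for Cartesian morphisms of open–closed decompositions. Cartesianity is exactly what gives base change \(g^*j_!\simeq j'_!g'^*\) and \(g^*i_*\simeq i'_*g'^*\) on constant sheaves, so the recollement triangles map to one another.
\item The maps are natural in \(\Lambda\). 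This is immediate from the naturality of \(\chi\).
\end{itemize}
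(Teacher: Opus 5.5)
Your proposal is correct and follows the paper's proof. The paper also takes fibers of the absolute comparison \(\chi\) of Proposition~\ref{prop:chough_comp} along the restriction maps to \(\mathcal Z\) and \(\mathcal U\). It identifies these fibers with \(R\Gamma_c(\mathcal U/\mathcal T;\underline{\Lambda}_{\mathcal U})\) and \(R\Gamma_{\mathcal Z}(\mathcal T;\underline{\Lambda}_{\mathcal T})\) via the open--closed recollement fiber sequences, and derives naturality from that of \(\chi\), the recollement sequences, and the fiber construction. The recollement and base-change points you flag as obstacles are covered in the paper only by citing the \'etale recollement in the Stacks Project, so your extra checks are compatible with the paper's argument and somewhat more explicit.
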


\begin{proof}
The open--closed recollement gives fiber sequences
\[
        R\Gamma_c(\mathcal U/\mathcal T;
                  \underline{\Lambda}_{\mathcal U})
        \longrightarrow
        R\Gamma(\mathcal T;
                \underline{\Lambda}_{\mathcal T})
        \longrightarrow
        R\Gamma(\mathcal Z;
                \underline{\Lambda}_{\mathcal Z})
\]
and
\[
        R\Gamma_{\mathcal Z}(\mathcal T;
                            \underline{\Lambda}_{\mathcal T})
        \longrightarrow
        R\Gamma(\mathcal T;
                \underline{\Lambda}_{\mathcal T})
        \longrightarrow
        R\Gamma(\mathcal U;
                \underline{\Lambda}_{\mathcal U}).
\]
In the \'etale setting, see~\cite[Tags~095L and~09XP]
{STACKS-PROJECT}.
Taking fibers of the natural comparisons of
Proposition~\ref{prop:chough_comp} gives
\(\alpha_c\) and \(\alpha_Z\).
Their naturality follows from that of the absolute
comparison, the recollement sequences, and the fiber
construction.
\end{proof}

For \(\Lambda=\FF_2\), the Steenrod operations on these
ordinary support groups are the operations of
Definition~\ref{def:spectral-relative-steenrod-operations},
transported through the preceding comparisons.
We denote them by \(\Sq_c^r\) and \(\Sq_{\mathcal Z}^r\),
respectively. Their extension to arithmetic boundary
cohomology is developed in
\S\ref{sec:stable-steenrod-arithmetic-boundary}.

\subsection{Modified compactly supported cohomology over arithmetic bases}
\label{sec:modified-compact-support}

We review the Tate construction and its role in ordinary
and modified arithmetic compact support; our main reference is
Geisser--Schmidt~\cite[\S2]{GeisserSchmidt}.
Retain \(G_{\mathbf R}\), and let
\[
        v\colon\Spec(\mathbf C)\longrightarrow\Spec(\mathbf Z)
\]
be the geometric point at infinity.

\subsubsection{The Tate construction and the real place}
\label{sssec:tate-real-place}

\begin{definition}[The Tate construction;
cf.~{\cite[\S I.1]{NikolausScholze2018}}]
\label{def:tate-construction}
Let \(G\) be a finite group.  Put
\[
        \mathcal D(\mathbf Z)^{BG}
        :=
        \operatorname{Fun}(BG,\mathcal D(\mathbf Z)).
\]
For \(M\in\mathcal D(\mathbf Z)^{BG}\), set
\[
        M_{hG}:=\operatorname*{colim}_{BG}M,
        \qquad
        M^{hG}:=\operatorname*{lim}_{BG}M.
\]
Let
\[
        N_M\colon M_{hG}\longrightarrow M^{hG}
\]
be the norm morphism.  The Tate construction is
\[
        M^{tG}
        :=
        \operatorname{cofib}\!\left(
        M_{hG}\xrightarrow{\,N_M\,}M^{hG}
        \right).
\]
The canonical ordinary-to-Tate morphism is denoted
\[
        \operatorname{can}_M\colon M^{hG}\longrightarrow M^{tG}.
\]
\end{definition}

\begin{definition}[Ordinary and modified cohomology over
\(\Spec\mathbf R\);
cf.~{\cite[Definition~2.1 and (2.5)]{GeisserSchmidt}}]
\label{def:modified-specR}
Under the canonical equivalence
\[
        \mathcal D((\Spec\mathbf R)_{\acute et},\mathbf Z)
        \simeq
        \operatorname{Fun}(BG_{\mathbf R},\mathcal D(\mathbf Z)),
\]
view
\[
        \mathcal H^\bullet\in
        \mathcal D^b((\Spec\mathbf R)_{\acute et},\mathbf Z)
\]
as an object of \(\mathcal D(\mathbf Z)^{BG_{\mathbf R}}\).
Its ordinary and modified cohomology complexes are
\[
\begin{aligned}
        R\Gamma_{\acute et}(\Spec\mathbf R,\mathcal H^\bullet)
        &\simeq(\mathcal H^\bullet)^{hG_{\mathbf R}},
        \\
        \widehat R\Gamma_{\acute et}
                (\Spec\mathbf R,\mathcal H^\bullet)
        &:=(\mathcal H^\bullet)^{tG_{\mathbf R}}.
\end{aligned}
\]
Their degree-\(n\) cohomology groups are denoted by
\(H^n_{\acute et}(\Spec\mathbf R;\mathcal H^\bullet)\) and
\(\widehat H^n_{\acute et}(\Spec\mathbf R;\mathcal H^\bullet)\).
\end{definition}

\subsubsection{Arithmetic boundary complexes}
\label{sssec:arithmetic-boundary-complexes}

\begin{definition}[Arithmetic boundary complexes over
\(\Spec\mathbf Z\);
cf.~{\cite[Definition~2.5]{GeisserSchmidt}} and
{\cite[Proposition~6.13]{FlachMorin}}]
\label{def:compact-support-over-specZ}
Let
\[
        \mathcal G^\bullet\in
        \mathcal D^b((\Spec\mathbf Z)_{\acute et},\mathbf Z)
\]
and put
\[
        R\Gamma_{\mathrm{fin}}(\mathcal G^\bullet)
        :=
        R\Gamma_{\acute et}(\Spec\mathbf Z,\mathcal G^\bullet),
        \qquad
        M_\infty(\mathcal G^\bullet)
        :=
        R\Gamma_{\acute et}(\Spec\mathbf C,v^*\mathcal G^\bullet)
        \in
        \mathcal D(\mathbf Z)^{BG_{\mathbf R}}.
\]
Define the ordinary and Tate archimedean boundary complexes by
\[
        R\Gamma_\infty(\mathcal G^\bullet)
        :=
        M_\infty(\mathcal G^\bullet)^{hG_{\mathbf R}},
        \qquad
        \widehat R\Gamma_\infty(\mathcal G^\bullet)
        :=
        M_\infty(\mathcal G^\bullet)^{tG_{\mathbf R}}.
\]
Restriction along \(v\colon\Spec\mathbf C\to\Spec\mathbf Z\) gives, by adjunction,
the ordinary boundary morphism
\[
        \delta^{\mathrm{ord}}_{\mathcal G}\colon
        R\Gamma_{\mathrm{fin}}(\mathcal G^\bullet)
        \longrightarrow
        R\Gamma_\infty(\mathcal G^\bullet).
\]
The Tate and modified boundary morphisms are
\[
        \delta^{\mathrm{Tate}}_{\mathcal G}
        :=
        \operatorname{can}_{M_\infty(\mathcal G^\bullet)}
        \colon
        R\Gamma_\infty(\mathcal G^\bullet)
        \longrightarrow
        \widehat R\Gamma_\infty(\mathcal G^\bullet),
        \qquad
        \delta^{\mathrm{mod}}_{\mathcal G}
        :=
        \delta^{\mathrm{Tate}}_{\mathcal G}\circ
        \delta^{\mathrm{ord}}_{\mathcal G}.
\]
Finally define
\[
\begin{array}{rcl}
R\Gamma_{c}(\mathcal G^\bullet)
&:=&
\operatorname{Fib}\!\left(
R\Gamma_{\mathrm{fin}}(\mathcal G^\bullet)
\xrightarrow{\ \delta^{\mathrm{ord}}_{\mathcal G}\ }
R\Gamma_\infty(\mathcal G^\bullet)
\right),
\\[0.8em]
\widehat R\Gamma_{\acute et}(\Spec\mathbf Z,\mathcal G^\bullet)
&:=&
\operatorname{Fib}\!\left(
R\Gamma_{\mathrm{fin}}(\mathcal G^\bullet)
\xrightarrow{\ \delta^{\mathrm{mod}}_{\mathcal G}\ }
\widehat R\Gamma_\infty(\mathcal G^\bullet)
\right).
\end{array}
\]
\end{definition}

\begin{proposition}[Functoriality]
\label{prop:modified-functoriality}
The functors \(R\Gamma_{\mathrm{fin}}\),
\(R\Gamma_\infty\), \(\widehat R\Gamma_\infty\),
\(R\Gamma_c\), and
\(\widehat R\Gamma_{\acute et}(\Spec\mathbf Z,-)\)
are exact from
\(\mathcal D^b((\Spec\mathbf Z)_{\acute et},\mathbf Z)\)
to \(\mathcal D(\mathbf Z)\).
\end{proposition}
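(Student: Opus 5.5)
The plan is to verify exactness for each functor in turn, building up from the elementary ones to the fiber constructions. Exactness here means preserving fiber/cofiber sequences, equivalently, being an exact functor between stable \(\infty\)-categories.

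First, \(R\Gamma_{\mathrm{fin}}=R\Gamma_{\acute et}(\Spec\mathbf Z,-)\) is a right adjoint: global sections on \(\mathcal D((\Spec\mathbf Z)_{\acute et},\mathbf Z)\) is right adjoint to the constant-sheaf functor. A right adjoint between stable \(\infty\)-categories preserves finite limits, hence fiber sequences, so it is exact.

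Next, \(M_\infty=R\Gamma_{\acute et}(\Spec\mathbf C,v^*(-))\) is a composite of \(v^*\), which is exact as a left adjoint, and global sections on \(\Spec\mathbf C\). The latter is an equivalence with \(\mathcal D(\mathbf Z)\); the Galois action is recorded by landing in \(\mathcal D(\mathbf Z)^{BG_{\mathbf R}}\). Then \((-)^{hG_{\mathbf R}}\) is a limit over \(BG_{\mathbf R}\) and \((-)_{hG_{\mathbf R}}\) is a colimit. In a stable \(\infty\)-category, limits and colimits commute with finite limits and colimits, so both are exact. Hence \(R\Gamma_\infty\) is exact. For \(\widehat R\Gamma_\infty\), the norm \(N\) is a natural transformation between exact functors, and the cofiber of a natural transformation between exact functors is exact, because cofibers commute with fiber sequences in a stable setting. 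This is where I would cite the Nikolaus--Scholze fact that \((-)^{tG}\) is exact.

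Finally, \(R\Gamma_c\) and \(\widehat R\Gamma_{\acute et}(\Spec\mathbf Z,-)\) are fibers of the natural transformations \(\delta^{\mathrm{ord}}\) and \(\delta^{\mathrm{mod}}=\operatorname{can}\circ\delta^{\mathrm{ord}}\) between exact functors. They are therefore exact by the same commutation of finite limits.

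The only point needing care is naturality. The boundary maps must be honest natural transformations of functors out of \(\mathcal D^b\), not merely maps defined objectwise in the homotopy category. This is why the argument is run \(\infty\)-categorically. Concretely:
\begin{itemize}
\item \(\delta^{\mathrm{ord}}\) is the unit of the adjunction \(v^*\dashv v_*\), followed by the identification \(R\Gamma(\Spec\mathbf Z,v_*v^*-)\simeq M_\infty(-)^{hG_{\mathbf R}}\), both functorial.
\item \(\operatorname{can}\) is natural by construction of the Tate cofiber.
\end{itemize}
The boundedness hypothesis is not needed for exactness itself; it only matters for finiteness elsewhere.
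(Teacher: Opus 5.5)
Your argument is the paper's argument. Both proofs use exactness of derived global sections, inverse image, homotopy orbits and homotopy fixed points, naturality of the norm and of the boundary morphism, and the fact that (co)fibers of natural transformations between exact functors are exact.

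One correction to your naturality bullet. For the paper's \(v\colon\Spec\mathbf C\to\Spec\mathbf Z\), one has \(R\Gamma(\Spec\mathbf Z,Rv_*v^*\mathcal G^\bullet)\simeq R\Gamma_{\acute et}(\Spec\mathbf C,v^*\mathcal G^\bullet)\). This is the underlying complex of \(M_\infty(\mathcal G^\bullet)\), not \(M_\infty(\mathcal G^\bullet)^{hG_{\mathbf R}}\). So the unit of \(v^*\dashv v_*\) only recovers \(\delta^{\mathrm{ord}}\) followed by the forgetful map \(M_\infty^{hG_{\mathbf R}}\to M_\infty\).

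There are two ways to fix this. You can use the unit for the real point \(u\colon\Spec\mathbf R\to\Spec\mathbf Z\), where \(R\Gamma_{\acute et}(\Spec\mathbf R,u^*\mathcal G^\bullet)\simeq M_\infty(\mathcal G^\bullet)^{hG_{\mathbf R}}\). Alternatively, as the paper does, take the adjunct under \(\operatorname{triv}\dashv(-)^{hG_{\mathbf R}}\) of equivariant restriction to the fiber at infinity. Either construction is visibly natural, so your conclusion is unaffected.
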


\begin{proof}
Derived global sections, inverse image, homotopy orbits,
and homotopy fixed points are exact.
Naturality of the norm and restriction morphisms,
together with exactness of fibers and cofibers in stable
\(\infty\)-categories, gives the assertion.
\end{proof}

\subsubsection{Compact support over arithmetic schemes}
\label{sssec:compact-support-arithmetic-schemes}
Let \(s\colon X\to\Spec(\mathbf Z)\) be separated and of
finite type.

\begin{definition}[Compact support over \(X\)]
\label{def:compact-support-over-X}
Let \(F^\bullet\in\mathcal D^b(X_{\acute et},\mathbf Z)\)
have torsion cohomology sheaves, and suppose that
\[
        \mathcal G_X^\bullet:=Rs_!F^\bullet
        \in
        \mathcal D^b((\Spec\mathbf Z)_{\acute et},\mathbf Z).
\]
Applying Definition~\ref{def:compact-support-over-specZ},
set
\[
\begin{aligned}
        R\Gamma_{c,\mathrm{fin}}(X,F^\bullet)
        &:=R\Gamma_{\mathrm{fin}}(\mathcal G_X^\bullet),
        \\
        M_\infty(X,F^\bullet)
        &:=M_\infty(\mathcal G_X^\bullet),
        \\
        R\Gamma_\infty(X,F^\bullet)
        &:=R\Gamma_\infty(\mathcal G_X^\bullet),
        \\
        \widehat R\Gamma_\infty(X,F^\bullet)
        &:=\widehat R\Gamma_\infty(\mathcal G_X^\bullet),
        \\
        R\Gamma_c(X,F^\bullet)
        &:=R\Gamma_c(\mathcal G_X^\bullet),
        \\
        \widehat R\Gamma_c(X,F^\bullet)
        &:=\widehat R\Gamma_{\acute et}
                (\Spec\mathbf Z,\mathcal G_X^\bullet).
\end{aligned}
\]
We write
\(H^n_{c,\mathrm{fin}}\), \(H^n_\infty\),
\(\widehat H^n_\infty\), \(H^n_c\), and
\(\widehat H^n_c\) for the cohomology of the
correspondingly named complexes.
The associated boundary morphisms are denoted by
\(\delta^{\mathrm{ord}}_{X,F}\),
\(\delta^{\mathrm{Tate}}_{X,F}\), and
\(\delta^{\mathrm{mod}}_{X,F}\).
\end{definition}

In our notation, Geisser--Schmidt's ordinary and modified
compact support are \(R\Gamma_{c,\mathrm{fin}}\) and
\(\widehat R\Gamma_c\), respectively;
see~\cite[Definition~2.6 and (2.13)]{GeisserSchmidt}.
The intermediate complex \(R\Gamma_c\) denotes ordinary
compact support with respect to the Artin--Verdier
compactification of \(\Spec\mathbf Z\).

Put \(X_{\mathbf C}:=X\times_{\Spec\mathbf Z}\Spec\mathbf C\),
and let \(F^\bullet_{\mathbf C}\) be the pullback of
\(F^\bullet\). Base change for \(Rs_!\) gives a natural
\(G_{\mathbf R}\)-equivariant equivalence
\[
        M_\infty(X,F^\bullet)
        \simeq
        R\Gamma_c(X_{\mathbf C};F^\bullet_{\mathbf C});
\]
see~\cite[Tag~0F7L]{STACKS-PROJECT}.
Here compact support on \(X_{\mathbf C}\) is taken over
\(\Spec\mathbf C\).

\begin{remark}[Boundary fiber sequences]
\label{rem:ordinary-modified-boundary-triangles}
There is a canonical morphism of fiber sequences in \(\mathcal D(\mathbf Z)\)
\[
\begin{tikzcd}[column sep=large,row sep=large]
        R\Gamma_c(X,F^\bullet)
        \arrow[r]
        \arrow[d,"\gamma_{X,F}"']
&
        R\Gamma_{c,\mathrm{fin}}(X,F^\bullet)
        \arrow[r,"\delta^{\mathrm{ord}}_{X,F}"]
        \arrow[d,equal]
&
        R\Gamma_\infty(X,F^\bullet)
        \arrow[d,"\delta^{\mathrm{Tate}}_{X,F}"]
\\
        \widehat R\Gamma_c(X,F^\bullet)
        \arrow[r]
&
        R\Gamma_{c,\mathrm{fin}}(X,F^\bullet)
        \arrow[r,"\delta^{\mathrm{mod}}_{X,F}"]
&
        \widehat R\Gamma_\infty(X,F^\bullet).
\end{tikzcd}
\]
The defining Tate sequence identifies the fiber of
\(\delta^{\mathrm{Tate}}_{X,F}\) with
\(M_\infty(X,F^\bullet)_{hG_{\mathbf R}}\).
Taking fibers of the composite
\(\delta^{\mathrm{mod}}_{X,F}
=\delta^{\mathrm{Tate}}_{X,F}\circ\delta^{\mathrm{ord}}_{X,F}\)
therefore gives a natural fiber sequence
\begin{equation}
\label{eq:ordinary-modified-comparison-fiber}
        R\Gamma_c(X,F^\bullet)
        \xrightarrow{\ \gamma_{X,F}\ }
        \widehat R\Gamma_c(X,F^\bullet)
        \longrightarrow
        M_\infty(X,F^\bullet)_{hG_{\mathbf R}}.
\end{equation}
\end{remark}

\begin{remark}[Local meaning of the modification]
\label{rem:modified-local-vanishing}
Complex-place summands are induced \(G_{\mathbf R}\)-objects,
so their Tate constructions vanish;
see~\cite[Remark~2.4]{GeisserSchmidt}.
Their homotopy orbits need not vanish, however, so they
can contribute to the comparison
\eqref{eq:ordinary-modified-comparison-fiber}.

If \(\widehat R\Gamma_\infty(X,F^\bullet)\simeq0\), the
defining boundary sequence gives
\[
        \widehat R\Gamma_c(X,F^\bullet)
        \simeq
        R\Gamma_{c,\mathrm{fin}}(X,F^\bullet).
\]
The comparison with \(R\Gamma_c(X,F^\bullet)\) is instead
controlled by \(M_\infty(X,F^\bullet)_{hG_{\mathbf R}}\).

Over non-archimedean local fields, modified cohomology
agrees with ordinary cohomology, including compact supports;
see~\cite[(2.9)]{GeisserSchmidt}.
\end{remark}

\section{Stable Steenrod operations on arithmetic boundary cohomology}
\label{sec:stable-steenrod-arithmetic-boundary}

\subsection{Spectral construction of relative Steenrod operations}
\label{subsec:spectral-relative-steenrod-construction}
Let \(\mathcal S:=\mathrm{Ani}\) denote the \(\infty\)-category of anima
(also called spaces), let \(\Pro(\mathcal S)\) denote its pro-category, and let
\(\mathrm{Sp}\) denote the stable \(\infty\)-category of spectra.  For a finite
group \(G\), write
\[
        \mathcal S^{BG}:=\operatorname{Fun}(BG,\mathcal S),
        \qquad
        \mathrm{Sp}^{BG}:=\operatorname{Fun}(BG,\mathrm{Sp}).
\]
All equivariant objects below are naive.  By an equivariant pro-anima we mean
an object of \(\Pro(\mathcal S^{BG})\).  We work intrinsically in
\(\mathcal S\), \(\Pro(\mathcal S)\), and their equivariant variants, using
simplicial sets and pro-simplicial sets only as concrete presentations.

Let \(F(-,-)\) denote the internal function spectrum in \(\mathrm{Sp}\), and let
\(H\Lambda\) be the Eilenberg--Mac Lane spectrum of an abelian group
\(\Lambda\).  We use cohomological grading, so that, for \(K\in\mathcal S\)
and \(E\in\mathrm{Sp}\),
\[
        H^n(K;E)
        :=
        \pi_{-n}F(\Sigma^\infty_+K,E).
\]
All suspension spectra, fibers, and cofibers are taken in the corresponding
\(\infty\)-categories.

\begin{definition}[Relative cochain spectrum]
\label{def:relative-cochain-spectrum}
Let
\[
        f\colon A\longrightarrow X
\]
be a morphism in \(\mathcal S\), and let \(E\in\mathrm{Sp}\).  We define its
\emph{relative \(E\)-cochain spectrum} by
\[
        C(f;E)
        :=
        F\!\left(
                \operatorname{Cof}
                \bigl(\Sigma^\infty_+A\longrightarrow\Sigma^\infty_+X\bigr),
                E
        \right),
\]
and put
\[
        H^n(f;E):=\pi_{-n}C(f;E).
\]
For \(X\in\mathcal S\), we also write
\[
        C(X;E):=F(\Sigma^\infty_+X,E).
\]
\end{definition}

Since function spectra carry cofiber sequences in the first variable to fiber sequences,
there is a natural fiber sequence
\begin{equation}
\label{eq:relative-cochain-fiber-sequence}
        C(f;E)\longrightarrow C(X;E)\longrightarrow C(A;E).
\end{equation}
Thus, \(C(\varnothing\to X;E)\simeq C(X;E)\). If a simplicial
presentation of \(f\) is a monomorphism \(A\hookrightarrow X\), then
\[
        C(f;E)\simeq F(\Sigma^\infty(X/A),E).
\]
For an arbitrary simplicial presentation, the same spectrum is computed after
replacing it by the mapping-cylinder cofibration
\(A\hookrightarrow M(f)\).  Thus, Definition~\ref{def:relative-cochain-spectrum}
recovers the usual cohomology of a pair.

Now let \(f\colon A\to X\) be a morphism in \(\Pro(\mathcal S)\).
Since \([1]\) is a finite poset, the canonical functor
\[
        \Pro\bigl(\mathcal S^{[1]}\bigr)
        \xrightarrow{\;\simeq\;}
        \Pro(\mathcal S)^{[1]}
\]
is an equivalence; see, dually,~\cite[Proposition~5.3.5.15]{HTT}.
Consequently, \(f\) admits a level
presentation over a small cofiltered \(\infty\)-category \(T\),
\[
        f_t\colon A_t\longrightarrow X_t
        \qquad (t\in T),
\]
where \(t\mapsto f_t\) is a diagram in \(\mathcal S^{[1]}\). We set
\begin{equation}
\label{eq:relative-cochain-pro-anima}
        C(f;E)
        :=
        \varinjlim_{t\in T^{\mathrm{op}}}C(f_t;E),
        \qquad
        H^n(f;E):=\pi_{-n}C(f;E).
\end{equation}
The absolute construction \(C(X;E)\) is defined in the same way.

\begin{proposition}[Formal properties of relative cochain spectra]
\label{prop:relative-cochain-spectrum-formalism}
For every \(E\in\mathrm{Sp}\), the relative-cochain functor
\[
        c_E\colon
        \bigl(\mathcal S^{[1]}\bigr)^{\op}
        \longrightarrow
        \mathrm{Sp},
        \qquad
        f\longmapsto C(f;E),
\]
admits, up to a contractible space of choices, a unique
filtered-colimit-preserving extension
\[
        C(-;E)\colon
        \bigl(\Pro(\mathcal S)^{[1]}\bigr)^{\op}
        \longrightarrow
        \mathrm{Sp}.
\]
If \(f\colon A\to X\) is represented over a small cofiltered
\(\infty\)-category \(T\) by maps \(f_t\colon A_t\to X_t\), then
\[
        C(f;E)
        \simeq
        \varinjlim_{t\in T^{\op}}C(f_t;E).
\]
In particular, this expression is independent, up to canonical equivalence,
of the chosen level presentation.

For fixed \(f\), the functor \(C(f;-)\colon\mathrm{Sp}\to\mathrm{Sp}\) is
exact, and there are natural equivalences
\[
        C(f;\Sigma^rE)
        \simeq
        \Sigma^rC(f;E)
        \qquad (r\in\mathbf Z).
\]
Moreover, there is a natural fiber sequence
\[
        C(f;E)\longrightarrow C(X;E)\longrightarrow C(A;E)
\]
and hence a long exact sequence
\[
        \cdots\longrightarrow H^n(f;E)
        \longrightarrow H^n(X;E)
        \longrightarrow H^n(A;E)
        \xrightarrow{\partial}H^{n+1}(f;E)
        \longrightarrow\cdots .
\]
Finally,
\[
        H^n(f;E)
        \cong
        \varinjlim_{t\in T^{\op}}H^n(f_t;E).
\]

For every finite group \(G\), the analogous equivariant construction defines
a functor
\[
        \bigl(\Pro(\mathcal S^{BG})^{[1]}\bigr)^{\op}
        \longrightarrow
        \mathrm{Sp}^{BG}.
\]
\end{proposition}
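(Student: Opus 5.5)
The plan is to construct the extension by left Kan extension along the Ind/Pro universal property, and then deduce the remaining assertions levelwise, using that filtered colimits in \(\mathrm{Sp}\) are exact.

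\emph{Step 1: existence and uniqueness of the extension.} The functor \(c_E\colon(\mathcal S^{[1]})^{\op}\to\mathrm{Sp}\) is a functor from a small-enough category into a category with filtered colimits. We have
\[
\bigl(\Pro(\mathcal S^{[1]})\bigr)^{\op}\simeq\operatorname{Ind}\bigl((\mathcal S^{[1]})^{\op}\bigr).
\]
Hence \cite[Proposition~5.3.5.10]{HTT} gives a filtered-colimit-preserving extension, unique up to a contractible space of choices. Composing with the equivalence \(\Pro(\mathcal S^{[1]})\simeq\Pro(\mathcal S)^{[1]}\) of \cite[Proposition~5.3.5.15]{HTT}, already invoked above, yields the functor \(C(-;E)\). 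Suppose \(f\) is presented by a cofiltered diagram \(t\mapsto f_t\) in \(\mathcal S^{[1]}\). Then \(f\) is the cofiltered limit of the \(f_t\) in \(\Pro(\mathcal S^{[1]})\), so in the Ind-category it is their filtered colimit. Because the extension preserves filtered colimits, we get \(C(f;E)\simeq\varinjlim C(f_t;E)\). This formula shows independence of the presentation, and it agrees with \eqref{eq:relative-cochain-pro-anima}.

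\emph{Step 2: exactness, suspension, and the fiber sequence.} At the level of anima, \(C(f_t;-)=F(\operatorname{Cof}(\Sigma^\infty_+A_t\to\Sigma^\infty_+X_t),-)\) is exact, and it commutes with \(\Sigma^r\) since \(F(Y,\Sigma^rE)\simeq\Sigma^rF(Y,E)\). The natural fiber sequence \eqref{eq:relative-cochain-fiber-sequence} also holds at this level. All of these are natural in \(t\). Filtered colimits in the stable \(\infty\)-category \(\mathrm{Sp}\) commute with finite limits and colimits, so passing to \(\varinjlim_{T^{\op}}\) preserves exactness in \(E\), the suspension equivalences, and the fiber sequence. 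Here the absolute terms \(C(X;E)\) and \(C(A;E)\) are the extensions along the source and target projections. Taking homotopy groups gives the long exact sequence. Finally, \(\pi_{-n}\) commutes with filtered colimits of spectra, which gives \(H^n(f;E)\cong\varinjlim H^n(f_t;E)\).

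\emph{Step 3: the equivariant variant.} We repeat the argument with \(\mathcal S^{BG}\) and \(\mathrm{Sp}^{BG}\) in place of \(\mathcal S\) and \(\mathrm{Sp}\). The levelwise functor is \(F(\operatorname{Cof}(\Sigma^\infty_+A\to\Sigma^\infty_+X),E)\), computed pointwise with its induced \(G\)-action. Since \(\mathrm{Sp}^{BG}\) is stable and has filtered colimits computed pointwise, Steps 1 and 2 apply verbatim.

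The main obstacle is making Step 1 precise. The category \(\mathcal S^{[1]}\) is large, so one must check that the Pro/Ind universal property applies, for instance by working in a larger universe or restricting to accessible pro-objects as in \cite{HTT}. One must also check that the presentation formula is compatible with the identification \(\Pro(\mathcal S^{[1]})\simeq\Pro(\mathcal S)^{[1]}\), i.e., that a level presentation of \(f\) corresponds to a cofiltered limit in \(\Pro(\mathcal S^{[1]})\). I would cite both facts rather than reprove them.
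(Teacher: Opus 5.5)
Your proposal is correct and follows the paper's proof essentially step for step. You identify $(\Pro(\mathcal S)^{[1]})^{\op}$ with $\Ind((\mathcal S^{[1]})^{\op})$ via the dual of \cite[Proposition~5.3.5.15]{HTT}, extend $c_E$ by \cite[Proposition~5.3.5.10]{HTT}, and read off the colimit formula from a level presentation. You then obtain exactness, the suspension equivalences, the fiber sequence, and the formula for $H^n(f;E)$ levelwise, using exactness of filtered colimits and compactness of spheres, and handle the equivariant case verbatim. The size issue you flag, that $\mathcal S^{[1]}$ is not small, is also left implicit in the paper, so invoking the standard universe convention suffices there.
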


\begin{proof}
Since \([1]\) is a finite poset, the dual
of~\cite[Proposition~5.3.5.15]{HTT} gives a canonical equivalence
\[
        \Pro\bigl(\mathcal S^{[1]}\bigr)
        \simeq
        \Pro(\mathcal S)^{[1]}.
\]
Taking opposites gives
\begin{equation}
\label{eq:opposite-arrow-pro-anima-as-ind}
        \bigl(\Pro(\mathcal S)^{[1]}\bigr)^{\op}
        \simeq
        \Ind\bigl((\mathcal S^{[1]})^{\op}\bigr).
\end{equation}
The universal property of the
Ind-completion~\cite[Proposition~5.3.5.10]{HTT} therefore extends \(c_E\), essentially
uniquely, to a filtered-colimit-preserving functor
\[
        C(-;E)\colon
        \bigl(\Pro(\mathcal S)^{[1]}\bigr)^{\op}
        \longrightarrow
        \mathrm{Sp}.
\]
Under~\eqref{eq:opposite-arrow-pro-anima-as-ind}, a level presentation
\((f_t)_{t\in T}\) of \(f\) exhibits \(f\) as the filtered colimit of the
diagram
\[
        T^{\op}\longrightarrow
        \bigl(\mathcal S^{[1]}\bigr)^{\op},
        \qquad t\longmapsto f_t.
\]
Consequently,
\[
        C(f;E)
        \simeq
        \varinjlim_{t\in T^{\op}}C(f_t;E).
\]
This proves independence of the level presentation and contravariant
functoriality in \(f\). The same argument with \(\mathcal S^{BG}\) and
\(\mathrm{Sp}^{BG}\) proves the equivariant assertion.

Fix a level presentation \((f_t)_{t\in T}\), and put
\[
        P_t:=
        \operatorname{Cof}\bigl(
        \Sigma^\infty_+A_t\longrightarrow\Sigma^\infty_+X_t
        \bigr).
\]
The functor \(F(P_t,-)\) is exact and satisfies
\[
        F(P_t,\Sigma^rE)
        \simeq
        \Sigma^rF(P_t,E).
\]
Since filtered colimits in \(\mathrm{Sp}\) are exact and commute with
suspension, the same assertions hold for \(C(f;-)\).

For every \(t\), the defining cofiber sequence for \(P_t\) induces a fiber
sequence
\[
        C(f_t;E)
        \longrightarrow
        C(X_t;E)
        \longrightarrow
        C(A_t;E).
\]
Taking the filtered colimit over \(T^{\op}\) gives
\[
        C(f;E)
        \longrightarrow
        C(X;E)
        \longrightarrow
        C(A;E),
\]
and applying homotopy groups gives the asserted long exact sequence.

Finally, the sphere spectra are compact, so stable homotopy groups commute
with filtered colimits. Hence,
\[
\begin{aligned}
        H^n(f;E)
        &=
        \pi_{-n}C(f;E)\\
        &\cong
        \varinjlim_{t\in T^{\op}}\pi_{-n}C(f_t;E)\\
        &=
        \varinjlim_{t\in T^{\op}}H^n(f_t;E).
\end{aligned}
\]
\end{proof}

For spectra \(P,Q\), write
\[
        [P,Q]_{\mathrm{Sp}}
        :=
        \pi_0F(P,Q)
\]
for the abelian group of morphisms \(P\to Q\) in the homotopy category of
spectra. We now specialize to \(H\FF_2\)-coefficients.
The graded endomorphism algebra
\[
        \mathcal A_2^r
        :=
        [H\FF_2,\Sigma^rH\FF_2]_{\mathrm{Sp}},
        \qquad
        \mathcal A_2
        :=
        \bigoplus_{r\geq0}\mathcal A_2^r,
\]
is the ordinary mod-\(2\) stable Steenrod algebra;
see~\cite[Part~III, \S12]{AdamsStableHomotopy}. Representability identifies
\(\mathcal A_2^r\) with the group of stable degree-\(r\) natural operations
in reduced mod-\(2\) cohomology;
see~\cite[Remark~3.1.4]{BeaudryCampbell}. Its multiplication is given by
suspended composition: for \(\theta\in\mathcal A_2^a\) and
\(\phi\in\mathcal A_2^b\),
\[
        \theta\phi
        :=
        (\Sigma^b\theta)\circ\phi.
\]
Under this correspondence, let
\[
        \operatorname{sq}^r\in\mathcal A_2^r
        \qquad(r\geq0)
\]
denote the class corresponding to the classical \(r\)-th Steenrod square.
Equivalently, if \(K\in\mathcal S_*\) and
\(x\in\widetilde H^n(K;\FF_2)\) is represented by
\[
        \bar x\colon\Sigma^\infty K\longrightarrow\Sigma^nH\FF_2,
\]
then \(\Sq^r(x)\) is represented by
\[
        (\Sigma^n\operatorname{sq}^r)\circ\bar x.
\]
Functoriality of \(C(f;-)\) in the coefficient spectrum consequently gives
a homotopy class of maps
\[
        C(f;\operatorname{sq}^r)\colon
        C(f;H\FF_2)
        \longrightarrow
        C(f;\Sigma^rH\FF_2)
        \simeq
        \Sigma^rC(f;H\FF_2).
\]
With our grading convention,
\[
        \pi_{-n}\!\left(\Sigma^rC(f;H\FF_2)\right)
        =
        \pi_{-n-r}C(f;H\FF_2)
        =
        H^{n+r}(f;H\FF_2).
\]

\begin{definition}[Relative Steenrod operations]
\label{def:spectral-relative-steenrod-operations}
Let \(f\colon A\to X\) be a morphism in \(\Pro(\mathcal S)\). For
\(n\in\mathbf Z\) and \(r\geq0\), define
\[
        \Sq_f^r\colon
        H^n(f;H\FF_2)
        \longrightarrow
        H^{n+r}(f;H\FF_2)
\]
to be the homomorphism induced on \(\pi_{-n}\) by
\(C(f;\operatorname{sq}^r)\).
\end{definition}

\begin{remark}
\label{rem:steenrod-operations-underlying-spectra}
For \(r>0\), the class \(\operatorname{sq}^r\) is not represented by an
\(H\FF_2\)-linear map, since
\[
        [H\FF_2,\Sigma^rH\FF_2]_{\Mod_{H\FF_2}}=0.
\]
Thus, the maps defining the relative Steenrod operations are formed in
\(\mathrm{Sp}\), even though \(C(f;H\FF_2)\) is naturally an
\(H\FF_2\)-module.
\end{remark}

\begin{theorem}[Relative stable Steenrod operations]
\label{thm:relative-stable-steenrod-operations}
The operations of Definition~\ref{def:spectral-relative-steenrod-operations} endow
\(H^*(f;H\FF_2)\) with a natural graded left \(\mathcal A_2\)-module structure. They commute
with the connecting morphisms in the relative long exact sequence, satisfy
\(\Sq_f^0=\mathrm{id}\), and satisfy the ordinary
Adem relations
\[
        \Sq_f^a\Sq_f^b
        =
        \sum_{j=0}^{\lfloor a/2\rfloor}
        \binom{b-j-1}{a-2j}
        \Sq_f^{a+b-j}\Sq_f^j
        \qquad (0<a<2b).
\]
Here the binomial coefficients are read in \(\FF_2\).
Moreover, \(\Sq_f^1\) is the Bockstein associated with
\[
        0\longrightarrow\FF_2
        \longrightarrow\mathbf Z/4
        \longrightarrow\FF_2
        \longrightarrow0.
\]
\end{theorem}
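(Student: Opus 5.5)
The plan is to deduce everything formally from two facts. First, \(C(f;-)\colon\mathrm{Sp}\to\mathrm{Sp}\) is a functor that is exact and commutes with suspension, by Proposition~\ref{prop:relative-cochain-spectrum-formalism}. Second, \(\mathcal A_2\) is the graded endomorphism algebra of \(H\FF_2\) in the homotopy category.

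\textbf{Module structure.} Define the action of \(\theta\in\mathcal A_2^a\) on \(H^n(f;H\FF_2)\) as \(\pi_{-n}\) of \(C(f;\theta)\colon C(f;H\FF_2)\to\Sigma^aC(f;H\FF_2)\). Functoriality of \(C(f;-)\) on homotopy categories gives
\[
C(f;(\Sigma^b\theta)\circ\phi)=C(f;\Sigma^b\theta)\circ C(f;\phi).
\]
The natural equivalence \(C(f;\Sigma^bE)\simeq\Sigma^bC(f;E)\) identifies \(C(f;\Sigma^b\theta)\) with \(\Sigma^bC(f;\theta)\). Its naturality in \(E\) guarantees compatibility with composition. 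Hence \(\theta\phi\) acts as the composite of the actions, and \(\mathrm{id}\) acts as the identity, which gives \(\Sq_f^0=\mathrm{id}\). Additivity holds because \(C(f;-)\) is exact, so it induces additive maps on mapping groups. The Adem relations then follow immediately: they hold in \(\mathcal A_2\) for the classes \(\operatorname{sq}^r\), because these classes represent the classical squares, which satisfy Adem relations on all spaces (Proposition~\ref{prop:classic_sq_properties}(v) together with representability). An algebra map transports relations.

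\textbf{Naturality and connecting maps.} Naturality in \(f\) holds because \(C(-;\theta)\) is a natural transformation between functors on \(\bigl(\Pro(\mathcal S)^{[1]}\bigr)^{\op}\). This natural transformation is obtained from the levelwise transformation \(F(P_t,\theta)\) by the essentially unique Ind-extension. For connecting maps, the fiber sequence
\[
C(f;E)\to C(X;E)\to C(A;E)
\]
is natural in \(E\), since it arises by applying \(F(-,E)\) to the cofiber sequence \(\Sigma^\infty_+A_t\to\Sigma^\infty_+X_t\to P_t\) and passing to filtered colimits. A map \(\theta\colon E\to E'\) therefore induces a map of fiber sequences. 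Its component on the connecting map \(C(A;E)\to\Sigma C(f;E)\) matches \(\Sigma C(f;\theta)\) under the suspension identification. I expect this step to be the main, though mild, obstacle. One must check that the suspension equivalence used in defining \(\Sq_f^r\) is compatible with the boundary map of the fiber sequence, with the correct sign. Over \(\FF_2\) any sign is irrelevant. Coherence holds because both the boundary map and the suspension equivalence come from the same exact functor \(F(P_t,-)\), and exact functors commute with boundary maps up to canonical homotopy.

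\textbf{Bockstein.} The sequence \(H\FF_2\to H\mathbf Z/4\to H\FF_2\) is a cofiber sequence of spectra. Its boundary map \(H\FF_2\to\Sigma H\FF_2\) is the class representing the Bockstein, which classically equals \(\Sq^1\), so it is \(\operatorname{sq}^1\). Applying the exact functor \(C(f;-)\) yields a fiber sequence
\[
C(f;H\FF_2)\to C(f;H\mathbf Z/4)\to C(f;H\FF_2)
\]
whose boundary map is \(C(f;\operatorname{sq}^1)\). On homotopy groups this boundary map is the Bockstein for relative cohomology with these coefficients.
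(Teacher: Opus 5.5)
Your proposal is correct and follows essentially the same route as the paper. Everything comes from functoriality and exactness of \(C(f;-)\) in the coefficient spectrum: postcomposition with \(\operatorname{sq}^r\) maps the relative fiber sequence to its \(r\)-fold suspension, \(\Sq^0=\mathrm{id}\) and the Adem relations are imported from \(\mathcal A_2\), and \(\operatorname{sq}^1\) is the connecting map of \(H\FF_2\to H\mathbf Z/4\to H\FF_2\); you additionally spell out details the paper leaves implicit, such as the module axioms and why the suspension identification is compatible with the boundary map (signs being irrelevant mod \(2\)).
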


\begin{proof}
Additivity and naturality follow from the naturality of postcomposition. Postcomposition
with \(\operatorname{sq}^r\) gives a morphism from the fiber sequence of
Proposition~\ref{prop:relative-cochain-spectrum-formalism} to its \(r\)-fold suspension, and
hence the operations commute with the connecting morphisms. The identities
\(\Sq^0=\mathrm{id}\) and the Adem relations are the corresponding identities in the stable
Steenrod algebra.  Likewise, \(\operatorname{sq}^1\) is the connecting map in the cofiber
sequence
\[
        H\FF_2\longrightarrow H\mathbf Z/4
        \longrightarrow H\FF_2
        \xrightarrow{\operatorname{sq}^1}\Sigma H\FF_2,
\]
which proves the Bockstein assertion.
\end{proof}
\subsection{Arithmetic boundary spectra and comparison}
\label{subsec:arithmetic-boundary-spectra-comparison}
We now apply the construction of
\S\ref{subsec:spectral-relative-steenrod-construction} to the arithmetic
boundary. Its functoriality in the boundary arrow and in the coefficient
spectrum allows stable cohomology operations to act on the resulting boundary
diagram.

Let
\(\mathrm{Sch}^{\mathrm{ft,sep}}_{\mathbf Z,\mathrm{prop}}\)
denote the category of schemes separated and of finite type over
\(\Spec(\mathbf Z)\), with proper \(\mathbf Z\)-morphisms.  Let
\(\operatorname{Comp}^{\partial}_{\mathbf Z}\) denote the category whose
objects are dense compactifications over \(\Spec(\mathbf Z)\),
\[
        j_X\colon X\hookrightarrow\bar X,
\]
where \(X\) is separated and of finite type over \(\Spec(\mathbf Z)\) and
\(\bar X\) is proper over \(\Spec(\mathbf Z)\).  A morphism
\(j_Y\to j_X\) is a Cartesian square of \(\mathbf Z\)-schemes
\[
\begin{tikzcd}
        Y\arrow[r,"f"]\arrow[d,hook,"j_Y"']
&
        X\arrow[d,hook,"j_X"]
\\
        \bar Y\arrow[r,"\bar f"']
&
        \bar X
\end{tikzcd}
\]
with \(\bar f\) proper.  In particular, \(f\) is proper. This is the subcategory of the category of compactifications described
in~\cite[Tag~0ATT]{STACKS-PROJECT} consisting of dense compactifications and
morphisms for which the displayed square of open immersions is Cartesian.

Forgetting the compactification defines a functor
\[
        q\colon
        \operatorname{Comp}^{\partial}_{\mathbf Z}
        \longrightarrow
        \mathrm{Sch}^{\mathrm{ft,sep}}_{\mathbf Z,\mathrm{prop}}.
\]
For fixed \(X\), write \(\operatorname{Comp}_{\mathbf Z}(X)\) for the fiber
of \(q\) over \(X\).

Fix an object
\[
        j\colon X\hookrightarrow\bar X
\]
of \(\operatorname{Comp}^{\partial}_{\mathbf Z}\), write
\[
        s\colon X\longrightarrow\Spec(\mathbf Z)
\]
for its structural morphism, and put
\[
        i\colon
        \partial_jX:=\bar X\setminus X
        \hookrightarrow
        \bar X.
\]

For a scheme \(Y\), let \(Y_{\acute et}\) denote its small \'{e}tale site,
let \(\operatorname{Shv}(Y_{\acute et})\) denote the \(\infty\)-topos of
anima-valued sheaves on \(Y_{\acute et}\), and let \(\Pi_\infty\) denote
the shape functor from \(\infty\)-topoi to pro-anima.
Following~\cite[Corollary~2.12]{CarchediElmanto}, write
\[
        \Pi_\infty^{\acute et}(Y)
        :=
        \Pi_\infty\!\left(
                \operatorname{Shv}(Y_{\acute et})
        \right)
        \in
        \Pro(\mathcal S)
\]
for the intrinsic \'{e}tale homotopy type of \(Y\), and abbreviate
\[
        h(Y):=\Pi_\infty^{\acute et}(Y).
\]
A morphism \(u\colon Y\to Z\) induces a morphism
\[
        h(u)\colon h(Y)\longrightarrow h(Z).
\]

Write \(i_{\mathbf R}\) and \(i_{\mathbf C}\) for the base changes of \(i\)
to \(\Spec(\mathbf R)\) and \(\Spec(\mathbf C)\), respectively, and put
\[
        G_{\mathbf R}:=\operatorname{Gal}(\mathbf C/\mathbf R).
\]

For an \(\mathbf R\)-scheme \(Y\), let
\[
        p_Y\colon
        \operatorname{Shv}(Y_{\acute et})
        \longrightarrow
        \operatorname{Shv}((\Spec(\mathbf R))_{\acute et})
\]
be the structural geometric morphism, and set
\[
        h_{\mathbf R}(Y)
        :=
        (p_Y)_\sharp(*)
        \in
        \Pro\!\left(
                \operatorname{Shv}(
                        (\Spec(\mathbf R))_{\acute et}
                )
        \right)
        \simeq
        \Pro(\mathcal S^{BG_{\mathbf R}}).
\]
Here \((p_Y)_\sharp\) is left adjoint to \(\Pro(p_Y^*)\);
see~\cite[Proposition~2.0.2 and Definition~2.0.3]{ACS}
and~\cite[Definition~4.6 and Proposition~4.9]{CarchediElmanto}.
We use \(h_{\mathbf R}\) also for the arrows induced by morphisms of
\(\mathbf R\)-schemes.

Let
\[
        q_{\mathbf R}\colon\Spec(\mathbf C)\longrightarrow\Spec(\mathbf R),
        \qquad
        q_Y\colon Y_{\mathbf C}\longrightarrow Y.
\]
Under the displayed equivalence, \(\Pro(q_{\mathbf R}^*)\) identifies with
the underlying-object functor
\[
        U\colon
        \Pro(\mathcal S^{BG_{\mathbf R}})
        \longrightarrow
        \Pro(\mathcal S).
\]

The commutative square of inverse-image functors
\[
        q_Y^*p_Y^*
        \simeq
        p_{Y_{\mathbf C}}^*q_{\mathbf R}^*
\]
has right Beck--Chevalley map
\[
        p_Y^*(q_{\mathbf R})_*
        \longrightarrow
        (q_Y)_*p_{Y_{\mathbf C}}^*;
\]
see~\cite[Definition~2.1.3]{ACS}. This map is an equivalence.  Indeed, the
geometric morphism induced by \(q_{\mathbf R}\) identifies with the slice projection
associated with the finite locally constant object represented by
\(\Spec(\mathbf C)\), and \(q_Y\) is its pullback along \(p_Y\).
The corresponding direct images are dependent products along these objects.
Locally for the \'{e}tale topology, the object represented by
\(\Spec(\mathbf C)\) is a coproduct of two terminal objects, so the
Beck--Chevalley map reduces to the preservation of a binary product by
\(p_Y^*\).  It is therefore an equivalence because \(p_Y^*\) is left exact.

After passage to pro-categories, the two sides of this equivalence are the
right adjoints of
\[
        \Pro(q_{\mathbf R}^*)(p_Y)_\sharp
        \qquad\text{and}\qquad
        (p_{Y_{\mathbf C}})_\sharp\Pro(q_Y^*),
\]
respectively.  Hence, uniqueness of left adjoints gives a canonical
equivalence
\[
        (p_{Y_{\mathbf C}})_\sharp\Pro(q_Y^*)
        \simeq
        \Pro(q_{\mathbf R}^*)(p_Y)_\sharp.
\]
Evaluating at the terminal object and using
\(q_Y^*(*)\simeq *\) yields a natural equivalence
\[
        h(Y_{\mathbf C})
        \xrightarrow{\ \sim\ }
        U\bigl(h_{\mathbf R}(Y)\bigr).
\]
Compare~\cite[Proposition~2.12]{HarpazSchlank2013} for the corresponding
comparison in the pro-homotopy-category formulation.

Set
\[
        \widetilde h(i_{\mathbf C})
        :=
        h_{\mathbf R}(i_{\mathbf R})
        \in
        \Pro(\mathcal S^{BG_{\mathbf R}})^{[1]}.
\]
Thus,
\[
        U\bigl(\widetilde h(i_{\mathbf C})\bigr)
        \simeq
        h(i_{\mathbf C}).
\]

Let
\[
        (-)/\!/G_{\mathbf R}\colon
        \Pro(\mathcal S^{BG_{\mathbf R}})
        \rightleftarrows
        \Pro(\mathcal S)
        \mathbin{:}\operatorname{triv}
\]
denote the homotopy-quotient--trivial-action adjunction.  The
relative-to-absolute comparison gives, naturally in every
\(\mathbf R\)-scheme \(Y\), an equivalence
\[
        h_{\mathbf R}(Y)/\!/G_{\mathbf R}
        \simeq
        h(Y);
\]
see~\cite[Remark~4.7 and the discussion following
Proposition~4.11]{CarchediElmanto}.  Applied to \(i_{\mathbf R}\), this
gives an equivalence of arrows
\[
        \widetilde h(i_{\mathbf C})/\!/G_{\mathbf R}
        \simeq
        h(i_{\mathbf R}).
\]

Composing with the morphism \(h(i_{\mathbf R})\to h(i)\) induced by base
change and adjunction gives
\begin{equation}
\label{eq:equivariant-boundary-base-change-arrow}
        \widetilde\rho_j\colon
        \widetilde h(i_{\mathbf C})
        \longrightarrow
        \operatorname{triv}\bigl(h(i)\bigr)
\end{equation}
in \(\Pro(\mathcal S^{BG_{\mathbf R}})^{[1]}\).  Its underlying morphism is
the composite
\[
        U\bigl(\widetilde h(i_{\mathbf C})\bigr)
        \longrightarrow
        \widetilde h(i_{\mathbf C})/\!/G_{\mathbf R}
        \xrightarrow{\ \sim\ }
        h(i_{\mathbf R})
        \longrightarrow
        h(i),
\]
where the first arrow is obtained by applying \(U\) to the adjunction
unit.  Under the preceding comparison equivalences, this identifies with
\[
        h(i_{\mathbf C})
        \longrightarrow
        h(i_{\mathbf R})
        \longrightarrow
        h(i),
\]
and hence with the morphism induced by
\[
\begin{tikzcd}
        (\partial_jX)_{\mathbf C}\arrow[r]\arrow[d,"i_{\mathbf C}"']
&
        \partial_jX\arrow[d,"i"]
\\
        \bar X_{\mathbf C}\arrow[r]
&
        \bar X.
\end{tikzcd}
\]

Coefficient spectra are endowed with the trivial
\(G_{\mathbf R}\)-action. By the equivariant construction of
Proposition~\ref{prop:relative-cochain-spectrum-formalism}, there is a
functor
\[
        C(-;E)\colon
        \bigl(\Pro(\mathcal S^{BG_{\mathbf R}})^{[1]}\bigr)^{\mathrm{op}}
        \longrightarrow
        \mathrm{Sp}^{BG_{\mathbf R}}.
\]
The underlying spectrum of
\(C(\widetilde h(i_{\mathbf C});E)\) is canonically equivalent to
\(C(h(i_{\mathbf C});E)\). Using
\[
        C\bigl(\operatorname{triv}(h(i));E\bigr)
        \simeq
        \operatorname{triv}\bigl(C(h(i);E)\bigr),
\]
contravariance applied
to~\eqref{eq:equivariant-boundary-base-change-arrow} gives
\[
        \rho_{X,j,E}\colon
        \operatorname{triv}\bigl(C(h(i);E)\bigr)
        \longrightarrow
        C\bigl(\widetilde h(i_{\mathbf C});E\bigr)
\]
in \(\mathrm{Sp}^{BG_{\mathbf R}}\).

\begin{definition}[Boundary data]
\label{def:boundary-data-category}
Let
\[
        \mathsf{Bnd}_{G_{\mathbf R}}
        :=
        \mathrm{Sp}
        \times_{\mathrm{Sp}^{BG_{\mathbf R}}}
        \operatorname{Fun}\!\left([1],\mathrm{Sp}^{BG_{\mathbf R}}\right),
\]
where the functors to \(\mathrm{Sp}^{BG_{\mathbf R}}\) are
\(\operatorname{triv}\) and evaluation at the source.  Thus, an object is a
triple
\[
        \bigl(A,M,\rho\colon\operatorname{triv}(A)\to M\bigr).
\]
We equip this category with the termwise symmetric monoidal structure
\[
\begin{aligned}
 &(A,M,\rho)\wedge(A',M',\rho')
\\
 &\qquad:=
 \bigl(
 A\wedge A',
 M\wedge M',
 \operatorname{triv}(A\wedge A')
 \simeq
 \operatorname{triv}(A)\wedge\operatorname{triv}(A')
 \xrightarrow{\rho\wedge\rho'}
 M\wedge M'
 \bigr).
\end{aligned}
\]
Since \(\operatorname{triv}\) and evaluation at the source are exact
symmetric monoidal functors, \(\mathsf{Bnd}_{G_{\mathbf R}}\) is a stable
symmetric monoidal \(\infty\)-category.
\end{definition}

Homotopy fixed points, the Tate construction, and fibers functorially
associate a boundary diagram to every object of
\(\mathsf{Bnd}_{G_{\mathbf R}}\), as follows.
\begin{definition}[Spectral arithmetic boundary diagram]
\label{def:spectral-arithmetic-boundary-diagram}
For \(E\in\mathrm{Sp}\), set
\[
\begin{aligned}
        C_{c,\mathrm{fin},j}(X;E)
        &:= C(h(i);E),
&
        C_{\mathbf C,j}(X;E)
        &:= C\bigl(\widetilde h(i_{\mathbf C});E\bigr),
\\
        C_{R,j}(X;E)
        &:= C_{\mathbf C,j}(X;E)^{hG_{\mathbf R}},
&
        C_{\infty,j}(X;E)
        &:= C_{\mathbf C,j}(X;E)^{tG_{\mathbf R}}.
\end{aligned}
\]
Under the adjunction
\[
        \operatorname{triv}\colon\mathrm{Sp}
        \rightleftarrows
        \mathrm{Sp}^{BG_{\mathbf R}}
        \mathbin{:}(-)^{hG_{\mathbf R}},
\]
let
\[
        \delta^{\mathrm{ord}}_{X,j,E}\colon
        C_{c,\mathrm{fin},j}(X;E)
        \longrightarrow
        C_{R,j}(X;E)
\]
be the adjunct of \(\rho_{X,j,E}\). Let
\[
        \delta^{\mathrm{Tate}}_{X,j,E}\colon
        C_{R,j}(X;E)
        \longrightarrow
        C_{\infty,j}(X;E)
\]
be the canonical map from homotopy fixed points to the Tate construction,
and set
\[
        \delta^{\mathrm{mod}}_{X,j,E}
        :=
        \delta^{\mathrm{Tate}}_{X,j,E}
        \circ
        \delta^{\mathrm{ord}}_{X,j,E}.
\]
Define
\[
        C_{c,j}(X;E)
        :=
        \Fib\bigl(\delta^{\mathrm{ord}}_{X,j,E}\bigr),
        \qquad
        \widehat C_{c,j}(X;E)
        :=
        \Fib\bigl(\delta^{\mathrm{mod}}_{X,j,E}\bigr).
\]
The identity of \(C_{c,\mathrm{fin},j}(X;E)\), together with
\(\delta^{\mathrm{Tate}}_{X,j,E}\), induces a canonical morphism of fiber
sequences
\begin{equation}
\label{eq:spectral-arithmetic-boundary-diagram}
\begin{tikzcd}[column sep=large,row sep=large]
        C_{c,j}(X;E)
        \arrow[r]
        \arrow[d,"\gamma_{X,j,E}"']
&
        C_{c,\mathrm{fin},j}(X;E)
        \arrow[r,"\delta^{\mathrm{ord}}_{X,j,E}"]
        \arrow[d,equal]
&
        C_{R,j}(X;E)
        \arrow[d,"\delta^{\mathrm{Tate}}_{X,j,E}"]
\\
        \widehat C_{c,j}(X;E)
        \arrow[r]
&
        C_{c,\mathrm{fin},j}(X;E)
        \arrow[r,"\delta^{\mathrm{mod}}_{X,j,E}"']
&
        C_{\infty,j}(X;E).
\end{tikzcd}
\end{equation}
We write
\[
        \mathscr B_{X,j}(E)
        :=
        \bigl(
                C_{c,\mathrm{fin},j}(X;E),
                C_{\mathbf C,j}(X;E),
                \rho_{X,j,E}
        \bigr)
        \in
        \mathsf{Bnd}_{G_{\mathbf R}}.
\]
Diagram~\eqref{eq:spectral-arithmetic-boundary-diagram} is the arithmetic
boundary diagram functorially associated with this boundary datum.
\end{definition}

\begin{proposition}[Functoriality of the spectral boundary datum]
\label{prop:arithmetic-boundary-coefficient-functoriality}
The construction above defines a bifunctor
\[
        \mathscr B\colon
        \bigl(\operatorname{Comp}^{\partial}_{\mathbf{Z}}\bigr)^{\mathrm{op}}
        \times\mathrm{Sp}
        \longrightarrow
        \mathsf{Bnd}_{G_{\mathbf R}},
        \qquad
        (j_X,E)\longmapsto\mathscr B_{X,j_X}(E).
\]
For every \(j_X\), the functor \(E\mapsto\mathscr B_{X,j_X}(E)\) is exact.
Consequently, there are natural equivalences
\[
        \mathscr B_{X,j_X}(\Sigma^rE)
        \simeq
        \Sigma^r\mathscr B_{X,j_X}(E)
        \qquad(r\in\mathbf Z).
\]
For fixed \(E\), write
\[
        \mathscr B_E:=\mathscr B(-,E).
\]
\end{proposition}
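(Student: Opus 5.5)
The plan is to assemble \(\mathscr B\) from three ingredients: a functor from compactifications to arrows of equivariant arrows of pro-anima, the cochain functor of Proposition~\ref{prop:relative-cochain-spectrum-formalism} (which is functorial in both variables), and the identification of \(\mathsf{Bnd}_{G_{\mathbf R}}\) as a fiber product.

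\emph{Step 1 (the geometric side).} Define a functor
\[
        \Phi\colon
        \operatorname{Comp}^{\partial}_{\mathbf Z}
        \longrightarrow
        \operatorname{Fun}\!\bigl([1],\Pro(\mathcal S^{BG_{\mathbf R}})^{[1]}\bigr),
        \qquad
        j_X\longmapsto
        \bigl(\widetilde\rho_{j_X}\colon
        \widetilde h(i_{\mathbf C})\to\operatorname{triv}(h(i))\bigr).
\]
A morphism \((f,\bar f)\colon j_Y\to j_X\) is a Cartesian square. It therefore restricts to a morphism of boundaries \(\partial_{j_Y}Y\to\partial_{j_X}X\) over \(\bar f\), that is, a morphism of closed-immersion arrows \(i_Y\to i_X\), compatibly with base change to \(\mathbf R\) and \(\mathbf C\).

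Each ingredient of \(\widetilde\rho_j\) is natural in morphisms of schemes. These ingredients are \(h\) and \(h_{\mathbf R}\) (by~\cite[Corollary~2.12]{CarchediElmanto}), the unit of \((-)/\!/G_{\mathbf R}\dashv\operatorname{triv}\), the equivalence \(h_{\mathbf R}(-)/\!/G_{\mathbf R}\simeq h(-)\), and the base-change map \(h(i_{\mathbf R})\to h(i)\). Consequently \(\Phi\) is a composite of functors and natural transformations.

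The subtle point is to do this coherently at the \(\infty\)-categorical level, not merely on homotopy categories. My approach is to express \(\widetilde\rho_j\) as the adjunct, under \((-)/\!/G_{\mathbf R}\dashv\operatorname{triv}\), of the natural transformation \(h_{\mathbf R}(-)_{\mathbf R}/\!/G_{\mathbf R}\simeq h((-)_{\mathbf R})\to h(-)\) of functors on the arrow category of schemes. Adjuncts of natural transformations are again natural, which gives the required coherence. I expect this to be the main obstacle, and I would rely on the functoriality statements of~\cite{CarchediElmanto} and~\cite{ACS}.

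\emph{Step 2 (applying cochains).} Compose \(\Phi^{\mathrm{op}}\times\mathrm{Sp}\) with the equivariant bifunctor
\[
        C(-;-)\colon
        \bigl(\Pro(\mathcal S^{BG_{\mathbf R}})^{[1]}\bigr)^{\mathrm{op}}\times\mathrm{Sp}
        \longrightarrow\mathrm{Sp}^{BG_{\mathbf R}}.
\]
This bifunctor comes from the function-spectrum bifunctor \(F(-,-)\), extended along the Ind-completion in the first variable. That extension is functorial in \(E\) because \(\operatorname{Fun}(\mathcal C,\operatorname{Fun}(\mathrm{Sp},\mathrm{Sp}))\) is the relevant target and the universal property of \(\Ind\) applies to it.

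The result is an arrow
\[
        C(\operatorname{triv}(h(i));E)\longrightarrow C(\widetilde h(i_{\mathbf C});E)
\]
in \(\mathrm{Sp}^{BG_{\mathbf R}}\). The identification
\[
        C(\operatorname{triv}(h(i));E)\simeq\operatorname{triv}\bigl(C(h(i);E)\bigr)
\]
is natural in \(i\) and \(E\), since \(\operatorname{triv}\) commutes with \(\Sigma^\infty_+\), cofibers, function spectra with trivial action, and filtered colimits. Together these give a functor into
\[
        \mathrm{Sp}\times_{\mathrm{Sp}^{BG_{\mathbf R}}}\operatorname{Fun}\bigl([1],\mathrm{Sp}^{BG_{\mathbf R}}\bigr)
        =\mathsf{Bnd}_{G_{\mathbf R}},
\]
using the universal property of the fiber product.

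\emph{Step 3 (exactness).} For fixed \(j_X\), the three components are \(C(h(i);-)\), \(C(\widetilde h(i_{\mathbf C});-)\), and the map between them.

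The first two components are exact by Proposition~\ref{prop:relative-cochain-spectrum-formalism}, in its nonequivariant and equivariant forms. Exactness can be checked on underlying spectra, since the forgetful functor from \(\mathrm{Sp}^{BG_{\mathbf R}}\) is conservative and exact.

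Finite limits and colimits in \(\mathsf{Bnd}_{G_{\mathbf R}}\) are computed componentwise, because \(\operatorname{triv}\) and evaluation at the source are exact. It follows that \(E\mapsto\mathscr B_{X,j_X}(E)\) is exact.

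An exact functor between stable \(\infty\)-categories commutes with \(\Sigma\) and \(\Omega\), which yields natural equivalences \(\mathscr B_{X,j_X}(\Sigma^rE)\simeq\Sigma^r\mathscr B_{X,j_X}(E)\) for all \(r\in\mathbf Z\).
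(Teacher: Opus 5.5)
Your proposal is correct and follows essentially the same route as the paper. Functoriality of \(h\), \(h_{\mathbf R}\), the relative-to-absolute comparison, and the adjunction make \(\widetilde\rho_j\), and hence \(\rho_{X,j,E}\), natural in compactified morphisms; contravariance of relative cochains together with functoriality in \(E\) gives the bifunctor; and exactness follows termwise from Proposition~\ref{prop:relative-cochain-spectrum-formalism} and its equivariant analog, with suspension compatibility a formal consequence. Your version only makes explicit some coherence points the paper asserts without detail: the adjunct description of \(\widetilde\rho_j\), the identification \(C(\operatorname{triv}(h(i));E)\simeq\operatorname{triv}(C(h(i);E))\), and the fact that finite limits in \(\mathsf{Bnd}_{G_{\mathbf R}}\) are computed componentwise.
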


\begin{proof}
Functoriality of \(h\) and \(h_{\mathbf R}\), together with the naturality
of the relative-to-absolute comparison and of the adjunction defining
\(\widetilde\rho_j\), makes \(\widetilde\rho_j\), and hence
\(\rho_{X,j,E}\), functorial in compactified morphisms. Contravariance of relative cochains in the
boundary arrow and functoriality in \(E\) therefore give the asserted
bifunctor.

Proposition~\ref{prop:relative-cochain-spectrum-formalism} and its
equivariant analog show that its finite and geometric terms are exact in
\(E\).  The trivial-action, homotopy-fixed-point, Tate, and fiber functors
are exact.  Hence, every associated term is exact in \(E\), and exact functors
between stable \(\infty\)-categories commute with suspension.
\end{proof}

We regard the complexes of Definition~\ref{def:compact-support-over-X} as
spectra through the Eilenberg--Mac Lane equivalence
\[
        \mathcal D(\mathbf Z)\simeq\Mod_{H\mathbf Z},
\]
and equivariant complexes through its objectwise extension
\[
        \operatorname{Fun}\!\left(
                BG_{\mathbf R},\mathcal D(\mathbf Z)
        \right)
        \simeq
        \operatorname{Fun}\!\left(
                BG_{\mathbf R},\Mod_{H\mathbf Z}
        \right).
\]
We retain the same notation for the resulting spectra;
see~\cite[Proposition~2.10]{ShipleyHZAlgebraSpectra}.

For a finite abelian group \(\Lambda\), let
\[
        \rho^{\mathrm{arith}}_{X,\Lambda}\colon
        \operatorname{triv}\bigl(
                R\Gamma_{c,\mathrm{fin}}(X,\Lambda)
        \bigr)
        \longrightarrow
        R\Gamma_c(X_{\mathbf C},\Lambda)
\]
be equivariant restriction to the geometric fiber at infinity; its adjunct
is \(\delta^{\mathrm{ord}}_{X,\Lambda}\).  Set
\[
        \mathscr D_{\Lambda}(X)
        :=
        \bigl(
                R\Gamma_{c,\mathrm{fin}}(X,\Lambda),
                R\Gamma_c(X_{\mathbf C},\Lambda),
                \rho^{\mathrm{arith}}_{X,\Lambda}
        \bigr)
        \in
        \mathsf{Bnd}_{G_{\mathbf R}}.
\]
Its associated boundary diagram is the intrinsic arithmetic boundary diagram
of Definition~\ref{def:compact-support-over-X}.  Proper pullback defines a
functor
\[
        \mathscr D_\Lambda\colon
        \bigl(
                \mathrm{Sch}^{\mathrm{ft,sep}}_{\mathbf{Z},\mathrm{prop}}
        \bigr)^{\mathrm{op}}
        \longrightarrow
        \mathsf{Bnd}_{G_{\mathbf R}},
        \qquad
        X\longmapsto\mathscr D_{\Lambda}(X),
\]
naturally in \(\Lambda\).

\begin{theorem}[Functorial arithmetic boundary comparison]
\label{thm:spectral-arithmetic-boundary-comparison}
For every finite abelian group \(\Lambda\), there is a canonical natural
equivalence
\[
        \eta_\Lambda\colon
        \mathscr B_{H\Lambda}
        \xrightarrow{\ \sim\ }
        \mathscr D_\Lambda\circ q^{\mathrm{op}}
\]
in
\[
        \operatorname{Fun}\!\left(
                \bigl(\operatorname{Comp}^{\partial}_{\mathbf{Z}}\bigr)^{\mathrm{op}},
                \mathsf{Bnd}_{G_{\mathbf R}}
        \right),
\]
natural in \(\Lambda\).  Write \(\eta_{\Lambda,X,j}\) for its component at
\(j\colon X\hookrightarrow\bar X\).  Its two components are compatible
equivalences
\[
        C_{c,\mathrm{fin},j}(X;H\Lambda)
        \simeq
        R\Gamma_{c,\mathrm{fin}}(X,\Lambda)
\]
and
\[
        C_{\mathbf C,j}(X;H\Lambda)
        \simeq
        R\Gamma_c(X_{\mathbf C},\Lambda)
\]
in \(\mathrm{Sp}\) and \(\mathrm{Sp}^{BG_{\mathbf R}}\), respectively, which
identify \(\rho_{X,j,H\Lambda}\) with
\(\rho^{\mathrm{arith}}_{X,\Lambda}\).

Consequently, there are natural equivalences
\[
\begin{aligned}
        C_{R,j}(X;H\Lambda)
        &\simeq R\Gamma_\infty(X,\Lambda),
&
        C_{\infty,j}(X;H\Lambda)
        &\simeq\widehat R\Gamma_\infty(X,\Lambda),
\\
        C_{c,j}(X;H\Lambda)
        &\simeq R\Gamma_c(X,\Lambda),
&
        \widehat C_{c,j}(X;H\Lambda)
        &\simeq\widehat R\Gamma_c(X,\Lambda),
\end{aligned}
\]
compatible with all structure morphisms, and
\(\gamma_{X,j,H\Lambda}\) corresponds to the canonical map from ordinary to
modified compact support.

For fixed \(X\), restriction to \(\operatorname{Comp}_{\mathbf{Z}}(X)\) gives a natural
equivalence
\[
        \left.
        \mathscr B_{H\Lambda}
        \right|_{\operatorname{Comp}_{\mathbf{Z}}(X)^{\mathrm{op}}}
        \simeq
        \operatorname{const}_{\mathscr D_{\Lambda}(X)}.
\]
\end{theorem}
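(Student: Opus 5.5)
The plan is to build \(\eta_\Lambda\) one component at a time and then deduce the remaining equivalences from exactness of the boundary-diagram construction.

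\emph{Finite component.} Apply Proposition~\ref{prop:relative-comparison} to the open--closed decomposition \(X\hookrightarrow\bar X\hookleftarrow\partial_jX\). This gives
\[
        \alpha_c\colon C^\bullet(h(i);\Lambda)\xrightarrow{\ \sim\ }R\Gamma(\bar X;j_!\Lambda).
\]
Since \(\bar s\colon\bar X\to\Spec\mathbf Z\) is proper, \(R\bar s_*j_!\simeq Rs_!\), so \(R\Gamma(\bar X;j_!\Lambda)\simeq R\Gamma_{c,\mathrm{fin}}(X,\Lambda)\). To compare with the spectrum \(C(h(i);H\Lambda)\), use the Eilenberg--Mac Lane identification \(F(\Sigma^\infty_+K,H\Lambda)\simeq C^\bullet(K;\Lambda)\) for anima. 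This is natural in arrows of anima, hence after Ind-extension (Proposition~\ref{prop:relative-cochain-spectrum-formalism}) natural on \(\Pro(\mathcal S)^{[1]}\), and it is compatible with the formula \(C^\bullet(X;\Lambda)=\varinjlim C^\bullet(X_i;\Lambda)\) used in Proposition~\ref{prop:chough_comp}.

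\emph{Geometric component.} First get a non-equivariant equivalence \(C(h(i_{\mathbf C});H\Lambda)\simeq R\Gamma_c(X_{\mathbf C},\Lambda)\) by the same argument over \(\Spec\mathbf C\). The real work is upgrading it to \(\mathrm{Sp}^{BG_{\mathbf R}}\). I would redo Proposition~\ref{prop:chough_comp} relative to the base topos \(\operatorname{Shv}((\Spec\mathbf R)_{\acute et})\simeq\mathcal S^{BG_{\mathbf R}}\). Mapping \(h_{\mathbf R}(\bar X_{\mathbf R})\) into equivariant Eilenberg--Mac Lane objects \(\operatorname{triv}K(\Lambda,n)\) corresponds, by the defining adjunction of \((p_Y)_\sharp\), to cohomology of \(\bar X_{\mathbf R}\) with coefficients pulled back from \(\Spec\mathbf R\). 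Interpreted in \(\mathcal D(\mathbf Z)^{BG_{\mathbf R}}\), the internal mapping object is \(R(p_Y)_*\Lambda\), i.e.\ \(R\Gamma(\bar X_{\mathbf C};\Lambda)\) with its conjugation action. Take fibers for the arrow \(i_{\mathbf R}\) and use proper base change (Tag~0F7L) to identify the result with \(R\Gamma_c(X_{\mathbf C},\Lambda)\) with its natural \(G_{\mathbf R}\)-action. The underlying equivalence agrees with the non-equivariant one by the Beck--Chevalley comparison \(h(Y_{\mathbf C})\simeq U(h_{\mathbf R}(Y))\).

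\emph{Compatibility of \(\rho\).} Show that under these identifications \(\rho_{X,j,H\Lambda}\) is restriction along \(X_{\mathbf C}\to X\). Since \(\widetilde\rho_j\) is built from the unit of \((-)/\!/G_{\mathbf R}\dashv\operatorname{triv}\), the comparison \(h_{\mathbf R}(Y)/\!/G_{\mathbf R}\simeq h(Y)\), and base change along \(\bar X_{\mathbf R}\to\bar X\), it suffices to check two things:
\begin{itemize}
\item on cochains, each of these steps corresponds to the respective pullback in étale cohomology;
\item the adjunct of equivariant pullback \(\operatorname{triv}R\Gamma(\bar X)\to R\Gamma(\bar X_{\mathbf C})\) is the map \(R\Gamma(\bar X)\to R\Gamma(\bar X_{\mathbf R})\simeq R\Gamma(\bar X_{\mathbf C})^{hG_{\mathbf R}}\), using Hochschild--Serre.
\end{itemize}
This is where I expect the main obstacle. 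The comparison must be coherent (a natural equivalence of functors into \(\mathsf{Bnd}_{G_{\mathbf R}}\), not just objectwise), and it must be natural in Cartesian morphisms of compactifications. I would handle this by making every step a natural transformation of functors on \(\operatorname{Comp}^\partial_{\mathbf Z}\): shapes, adjunction units, and Proposition~\ref{prop:relative-comparison} for Cartesian squares. For proper pullback on the \(\mathscr D\) side, I would use the base change isomorphism \(R\bar f^*j_{X!}\simeq j_{Y!}f^*\) coming from the Cartesian square.

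\emph{Consequences.} The equivalences for \(C_R\), \(C_\infty\), \(C_c\) and \(\widehat C_c\), and the identification of \(\gamma\), follow because the boundary diagram is a functor of the boundary datum, built from homotopy fixed points, Tate, adjunction and fibers. Finally, the restriction to \(\operatorname{Comp}_{\mathbf Z}(X)\) is constant because \(\mathscr D_\Lambda\circ q^{\mathrm{op}}\) is constant there: every morphism in the fiber lies over \(\mathrm{id}_X\).
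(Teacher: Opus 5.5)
Your proposal is correct and follows essentially the same route as the paper. Both use Proposition~\ref{prop:relative-comparison} together with \(Rs_!\simeq R\bar s_*j_!\) and proper base change for the two components. The equivariant structure comes from running the shape comparison internally to \(\operatorname{Shv}((\Spec\mathbf R)_{\acute et})\), as the paper also does in Proposition~\ref{prop:multiplicative-spectrum-valued-comparison}. Naturality along \(\widetilde\rho_j\) then matches \(\rho_{X,j,H\Lambda}\) with \(\rho^{\mathrm{arith}}_{X,\Lambda}\). Homotopy fixed points, the Tate construction and fibers give the remaining terms, and constancy of \(q\) on \(\operatorname{Comp}_{\mathbf Z}(X)\) gives the last assertion. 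Your explicit Hochschild--Serre description of the adjunct and your attention to coherence in compactified morphisms spell out what the paper's proof only asserts.
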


\begin{proof}
For \(j\colon X\hookrightarrow\bar X\), let
\(\bar s\colon\bar X\to \mathbf{Z}\) be the structural morphism.  Since
\(s=\bar s\circ j\) and \(\bar s\) is proper,
\[
        Rs_!\Lambda\simeq R\bar s_*j_!\Lambda.
\]
The relative open--closed comparison of
Proposition~\ref{prop:relative-comparison}, together with proper base change,
gives natural equivalences
\[
        C_{c,\mathrm{fin},j}(X;H\Lambda)
        \simeq
        R\Gamma_{c,\mathrm{fin}}(X,\Lambda)
\]
and
\[
        C_{\mathbf C,j}(X;H\Lambda)
        \simeq
        R\Gamma_c(X_{\mathbf C},\Lambda),
\]
the second in \(\mathrm{Sp}^{BG_{\mathbf R}}\).  Naturality of the relative
comparison for~\eqref{eq:equivariant-boundary-base-change-arrow} identifies
\(\rho_{X,j,H\Lambda}\) with
\(\rho^{\mathrm{arith}}_{X,\Lambda}\).  These equivalences are natural in
compactified morphisms and in \(\Lambda\), and therefore assemble into
\(\eta_\Lambda\).

Applying homotopy fixed points, the Tate construction, and fibers gives the
remaining equivalences.  On \(\operatorname{Comp}_{\mathbf{Z}}(X)\), the functor \(q\)
is constant at \(X\), which proves the final assertion.
\end{proof}

For the multiplicative constructions below, let
\(\mathcal E_2\subset\mathrm{Sp}\) be the full subcategory spanned by finite
direct sums of suspensions of \(H\FF_2\).  Morphisms in \(\mathcal E_2\)
are maps of underlying spectra.

For a scheme \(Y\), write
\[
        \operatorname{Shv}(Y_{\acute et};\mathrm{Sp})
\]
for the stable \(\infty\)-category of \'etale sheaves of spectra.
The coefficient forgetful functor
\(\mathcal D(R)\simeq\operatorname{Mod}_{HR}\to\mathrm{Sp}\)
induces
\[
        \mathcal D(Y_{\acute et},R)
        \longrightarrow
        \operatorname{Shv}(Y_{\acute et};\mathrm{Sp}),
\]
compatibly with inverse image, global sections, proper pushforward,
and extension by zero.

For \(E\in\mathcal E_2\), let \(\underline E_Y\) denote the
constant sheaf of spectra. Using extension by zero and proper
pushforward, define
\[
        R\Gamma_{c,\mathrm{fin}}(X;\underline E_X)
        \qquad\text{and}\qquad
        R\Gamma_c(X_{\mathbf C};\underline E_{X_{\mathbf C}})
\]
by the formulas of Definition~\ref{def:compact-support-over-X},
interpreted spectrally. For \(E=H\FF_2\), these recover the
underlying spectra of the complexes defined there, compatibly
with the boundary morphisms and products.

Let
\[
        \rho^{\mathrm{arith,sp}}_{X,E}\colon
        \operatorname{triv}\bigl(
                R\Gamma_{c,\mathrm{fin}}(X;\underline E_X)
        \bigr)
        \longrightarrow
        R\Gamma_c(X_{\mathbf C};\underline E_{X_{\mathbf C}})
\]
be equivariant restriction to the geometric fiber at infinity, and set
\[
        \mathscr D_E^{\mathrm{sp}}(X)
        :=
        \bigl(
                R\Gamma_{c,\mathrm{fin}}(X;\underline E_X),
                R\Gamma_c(X_{\mathbf C};\underline E_{X_{\mathbf C}}),
                \rho^{\mathrm{arith,sp}}_{X,E}
        \bigr)
        \in
        \mathsf{Bnd}_{G_{\mathbf R}}.
\]
Proper pullback makes this construction functorial in \(X\).

\begin{proposition}[Spectrum-valued multiplicative refinement]
\label{prop:multiplicative-spectrum-valued-comparison}
For every \(E\in\mathcal E_2\), there is a canonical natural equivalence
\[
        \eta_E^{\mathrm{sp}}\colon
        \mathscr B_E
        \xrightarrow{\ \sim\ }
        \mathscr D_E^{\mathrm{sp}}\circ q^{\op}
\]
in
\[
        \operatorname{Fun}\!\left(
                \bigl(\operatorname{Comp}^{\partial}_{\mathbf Z}\bigr)^{\op},
                \mathsf{Bnd}_{G_{\mathbf R}}
        \right).
\]
It is natural in arbitrary maps of the coefficient spectra.  Its components
are compatible equivalences
\begin{equation}
\label{eq:spectrum-valued-multiplicative-boundary-comparison}
\begin{aligned}
        C_{c,\mathrm{fin},j}(X;E)
        &\simeq
        R\Gamma_{c,\mathrm{fin}}(X;\underline E_X),
\\
        C_{\mathbf C,j}(X;E)
        &\simeq
        R\Gamma_c(X_{\mathbf C};\underline E_{X_{\mathbf C}}),
\end{aligned}
\end{equation}
the second in \(\mathrm{Sp}^{BG_{\mathbf R}}\).  These equivalences identify
\(\rho_{X,j,E}\) with
\(\rho^{\mathrm{arith,sp}}_{X,E}\) and are compatible with external
coefficient pairings and cup products.  For \(E=H\FF_2\), they recover
\(\eta_{\FF_2}\) of
Theorem~\ref{thm:spectral-arithmetic-boundary-comparison}.
\end{proposition}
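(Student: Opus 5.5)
The plan is to bootstrap from the \(H\FF_2\) case of Theorem~\ref{thm:spectral-arithmetic-boundary-comparison} by additivity, and to check separately that the equivalence does not depend on the \(H\FF_2\)-module structure. Every \(E\in\mathcal E_2\) is a finite sum \(\bigoplus_k\Sigma^{n_k}H\FF_2\), and both \(\mathscr B_E\) and \(\mathscr D^{\mathrm{sp}}_E\) are exact in \(E\). For \(\mathscr B_E\) this is Proposition~\ref{prop:arithmetic-boundary-coefficient-functoriality}. For \(\mathscr D^{\mathrm{sp}}_E\) it holds because constant sheaves, extension by zero, proper pushforward, global sections and restriction are exact functors of spectral sheaves. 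So on objects the equivalence reduces to \(E=H\FF_2\), where it is \(\eta_{\FF_2}\), transported through the forgetful functor \(\mathcal D(\FF_2)\to\mathrm{Sp}\). The difficulty is naturality in \emph{arbitrary} spectrum maps such as \(\operatorname{sq}^r\), which are not \(H\FF_2\)-linear (Remark~\ref{rem:steenrod-operations-underlying-spectra}). A summand-by-summand definition therefore gives no naturality for free.

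To get naturality, I will build \(\eta^{\mathrm{sp}}_E\) directly as a natural transformation of functors of \(E\in\mathrm{Sp}\), not by splitting. The key observation is that for a topos \(\mathcal T\) and any spectrum \(E\), global sections of the constant sheaf satisfy \(\Gamma(\mathcal T;\underline E)\simeq F(\Sigma^\infty_+h(\mathcal T),E)=C(h(\mathcal T);E)\) when \(E\) is bounded above. This is the spectral form of Proposition~\ref{prop:chough_comp}. I will prove it in three steps. First, use the shape adjunction to identify \(\operatorname{Map}(1,\Gamma^*K)\simeq\operatorname{Map}(h(\mathcal T),K)\) for \(K=\Omega^{\infty-n}E\). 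Second, pass to the colimit over \(n\) to obtain a map natural in \(E\). Third, check that it is an equivalence on \(E=\Sigma^nH\FF_2\), where it is Proposition~\ref{prop:chough_comp}, and hence on all of \(\mathcal E_2\) by exactness. Naturality in \(E\) is then automatic, since the comparison comes from a transformation of functors of \(E\).

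The relative version follows from the open--closed recollement as in Proposition~\ref{prop:relative-comparison}, using that \(j_!\), \(i_*\) and the recollement fiber sequences make sense in \(\operatorname{Shv}(-;\mathrm{Sp})\). Equivariance and the identification of \(\rho_{X,j,E}\) with \(\rho^{\mathrm{arith,sp}}_{X,E}\) come from running the same argument for the relative shape \(h_{\mathbf R}\) over \(\operatorname{Shv}((\Spec\mathbf R)_{\acute et})\simeq\mathcal S^{BG_{\mathbf R}}\), together with the base-change arrow \(\widetilde\rho_j\). Naturality in compactified morphisms follows from the functoriality of shapes, exactly as in Theorem~\ref{thm:spectral-arithmetic-boundary-comparison}.

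For multiplicativity, both sides are lax symmetric monoidal in \(E\). Relative cochains \(C(f;-)\) carry external pairings \(C(f;E)\wedge C(f';E')\to C(f\times f';E\wedge E')\). On the sheaf side there is the external product of constant sheaves. The shape comparison, as the adjunct of a map of spaces, is compatible with smash products of coefficients. Cup products are then obtained by pulling back along diagonals on both sides. I expect the main obstacle to be this monoidal compatibility in the relative and compactly supported setting. There one must check that the recollement fiber sequences and the Tate construction are lax monoidal compatibly with the pairings. I would first handle it for \(R\Gamma_{c,\mathrm{fin}}\) via \(j_!\underline E\otimes j_!\underline E'\simeq j_!(\underline{E\wedge E'})\), and then use the lax monoidality of \((-)^{hG}\) and \((-)^{tG}\) together with the termwise monoidal structure on \(\mathsf{Bnd}_{G_{\mathbf R}}\).
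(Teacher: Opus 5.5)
Your proposal is correct and takes essentially the paper's route. The paper applies the shape adjunction to the spaces \(\Omega^\infty\Sigma^mE\) to get an equivalence \(C(\Pi_\infty\mathcal T;E)\simeq R\Gamma(\mathcal T,\underline E_{\mathcal T})\), natural in arbitrary maps of \(E\). It then passes this through the open--closed recollement, proper base change, the relative shape over \((\Spec\mathbf R)_{\acute et}\), and the lax monoidality of constant sheaves and global sections.

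The differences are cosmetic. The shape adjunction is already an equivalence on every \(\Omega^\infty\Sigma^m\) for every spectrum \(E\). So the paper needs neither your bounded-above hypothesis nor the separate check on \(\Sigma^nH\FF_2\) via Proposition~\ref{prop:chough_comp} followed by exactness.
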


\begin{proof}
For every \(\infty\)-topos \(\mathcal T\), the defining adjunction for its
shape gives, for every \(m\in\mathbf Z\),
\[
\begin{aligned}
        \Omega^\infty\Sigma^m
        C(\Pi_\infty\mathcal T;E)
        &\simeq
        \operatorname{Map}_{\Pro(\mathcal S)}
        \bigl(
                \Pi_\infty\mathcal T,
                \Omega^\infty\Sigma^mE
        \bigr)
\\
        &\simeq
        \Gamma\!\left(
                \mathcal T,
                \underline{\Omega^\infty\Sigma^mE}_{\mathcal T}
        \right)
\\
        &\simeq
        \Omega^\infty\Sigma^m
        R\Gamma(\mathcal T,\underline E_{\mathcal T}).
\end{aligned}
\]
These equivalences are compatible with the spectrum structures and hence
assemble into a natural equivalence
\begin{equation}
\label{eq:stable-etale-shape-comparison}
        C(\Pi_\infty\mathcal T;E)
        \simeq
        R\Gamma(\mathcal T,\underline E_{\mathcal T}).
\end{equation}
The same argument internal to
\(\operatorname{Shv}((\Spec(\mathbf R))_{\acute et})\) gives the
corresponding relative-shape equivalence in
\(\mathrm{Sp}^{BG_{\mathbf R}}\).

Apply these comparisons to the open--closed decompositions determined by
\(j\) and \(j_{\mathbf C}\).  The relative-cochain fiber sequences correspond
under~\eqref{eq:stable-etale-shape-comparison} to the
recollement fiber sequences, and proper base change identifies the resulting
fibers with the two compact-support spectra
in~\eqref{eq:spectrum-valued-multiplicative-boundary-comparison}.  Naturality
for the base-change square defining \(\widetilde\rho_j\) identifies
\(\rho_{X,j,E}\) with equivariant restriction.

Finally, the constant-spectrum functor is symmetric monoidal and global
sections are lax symmetric monoidal.  Hence, the comparison is compatible
with external coefficient pairings and cup products.  All constructions are
natural in compactified morphisms and in arbitrary maps of \(E\).
\end{proof}

We now specialize to \(E=H\FF_2\). For every \(r\geq0\), functoriality in
the coefficient spectrum associates to \(\operatorname{sq}^r\) a homotopy
class of morphisms
\begin{equation}
\label{eq:steenrod-action-on-arithmetic-boundary-diagram}
        \mathscr B_{X,j}(H\FF_2)
        \longrightarrow
        \Sigma^r\mathscr B_{X,j}(H\FF_2).
\end{equation}

\begin{definition}[Arithmetic boundary Steenrod operations]
\label{def:arithmetic-boundary-steenrod-operations}
Via the component \(\eta_{\FF_2,X,j}\) of
Theorem~\ref{thm:spectral-arithmetic-boundary-comparison}, the operations
induced by~\eqref{eq:steenrod-action-on-arithmetic-boundary-diagram} are
denoted
\[
\begin{aligned}
        &\Sq^r_{c,\mathrm{fin},j},\quad
        \Sq^r_{\infty,j},\quad
        \widehat\Sq^r_{\infty,j},
\\
        &\Sq^r_{c,j},\quad
        \widehat\Sq^r_{c,j}
\end{aligned}
\qquad(r\geq0)
\]
on
\[
\begin{aligned}
        &H^*_{c,\mathrm{fin}}(X;\FF_2),\quad
        H^*_{\infty}(X;\FF_2),\quad
        \widehat H^*_{\infty}(X;\FF_2),
\\
        &H^*_c(X;\FF_2),\quad
        \widehat H^*_c(X;\FF_2),
\end{aligned}
\]
respectively. We also denote by
\[
        \Sq^r_{\mathbf C,j}\colon
        H^n_c(X_{\mathbf C};\FF_2)
        \longrightarrow
        H^{n+r}_c(X_{\mathbf C};\FF_2)
\]
the induced \(G_{\mathbf R}\)-equivariant auxiliary operation.
\end{definition}

\begin{theorem}[Canonical arithmetic boundary operations]
\label{thm:canonical-arithmetic-boundary-operations}
Transport along \(\eta_{\FF_2,X,j}\) makes the operations of
Definition~\ref{def:arithmetic-boundary-steenrod-operations} independent of
\(j\).  They define natural graded left \(\mathcal A_2\)-module structures
on the intrinsic arithmetic groups, and every morphism in the ordinary and
modified boundary long exact sequences is \(\mathcal A_2\)-linear.  They
satisfy
\[
        \Sq^0=\mathrm{id},
        \qquad
        \Sq^1=\beta,
\]
and the ordinary Adem relations, and are natural under proper morphisms.  On
the finite and geometric terms they agree with the relative stable Steenrod
operations of Theorem~\ref{thm:relative-stable-steenrod-operations}.
\end{theorem}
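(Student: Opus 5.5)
The plan is to derive everything from the bifunctoriality of the boundary datum \(\mathscr B\) (Proposition~\ref{prop:arithmetic-boundary-coefficient-functoriality}) together with the comparison \(\eta_{\FF_2}\) of Theorem~\ref{thm:spectral-arithmetic-boundary-comparison}.

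\textbf{Independence of \(j\).} Fix \(X\). For any morphism \(j'\to j\) in \(\operatorname{Comp}_{\mathbf Z}(X)\), the map \(\mathscr B_{X,j}(E)\to\mathscr B_{X,j'}(E)\) is natural in \(E\). It therefore intertwines the actions of \(\operatorname{sq}^r\) in~\eqref{eq:steenrod-action-on-arithmetic-boundary-diagram}. By the last assertion of Theorem~\ref{thm:spectral-arithmetic-boundary-comparison}, under \(\eta_{\FF_2}\) this map becomes the identity of \(\mathscr D_{\FF_2}(X)\). Hence the transported operations for \(j\) and \(j'\) coincide. The category \(\operatorname{Comp}_{\mathbf Z}(X)\) is connected, since any two compactifications are dominated by the closure of \(X\) in \(\bar X\times\bar X'\), which again gives a dense compactification with Cartesian squares. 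So the operations are independent of \(j\). This connectedness is the step I expect to need the most care: one has to check that the closure of the diagonal embedding is Cartesian over each factor, using that \(X\) is open in both.

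\textbf{Module structure and exactness.} Since \(\mathscr B_{X,j}(-)\) is exact, hence commutes with \(\Sigma^r\), the assignment \(\theta\mapsto\mathscr B_{X,j}(\theta)\) sends composition in \(\mathrm{Sp}\) to composition. Suspended composition \(\theta\phi=(\Sigma^b\theta)\circ\phi\) goes to the composite of the induced operations, and sums go to sums. This yields a graded left \(\mathcal A_2\)-module structure on every term of \(\mathscr B\). It passes to every term of the associated boundary diagram~\eqref{eq:spectral-arithmetic-boundary-diagram}, because \((-)^{hG_{\mathbf R}}\), \((-)^{tG_{\mathbf R}}\), \(\operatorname{can}\), fibers, and \(\gamma\) are exact functors of the datum, so they commute with suspension and are functorial. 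A morphism of fiber sequences \(F\to\Sigma^rF\) commutes with the connecting maps, and suspension isomorphisms are compatible with shifting. Hence every map in the ordinary and modified long exact sequences, including the connecting maps and \(\gamma_X\), is \(\mathcal A_2\)-linear.

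\textbf{Relations, naturality, and comparison.} The identities \(\Sq^0=\mathrm{id}\) and the Adem relations hold already in \(\mathcal A_2\), so they hold for any module. For \(\Sq^1=\beta\), apply the exact functor to the cofiber sequence \(H\FF_2\to H\mathbf Z/4\to H\FF_2\xrightarrow{\operatorname{sq}^1}\Sigma H\FF_2\). Then \(\eta_\Lambda\), being natural in \(\Lambda\), identifies the resulting connecting map with the Bockstein of \(0\to\FF_2\to\mathbf Z/4\to\FF_2\to0\) on each arithmetic group.

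Naturality under proper \(f\colon Y\to X\) goes as follows. Choose \(j_X\), and compactify \(Y\) over \(\bar X\) to get \(\bar f\) proper; then replace \(\bar Y\) by the closure of \(Y\) so that the square is Cartesian, which requires \(f\) proper. This gives a morphism in \(\operatorname{Comp}^{\partial}_{\mathbf Z}\), and bifunctoriality plus naturality of \(\eta_{\FF_2}\) give compatibility with proper pullback.

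Finally, on the finite and geometric terms the operations are by definition \(C(h(i);\operatorname{sq}^r)\) and \(C(\widetilde h(i_{\mathbf C});\operatorname{sq}^r)\). The latter has underlying operation \(C(h(i_{\mathbf C});\operatorname{sq}^r)\), which is exactly Definition~\ref{def:spectral-relative-steenrod-operations}, so these agree with the operations of Theorem~\ref{thm:relative-stable-steenrod-operations}.
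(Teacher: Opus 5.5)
Your proposal is correct and follows essentially the same route as the paper. Transport along \(\eta_{\FF_2}\) makes the transition maps in \(\operatorname{Comp}_{\mathbf Z}(X)\) trivial, common domination then gives independence of \(j\), and the module structure, Adem relations, \(\Sq^1=\beta\), proper naturality, and agreement with the relative operations all follow from bifunctoriality of \(\mathscr B\) and coefficient naturality of \(\eta\). The only difference is that you build the common domination (closure of \(X\) in \(\bar X\times\bar X'\)) and the compactified lift of a proper morphism by hand, whereas the paper simply cites Stacks Tags~0ATU and~0ATT.
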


\begin{proof}
Let \(p\colon j'\to j\) be a morphism in
\(\operatorname{Comp}_{\mathbf{Z}}(X)\), and let
\[
        t_p(E)\colon
        \mathscr B_{X,j}(E)
        \longrightarrow
        \mathscr B_{X,j'}(E)
\]
be the induced transition morphism.  Naturality of \(\eta_{\FF_2}\) gives
\[
        \eta_{\FF_2,X,j'}\circ t_p(H\FF_2)
        \simeq
        \eta_{\FF_2,X,j}.
\]
Thus, \(t_p(H\FF_2)\) becomes the identity of
\(\mathscr D_{\FF_2}(X)\) after transport.  Bifunctoriality of \(\mathscr B\)
also gives
\[
        \bigl(\Sigma^rt_p(H\FF_2)\bigr)
        \circ
        \mathscr B_{X,j}(\operatorname{sq}^r)
        \simeq
        \mathscr B_{X,j'}(\operatorname{sq}^r)
        \circ
        t_p(H\FF_2).
\]
Hence, the transported operations agree whenever one compactification
dominates the other.  Since any two dense compactifications admit a common
domination, they agree for arbitrary compactifications;
see~\cite[Tag~0ATU]{STACKS-PROJECT}.

The \(\mathcal A_2\)-module identities and the Adem relations follow from
the corresponding identities among the stable coefficient operations.
Compatibility with the boundary morphisms follows because each coefficient
operation induces a morphism of the entire boundary diagram.  Applying
exactness in the coefficient spectrum to
\[
        H\FF_2\longrightarrow H\mathbf Z/4
        \longrightarrow H\FF_2
        \xrightarrow{\operatorname{sq}^1}\Sigma H\FF_2
\]
and using the coefficient naturality of \(\eta\) identifies \(\Sq^1\) with
the Bockstein on every term.

For a proper morphism \(f\colon Y\to X\), choose a morphism of
compactifications \((f,\bar f)\) lying over \(f\);
see~\cite[Tag~0ATT]{STACKS-PROJECT}.  Naturality of \(\eta_{\FF_2}\) gives
\[
        \eta_{\FF_2,Y,j_Y}\circ
        \mathscr B_{H\FF_2}(f,\bar f)
        \simeq
        \mathscr D_{\FF_2}(f)\circ
        \eta_{\FF_2,X,j_X}.
\]
Thus, the transported morphism is the intrinsic proper pullback and is
independent of the chosen compactified lift.  Bifunctoriality in the
coefficient spectrum shows that it commutes with all Steenrod operations.

Finally, the finite term is the relative cochain spectrum associated with
\(h(i)\), while, after forgetting the action, the geometric term is the one
associated with \(h(i_{\mathbf C})\).  The final assertion therefore follows
from Theorem~\ref{thm:relative-stable-steenrod-operations}.
\end{proof}

We henceforth omit \(j\) from the notation for the five intrinsic operations.  The theorem
provides the unary stable formalism used below; the internal and mixed products, and their
Cartan formulas, are constructed in the next subsection.

\subsection{Products and formal properties}
\label{subsec:steenrod-products-formal-properties}
We next construct the products used throughout the paper and prove their compatibility with
the Steenrod operations.

We use the standard pushout-product symmetric monoidal structure on arrow
\(\infty\)-categories; equivalently, it is the Day-convolution structure on
\(\operatorname{Fun}([1],-)\), with \([1]\) endowed with the minimum
operation;
see~\cite[Example~2.2.6.17 and Notation~4.1.8.10]{LurieHA}.

Let
\[
        f\colon A\longrightarrow X,
        \qquad
        g\colon B\longrightarrow Y
\]
be morphisms in \(\Pro(\mathcal S)\). Their pushout-product is
\[
        f\mathbin\square g\colon
        (A\times Y)\coprod_{A\times B}(X\times B)
        \longrightarrow
        X\times Y.
\]
The unit for this structure is \(\varnothing\to *\). Finite diagrams in
\(\Pro(\mathcal S)\) admit common level representations. Thus, if \(f\)
and \(g\) are represented over small cofiltered \(\infty\)-categories
\(T\) and \(S\), respectively, then \(f\mathbin\square g\) is represented
over \(T\times S\) by the arrows \(f_t\mathbin\square g_s\).

For an arrow \(k\colon K_0\to K_1\) in \(\mathcal S\), put
\[
        P(k):=
        \operatorname{Cof}\!\left(
                \Sigma^\infty_+K_0
                \longrightarrow
                \Sigma^\infty_+K_1
        \right).
\]
Equivalently, \(P\) is the composite
\[
        \bigl(\mathcal S^{[1]},\mathbin\square\bigr)
        \xrightarrow{\,(\Sigma^\infty_+)^{[1]}\,}
        \bigl(\mathrm{Sp}^{[1]},\mathbin\square\bigr)
        \xrightarrow{\,\operatorname{Cof}\,}
        (\mathrm{Sp},\wedge).
\]
The first functor is symmetric monoidal because \(\Sigma^\infty_+\) is
colimit-preserving and symmetric monoidal. Since smash products preserve
pushouts separately, the cofiber functor is also symmetric monoidal.
Consequently, there are coherent natural equivalences
\begin{equation}
\label{eq:cofiber-of-relative-pushout-product}
        P(f_t\mathbin\square g_s)
        \simeq
        P(f_t)\wedge P(g_s).
\end{equation}

\begin{proposition}[Products on relative cochain spectra]
\label{prop:products-relative-cochain-spectra}
Let
\[
        \mu\colon E_1\wedge E_2\longrightarrow E_3
\]
be a pairing of spectra. There is a natural external product
\begin{equation}
\label{eq:external-product-relative-cochain-spectra}
        C(f;E_1)\wedge C(g;E_2)
        \longrightarrow
        C(f\mathbin\square g;E_3).
\end{equation}
The construction is natural in the arrows and in \(\mu\). For compatible
systems of coefficient pairings, the resulting products respect
associativity, symmetry, and units. In particular, if
\(E\) is a commutative algebra spectrum, these products make
\[
        C(-;E)\colon
        \bigl(\Pro(\mathcal S)^{[1]}\bigr)^{\op}
        \longrightarrow
        \mathrm{Sp}
\]
lax symmetric monoidal, where the arrow category is equipped with the
pushout-product. The same assertions hold equivariantly.

For \(f\colon A\to X\) and \(E=H\FF_2\), the relative diagonal and the
graph of \(f\) give canonical morphisms of arrows
\[
        f\longrightarrow f\mathbin\square f,
        \qquad
        f\longrightarrow
        f\mathbin\square(\varnothing\longrightarrow X).
\]
In the first morphism, the maps on source and target are induced by
\(\Delta_A\) and \(\Delta_X\), respectively; in the second, they are
\[
        (\mathrm{id}_A,f)\colon A\longrightarrow A\times X
        \qquad\text{and}\qquad
        \Delta_X\colon X\longrightarrow X\times X.
\]
These morphisms induce, respectively, an internal product
\begin{equation}
\label{eq:internal-product-relative-cochain-spectra}
        C(f;H\FF_2)\wedge C(f;H\FF_2)
        \longrightarrow
        C(f;H\FF_2)
\end{equation}
and a relative--absolute product
\begin{equation}
\label{eq:relative-absolute-product-relative-cochain-spectra}
        C(f;H\FF_2)\wedge C(X;H\FF_2)
        \longrightarrow
        C(f;H\FF_2).
\end{equation}
Consequently, \(C(f;H\FF_2)\) is canonically a non-unital commutative
algebra object and a module over the commutative algebra
\(C(X;H\FF_2)\).
\end{proposition}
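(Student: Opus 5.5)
The plan is to reduce to levelwise constructions on anima and then pass to filtered colimits, using the Ind-extension of Proposition~\ref{prop:relative-cochain-spectrum-formalism}. For arrows \(f_t\colon A_t\to X_t\) and \(g_s\colon B_s\to Y_s\) in \(\mathcal S\), I would define the external product as the composite
\[
        F(P(f_t),E_1)\wedge F(P(g_s),E_2)
        \longrightarrow
        F(P(f_t)\wedge P(g_s),E_1\wedge E_2)
        \xrightarrow{\ \mu_*\ }
        F(P(f_t)\wedge P(g_s),E_3)
        \simeq
        C(f_t\mathbin\square g_s;E_3).
\]
The first arrow is the lax symmetric monoidal structure of the internal function spectrum, and the last equivalence is~\eqref{eq:cofiber-of-relative-pushout-product}. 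Equivalently, \(C(-;E)=F(P(-),E)\) is the composite of the symmetric monoidal functor \(P\) with \(F(-,E)\colon\mathrm{Sp}^{\op}\to\mathrm{Sp}\). The latter is lax symmetric monoidal when \(E\) is a commutative algebra, since it is right adjoint to a symmetric monoidal functor, or more directly because it is the internal hom into a commutative algebra. This yields a lax symmetric monoidal functor \((\mathcal S^{[1]})^{\op}\to\mathrm{Sp}\), and for a general pairing \(\mu\) it yields the external product natural in \(\mu\). Associativity, symmetry and units for compatible systems of pairings then follow from the coherence of these two lax structures.

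To pass to pro-anima, I would use the identification \((\Pro(\mathcal S)^{[1]})^{\op}\simeq\Ind((\mathcal S^{[1]})^{\op})\). The Day-convolution (pushout-product) structure on \(\Pro(\mathcal S)^{[1]}\) is the one whose tensor product preserves cofiltered limits separately in each variable; this is just the levelwise formula over \(T\times S\) noted above. By~\cite[Proposition~4.8.1.10]{LurieHA}, or the Ind-extension universal property in its monoidal form, a lax symmetric monoidal functor out of \((\mathcal S^{[1]})^{\op}\) into a presentably symmetric monoidal target extends uniquely to a filtered-colimit-preserving lax symmetric monoidal functor on the Ind-completion, because the smash product in \(\mathrm{Sp}\) preserves filtered colimits in each variable. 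Concretely, the external product on \(f\) and \(g\) is the colimit over \(T^{\op}\times S^{\op}\) of the levelwise products, using \(\varinjlim\wedge\varinjlim\simeq\varinjlim_{T\times S}\). The equivariant version runs the same argument in \(\mathcal S^{BG}\) and \(\mathrm{Sp}^{BG}\), with pointwise smash products.

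For the internal and relative--absolute products, I would first check that the indicated maps really are morphisms of arrows. The map \(\Delta_A\) must land in the source \((A\times X)\amalg_{A\times A}(X\times A)\) of \(f\mathbin\square f\). It does so via \(A\to A\times A\to A\times X\), and the square with \(\Delta_X\) commutes. Likewise, for the second map, the source of \(f\mathbin\square(\varnothing\to X)\) is \(A\times X\), and \((\mathrm{id},f)\) is compatible with \(\Delta_X\). Contravariant functoriality of \(C(-;H\FF_2)\), precomposed with the external product for the multiplication of \(H\FF_2\), gives~\eqref{eq:internal-product-relative-cochain-spectra} and~\eqref{eq:relative-absolute-product-relative-cochain-spectra}.

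The algebra and module structures come from packaging these maps, and this is the step I expect to be the main obstacle: one needs all higher coherences, not just a homotopy-commutative product. My approach is to note that the diagonal \(f\to f\mathbin\square f\) exhibits every arrow as a non-unital cocommutative coalgebra in \((\Pro(\mathcal S)^{[1]},\mathbin\square)\). Indeed, the cartesian structure makes every object of \(\Pro(\mathcal S)\) a commutative coalgebra, and the pushout-product of arrows is compatible with it; the unit \(\varnothing\to*\) fails to receive a counit from \(f\) unless \(A=\varnothing\), which is exactly why the structure is only non-unital. A lax symmetric monoidal functor from the opposite category carries coalgebras to algebras, so \(C(f;H\FF_2)\) becomes a non-unital \(\mathbb E_\infty\)-algebra. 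Similarly, \((\varnothing\to X)\) is a unital cocommutative coalgebra, \(f\) is a comodule over it via the graph map, and applying the functor gives the module structure. To make the first coherence step rigorous, I would realize the coalgebra structure by viewing \(\mathcal S^{[1]}\) with pushout-product as receiving a symmetric monoidal functor from the cartesian structure. Failing that, one can work directly in \(\operatorname{Fun}([1],\mathcal S)\) with Day convolution and invoke~\cite[Proposition~2.4.3.8]{LurieHA}, which says cartesian structures make every object canonically a commutative coalgebra.
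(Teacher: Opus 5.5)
Your proposal is correct and follows essentially the same route as the paper: the levelwise composite through the function-spectrum pairing and \(P(f_t\mathbin\square g_s)\simeq P(f_t)\wedge P(g_s)\), extension over \(T\times S\) via the Ind-completion, and the non-counital coalgebra and comodule structures coming from the diagonal and graph maps, transported along the contravariant lax symmetric monoidal functor \(C(-;H\FF_2)\). There are two harmless slips: \(F(-,E)\) is not right adjoint to a symmetric monoidal functor, since its left adjoint \(Y\mapsto F(Y,E)\) into \(\mathrm{Sp}^{\mathrm{op}}\) is only oplax, so rely on your direct internal-hom argument instead; and the identity from the cartesian structure to the pushout-product structure is only a non-unital oplax symmetric monoidal functor, because \(\mathrm{id}_*\) admits no map to \(\varnothing\to *\), which is exactly why the coalgebras are non-counital.
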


\begin{proof}
For arrows \(k,\ell\) in \(\mathcal S\), evaluation and the coefficient
pairing give a natural composite
\[
\begin{aligned}
        F(P(k),E_1)\wedge F(P(\ell),E_2)
        &\longrightarrow
        F\bigl(P(k)\wedge P(\ell),E_1\wedge E_2\bigr)
\\
        &\longrightarrow
        F\bigl(P(k\mathbin\square\ell),E_3\bigr),
\end{aligned}
\]
where the second map
uses~\eqref{eq:cofiber-of-relative-pushout-product}. For level
representations \((f_t)_{t\in T}\) and \((g_s)_{s\in S}\), these maps
induce
\[
\begin{aligned}
        C(f;E_1)\wedge C(g;E_2)
        &\simeq
        \varinjlim_{(t,s)\in(T\times S)^{\op}}
        \bigl(C(f_t;E_1)\wedge C(g_s;E_2)\bigr)
\\
        &\longrightarrow
        \varinjlim_{(t,s)\in(T\times S)^{\op}}
        C(f_t\mathbin\square g_s;E_3)
\\
        &\simeq
        C(f\mathbin\square g;E_3).
\end{aligned}
\]
Here smash products preserve colimits separately. The universal property of
the Ind-completion shows that the resulting pairing is independent of the
chosen presentations.

The asserted coherence properties follow from the symmetric monoidality of
\(P\) and the coherent lax symmetric monoidality of the function-spectrum
pairing. These coherences pass to the filtered-colimit extensions.

Finally, the relative diagonal equips \(f\) with a cocommutative
noncounital coalgebra structure for the pushout-product, while the graph
morphism equips it with a comodule structure over the cocommutative
coalgebra \(\varnothing\to X\). Applying the contravariant lax symmetric
monoidal functor \(C(-;H\FF_2)\) gives the asserted non-unital commutative
algebra and module structures.
\end{proof}

For the compactification arrow
\(i\colon\partial_jX\to\bar X\), the relative--absolute product of
Proposition~\ref{prop:products-relative-cochain-spectra} specializes to
\[
        C(h(i);H\FF_2)\wedge C(h(\bar X);H\FF_2)
        \longrightarrow
        C(h(i);H\FF_2).
\]
Under the arithmetic boundary comparison, this induces the action
\[
        H^*_{c,\mathrm{fin}}(X;\FF_2)
        \otimes
        H^*(\bar X;\FF_2)
        \longrightarrow
        H^*_{c,\mathrm{fin}}(X;\FF_2).
\]
When \(X\) is not proper over \(\Spec(\mathbf Z)\), this exhibits only the action of the
restriction image, along
\[
        j^*\colon H^*(\bar X;\FF_2)\longrightarrow H^*(X;\FF_2),
\]
of the intrinsic action of the ordinary \'{e}tale cohomology of \(X\) on
its compactly supported cohomology. We now construct the latter at the
spectrum level and compatibly on the entire arithmetic boundary diagram.

Throughout the remainder of this subsection, coefficient spectra lie in
\(\mathcal E_2\).

For \(E\in\mathcal E_2\), let
\[
        \operatorname{res}_{X,E}\colon
        \operatorname{triv}\!\left(
                R\Gamma(X_{\acute et};\underline E_X)
        \right)
        \longrightarrow
        R\Gamma\!\left(
                (X_{\mathbf C})_{\acute et};
                \underline E_{X_{\mathbf C}}
        \right)
\]
be equivariant base change, and set
\[
        \mathscr A_X(E)
        :=
        \left(
                R\Gamma(X_{\acute et};\underline E_X),
                R\Gamma\!\left(
                        (X_{\mathbf C})_{\acute et};
                        \underline E_{X_{\mathbf C}}
                \right),
                \operatorname{res}_{X,E}
        \right)
        \in
        \mathsf{Bnd}_{G_{\mathbf R}}.
\]

For
\[
        \mathcal F_1,\mathcal F_2
        \in
        \operatorname{Shv}(X_{\acute et};\mathrm{Sp}),
\]
the projection formula for \(j\colon X\hookrightarrow\bar X\) gives a
natural pairing
\begin{equation}
\label{eq:projection-formula-open-mixed-pairing}
        j_!\mathcal F_1\otimes Rj_*\mathcal F_2
        \simeq
        j_!\bigl(\mathcal F_1\otimes j^*Rj_*\mathcal F_2\bigr)
        \longrightarrow
        j_!(\mathcal F_1\otimes\mathcal F_2),
\end{equation}
where the final arrow is induced by the counit
\(j^*Rj_*\mathcal F_2\to\mathcal F_2\).

Now let \(E_1,E_2,E_3\in\mathcal E_2\), and let
\[
        \mu\colon E_1\wedge E_2\longrightarrow E_3
\]
be a pairing of underlying spectra. Taking
\(\mathcal F_a=\underline{E_a}_X\) for \(a=1,2\), composing with the
induced morphism
\[
        \underline{E_1}_X\otimes\underline{E_2}_X
        \longrightarrow
        \underline{E_3}_X,
\]
and applying spectrum-valued derived global sections gives
\begin{equation}
\label{eq:finite-compact-support-ordinary-mixed-pairing}
        R\Gamma_{c,\mathrm{fin}}(X;\underline{E_1}_X)
        \wedge
        R\Gamma(X_{\acute et};\underline{E_2}_X)
        \longrightarrow
        R\Gamma_{c,\mathrm{fin}}(X;\underline{E_3}_X).
\end{equation}
After base change to \(\mathbf C\), the analogous pairing is
\(G_{\mathbf R}\)-equivariant. The finite and geometric pairings are
compatible with
\[
        \rho^{\mathrm{arith,sp}}_{X,E_1},
        \qquad
        \operatorname{res}_{X,E_2},
        \qquad
        \rho^{\mathrm{arith,sp}}_{X,E_3}.
\]

\begin{proposition}[Coefficient pairings on the arithmetic boundary]
\label{prop:multiplicative-arithmetic-boundary-diagram}
Let \(E_1,E_2,E_3\in\mathcal E_2\). Every pairing
\[
        \mu\colon E_1\wedge E_2\longrightarrow E_3
\]
of underlying spectra induces natural morphisms
\begin{align}
\label{eq:coefficient-internal-pairing-arithmetic-boundary}
        \mathscr B_{X,j}(E_1)\wedge\mathscr B_{X,j}(E_2)
        &\longrightarrow
        \mathscr B_{X,j}(E_3),
\\
\label{eq:coefficient-mixed-pairing-arithmetic-boundary}
        \mathscr B_{X,j}(E_1)\wedge\mathscr A_X(E_2)
        &\longrightarrow
        \mathscr B_{X,j}(E_3)
\end{align}
in \(\mathsf{Bnd}_{G_{\mathbf R}}\). They are natural in the coefficient
spectra and in compactified morphisms, compatible with suspension, and
induce compatible pairings on every term and structure morphism of the
associated arithmetic boundary diagrams.

For the multiplication of \(H\FF_2\), the datum
\(\mathscr B_{X,j}(H\FF_2)\) is a non-unital commutative algebra object and
a module over the commutative algebra object
\(\mathscr A_X(H\FF_2)\). Consequently, after restriction of scalars along
the structure maps of \(\mathscr A_X(H\FF_2)\), every term of the associated
boundary diagram is a non-unital commutative algebra and a module over
\(R\Gamma(X_{\acute et};\underline{H\FF_2}_X)\). The map
\(\rho_{X,j,H\FF_2}\) and the structure morphisms
\[
        \delta^{\mathrm{ord}}_{X,j,H\FF_2},\qquad
        \delta^{\mathrm{Tate}}_{X,j,H\FF_2},\qquad
        \delta^{\mathrm{mod}}_{X,j,H\FF_2},\qquad
        \gamma_{X,j,H\FF_2}
\]
are multiplicative and module-linear.

Under \(\eta^{\mathrm{sp}}_{H\FF_2}\), these structures correspond to the
intrinsic arithmetic cup products and projection-formula actions. They are
therefore independent of the compactification and natural under proper
morphisms of \(X\).
\end{proposition}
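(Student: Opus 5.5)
The plan is to build both pairings first on the level of sheaves of spectra on \(\bar X\) and \(\bar X_{\mathbf C}\), and only at the end transport them to \(\mathscr B_{X,j}\) through \(\eta^{\mathrm{sp}}\). By Proposition~\ref{prop:multiplicative-spectrum-valued-comparison}, \(\mathscr B_{X,j}(E)\simeq\mathscr D^{\mathrm{sp}}_E(X)\) naturally in \(j\) and in arbitrary maps of \(E\in\mathcal E_2\), compatibly with external coefficient pairings. It therefore suffices to construct the pairings
\[
        \mathscr D^{\mathrm{sp}}_{E_1}(X)\wedge\mathscr D^{\mathrm{sp}}_{E_2}(X)\to\mathscr D^{\mathrm{sp}}_{E_3}(X)
        \quad\text{and}\quad
        \mathscr D^{\mathrm{sp}}_{E_1}(X)\wedge\mathscr A_X(E_2)\to\mathscr D^{\mathrm{sp}}_{E_3}(X)
\]
in \(\mathsf{Bnd}_{G_{\mathbf R}}\). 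We then pull these back along \(\eta^{\mathrm{sp}}\) and its inverse.

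\textbf{The internal pairing.} I would take \(j_!\underline{E_1}\otimes j_!\underline{E_2}\simeq j_!(\underline{E_1}\otimes\underline{E_2})\to j_!\underline{E_3}\) and apply the lax monoidal functor \(R\Gamma(\bar X,-)\). The equivariant analogue is obtained the same way on \(\bar X_{\mathbf C}\). \textbf{The mixed pairing.} I would use \eqref{eq:projection-formula-open-mixed-pairing} together with its base change to \(\mathbf C\).

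\textbf{Morphisms in \(\mathsf{Bnd}_{G_{\mathbf R}}\).} A morphism there is a pair of maps together with a homotopy making the \(\rho\)-square commute. The required homotopy comes from the fact that pullback along \(\bar X_{\mathbf C}\to\bar X\) is symmetric monoidal. It also commutes with \(j_!\) and \(Rj_*\), by proper base change for \(j_!\) and by the compatibility of counits. Lax monoidality of global sections then gives the commuting square. This is exactly the compatibility already noted before the proposition. Since the monoidal structure on \(\mathsf{Bnd}_{G_{\mathbf R}}\) is termwise, these data assemble into morphisms \eqref{eq:coefficient-internal-pairing-arithmetic-boundary} and \eqref{eq:coefficient-mixed-pairing-arithmetic-boundary}.

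\textbf{Naturality.} Naturality in \(\mu\) is immediate. Compatibility with suspension follows from exactness in each variable. Naturality in compactified morphisms \((f,\bar f)\) uses the base-change equivalence \(\bar f^*j_{X!}\simeq j_{Y!}f^*\) coming from the Cartesian square, together with monoidality of \(\bar f^*\).

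\textbf{The boundary diagram.} The functor \(\mathsf{Bnd}_{G_{\mathbf R}}\to\operatorname{Fun}(\text{diagram},\mathrm{Sp})\) sending \((A,M,\rho)\) to its boundary diagram is lax symmetric monoidal. Indeed, \((-)^{hG}\) is lax monoidal, \((-)^{tG}\) is lax monoidal with \(\operatorname{can}\) a monoidal transformation (Nikolaus--Scholze), and fibers of lax monoidal transformations inherit pairings. Applying this functor yields compatible pairings on all terms. It also makes \(\delta^{\mathrm{ord}}\), \(\delta^{\mathrm{Tate}}\), \(\delta^{\mathrm{mod}}\) and \(\gamma\) multiplicative.

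\textbf{Algebra and module structures.} For \(E_i=H\FF_2\) with its commutative multiplication, I would upgrade the construction to the \(\infty\)-operadic level. The functor \(E\mapsto j_!\underline E\) is lax symmetric monoidal without unit, because \(j_!\) preserves tensor products but not the unit. This makes \(\mathscr D^{\mathrm{sp}}_{H\FF_2}(X)\) a non-unital \(\mathbb E_\infty\)-algebra. Likewise, \(Rj_*\) is lax monoidal and \(j_!\) is a module functor over it, which gives the \(\mathscr A_X\)-module structure. Module-linearity of the structure maps follows from the lax monoidality of the boundary-diagram functor above.

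\textbf{Identification and independence.} The correspondence with the intrinsic cup products and projection-formula actions holds by construction. Independence of \(j\) and naturality under proper morphisms then follow as in Theorem~\ref{thm:canonical-arithmetic-boundary-operations}, since the \(\mathscr D^{\mathrm{sp}}\)-side does not involve \(j\) at all.

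\textbf{Main obstacle.} I expect the hardest step to be the coherence of the \(\mathbb E_\infty\) structures. In particular, one must show that the Tate map \(\operatorname{can}\) and the homotopy witnessing \(\rho\)-compatibility are coherently multiplicative, and not merely so up to homotopy. My first attempt would be to realize everything as a lax symmetric monoidal functor out of \(\operatorname{Shv}(\bar X;\mathrm{Sp})\) via the recollement, and to invoke the Nikolaus--Scholze lax monoidality of \((-)^{tG}\) as a black box.
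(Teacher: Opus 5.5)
Your proposal is correct, but you build the internal pairing differently from the paper. Everything else matches.

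\textbf{Where the routes differ.} The paper constructs \eqref{eq:coefficient-internal-pairing-arithmetic-boundary} directly on $\mathscr B_{X,j}$, on the shape side. For $k=h(i)$ and $k=\widetilde h(i_{\mathbf C})$, it composes the external product of Proposition~\ref{prop:products-relative-cochain-spectra} with the map induced by the relative diagonal $k\to k\mathbin\square k$. Compatibility with $\rho$ then follows from naturality along the map of arrows $\widetilde\rho_j$. Only the mixed pairing is built sheaf-theoretically, via the projection formula and transport along $\eta^{\mathrm{sp}}$, exactly as you do.

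\textbf{Where the routes agree.} The remaining steps coincide with yours: lax monoidality of $(-)^{hG_{\mathbf R}}$, $(-)^{tG_{\mathbf R}}$ and $\operatorname{can}$; the symmetric monoidal adjunction $\operatorname{triv}\dashv(-)^{hG_{\mathbf R}}$ for $\delta^{\mathrm{ord}}$; fibers formed in non-unital algebras and modules; and independence of $j$ via naturality of $\eta^{\mathrm{sp}}$ on $\operatorname{Comp}_{\mathbf Z}(X)$.

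\textbf{What each route buys.} In the paper's route, the product on $\mathscr B_{X,j}(H\FF_2)$ is literally the relative-diagonal cup product on cochains of pro-anima. This is the product used later for the top-square and instability identities (Lemmas~\ref{lem:ordinary-real-boundary-relative-cochains}, \ref{lem:ordinary-compact-support-relative-cochains} and Proposition~\ref{prop:instability-relative-pro-anima-terms}). It also makes coherent multiplicativity of $\rho$ essentially free, because relative diagonals are natural for every map of arrows. The identification with intrinsic cup products then comes from Proposition~\ref{prop:multiplicative-spectrum-valued-comparison}.

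Your route makes that identification tautological and treats both pairings uniformly, but it shifts the burden in two ways. You must check by hand that restriction to the fiber at infinity is coherently multiplicative, which is the obstacle you correctly flag. You also need the cup-product compatibility of $\eta^{\mathrm{sp}}$ in the opposite direction, to know that your transported product is the diagonal one used downstream.

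\textbf{Two minor inaccuracies.} First, $j_!$ commutes with pullback along $\bar X_{\mathbf C}\to\bar X$ for any base change, not by proper base change. Proper base change is what identifies the fiber at infinity with $R\Gamma_c(X_{\mathbf C};-)$. Second, for $Rj_*$ you only need the base-change map compatible with counits, not an equivalence.
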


\begin{proof}
For each of the arrows
\[
        k=h(i),
        \qquad
        k=\widetilde h(i_{\mathbf C})=h_{\mathbf R}(i_{\mathbf R}),
\]
compose the external product of
Proposition~\ref{prop:products-relative-cochain-spectra} with the map induced
contravariantly by the relative diagonal \(k\to k\mathbin\square k\):
\[
        C(k;E_1)\wedge C(k;E_2)
        \longrightarrow
        C(k\mathbin\square k;E_3)
        \longrightarrow
        C(k;E_3).
\]
Using the equivariant construction in the second case gives the finite and
equivariant geometric components, respectively, of~\eqref{eq:coefficient-internal-pairing-arithmetic-boundary}. Naturality
with respect to \(\widetilde\rho_j\) makes them compatible with
\(\rho_{X,j,E_a}\) for \(a=1,2,3\), and hence defines a morphism in
\(\mathsf{Bnd}_{G_{\mathbf R}}\).

The projection-formula
pairing~\eqref{eq:projection-formula-open-mixed-pairing}, together with
equivariant base change to \(\mathbf C\), gives a morphism
\[
        \mathscr D^{\mathrm{sp}}_{E_1}(X)
        \wedge
        \mathscr A_X(E_2)
        \longrightarrow
        \mathscr D^{\mathrm{sp}}_{E_3}(X)
\]
in \(\mathsf{Bnd}_{G_{\mathbf R}}\). The equivalences
\(\eta^{\mathrm{sp}}_{E_1}\) and \(\eta^{\mathrm{sp}}_{E_3}\) transport it
to~\eqref{eq:coefficient-mixed-pairing-arithmetic-boundary}.

The functors
\[
        (-)^{hG_{\mathbf R}},
        \qquad
        (-)^{tG_{\mathbf R}}
\]
are lax symmetric monoidal, and
\[
        \operatorname{can}\colon
        (-)^{hG_{\mathbf R}}
        \longrightarrow
        (-)^{tG_{\mathbf R}}
\]
is a lax symmetric monoidal natural transformation;
see~\cite[Theorem~I.3.1]{NikolausScholze2018}
and~\cite[Theorem~1.8.7(3)]{RaksitTateCohomology}. Since
\(\operatorname{triv}\dashv(-)^{hG_{\mathbf R}}\) is a symmetric-monoidal
adjunction, the adjunct \(\delta^{\mathrm{ord}}\) of \(\rho\) respects both
pairings. Lax monoidality of \(\operatorname{can}\) gives the same conclusion
for \(\delta^{\mathrm{Tate}}\) and \(\delta^{\mathrm{mod}}\).

The fiber terms and their pairings may be formed in the categories of
non-unital commutative algebras and modules, whose forgetful functors create
limits. This gives the asserted structures on \(C_{c,j}\) and
\(\widehat C_{c,j}\), including compatibility with \(\gamma\).

Finally, \(\eta_{H\FF_2}^{\mathrm{sp}}\) is natural on
\((\operatorname{Comp}^{\partial}_{\mathbf Z})^{\op}\). The restriction of
this natural transformation to \(\operatorname{Comp}_{\mathbf Z}(X)^{\op}\)
identifies the preceding
algebra- and module-valued diagram with the constant intrinsic diagram.
This proves compactification independence, and proper naturality follows
from the same comparison.
\end{proof}
Throughout, we denote by \(\cdot\) the products on
cohomology induced by the spectrum-level pairings constructed here.

The common input for the Cartan formulas is the corresponding identity of
stable coefficient operations. Let
\[
        \mu\colon
        H\FF_2\wedge H\FF_2
        \longrightarrow
        H\FF_2
\]
denote multiplication. For every \(n\geq0\), the ordinary Cartan formula is
equivalent to the identity
\begin{equation}
\label{eq:stable-steenrod-coproduct-cartan}
        \operatorname{sq}^n\circ\mu
        =
        \sum_{a+b=n}
        (\Sigma^n\mu)\circ
        (\operatorname{sq}^a\wedge\operatorname{sq}^b)
\end{equation}
in
\[
        [H\FF_2\wedge H\FF_2,\Sigma^nH\FF_2]_{\mathrm{Sp}},
\]
where
\[
        \Sigma^aH\FF_2\wedge\Sigma^bH\FF_2
        \simeq
        \Sigma^n(H\FF_2\wedge H\FF_2)
\]
is understood; see~\cite[Part~III, \S12]{AdamsStableHomotopy}.

Applying~\eqref{eq:stable-steenrod-coproduct-cartan} to the absolute
arrow
\[
    \varnothing\longrightarrow h(Y)
\]
and using the multiplicative stable \'{e}tale-shape
comparison~\eqref{eq:stable-etale-shape-comparison} gives, for every
\(n\geq0\) and homogeneous \(\alpha,\beta\in H_{\acute et}^*(Y;\FF_2)\),
\begin{equation}
\label{eq:ordinary-etale-cartan}
        \Sq^n(\alpha\cdot\beta)
        =
        \sum_{a+b=n}
        \Sq^a(\alpha)\cdot\Sq^b(\beta).
\end{equation}
Here \(Y\) may be any separated scheme of finite type over $\Spec\mathbf{Z}$, and \(\Sq^r\)
denotes the ordinary Steenrod square on \'{e}tale cohomology.

\begin{theorem}[Internal and mixed Cartan formulas for the arithmetic boundary]
\label{thm:arithmetic-boundary-internal-mixed-cartan}
Let \(\mathscr H^*(X;\FF_2)\) be any of
\[
        H^*_{c,\mathrm{fin}}(X;\FF_2),\quad
        H_c^*(X_{\mathbf C};\FF_2),\quad
        H^*_{\infty}(X;\FF_2),\quad
        \widehat H^*_{\infty}(X;\FF_2),\quad
        H_c^*(X;\FF_2),\quad
        \widehat H_c^*(X;\FF_2),
\]
and let \(\Sq_{\mathscr H}^n\) denote the corresponding Steenrod operation. Then,
for \(n\geq0\) and homogeneous \(x,y\in\mathscr H^*(X;\FF_2)\),
\begin{equation}
\label{eq:arithmetic-boundary-internal-cartan}
        \Sq_{\mathscr H}^n(x\cdot y)
        =
        \sum_{a+b=n}
        \Sq_{\mathscr H}^a(x)\cdot\Sq_{\mathscr H}^b(y).
\end{equation}

Put \(X_{\mathscr H}=X_{\mathbf C}\) in the geometric case and
\(X_{\mathscr H}=X\) otherwise. For
\[
        x\in\mathscr H^q(X;\FF_2),
        \qquad
        \alpha\in H_{\acute et}^p(X_{\mathscr H};\FF_2),
\]
one has
\begin{equation}
\label{eq:arithmetic-boundary-mixed-cartan}
        \Sq_{\mathscr H}^n(x\cdot\alpha)
        =
        \sum_{a+b=n}
        \Sq_{\mathscr H}^a(x)\cdot\Sq^b(\alpha),
\end{equation}
where \(\Sq^b\) denotes the ordinary Steenrod square on \'{e}tale
cohomology.

Together with the previously constructed \(\mathcal A_2\)-actions,
equation~\eqref{eq:arithmetic-boundary-internal-cartan} makes these theories graded
non-unital \(\mathcal A_2\)-module algebras.
\end{theorem}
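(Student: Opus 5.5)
The plan is to reduce both Cartan formulas to the coefficient-level identity~\eqref{eq:stable-steenrod-coproduct-cartan}. The products and the Steenrod operations on every term of the boundary diagram are induced functorially by coefficient data. Products come from the pairings of Proposition~\ref{prop:multiplicative-arithmetic-boundary-diagram}, natural in the coefficient pairing \(\mu\colon E_1\wedge E_2\to E_3\) with \(E_a\in\mathcal E_2\). The operations come from applying \(\mathscr B_{X,j}(-)\) to \(\operatorname{sq}^n\). Fix a compactification \(j\); by compactification independence (Theorem~\ref{thm:canonical-arithmetic-boundary-operations} and the last part of Proposition~\ref{prop:multiplicative-arithmetic-boundary-diagram}) it suffices to argue for \(\mathscr B_{X,j}\).

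\emph{Internal formula.} Naturality of~\eqref{eq:coefficient-internal-pairing-arithmetic-boundary} in the coefficients gives two homotopy-commuting squares. In the first, pairing along \(\mu\) followed by \(\mathscr B_{X,j}(\operatorname{sq}^n)\) agrees with the pairing induced by the composite coefficient map \(\operatorname{sq}^n\circ\mu\colon H\FF_2\wedge H\FF_2\to\Sigma^nH\FF_2\). In the second, first applying \(\mathscr B_{X,j}(\operatorname{sq}^a)\wedge\mathscr B_{X,j}(\operatorname{sq}^b)\) and then the pairing along \(\Sigma^n\mu\) agrees with the pairing along \((\Sigma^n\mu)\circ(\operatorname{sq}^a\wedge\operatorname{sq}^b)\). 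Here one uses that the pairing construction is additive in \(\mu\), i.e.\ it is a map out of \([E_1\wedge E_2,E_3]\) that respects sums. Additivity holds because the construction is exact in each variable, since it is built from function spectra, filtered colimits and smash products. Applying~\eqref{eq:stable-steenrod-coproduct-cartan} then yields~\eqref{eq:arithmetic-boundary-internal-cartan} on the finite term and, equivariantly, on the geometric term.

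Next transport the formula to the derived terms. Homotopy fixed points, the Tate construction and fibers are lax symmetric monoidal, compatibly with \(\operatorname{can}\), so they carry these coefficient-natural pairings to coefficient-natural pairings. The same square argument therefore applies verbatim on \(H^*_\infty\), \(\widehat H^*_\infty\), \(H^*_c\) and \(\widehat H^*_c\). Compatibility with suspension (Proposition~\ref{prop:arithmetic-boundary-coefficient-functoriality}) identifies degrees; the suspension equivalence \(\Sigma^aH\FF_2\wedge\Sigma^bH\FF_2\simeq\Sigma^n(H\FF_2\wedge H\FF_2)\) introduces only signs, which vanish mod \(2\). Finally, evaluating on homotopy groups of homogeneous classes \(x,y\) gives the formula.

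\emph{Mixed formula.} Use~\eqref{eq:coefficient-mixed-pairing-arithmetic-boundary} in place of the internal pairing. The datum \(\mathscr A_X(E)\) is functorial in \(E\in\mathcal E_2\), and the action of \(\operatorname{sq}^b\) on it gives the ordinary étale square. This follows from~\eqref{eq:stable-etale-shape-comparison}, as in~\eqref{eq:ordinary-etale-cartan}, on both \(X\) and \(X_{\mathbf C}\). The same two naturality squares, now for the mixed pairing, together with~\eqref{eq:stable-steenrod-coproduct-cartan}, give~\eqref{eq:arithmetic-boundary-mixed-cartan}. The graded non-unital \(\mathcal A_2\)-module-algebra assertion then combines the internal formula with Theorem~\ref{thm:canonical-arithmetic-boundary-operations}.

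The main obstacle is naturality of the mixed pairing in arbitrary maps of underlying spectra, such as \(\operatorname{sq}^b\), which are not \(H\FF_2\)-linear (Remark~\ref{rem:steenrod-operations-underlying-spectra}). The mixed pairing was built sheaf-theoretically, via the projection formula~\eqref{eq:projection-formula-open-mixed-pairing}, and then transported along \(\eta^{\mathrm{sp}}\). So one must check two things. First, the constant-sheaf functor \(E\mapsto\underline E_X\) and the projection-formula map are natural in all spectrum maps, not merely in module maps. Second, \(\eta^{\mathrm{sp}}\) is natural in such maps, which Proposition~\ref{prop:multiplicative-spectrum-valued-comparison} asserts. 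I would first verify this at the level of sheaves of spectra, where \(\underline{(-)}\) is a symmetric monoidal functor \(\mathrm{Sp}\to\operatorname{Shv}(X_{\acute et};\mathrm{Sp})\), and only then pass to global sections.
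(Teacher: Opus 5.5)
Your proposal is correct and follows the paper's proof. Both apply the coefficient-natural internal and mixed pairings of Proposition~\ref{prop:multiplicative-arithmetic-boundary-diagram} to the stable Cartan identity~\eqref{eq:stable-steenrod-coproduct-cartan}, transport along $\eta^{\mathrm{sp}}_{H\FF_2}$, and use~\eqref{eq:stable-etale-shape-comparison} to identify the action of $\operatorname{sq}^b$ on the ordinary factor with the \'etale $\Sq^b$. Your extra checks spell out what the paper's one-paragraph proof leaves implicit: that the pairing is additive in $\mu$, and that the projection-formula pairing and $\eta^{\mathrm{sp}}$ are natural in non-$H\FF_2$-linear coefficient maps (the latter is exactly what Proposition~\ref{prop:multiplicative-spectrum-valued-comparison} asserts).
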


\begin{proof}
Apply the coefficient-natural internal and mixed
pairings~\eqref{eq:coefficient-internal-pairing-arithmetic-boundary}
and~\eqref{eq:coefficient-mixed-pairing-arithmetic-boundary} to the stable
Cartan identity~\eqref{eq:stable-steenrod-coproduct-cartan}. This gives
the corresponding identities on every term of the spectral arithmetic
boundary diagram. Passing to homotopy groups and transporting along
\(\eta^{\mathrm{sp}}_{H\FF_2}\) gives~\eqref{eq:arithmetic-boundary-internal-cartan}
and~\eqref{eq:arithmetic-boundary-mixed-cartan}. On the ordinary factor, the absolute stable-shape
comparison~\eqref{eq:stable-etale-shape-comparison} identifies the
operation induced by \(\operatorname{sq}^b\) with the ordinary
\'{e}tale Steenrod square \(\Sq^b\) appearing
in~\eqref{eq:ordinary-etale-cartan}.
\end{proof}

We now turn to the supported variant. If \(Y\) is separated and of finite
type over \(\Spec(\mathbf Z)\) and
\[
        \mathcal M\in
        \operatorname{Shv}(Y_{\acute et};\mathrm{Sp}),
\]
let \(\widehat R\Gamma_c(Y;\mathcal M)\) denote the modified
compact-support spectrum obtained from the ordinary--Tate boundary
construction with coefficient sheaf \(\mathcal M\). This construction is
exact and functorial in \(\mathcal M\).

Let \(g\colon Z\hookrightarrow T\) be a closed immersion, where \(T\) is
separated and of finite type over \(\Spec(\mathbf Z)\). For
\[
        \mathcal M\in
        \operatorname{Shv}(T_{\acute et};\mathrm{Sp}),
\]
set
\begin{equation}
\label{eq:supported-boundary-spectra}
\begin{aligned}
        R\Gamma_Z(T;\mathcal M)
        &:=
        R\Gamma\!\left(Z_{\acute et};Rg^!\mathcal M\right),
\\
        \widehat R\Gamma_{c,Z}(T;\mathcal M)
        &:=
        \widehat R\Gamma_c\!\left(Z;Rg^!\mathcal M\right).
\end{aligned}
\end{equation}
Both constructions are exact and functorial in \(\mathcal M\). If
\(u\colon T\setminus Z\hookrightarrow T\) is the complementary open
immersion, recollement gives a natural equivalence
\begin{equation}
\label{eq:supported-cochains-as-relative-fiber}
        R\Gamma_Z(T;\mathcal M)
        \simeq
        \Fib\!\left(
                R\Gamma(T_{\acute et};\mathcal M)
                \longrightarrow
                R\Gamma\!\left(
                        (T\setminus Z)_{\acute et};
                        u^*\mathcal M
                \right)
        \right).
\end{equation}

Applying these functors to the morphism of constant sheaves induced by
\(\operatorname{sq}^n\colon H\FF_2\to\Sigma^nH\FF_2\) gives natural maps
\[
\begin{aligned}
        R\Gamma_Z(T;\underline{H\FF_2}_T)
        &\longrightarrow
        \Sigma^nR\Gamma_Z(T;\underline{H\FF_2}_T),
\\
        \widehat R\Gamma_{c,Z}(T;\underline{H\FF_2}_T)
        &\longrightarrow
        \Sigma^n\widehat R\Gamma_{c,Z}
        (T;\underline{H\FF_2}_T).
\end{aligned}
\]
We denote the induced operations on homotopy groups by
\[
        \Sq_Z^n
        \qquad\text{and}\qquad
        \widehat\Sq_{c,Z}^n.
\]
Under the stable \'{e}tale-shape comparison~\eqref{eq:stable-etale-shape-comparison}
and its compatibility with
recollement, \(\Sq_Z^n\) agrees with the relative stable operation associated
with
\[
        h(T\setminus Z)\longrightarrow h(T),
\]
and hence with the ordinary stable Steenrod square on \'{e}tale cohomology
with support.

For sheaves of spectra \(\mathcal L,\mathcal M\) on \(T_{\acute et}\),
there is a natural exceptional-pullback pairing
\begin{equation}
\label{eq:exceptional-pullback-module-pairing}
        g^*\mathcal L\otimes Rg^!\mathcal M
        \longrightarrow
        Rg^!(\mathcal L\otimes\mathcal M).
\end{equation}
Applying this pairing termwise to the boundary diagram defining modified
compact support, and then taking the defining fibers, gives, for
\(E,F,K\in\mathcal E_2\) and a pairing \(E\wedge F\to K\), a
coefficient-natural morphism
\begin{equation}
\label{eq:supported-mixed-pairing-spectra}
        \widehat R\Gamma_c(Z;\underline E_Z)
        \wedge
        R\Gamma_Z(T;\underline F_T)
        \longrightarrow
        \widehat R\Gamma_{c,Z}(T;\underline K_T).
\end{equation}

Since \(g\) is proper, the counit \(Rg_*Rg^!\to\mathrm{id}\) induces a
natural forget-support morphism
\begin{equation}
\label{eq:modified-forget-support-spectrum}
        \widehat R\Gamma_{c,Z}(T;\mathcal M)
        \longrightarrow
        \widehat R\Gamma_c(T;\mathcal M).
\end{equation}
Its naturality in \(\mathcal M\) implies compatibility with the coefficient
operations above.

Write
\[
\begin{aligned}
        H_Z^q(T;\mathcal M)
        &:=
        \pi_{-q}R\Gamma_Z(T;\mathcal M),
\\
        \widehat H_{c,Z}^q(T;\mathcal M)
        &:=
        \pi_{-q}\widehat R\Gamma_{c,Z}(T;\mathcal M).
\end{aligned}
\]

\begin{theorem}[Supported mixed Cartan formula]
\label{thm:modified-supported-mixed-cartan}
For \(n\geq0\) and classes
\[
        x\in\widehat H_c^q(Z;\FF_2),
        \qquad
        \alpha\in H_Z^p(T;\FF_2),
\]
one has
\begin{equation}
\label{eq:modified-compact-support-supported-mixed-cartan}
        \widehat\Sq_{c,Z}^n(x\cdot\alpha)
        =
        \sum_{a+b=n}
        \widehat\Sq_c^a(x)\cdot\Sq_Z^b(\alpha).
\end{equation}
\end{theorem}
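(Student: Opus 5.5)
The plan follows the pattern of the proof of Theorem~\ref{thm:arithmetic-boundary-internal-mixed-cartan}: first establish a coefficient-level identity for the pairing \eqref{eq:supported-mixed-pairing-spectra}, then pass to homotopy groups. The key input is that \eqref{eq:supported-mixed-pairing-spectra} is \emph{natural in arbitrary maps of underlying spectra} in each variable, meaning maps in \(\mathcal E_2\), which need not be \(H\FF_2\)-linear. Concretely, for \(E,F,K\in\mathcal E_2\), maps \(\phi\colon E\to E'\), \(\psi\colon F\to F'\), and compatible pairings \(E\wedge F\to K\), \(E'\wedge F'\to K'\) with \(\chi\colon K\to K'\), I will check that the square formed by the pairing and \(\phi\wedge\psi\), \(\chi\) commutes up to homotopy. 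The same should hold whenever the pairings are only compatible up to a homotopy sum, by exactness.

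I will first verify this naturality termwise. The exceptional-pullback pairing \eqref{eq:exceptional-pullback-module-pairing} is natural in \(\mathcal L,\mathcal M\in\operatorname{Shv}(T_{\acute et};\mathrm{Sp})\). The constant-sheaf functor \(E\mapsto\underline E\) is symmetric monoidal on all of \(\mathrm{Sp}\), so it carries arbitrary spectrum maps and pairings to sheaf maps and pairings. Modified compact support is built from \(R\Gamma_{c,\mathrm{fin}}\), equivariant restriction, \((-)^{hG_{\mathbf R}}\), \((-)^{tG_{\mathbf R}}\), and fibers. Each of these is exact and lax symmetric monoidal, with \(\operatorname{can}\) lax monoidal, as recalled in the proof of Proposition~\ref{prop:multiplicative-arithmetic-boundary-diagram}. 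Hence the composite pairing
\[
\widehat R\Gamma_c(Z;\underline E_Z)\wedge R\Gamma_Z(T;\underline F_T)\longrightarrow\widehat R\Gamma_{c,Z}(T;\underline K_T)
\]
is a natural transformation of functors \(\mathcal E_2\times\mathcal E_2\to\mathrm{Sp}\) over the coefficient pairing. It is also compatible with suspension in each variable.

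Given this naturality, I apply the stable Cartan identity \eqref{eq:stable-steenrod-coproduct-cartan}. Write \(\Phi(E,F,\mu)\) for the pairing above. Naturality in the output coefficient gives
\[
\operatorname{sq}^n_*\circ\Phi(H\FF_2,H\FF_2,\mu)=\Phi\bigl(H\FF_2,H\FF_2,\operatorname{sq}^n\circ\mu\bigr).
\]
Substituting \eqref{eq:stable-steenrod-coproduct-cartan} and using additivity of \(\Phi\) in the pairing variable (from exactness) rewrites the right side as \(\sum_{a+b=n}\Phi(\Sigma^a H\FF_2,\Sigma^bH\FF_2,\Sigma^n\mu)\circ(\operatorname{sq}^a_*\wedge\operatorname{sq}^b_*)\). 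Naturality in the two input coefficients justifies this step. Passing to \(\pi_{-(p+q+n)}\) and using the suspension compatibilities then yields \eqref{eq:modified-compact-support-supported-mixed-cartan}. On the \(Z\)-side, the operation induced on \(\widehat R\Gamma_c(Z;\underline{H\FF_2}_Z)\) must be identified with \(\widehat\Sq^a_c\). This follows from Proposition~\ref{prop:multiplicative-spectrum-valued-comparison} together with Theorem~\ref{thm:canonical-arithmetic-boundary-operations}. On the supported side, the identification with \(\Sq^b_Z\) holds by definition.

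The main obstacle is making ``additivity of \(\Phi\) in the pairing'' precise. The pairing is not a single functor of one coefficient, and \(\mathcal E_2\) is not closed under smash products, since \(H\FF_2\wedge H\FF_2\) is not a finite sum of suspensions of \(H\FF_2\). I will resolve this by viewing the pairing as a natural map
\[
\widehat R\Gamma_c(Z;\underline E)\wedge R\Gamma_Z(T;\underline F)\longrightarrow\widehat R\Gamma_{c,Z}(T;\underline{E\wedge F}),
\]
built from the lax structure for arbitrary spectra \(E,F\), followed by pushforward along \(\mu\). Exactness of \(\widehat R\Gamma_{c,Z}(T;\underline{(-)})\) in the coefficient spectrum then makes postcomposition with a sum of maps \(E\wedge F\to K\) additive. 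The underlying constructions (sheaves of spectra, the pairing \eqref{eq:exceptional-pullback-module-pairing}, the Tate and fiber functors) all make sense for arbitrary spectral coefficients, so I expect this extension to be harmless.
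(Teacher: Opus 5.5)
Your proposal is correct and takes the same approach as the paper, whose proof simply applies the coefficient-natural pairing~\eqref{eq:supported-mixed-pairing-spectra} to the stable Cartan identity~\eqref{eq:stable-steenrod-coproduct-cartan} and passes to homotopy groups. Your extra step of factoring the pairing through \(\underline{E\wedge F}\) to get additivity in \(\mu\), and your identification of the induced operations with \(\widehat\Sq_c^a\) and \(\Sq_Z^b\), only make explicit what the paper leaves implicit.
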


\begin{proof}
Apply the coefficient-natural
pairing~\eqref{eq:supported-mixed-pairing-spectra} to the stable Cartan
identity~\eqref{eq:stable-steenrod-coproduct-cartan} and pass to homotopy
groups.
\end{proof}

\subsection{Instability and the Tate boundary}
\label{subsec:instability-tate-boundary}

The operations constructed above are the ordinary stable operations
\(\Sq^r\), indexed by \(r\geq0\).  On the relative cohomology associated with
morphisms of pro-anima, they agree with the classical relative Steenrod squares
and hence satisfy the usual instability relations.  The stable construction
alone, however, imposes no such relations on the cohomology of an arbitrary
spectrum.  We now determine precisely what survives for the arithmetic
boundary spectra.

\begin{lemma}[Ordinary real boundary as relative cochains]
\label{lem:ordinary-real-boundary-relative-cochains}
For every bounded-above spectrum \(E\), regarded with trivial
\(G_{\mathbf R}\)-action, there is a canonical equivalence
\begin{equation}
\label{eq:ordinary-real-boundary-as-relative-cochains}
        C(h(i_{\mathbf R});E)
        \xrightarrow{\ \sim\ }
        C_{R,j}(X;E),
\end{equation}
natural in \(E\) and in compactified morphisms. Under this equivalence,
\(\delta^{\mathrm{ord}}_{X,j,E}\) is induced contravariantly by
\(h(i_{\mathbf R})\to h(i)\). If \(E\) is a commutative algebra spectrum
whose underlying spectrum lies in \(\mathcal E_2\), the equivalence
respects the induced non-unital commutative algebra structures. By
naturality, it also commutes with stable coefficient operations.
\end{lemma}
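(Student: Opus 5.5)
The equivalence will come from the homotopy-quotient adjunction and the identification of $\widetilde h(i_{\mathbf C})/\!/G_{\mathbf R}$ with $h(i_{\mathbf R})$. For an equivariant pro-anima arrow $k$ and a spectrum $E$ with trivial action, I first want a natural equivalence
\[
        C\bigl(k/\!/G_{\mathbf R};E\bigr)
        \simeq
        C(k;E)^{hG_{\mathbf R}}.
\]
At a single level $k_t$ in $\mathcal S^{BG_{\mathbf R}}$ this is formal. The functor $\Sigma^\infty_+$ commutes with colimits, so $P(k_t/\!/G_{\mathbf R})\simeq P(k_t)_{hG_{\mathbf R}}$. Moreover, $F(-,E)$ with $E$ trivial turns homotopy orbits into homotopy fixed points, since
\[
        F\bigl(Q_{hG},E\bigr)\simeq F(Q,E)^{hG}.
\]
Applying this to $k=\widetilde h(i_{\mathbf C})$ and using $\widetilde h(i_{\mathbf C})/\!/G_{\mathbf R}\simeq h(i_{\mathbf R})$ gives the desired map levelwise:
\[
        C(h(i_{\mathbf R});E)
        \longrightarrow
        C_{\mathbf C,j}(X;E)^{hG_{\mathbf R}}
        =
        C_{R,j}(X;E).
\]

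The main obstacle is the passage to pro-objects. The source is a filtered colimit of levelwise fixed points, $\varinjlim_t C(k_t;E)^{hG}$, whereas the target is $(\varinjlim_t C(k_t;E))^{hG}$. Homotopy fixed points for a finite group are a limit over $BG$, which is not finite, so they do not commute with filtered colimits in general. This is where the bounded-above hypothesis enters. I would write $(-)^{hG}$ as the totalization of the cobar construction $\prod_{G^{\bullet}}$. Each $C(k_t;E)=F(P(k_t),E)$ is bounded above uniformly in $t$, because $P(k_t)$ is connective and $E$ is bounded above. For spectra bounded above uniformly, each homotopy group of the totalization depends only on a finite stage, by the convergent Bousfield--Kan spectral sequence. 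Equivalently, $(-)^{hG}$ restricted to $\mathrm{Sp}_{\le N}$ agrees in each degree with a finite limit of Postnikov truncations. Filtered colimits preserve $\mathrm{Sp}_{\le N}$ and commute with finite limits and with products in a fixed range, so the comparison map is an equivalence on every $\pi_n$. Naturality in $E$ and in compactified morphisms holds because every ingredient is natural: the shape functors, the relative-to-absolute equivalence, and the adjunction.

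Next, the identification of $\delta^{\mathrm{ord}}$ is an adjunction check. The morphism $\widetilde\rho_j$ factors through the unit $\widetilde h(i_{\mathbf C})\to\operatorname{triv}(\widetilde h(i_{\mathbf C})/\!/G)$ followed by $\operatorname{triv}$ of $h(i_{\mathbf R})\to h(i)$. Its cochain adjunct under $\operatorname{triv}\dashv(-)^{hG}$ is therefore the map $C(h(i);E)\to C(h(i_{\mathbf R});E)$ induced by $h(i_{\mathbf R})\to h(i)$, composed with the equivalence just built. The remaining claims follow from naturality. Compatibility with coefficient operations holds because the equivalence is natural in maps $E\to E'$ and in suspensions. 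Multiplicativity holds because the relative diagonal of $k$ is $G$-equivariant and descends to the diagonal of $k/\!/G$. In addition, the identification $F(Q_{hG},E)\simeq F(Q,E)^{hG}$ is lax symmetric monoidal, being the adjunct of a symmetric monoidal adjunction, so the non-unital algebra structures from Proposition~\ref{prop:products-relative-cochain-spectra} match. The pro-level colimit interchange above preserves these structures since it is itself natural.
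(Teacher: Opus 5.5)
Your proposal is correct and follows essentially the same route as the paper. It identifies $C(f_t/\!/G_{\mathbf R};E)\simeq C(f_t;E)^{hG_{\mathbf R}}$ levelwise, uses connectivity of $P(f_t)$ to make the $C(f_t;E)$ uniformly bounded above so that $(-)^{hG_{\mathbf R}}$ commutes with the filtered colimit, and reads off $\delta^{\mathrm{ord}}_{X,j,E}$ from the construction of $\widetilde\rho_j$, with multiplicativity and coefficient compatibility coming from naturality. The only difference is cosmetic: you justify the colimit interchange via finite partial totalizations of the cobar construction, while the paper uses finiteness of the homotopy-fixed-point spectral sequence in each total degree; this is the same argument.
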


\begin{proof}
Choose a level presentation of
\(\widetilde h(i_{\mathbf C})\) over a small cofiltered
\(\infty\)-category \(T\) by arrows
\[
        f_t\colon A_t\longrightarrow Y_t
        \qquad(t\in T)
\]
in \(\mathcal S^{BG_{\mathbf R}}\). The relative-to-absolute comparison
and the trivial action on \(E\) give a natural comparison
\[
\begin{aligned}
        C(h(i_{\mathbf R});E)
        &\simeq
        C\bigl(\widetilde h(i_{\mathbf C})/\!/G_{\mathbf R};E\bigr)
\\
        &\simeq
        \varinjlim_{t\in T^{\op}}
        C(f_t/\!/G_{\mathbf R};E)
\\
        &\simeq
        \varinjlim_{t\in T^{\op}}
        C(f_t;E)^{hG_{\mathbf R}}
\\
        &\longrightarrow
        \left(
        \varinjlim_{t\in T^{\op}}C(f_t;E)
        \right)^{hG_{\mathbf R}}
        \simeq
        C_{R,j}(X;E).
\end{aligned}
\]
For arbitrary \(E\), the displayed arrow need not be an equivalence. For
each \(t\), put
\[
        P_t:=
        \operatorname{Cof}\!\left(
                \Sigma^\infty_+A_t
                \longrightarrow
                \Sigma^\infty_+Y_t
        \right).
\]
The spectrum \(P_t\) is connective, since suspension spectra are
connective and connective spectra are closed under cofibers. If
\(E\in\mathrm{Sp}_{\leq N}\), it follows that
\[
        C(f_t;E)=F(P_t,E)\in\mathrm{Sp}_{\leq N}
\]
uniformly in \(t\). Since \(G_{\mathbf R}\) is finite, homotopy fixed points commute
with filtered colimits of uniformly bounded-above spectra. Indeed, the
homotopy-fixed-point spectral sequence
\[
        H^s\!\left(
                G_{\mathbf R};
                \pi_uC(f_t;E)
        \right)
        \Longrightarrow
        \pi_{u-s}\!\left(C(f_t;E)^{hG_{\mathbf R}}\right)
\]
has only finitely many terms in each total degree, and finite-group
cohomology commutes with filtered colimits. Thus, the comparison is an
equivalence.

By the construction of \(\widetilde\rho_j\)
in~\eqref{eq:equivariant-boundary-base-change-arrow} and the definition of
\(\delta^{\mathrm{ord}}_{X,j,E}\) in
Definition~\ref{def:spectral-arithmetic-boundary-diagram}, the
equivalence~\eqref{eq:ordinary-real-boundary-as-relative-cochains}
identifies \(\delta^{\mathrm{ord}}_{X,j,E}\) with the composite
\[
        C(h(i);E)
        \longrightarrow
        C(h(i_{\mathbf R});E)
        \xrightarrow{\ \sim\ }
        C_{R,j}(X;E).
\]
If \(E\) is a commutative algebra spectrum whose underlying spectrum lies
in \(\mathcal E_2\), compatibility of the
homotopy-orbit--homotopy-fixed-point comparison with the colax monoidal
structure on \((-) /\!/G_{\mathbf R}\) and the lax monoidal structure on
\((-)^{hG_{\mathbf R}}\) shows that~\eqref{eq:ordinary-real-boundary-as-relative-cochains}
intertwines the
products induced by the relative diagonals. Naturality in the coefficient
spectrum gives compatibility with stable coefficient operations.
\end{proof}

\begin{lemma}[Ordinary compact support as relative cochains]
\label{lem:ordinary-compact-support-relative-cochains}
Let
\[
        \kappa_{X,j}\colon
        h(\bar X_{\mathbf R})
        \coprod_{h((\partial_jX)_{\mathbf R})}
        h(\partial_jX)
        \longrightarrow
        h(\bar X)
\]
be the corner arrow of \(h(i_{\mathbf R})\to h(i)\), where the pushout
is formed in \(\Pro(\mathcal S)\). For every bounded-above spectrum
\(E\), there is a canonical equivalence
\[
        C(\kappa_{X,j};E)
        \xrightarrow{\ \sim\ }
        C_{c,j}(X;E),
\]
natural in \(E\) and in compactified morphisms. If \(E\) is a commutative
algebra spectrum whose underlying spectrum lies in \(\mathcal E_2\), the
equivalence respects the induced non-unital commutative algebra
structures. By naturality, it also commutes with stable coefficient
operations.
\end{lemma}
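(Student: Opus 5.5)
The idea is to identify both sides as fibers of the same map. By definition, \(C_{c,j}(X;E)=\Fib(\delta^{\mathrm{ord}}_{X,j,E})\), and \(\delta^{\mathrm{ord}}_{X,j,E}\) maps \(C_{c,\mathrm{fin},j}(X;E)=C(h(i);E)\) to \(C_{R,j}(X;E)\). Lemma~\ref{lem:ordinary-real-boundary-relative-cochains} identifies \(C_{R,j}(X;E)\simeq C(h(i_{\mathbf R});E)\) for bounded-above \(E\). Under this identification, \(\delta^{\mathrm{ord}}\) becomes the map induced contravariantly by the morphism of arrows \(h(i_{\mathbf R})\to h(i)\). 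It therefore suffices to show that, for a commutative square of pro-anima viewed as a morphism of arrows \(u\colon k'\to k\), with \(k'\colon A'\to X'\) and \(k\colon A\to X\), there is a natural equivalence
\[
        C(\kappa;E)\simeq\Fib\bigl(C(k;E)\to C(k';E)\bigr),
\]
where \(\kappa\colon X'\amalg_{A'}A\to X\) is the corner map. In our case \(k'=h(i_{\mathbf R})\) and \(k=h(i)\).

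\textbf{Levelwise statement.} By the dual of \cite[Proposition~5.3.5.15]{HTT}, the square \([1]\times[1]\to\Pro(\mathcal S)\) admits a level presentation by squares \(u_t\) in \(\mathcal S\). The pushout in \(\Pro(\mathcal S)\) is computed levelwise: finite colimits of pro-objects with a common index are formed levelwise. Hence \(\kappa\) is presented by the corner maps \(\kappa_t\). For a single square of anima, \(\Sigma^\infty_+\) preserves pushouts, so \(\operatorname{Cof}(\Sigma^\infty_+(X'_t\amalg_{A'_t}A_t)\to\Sigma^\infty_+X_t)\) is the total cofiber of the square of suspension spectra. By the octahedral axiom this is \(\operatorname{Cof}(P(k'_t)\to P(k_t))\). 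Applying \(F(-,E)\) turns this cofiber into the fiber of \(C(k_t;E)\to C(k'_t;E)\), naturally in the square and in \(E\). Since filtered colimits in \(\mathrm{Sp}\) are exact, passing to the colimit over \(T^{\op}\) gives the fiber statement for pro-anima, via Proposition~\ref{prop:relative-cochain-spectrum-formalism}. Composing with Lemma~\ref{lem:ordinary-real-boundary-relative-cochains} yields the asserted equivalence \(C(\kappa_{X,j};E)\simeq C_{c,j}(X;E)\). Naturality in \(E\) and in compactified morphisms follows because every step is functorial in the square and in the coefficients. Compatibility with stable operations is then formal.

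\textbf{Multiplicativity.} This is the main point requiring care. I would use that \(C(-;E)\) is lax symmetric monoidal for the pushout-product (Proposition~\ref{prop:products-relative-cochain-spectra}), and that the relative diagonal of \(\kappa\) is compatible with those of \(k\) and \(k'\) through the morphisms \(k'\to\kappa\)-type maps of arrows. More precisely, the corner construction is the cofiber-of-arrows functor on \(\Fun([1]\times[1],\mathcal S)\). The equivalence above can be promoted to a symmetric monoidal identification, using the Day-convolution structure on \([1]\times[1]\) with the minimum operation. Alternatively, and more cheaply, I would form the fiber in the category of non-unital commutative algebras in \(\mathrm{Sp}\), whose forgetful functor creates limits, as in Proposition~\ref{prop:multiplicative-arithmetic-boundary-diagram}. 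This requires \(C(h(i);E)\to C(h(i_{\mathbf R});E)\) to be an algebra map, which holds by naturality of the relative diagonal, and the product on \(C_{R,j}\) to match, which is given by Lemma~\ref{lem:ordinary-real-boundary-relative-cochains}. It then remains to check that the algebra structure on \(C(\kappa;E)\) induced by its own relative diagonal agrees with that fiber structure. I expect this compatibility to be the main obstacle: it needs the levelwise identification to be natural for the diagonal morphisms \(u\to u\mathbin\square u\) of squares. I would verify it levelwise on simplicial presentations, where all three are honest relative cochain algebras of inclusions and the identification is a strict equality of kernels, and then pass to filtered colimits.
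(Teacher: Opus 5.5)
Your proposal is correct and follows essentially the same route as the paper: the total-cofiber identity at each level, passage to filtered colimits to get $C(\kappa;E)\simeq\Fib\bigl(C(k;E)\to C(k';E)\bigr)$, and then Lemma~\ref{lem:ordinary-real-boundary-relative-cochains} to identify the last map with $\delta^{\mathrm{ord}}$. The paper handles multiplicativity in a single sentence, citing naturality of the total-cofiber construction with respect to relative diagonals together with the multiplicative part of that lemma; this is exactly the compatibility you isolate and propose to check in more detail.
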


\begin{proof}
More generally, let
\[
        \alpha\colon
        (f'\colon A'\to X')
        \longrightarrow
        (f\colon A\to X)
\]
be a morphism of arrows in \(\Pro(\mathcal S)\), and let
\[
        \partial\alpha\colon
        X'\coprod_{A'}A\longrightarrow X
\]
be its corner arrow. After choosing a common level presentation, the
total-cofiber identity gives cofiber sequences
\[
\begin{aligned}
 &\operatorname{Cof}\!\left(
        \Sigma^\infty_+A'_t\longrightarrow\Sigma^\infty_+X'_t
 \right)
 \longrightarrow
 \operatorname{Cof}\!\left(
        \Sigma^\infty_+A_t\longrightarrow\Sigma^\infty_+X_t
 \right)
\\
 &\hspace{35mm}\longrightarrow
 \operatorname{Cof}\!\left(
        \Sigma^\infty_+(X'_t\coprod_{A'_t}A_t)
        \longrightarrow
        \Sigma^\infty_+X_t
 \right).
\end{aligned}
\]
Applying \(F(-,E)\), passing to filtered colimits, and using their
exactness gives a natural fiber sequence
\[
        C(\partial\alpha;E)
        \longrightarrow
        C(f;E)
        \longrightarrow
        C(f';E).
\]
For \(h(i_{\mathbf R})\to h(i)\), this becomes
\[
        C(\kappa_{X,j};E)
        \longrightarrow
        C(h(i);E)
        \longrightarrow
        C(h(i_{\mathbf R});E).
\]
By Lemma~\ref{lem:ordinary-real-boundary-relative-cochains}, the final
map identifies, for bounded-above \(E\), with
\(\delta^{\mathrm{ord}}_{X,j,E}\). Its fiber is therefore
\(C_{c,j}(X;E)\).

If \(E\) is a commutative algebra spectrum whose underlying spectrum lies
in \(\mathcal E_2\), naturality of the total-cofiber construction with
respect to relative diagonals, together with the multiplicative
compatibility of
Lemma~\ref{lem:ordinary-real-boundary-relative-cochains}, shows that the
equivalence respects the induced non-unital commutative algebra
structures. Naturality in the coefficient spectrum gives compatibility
with stable coefficient operations.
\end{proof}

No analogous relative-cochain model is asserted for
\(C_{\infty,j}(X;E)\) or \(\widehat C_{c,j}(X;E)\).

\begin{proposition}[Instability for relative pro-anima cohomology]
\label{prop:instability-relative-pro-anima-terms}
Let \(f\colon A\to Y\) be a morphism in \(\Pro(\mathcal S)\). For every
homogeneous class \(x\in H^q(f;H\FF_2)\) with \(q\geq0\),
\begin{equation}
\label{eq:instability-relative-pro-anima-terms}
        \Sq_f^r(x)=0\quad(r>q),
        \qquad
        \Sq_f^q(x)=x^2.
\end{equation}
Moreover,
\[
        H^q(f;H\FF_2)=0
        \qquad(q<0).
\]
In particular, the same assertions hold for
\[
\begin{gathered}
        H^*_{c,\mathrm{fin}}(X;\FF_2),\qquad
        H_c^*(X_{\mathbf C};\FF_2),\qquad
        H_\infty^*(X;\FF_2),\qquad
        H_c^*(X;\FF_2),
\\
        H_{\acute et}^*(X;\FF_2),\qquad
        H_{\acute et}^*(X_{\mathbf C};\FF_2),\qquad
        H_Z^*(T;\FF_2),
\end{gathered}
\]
where \(Z\hookrightarrow T\) is a closed immersion and \(T\) is separated
and of finite type over \(\Spec(\mathbf Z)\).
\end{proposition}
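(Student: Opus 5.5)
The plan is to reduce to the classical simplicial statement of Proposition~\ref{prop:classic_sq_properties} through a level presentation, and then identify each listed group with the relative cohomology of some arrow of pro-anima.

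\textbf{Step 1: reduction to a single level.} Choose a level presentation \(f_t\colon A_t\to Y_t\), \(t\in T\), of \(f\). By Proposition~\ref{prop:relative-cochain-spectrum-formalism},
\[
H^q(f;H\FF_2)\cong\varinjlim_{t\in T^{\op}}H^q(f_t;H\FF_2).
\]
The operation \(\Sq^r_f\) is induced by the coefficient map \(\operatorname{sq}^r\), and that map is natural in the arrow. So \(\Sq^r_f\) is the colimit of the levelwise operations \(\Sq^r_{f_t}\). The cup square is compatible with this colimit in the same way, because the internal product of Proposition~\ref{prop:products-relative-cochain-spectra} is built levelwise from relative diagonals. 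Any class \(x\) is therefore represented at some finite stage \(t\), and it suffices to treat a single arrow of anima.

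\textbf{Step 2: the case of anima.} Fix one arrow \(f_t\) and replace it by the mapping-cylinder cofibration \(A_t\hookrightarrow M(f_t)\) of simplicial sets. Then \(C(f_t;H\FF_2)\simeq F(\Sigma^\infty(M(f_t)/A_t),H\FF_2)\). Under this identification, the spectral operation \((\Sigma^q\operatorname{sq}^r)\circ\bar x\) is by definition the classical reduced square \(\Sq^r\) on \(\widetilde H^*(M(f_t)/A_t)\cong H^*(M(f_t),A_t)\). Proposition~\ref{prop:classic_sq_properties}(vii),(viii) then give \(\Sq^r x=0\) for \(r>q\) and \(\Sq^q x=x\smile x\).

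It remains to check that this classical cup square agrees with the product induced by the relative diagonal \(f\to f\mathbin\square f\). Both are the reduced diagonal of the pair, so this is a standard compatibility. Negative-degree vanishing follows because \(P_t\) is connective, so \(\pi_{-q}F(P_t,H\FF_2)=0\) for \(q<0\), and this passes to filtered colimits.

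\textbf{Step 3: the listed groups.} Each listed group is identified with some \(H^*(k;H\FF_2)\) compatibly with the operations and the products. The assignments are as follows.
\begin{itemize}
\item \(H_{c,\mathrm{fin}}\) corresponds to \(k=h(i)\) via Theorem~\ref{thm:spectral-arithmetic-boundary-comparison}.
\item \(H_c(X_{\mathbf C})\) corresponds to \(k=h(i_{\mathbf C})\) after forgetting the action.
\item \(H_\infty\) corresponds to \(h(i_{\mathbf R})\) via Lemma~\ref{lem:ordinary-real-boundary-relative-cochains}, which applies since \(H\FF_2\) is bounded above.
\item \(H_c\) corresponds to \(\kappa_{X,j}\) via Lemma~\ref{lem:ordinary-compact-support-relative-cochains}.
\item Absolute étale cohomology corresponds to \(\varnothing\to h(X)\) via \eqref{eq:stable-etale-shape-comparison}.
\item \(H_Z(T)\) corresponds to \(h(T\setminus Z)\to h(T)\), by the remark after \eqref{eq:supported-cochains-as-relative-fiber}.
\end{itemize}
Each of these comparisons was stated to commute with stable coefficient operations and, where relevant, with products, so the conclusions transfer.

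\textbf{Main obstacle.} I expect the main difficulty to be the identification in Step 2 of the top square \(\Sq^q x\) with \(x^2\) under the spectral products. The identification of the operations themselves is essentially by definition. The product, however, was constructed abstractly from the pushout-product structure, so one must check that it recovers the classical relative cup product on pairs.

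I would handle this by naturality. Both products are natural transformations \(C(k)\wedge C(k)\to C(k)\) on arrows of anima. Both are induced from the external product composed with the diagonal. For pairs of simplicial sets, the external product in Proposition~\ref{prop:products-relative-cochain-spectra} is the smash-product pairing \(\widetilde H^*(K/L)\otimes\widetilde H^*(K/L)\to\widetilde H^*((K/L)\wedge(K/L))\). Pulling back along the reduced diagonal \(K/L\to(K/L)\wedge(K/L)\) is exactly the classical relative cup product, which is the product appearing in Proposition~\ref{prop:classic_sq_properties}(vii).
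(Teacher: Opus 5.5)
Your proposal is correct and follows the paper's proof essentially step for step. Both arguments take a level presentation, replace each \(f_t\) by its mapping cylinder so that classical relative instability and the top-square identity hold at every stage, pass to the filtered colimit, and then transport the result to the listed theories via the arrows \(h(i)\), \(h(i_{\mathbf C})\), \(h(i_{\mathbf R})\), \(\kappa_{X,j}\), \(\varnothing\to h(X)\), and \(h(T\setminus Z)\to h(T)\). Your explicit check that the pushout-product internal product recovers the classical relative cup product through the reduced diagonal \(K/L\to(K/L)\wedge(K/L)\) spells out a compatibility that the paper only uses implicitly.
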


\begin{proof}
Choose a level presentation of \(f\) over a small cofiltered
\(\infty\)-category \(I\),
\[
        f_t\colon A_t\longrightarrow Y_t
        \qquad(t\in I).
\]
Replacing each \(f_t\) by its mapping-cylinder cofibration gives natural
identifications
\[
        H^*(f_t;H\FF_2)
        \cong
        \widetilde H^*(M(f_t)/A_t;\FF_2)
        \cong
        H^*(M(f_t),A_t;\FF_2).
\]
By the defining correspondence between stable coefficient operations and
classical Steenrod operations, \(C(f_t;\operatorname{sq}^r)\) induces the
classical Steenrod square on the middle term, and hence the classical
relative square on the final term. Classical instability and the top-square
identity therefore hold at every level;
see~\cite[\S3.4, property~(4)]{FengEtaleSteenrod}. Negative-degree cohomology
vanishes.

By Proposition~\ref{prop:relative-cochain-spectrum-formalism},
\[
        H^q(f;H\FF_2)
        \cong
        \varinjlim_{t\in I^{\op}}H^q(f_t;H\FF_2),
\]
and the operations and products are induced by their levelwise
counterparts. Passing to the filtered colimit proves the assertion for
\(f\).

The finite and geometric boundary terms are obtained from \(h(i)\) and
\(h(i_{\mathbf C})\), respectively. The ordinary real-boundary and
ordinary compact-support terms are treated by
Lemmas~\ref{lem:ordinary-real-boundary-relative-cochains}
and~\ref{lem:ordinary-compact-support-relative-cochains}, respectively.
The absolute terms arise from
\[
        \varnothing\longrightarrow h(X),
        \qquad
        \varnothing\longrightarrow h(X_{\mathbf C}),
\]
and the supported term from
\[
        h(T\setminus Z)\longrightarrow h(T).
\]
The relevant comparison equivalences preserve products and stable
coefficient operations, so the preceding identities transport to all
the listed theories.
\end{proof}

Let \(M\) be a \(G_{\mathbf R}\)-spectrum.  Recall the natural cofiber sequence
\begin{equation}
\label{eq:homotopy-orbit-fixed-point-tate-sequence-instability}
        M_{hG_{\mathbf R}}
        \longrightarrow
        M^{hG_{\mathbf R}}
        \xrightarrow{\operatorname{can}_M}
        M^{tG_{\mathbf R}}.
\end{equation}
When \(M\) is one of the coefficient-functorial \(G_{\mathbf R}\)-spectra constructed
above, naturality of \(\operatorname{can}\) makes it commute with every stable coefficient
operation.  When, in addition, \(M\) carries the coefficient pairings of
Proposition~\ref{prop:multiplicative-arithmetic-boundary-diagram}, the fact that
\(\operatorname{can}\) is a lax symmetric monoidal natural transformation makes this map
multiplicative. For
\[
        M:=C_{\mathbf C,j}(X;H\FF_2),
\]
the map \(\operatorname{can}_M\) is precisely
\[
        \delta^{\mathrm{Tate}}_{X,j,H\FF_2}.
\]
Moreover, Theorem~\ref{thm:spectral-arithmetic-boundary-comparison} gives a
natural \(G_{\mathbf R}\)-equivariant equivalence
\[
        M\simeq R\Gamma_c(X_{\mathbf C},\FF_2).
\]

\begin{proposition}[Comparison and instability in the bounded range]
\label{prop:ordinary-tate-boundary-bounded-range-comparison}
Suppose that, for some integer \(N_X\geq0\),
\[
        H_c^q(X_{\mathbf C};\FF_2)=0
        \qquad(q>N_X).
\]
Then the canonical morphisms induce isomorphisms
\begin{align}
\label{eq:ordinary-tate-boundary-bounded-range-comparison}
        H^n_{\infty}(X;\FF_2)
        &\xrightarrow{\ \sim\ }
        \widehat H^n_{\infty}(X;\FF_2)
        &&(n>N_X),
\\
\label{eq:ordinary-modified-compact-support-bounded-range-comparison}
        H^n_c(X;\FF_2)
        &\xrightarrow{\ \sim\ }
        \widehat H^n_c(X;\FF_2)
        &&(n>N_X+1).
\end{align}
These maps commute with all Steenrod operations and preserve products.
Their inverses preserve products whenever the degrees of both factors and
their product lie in the indicated ranges.

Consequently, if \(q>N_X\), then every
\(x\in\widehat H_\infty^q(X;\FF_2)\) satisfies
\[
        \widehat\Sq_\infty^r(x)=0\quad(r>q),
        \qquad
        \widehat\Sq_\infty^q(x)=x^2.
\]
If \(q>N_X+1\), then every
\(x\in\widehat H_c^q(X;\FF_2)\) satisfies
\[
        \widehat\Sq_c^r(x)=0\quad(r>q),
        \qquad
        \widehat\Sq_c^q(x)=x^2.
\]
\end{proposition}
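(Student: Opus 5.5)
The plan is to compare everything through the long exact sequences of the cofiber sequence \eqref{eq:homotopy-orbit-fixed-point-tate-sequence-instability} for \(M=C_{\mathbf C,j}(X;H\FF_2)\simeq R\Gamma_c(X_{\mathbf C},\FF_2)\), and of the comparison fiber sequence \eqref{eq:ordinary-modified-comparison-fiber}. The key vanishing input is a bound on the homotopy orbits. The homotopy-orbit spectral sequence
\[
        H_s\bigl(G_{\mathbf R};H_c^{-t}(X_{\mathbf C};\FF_2)\bigr)
        \Longrightarrow
        \pi_{s+t}\bigl(M_{hG_{\mathbf R}}\bigr)
\]
has \(E^2\)-terms only for \(s\geq0\) and cohomological degree \(-t\leq N_X\). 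Hence \(\pi_{-n}(M_{hG_{\mathbf R}})=0\) for \(n>N_X\); in cohomological notation, \(H^n(M_{hG_{\mathbf R}})=0\) for \(n>N_X\).

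\textbf{Step 1: the first isomorphism.} The long exact sequence gives
\[
        H^n(M_{hG})\to H^n_\infty\to\widehat H^n_\infty\to H^{n+1}(M_{hG}).
\]
For \(n>N_X\) both outer terms vanish, so \eqref{eq:ordinary-tate-boundary-bounded-range-comparison} holds.

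\textbf{Step 2: the second isomorphism.} From \eqref{eq:ordinary-modified-comparison-fiber} we have
\[
        H^{n-1}(M_{hG})\to H^n_c\to\widehat H^n_c\to H^n(M_{hG}).
\]
The connecting term sits in degree \(n-1\) because of the orientation of the fiber sequence. Both outer terms vanish once \(n-1>N_X\), which gives \eqref{eq:ordinary-modified-compact-support-bounded-range-comparison}.

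\textbf{Step 3: compatibility with operations and products.} Both comparison maps, \(\delta^{\mathrm{Tate}}\) and \(\gamma\), are induced by morphisms of the boundary diagram \eqref{eq:spectral-arithmetic-boundary-diagram}. That diagram is functorial in the coefficient spectrum (Proposition~\ref{prop:arithmetic-boundary-coefficient-functoriality}), so the maps commute with all \(\Sq^r\) (Theorem~\ref{thm:canonical-arithmetic-boundary-operations}). By Proposition~\ref{prop:multiplicative-arithmetic-boundary-diagram} they are multiplicative. For the inverses, take \(a,b\) with \(a\), \(b\) and \(a+b\) all in range, and write \(\phi\) for the comparison. Then
\[
        \phi\bigl(\phi^{-1}(a)\phi^{-1}(b)\bigr)=ab,
\]
and injectivity of \(\phi\) in the degree of the product gives \(\phi^{-1}(ab)=\phi^{-1}(a)\phi^{-1}(b)\).

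\textbf{Step 4: instability.} Let \(x\in\widehat H^q_\infty\) with \(q>N_X\), and write \(x=\phi(y)\) with \(y\in H^q_\infty\). Then
\[
        \widehat\Sq^r_\infty(x)=\phi\bigl(\Sq^r_\infty y\bigr),
\]
and \(\Sq^r_\infty y=0\) for \(r>q\) by Proposition~\ref{prop:instability-relative-pro-anima-terms}, since \(H_\infty\) is modeled by relative pro-anima cochains (Lemma~\ref{lem:ordinary-real-boundary-relative-cochains}). Likewise
\[
        \widehat\Sq^q_\infty(x)=\phi(y^2)=\phi(y)^2=x^2,
\]
using only the forward map's multiplicativity. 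The argument for \(\widehat H_c\) is identical, using Lemma~\ref{lem:ordinary-compact-support-relative-cochains}.

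\textbf{Main obstacle.} The delicate point is getting the degree shift of the connecting maps right. Concretely, one must check that the fiber of \(\gamma\) is \(\Sigma^{-1}M_{hG}\), so that the threshold for compact support is \(N_X+1\) rather than \(N_X\). One must also justify that the homotopy-orbit spectral sequence converges for the bounded \(M\) here; this is clear because \(M\) is bounded above and orbits preserve coconnectivity bounds.
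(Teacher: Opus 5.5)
Your proposal is correct and is essentially the paper's argument: the vanishing of \(\pi_{-n}(M_{hG_{\mathbf R}})\) for \(n>N_X\) from the homotopy-orbit spectral sequence, the two long exact sequences (where the degree shift in the comparison fiber sequence gives the threshold \(N_X+1\) for compact support), coefficient-functoriality and multiplicativity of the boundary diagram, and transport of instability along the forward comparison maps. One small wording fix: homotopy orbits preserve homotopical connectivity, which is your cohomological upper bound, not coconnectivity; in any case \(M\) is bounded on both sides here, so the spectral sequence has only finitely many nonzero terms in each total degree and convergence is automatic.
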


\begin{proof}
Put
\[
        M:=C_{\mathbf C,j}(X;H\FF_2).
\]
By Theorem~\ref{thm:spectral-arithmetic-boundary-comparison},
\[
        \pi_{-q}M
        \cong
        H_c^q(X_{\mathbf C};\FF_2).
\]
Proposition~\ref{prop:instability-relative-pro-anima-terms} and the
hypothesis show that \(M\) is cohomologically bounded. The strongly
convergent homotopy-orbit spectral sequence
\[
        E_2^{-p,q}
        =
        H_p\!\left(
                G_{\mathbf R},
                H_c^q(X_{\mathbf C};\FF_2)
        \right)
        \Longrightarrow
        \pi_{p-q}\!\left(M_{hG_{\mathbf R}}\right),
        \qquad p\geq0,
\]
therefore gives
\[
        \pi_{-m}\!\left(M_{hG_{\mathbf R}}\right)=0
        \qquad(m>N_X).
\]
The long exact sequence associated with
\[
        M_{hG_{\mathbf R}}
        \longrightarrow
        M^{hG_{\mathbf R}}
        \xrightarrow{\operatorname{can}_M}
        M^{tG_{\mathbf R}}
\]
proves~\eqref{eq:ordinary-tate-boundary-bounded-range-comparison}.

In a stable \(\infty\)-category, composable morphisms
\[
        A\xrightarrow{u}B\xrightarrow{v}C
\]
determine a canonical fiber sequence
\[
        \Fib(u)\longrightarrow\Fib(vu)\longrightarrow\Fib(v).
\]
Applying this to
\[
        C_{c,\mathrm{fin},j}(X;H\FF_2)
        \xrightarrow{\delta^{\mathrm{ord}}_{X,j,H\FF_2}}
        M^{hG_{\mathbf R}}
        \xrightarrow{\operatorname{can}_M}
        M^{tG_{\mathbf R}},
\]
and using
\[
        \Fib(\operatorname{can}_M)\simeq M_{hG_{\mathbf R}},
\]
gives a fiber sequence
\[
        C_{c,j}(X;H\FF_2)
        \xrightarrow{\gamma_{X,j,H\FF_2}}
        \widehat C_{c,j}(X;H\FF_2)
        \longrightarrow
        M_{hG_{\mathbf R}}.
\]
Its long exact sequence
proves~\eqref{eq:ordinary-modified-compact-support-bounded-range-comparison}.

Proposition~\ref{prop:arithmetic-boundary-coefficient-functoriality}
shows that the comparison maps commute with Steenrod operations, while
Proposition~\ref{prop:multiplicative-arithmetic-boundary-diagram} shows
that they preserve products. The final assertions follow by transporting
the instability and top-square identities for
\(H_\infty^*(X;\FF_2)\) and \(H_c^*(X;\FF_2)\) through the corresponding
isomorphisms.
\end{proof}

The modified boundary fiber sequence gives an exact sequence
\begin{equation}
\label{eq:tate-boundary-instability-defect-sequence}
        \widehat H^{q-1}_{\infty}(X;\FF_2)
        \xrightarrow{\partial_{\infty}}
        \widehat H^q_c(X;\FF_2)
        \xrightarrow{\iota_c}
        H^q_{c,\mathrm{fin}}(X;\FF_2).
\end{equation}
Both arrows commute with the stable Steenrod operations, and \(\iota_c\) is multiplicative.
Put
\begin{equation}
\label{eq:tate-boundary-instability-ideal}
        I_{\infty}(X)
        :=
        \bigoplus_{k\in\mathbf Z}
        \operatorname{im}\!\left(
        \partial_{\infty}\colon
        \widehat H^{k-1}_{\infty}(X;\FF_2)
        \longrightarrow
        \widehat H^k_c(X;\FF_2)
        \right).
\end{equation}

\begin{theorem}[Instability modulo the Tate boundary]
\label{thm:instability-modulo-tate-boundary}
Let \(x\in\widehat H_c^q(X;\FF_2)\) be homogeneous.  Then
\begin{equation}
\label{eq:upper-square-is-tate-boundary}
        \widehat\Sq_c^r(x)
        \in
        \operatorname{im}\!\left(
        \partial_{\infty}\colon
        \widehat H^{q+r-1}_{\infty}(X;\FF_2)
        \longrightarrow
        \widehat H^{q+r}_c(X;\FF_2)
        \right)
        \qquad(r\geq0,\ r>q),
\end{equation}
and, if \(q\geq0\),
\begin{equation}
\label{eq:top-square-defect-is-tate-boundary}
        \widehat\Sq_c^q(x)+x^2
        \in
        \operatorname{im}\!\left(
        \partial_{\infty}\colon
        \widehat H^{2q-1}_{\infty}(X;\FF_2)
        \longrightarrow
        \widehat H^{2q}_c(X;\FF_2)
        \right).
\end{equation}
Consequently,
\begin{equation}
\label{eq:modified-compact-support-unstable-quotient}
        \widehat H_c^*(X;\FF_2)\big/I_{\infty}(X)
\end{equation}
is an unstable non-unital \(\mathcal A_2\)-algebra.  Its negative-degree part is zero.
\end{theorem}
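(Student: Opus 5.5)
The plan is to push everything into \(H^*_{c,\mathrm{fin}}(X;\FF_2)\), where instability holds, and then read the defects off the exact sequence~\eqref{eq:tate-boundary-instability-defect-sequence}.

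\textbf{Step 1: the two congruences.} Let \(x\in\widehat H_c^q(X;\FF_2)\) and put \(y:=\iota_c(x)\in H^q_{c,\mathrm{fin}}(X;\FF_2)\). By Theorem~\ref{thm:canonical-arithmetic-boundary-operations}, \(\iota_c\) is \(\mathcal A_2\)-linear, so
\[
\iota_c\bigl(\widehat\Sq_c^r(x)\bigr)=\Sq^r_{c,\mathrm{fin}}(y).
\]
By Proposition~\ref{prop:multiplicative-arithmetic-boundary-diagram}, \(\iota_c\) is also multiplicative, so \(\iota_c(x^2)=y^2\). Proposition~\ref{prop:instability-relative-pro-anima-terms} applies to \(H^*_{c,\mathrm{fin}}\), since it is the relative cohomology of \(h(i)\). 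It gives \(\Sq^r_{c,\mathrm{fin}}(y)=0\) for \(r>q\ge0\), and \(\Sq^q_{c,\mathrm{fin}}(y)=y^2\). If \(q<0\), then \(H^q_{c,\mathrm{fin}}=0\), so \(y=0\) and its square is zero as well. In every case, \(\widehat\Sq_c^r(x)\) (for \(r>q\)) and \(\widehat\Sq_c^q(x)+x^2\) lie in \(\ker\iota_c\). By exactness of~\eqref{eq:tate-boundary-instability-defect-sequence}, \(\ker\iota_c=\operatorname{im}\partial_\infty\) in the relevant degree. This gives \eqref{eq:upper-square-is-tate-boundary} and \eqref{eq:top-square-defect-is-tate-boundary}.

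\textbf{Step 2: \(I_\infty(X)\) is an ideal and a sub-\(\mathcal A_2\)-module.} That it is an \(\mathcal A_2\)-submodule follows because \(\partial_\infty\) commutes with the stable operations. That it is an ideal is the point needing the most care. I would argue as follows. Since \(I_\infty(X)=\ker\iota_c\) and \(\iota_c\) is a multiplicative map of non-unital algebras, its kernel is automatically a two-sided ideal. This avoids having to construct any module structure of \(\widehat H_c^*\) over \(\widehat H_\infty^{*}\).

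\textbf{Step 3: the quotient is unstable.} The quotient \(\widehat H_c^*/I_\infty(X)\) is then a non-unital \(\mathcal A_2\)-algebra. It embeds, compatibly with products and operations, into \(H^*_{c,\mathrm{fin}}(X;\FF_2)\) via \(\iota_c\). Hence instability and the identity \(\Sq^q(x)=x^2\), which hold in the target, hold in the quotient. The Cartan formula is inherited from Theorem~\ref{thm:arithmetic-boundary-internal-mixed-cartan}. Finally, the negative-degree part of the quotient injects into \(H^{<0}_{c,\mathrm{fin}}=0\), so it vanishes.

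The main subtlety is in Step 2: one must check that \(\iota_c\), the map \(\widehat C_c\to C_{c,\mathrm{fin}}\), is genuinely multiplicative for the non-unital algebra structures. This was established in Proposition~\ref{prop:multiplicative-arithmetic-boundary-diagram} by forming fibers in non-unital commutative algebras.
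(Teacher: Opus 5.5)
Your proposal is correct and matches the paper's proof essentially step for step. You push everything to \(H^*_{c,\mathrm{fin}}(X;\FF_2)\) along the \(\mathcal A_2\)-linear, multiplicative map \(\iota_c\), apply instability there (handling \(q<0\) by the vanishing of negative-degree finite compact support), conclude by exactness of the defect sequence, and use \(I_\infty(X)=\ker\iota_c\) to get an \(\mathcal A_2\)-stable ideal; your explicit embedding of the quotient into \(H^*_{c,\mathrm{fin}}\) is just a more detailed wording of the paper's ``the quotient inherits'' step.
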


\begin{proof}
By exactness of~\eqref{eq:tate-boundary-instability-defect-sequence}, it is enough to show
that the images under \(\iota_c\) of the classes
in~\eqref{eq:upper-square-is-tate-boundary}
and~\eqref{eq:top-square-defect-is-tate-boundary} vanish.  Suppose first that \(q\geq0\).
Since \(\iota_c\) is
\(\mathcal A_2\)-linear and multiplicative,
Proposition~\ref{prop:instability-relative-pro-anima-terms} gives
\[
        \iota_c\!\left(\widehat\Sq_c^r(x)\right)
        =\Sq^r_{c,\mathrm{fin}}(\iota_c(x))=0
        \qquad(r>q)
\]
and
\[
        \iota_c\!\left(\widehat\Sq_c^q(x)+x^2\right)
        =\Sq^q_{c,\mathrm{fin}}(\iota_c(x))+\iota_c(x)^2=0.
\]
If \(q<0\), then \(H^q_{c,\mathrm{fin}}(X;\FF_2)=0\), so
\(\iota_c(x)=0\) and \(\iota_c(\widehat\Sq_c^r(x))=0\) for every \(r\geq0\).
Moreover, \(I_{\infty}(X)=\ker(\iota_c)\) is an
\(\mathcal A_2\)-stable ideal.  Thus, the quotient inherits the asserted algebra structure
and instability relations.  Finally,
\(H^q_{c,\mathrm{fin}}(X;\FF_2)=0\) for \(q<0\), so every negative-degree modified class
lies in \(\ker(\iota_c)\).
\end{proof}

We next make the Tate contribution explicit.  Put
\[
        G_{\mathbf R}=C_2
\]
and let
\[
        \beta_{\mathbf R}\in H^1(BC_2;\FF_2)
\]
be the standard generator.

For a graded \(\FF_2\)-algebra \(R^*=\bigoplus_qR^q\), write
\[
        R^{\wedge,+}
        :=
        \bigcup_{q_0\in\mathbf Z}
        \prod_{q\geq q_0}R^q
\]
for its forward degree completion. Multiplication extends continuously to
\(R^{\wedge,+}\), and, since \(\Sq^r\) raises degree by \(r\), the total
square
\[
        \Sq:=\sum_{r\geq0}\Sq^r
\]
is well-defined there. The Cartan formula makes \(\Sq\) multiplicative. In
particular,
\[
        \bigl(\FF_2[\beta_{\mathbf R}^{\pm1}]\bigr)^{\wedge,+}
        =
        \FF_2[[\beta_{\mathbf R}]][\beta_{\mathbf R}^{-1}].
\]

\begin{proposition}[The \(C_2\)-fixed-point calculation]
\label{prop:c2-point-tate-steenrod-calculation}
For the trivial \(C_2\)-spectrum \(H\FF_2\), one has
\begin{equation}
\label{eq:c2-point-hfp-tate-rings}
        H^*\!\left((H\FF_2)^{hC_2}\right)
        =\FF_2[\beta_{\mathbf R}],
        \qquad
        H^*\!\left((H\FF_2)^{tC_2}\right)
        =\FF_2[\beta_{\mathbf R}^{\pm1}],
\end{equation}
and the canonical map is the localization at \(\beta_{\mathbf R}\).  For every
\(m\in\mathbf Z\) and \(r\geq0\),
\begin{equation}
\label{eq:steenrod-square-beta-integral-power}
        \Sq^r(\beta_{\mathbf R}^{m})
        =\binom{m}{r}\beta_{\mathbf R}^{m+r},
\end{equation}
where the integral binomial coefficient is reduced modulo \(2\). In
particular, in the forward degree completion
\(\FF_2[[\beta_{\mathbf R}]][\beta_{\mathbf R}^{-1}]\),
\begin{equation}
\label{eq:total-steenrod-square-beta-minus-one}
        \Sq(\beta_{\mathbf R}^{-1})
        =
        \beta_{\mathbf R}^{-1}(1+\beta_{\mathbf R})^{-1}
        =
        \sum_{r\geq0}\beta_{\mathbf R}^{r-1}
        =
        \beta_{\mathbf R}^{-1}+1+\beta_{\mathbf R}
        +\beta_{\mathbf R}^2+\cdots .
\end{equation}
\end{proposition}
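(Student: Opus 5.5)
The plan has four steps: compute the rings, transport the Steenrod action to positive powers of \(\beta_{\mathbf R}\), extend it to negative powers by Cartan, then sum.

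\textbf{Step 1: the rings.} Here \(H^*(-)\) means \(\pi_{-*}\). Since \(H\FF_2\) has trivial action, \((H\FF_2)^{hC_2}\simeq F(\Sigma^\infty_+BC_2,H\FF_2)=C(BC_2;H\FF_2)\). Its homotopy is \(H^*(BC_2;\FF_2)=H^*(\mathbf{RP}^\infty;\FF_2)=\FF_2[\beta_{\mathbf R}]\).

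For the Tate ring, use the cofiber sequence \(M_{hC_2}\to M^{hC_2}\to M^{tC_2}\). Note that \(\pi_*((H\FF_2)_{hC_2})=H_*(BC_2;\FF_2)\) is concentrated in homological degrees \(\geq 0\), while the norm map is zero on homotopy in \(\FF_2\)-coefficients (it is multiplication by \(|C_2|=2\) on \(\pi_0\)). The long exact sequence therefore gives
\[
\pi_{-*}(H\FF_2)^{tC_2}\cong \FF_2[\beta_{\mathbf R}]\oplus \Sigma^{-1}H_*(BC_2).
\]
To get the ring structure, use that \((-)^{tC_2}\) is lax monoidal and that \(\operatorname{can}\) is a ring map. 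Then invoke the standard periodicity: \((H\FF_2)^{tC_2}\) is a module over \((H\FF_2)^{hC_2}\) on which \(\beta_{\mathbf R}\) acts invertibly. This follows from the Tate spectral sequence \(\widehat H^*(C_2;\FF_2)=\FF_2[\beta^{\pm1}]\), which collapses for degree reasons since everything is concentrated in one row, together with the classical identification \(\widehat H^*(C_2;\FF_2)=\FF_2[\beta^{\pm1}]\) via cup product on complete resolutions. It follows that \(\operatorname{can}\) is localization at \(\beta_{\mathbf R}\).

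\textbf{Step 2: nonnegative powers.} On \((H\FF_2)^{hC_2}=C(BC_2;H\FF_2)\), the stable coefficient operations are the classical Steenrod squares on \(\mathbf{RP}^\infty\) (Theorem~\ref{thm:relative-stable-steenrod-operations} applied to \(\varnothing\to BC_2\)). For \(\deg\beta=1\), instability gives \(\Sq(\beta)=\beta+\beta^2\). The Cartan formula then gives \(\Sq(\beta^m)=\beta^m(1+\beta)^m\), which is \eqref{eq:steenrod-square-beta-integral-power} for \(m\geq0\).

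\textbf{Step 3: negative powers (the main obstacle).} The operations on the Tate term commute with \(\operatorname{can}\), since \(\operatorname{can}\) is natural in the coefficient spectrum, and they satisfy the Cartan formula because \(\operatorname{can}\) is lax monoidal (the same argument as Theorem~\ref{thm:arithmetic-boundary-internal-mixed-cartan}). Thus the total square \(\Sq\) is a ring endomorphism of the forward completion \(\FF_2[[\beta]][\beta^{-1}]\) extending Step 2. The point needing care is that the total square is only defined on this completion, and that we cannot argue from instability in negative degrees.

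Since \(\beta\cdot\beta^{-1}=1\) and \(\Sq(1)=1\), we get
\[
\Sq(\beta^{-1})=\Sq(\beta)^{-1}=\beta^{-1}(1+\beta)^{-1},
\]
where the inverse is unique because \(\Sq(\beta)=\beta(1+\beta)\) is a unit in the completed ring. Hence \(\Sq(\beta^m)=\beta^m(1+\beta)^m\) for all \(m\in\mathbf Z\). Comparing degree \(m+r\) coefficients, and expanding \((1+\beta)^m\) by the generalized binomial series, which has integral coefficients reduced mod \(2\), yields \eqref{eq:steenrod-square-beta-integral-power}.

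\textbf{Step 4: the total square of \(\beta^{-1}\).} For \(m=-1\) we have \(\binom{-1}{r}=(-1)^r\equiv1\), which gives \eqref{eq:total-steenrod-square-beta-minus-one}. Alternatively, it follows directly from \((1+\beta)^{-1}=\sum_r\beta^r\).
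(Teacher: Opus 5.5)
Your proposal is correct and follows essentially the same route as the paper. You identify the rings via the standard (Tate) cohomology of $C_2$, obtain $\Sq(\beta_{\mathbf R})=\beta_{\mathbf R}(1+\beta_{\mathbf R})$ from classical instability on $BC_2$, and use multiplicativity of the total square on the forward completion (coefficient Cartan identity plus lax monoidality of $(-)^{tC_2}$) to invert and get $\Sq(\beta_{\mathbf R}^m)=\beta_{\mathbf R}^m(1+\beta_{\mathbf R})^m$ for all $m\in\mathbf Z$. Your Step~1 spells out more of the ``standard'' ring computation than the paper does. One small correction: the Cartan formula on the Tate term rests on the lax monoidality of $(-)^{tC_2}$ itself, while the lax monoidality of $\operatorname{can}$ is needed only to make $\operatorname{can}$ multiplicative.
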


\begin{proof}
The standard cyclic-group cohomology and Tate cohomology calculations
give~\eqref{eq:c2-point-hfp-tate-rings}; under these identifications the canonical map is the
inclusion of the polynomial ring into its Laurent localization.  Classical instability on
\(BC_2\) gives
\[
        \Sq^0(\beta_{\mathbf R})=\beta_{\mathbf R},
        \qquad
        \Sq^1(\beta_{\mathbf R})=\beta_{\mathbf R}^2,
        \qquad
        \Sq^r(\beta_{\mathbf R})=0\quad(r>1).
\]
The coefficient identity~\eqref{eq:stable-steenrod-coproduct-cartan}
and the lax symmetric monoidality of the Tate construction imply that the
total square is multiplicative in the forward degree completion. Since
\[
        \Sq(\beta_{\mathbf R})
        =
        \beta_{\mathbf R}(1+\beta_{\mathbf R}),
        \qquad
        \Sq(1)=1,
\]
it follows that
\[
        \Sq(\beta_{\mathbf R}^{m})
        =
        \beta_{\mathbf R}^{m}(1+\beta_{\mathbf R})^{m}
\]
in
\(\FF_2[[\beta_{\mathbf R}]][\beta_{\mathbf R}^{-1}]\) for every
\(m\in\mathbf Z\), where negative powers of \(1+\beta_{\mathbf R}\) are
formed in \(\FF_2[[\beta_{\mathbf R}]]^\times\), equivalently by formal
binomial expansion. Comparing homogeneous components of degree \(m+r\)
gives~\eqref{eq:steenrod-square-beta-integral-power}. Taking \(m=-1\)
gives~\eqref{eq:total-steenrod-square-beta-minus-one}.
\end{proof}

The failure in instability is not confined to negative cohomological degrees. The following
\(C_2\)-equivariant calculation gives a degree-zero counterexample; its
arithmetic realization is recorded in
Remark~\ref{rem:scope-of-tate-instability-examples}.

For a finite-dimensional real \(C_2\)-representation \(V\), write
\[
        S^V:=V\cup\{\infty\}
\]
for its one-point compactification, based at \(\infty\). Let \(1\) and
\(\sigma\) denote the trivial and sign real representations,
respectively. Under complex conjugation,
\(\mathbf C\cong 1\oplus\sigma\) as real \(C_2\)-representations.
Consequently, the standard affine chart gives a based
\(C_2\)-equivariant identification
\[
        \bigl(\mathbf P^1(\mathbf C),\infty\bigr)
        \simeq
        S^{1\oplus\sigma}.
\]

\begin{proposition}[The \(\mathbf P^1_{\mathbf R}\) test]
\label{prop:projective-line-real-tate-instability-test}
Let
\[
        M_{\mathbf P^1}:=C\!\left(\mathbf P^1(\mathbf C);H\FF_2\right),
\]
with the \(C_2\)-action induced by complex conjugation. Let
\(\eta\in H^2(M_{\mathbf P^1}^{hC_2})\) be the Borel-equivariant Thom
class associated with the displayed representation-sphere
identification. Then
\begin{align}
\label{eq:projective-line-real-hfp-ring}
        H^*(M_{\mathbf P^1}^{hC_2})
        &\cong
        \FF_2[\beta_{\mathbf R},\eta]/(\eta^2),
\\
\label{eq:projective-line-real-tate-ring}
        H^*(M_{\mathbf P^1}^{tC_2})
        &\cong
        \FF_2[\beta_{\mathbf R}^{\pm1},\eta]/(\eta^2),
\end{align}
and the canonical map is localization at \(\beta_{\mathbf R}\).  Moreover,
\begin{equation}
\label{eq:steenrod-on-projective-line-thom-class}
        \Sq^1(\eta)=\beta_{\mathbf R}\eta,
        \qquad
        \Sq^r(\eta)=0\quad(r\geq2),
\end{equation}
and, for \(m\in\mathbf Z\) and \(r\geq0\),
\begin{equation}
\label{eq:steenrod-on-projective-line-tate-monomial}
        \Sq^r(\beta_{\mathbf R}^{m}\eta)
        =\binom{m+1}{r}\beta_{\mathbf R}^{m+r}\eta.
\end{equation}
In particular, the class
\[
        z:=\beta_{\mathbf R}^{-2}\eta
        \in H^0(M_{\mathbf P^1}^{tC_2})
\]
satisfies
\begin{equation}
\label{eq:projective-line-degree-zero-tate-counterexample}
        \Sq^r(z)=\beta_{\mathbf R}^{r-2}\eta\neq0
        \quad(r\geq0),
        \qquad
        z^2=0.
\end{equation}
Thus, the upper-vanishing and top-square identities both fail on the Tate boundary, already
for a class of degree zero.
\end{proposition}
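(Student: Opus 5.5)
The plan is to reduce everything to the equivariant representation sphere $S^{1\oplus\sigma}$ and to the fixed-point calculation of Proposition~\ref{prop:c2-point-tate-steenrod-calculation}.

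\textbf{Step 1: the rings.} The based identification $(\mathbf P^1(\mathbf C),\infty)\simeq S^{1\oplus\sigma}$ gives a splitting of $C_2$-spectra
\[
M_{\mathbf P^1}\simeq H\FF_2\oplus F(\Sigma^\infty S^{1\oplus\sigma},H\FF_2),
\]
with the basepoint inclusion furnishing the first summand. The Borel cohomology of the second summand is the reduced cohomology of the Thom space of the bundle $1\oplus\sigma$ over $BC_2$. By the Thom isomorphism, this is a free $\FF_2[\beta_{\mathbf R}]$-module on the Thom class $\eta$ in degree $2$. We have $\eta^2=0$ because the reduced diagonal of a based sphere is null after forgetting, and more precisely because $\eta$ restricts to zero at the basepoint, so the relative product of two such classes vanishes on the reduced part. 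This gives~\eqref{eq:projective-line-real-hfp-ring}.

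For the Tate ring, I would use that $(-)^{tC_2}$ is exact and lax monoidal, and that on bounded $H\FF_2$-modules with finite-dimensional homotopy the canonical map inverts $\beta_{\mathbf R}$. The norm cofiber sequence combined with the fixed-point calculation then gives~\eqref{eq:projective-line-real-tate-ring}, with the canonical map identified as localization at $\beta_{\mathbf R}$.

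\textbf{Step 2: the Thom class.} The Borel Stiefel--Whitney class of $1\oplus\sigma$ is $w=1+\beta_{\mathbf R}$. Thom's definition $\Sq(\eta)=\eta\cdot w$, together with classical instability on the pro-anima Thom space (Proposition~\ref{prop:instability-relative-pro-anima-terms}), gives
\[
\Sq^1\eta=\beta_{\mathbf R}\eta,\qquad \Sq^r\eta=0\quad(r\geq2),
\]
which is~\eqref{eq:steenrod-on-projective-line-thom-class}. Concretely, $\Sq^2\eta=\eta^2=0$, and $\Sq^r\eta$ vanishes for $r>2$ by instability in degree $2$.

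\textbf{Step 3: monomials and the counterexample.} The mixed Cartan formula for the module structure over $(H\FF_2)^{hC_2}$, and respectively over $(H\FF_2)^{tC_2}$ via lax monoidality of $\operatorname{can}$, makes the total square multiplicative in the completion. Hence
\[
\Sq(\beta_{\mathbf R}^m\eta)=\beta_{\mathbf R}^m(1+\beta_{\mathbf R})^m\cdot\eta(1+\beta_{\mathbf R})=\beta_{\mathbf R}^m\eta(1+\beta_{\mathbf R})^{m+1},
\]
and reading off the coefficients gives~\eqref{eq:steenrod-on-projective-line-tate-monomial}.

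For $m=-2$, expand
\[
(1+\beta_{\mathbf R})^{-1}=\sum_{r\geq0}\beta_{\mathbf R}^r,
\]
so $\binom{-1}{r}\equiv1$ and $\Sq^r(z)=\beta_{\mathbf R}^{r-2}\eta\neq0$ for every $r\geq0$. Also $z^2=\beta_{\mathbf R}^{-4}\eta^2=0$. Taking $r=0$ gives $\Sq^0(z)=z\neq0$ but $z^2=0$, while $r>0$ gives failures of upper vanishing.

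\textbf{Main obstacle.} The delicate step is justifying that the total square is multiplicative on the Tate ring and that the Thom relation holds there. I would handle this by transporting both statements along the localization map. The homotopy-fixed-point identities are classical, because there the objects are genuine Borel cochains of the Thom space. Since the Tate ring is the localization at $\beta_{\mathbf R}$, every class has the form $\beta_{\mathbf R}^{-k}y$ with $y$ a homotopy-fixed-point class. Multiplicativity for products with $\beta_{\mathbf R}^{-k}$ then reduces to Proposition~\ref{prop:c2-point-tate-steenrod-calculation} together with the stable Cartan identity~\eqref{eq:stable-steenrod-coproduct-cartan}.
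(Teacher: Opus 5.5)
Your overall route is the paper's. You identify the reduced Borel cohomology with the Thom cohomology of \(1\oplus\sigma\) over \(BC_2\), use \(w(1\oplus\sigma)=1+\beta_{\mathbf R}\) to get \(\Sq(\eta)=(1+\beta_{\mathbf R})\eta\), invert \(\beta_{\mathbf R}\) to reach the Tate ring, and combine the Cartan formula with \(\Sq(\beta_{\mathbf R}^m)=\beta_{\mathbf R}^m(1+\beta_{\mathbf R})^m\). The Tate-ring, monomial, and \(m=-2\) steps are fine.

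However, your justification of \(\eta^2=0\) in Step~1 is wrong, and neither reason you give can work. That \(\eta\) vanishes at the basepoint only says \(\eta\) is a relative class, and squares of relative classes need not vanish. Non-equivariant nullity of the reduced diagonal of \(S^2\) says nothing about Borel cohomology. Both arguments apply verbatim to other representation spheres, where the conclusion is false:
\begin{itemize}
\item for \(S^{\sigma}\), the Thom class \(u\in\widetilde H^1_{C_2}(S^\sigma)\) satisfies \(u^2=\Sq^1u=\beta_{\mathbf R}u\neq0\);
\item for \(S^{2\sigma}\), one has \(\eta^2=w_2(2\sigma)\eta=\beta_{\mathbf R}^2\eta\neq0\).
\end{itemize}
In general \(\eta_V^2=w_{\dim V}(V)\,\eta_V\), so the vanishing must use something specific to \(V=1\oplus\sigma\). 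Your Step~2 inherits the problem, because you derive \(\Sq^2\eta=0\) from \(\eta^2=0\).

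The repair is to reverse the order, as the paper does. The Thom identity alone gives \(\Sq^r\eta=w_r(1\oplus\sigma)\eta\). Hence \(\Sq^1\eta=\beta_{\mathbf R}\eta\) and \(\Sq^r\eta=0\) for all \(r\geq2\), with no instability needed. The top-square identity on the Borel construction, which is a genuine space, then gives \(\eta^2=\Sq^2\eta=w_2(1\oplus\sigma)\eta=0\).

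Alternatively, use the trivial summand directly. Since \(S^{1\oplus\sigma}\simeq\Sigma S^{\sigma}\) with trivial action on the suspension coordinate, the reduced Borel construction \((EC_2)_+\wedge_{C_2}S^{1\oplus\sigma}\) is a suspension, so products of reduced classes vanish.

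With either fix, the rest of your argument goes through and agrees with the paper's proof.
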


\begin{proof}
Under the displayed representation-sphere identification, reduced Borel cohomology is the
Thom cohomology of the real vector bundle \(1\oplus\sigma\) over \(BC_2\).  Its total
Stiefel--Whitney class is
\[
        w(1\oplus\sigma)=1+\beta_{\mathbf R}.
\]
The classical Thom identity therefore gives
\[
        \Sq(\eta)=(1+\beta_{\mathbf R})\eta.
\]
See, for example,~\cite[\S8]{MILNOR_STASHEFF}.
The \'{e}tale counterpart of this Thom-class characterization
is introduced below in Definition~\ref{def:etale-sw}.
Since Borel cohomology is the ordinary cohomology of a space, its top-square identity gives
\[
        \eta^2=\Sq^2(\eta)=w_2(1\oplus\sigma)\eta=0.
\]
This proves~\eqref{eq:projective-line-real-hfp-ring}
and~\eqref{eq:steenrod-on-projective-line-thom-class}. For this bounded two-cell object, the
Tate spectral sequence is obtained from the homotopy-fixed-point spectral sequence by
inverting \(\beta_{\mathbf R}\).  This
gives~\eqref{eq:projective-line-real-tate-ring}.  The componentwise Cartan formula
and~\eqref{eq:steenrod-square-beta-integral-power} now yield
\[
\begin{split}
        \Sq^r(\beta_{\mathbf R}^{m}\eta)
        &=\binom{m}{r}\beta_{\mathbf R}^{m+r}\eta
          +\binom{m}{r-1}\beta_{\mathbf R}^{m+r}\eta
\\
        &=\binom{m+1}{r}\beta_{\mathbf R}^{m+r}\eta.
\end{split}
\]
Taking \(m=-2\) and using \(\binom{-1}{r}\equiv1\pmod2\)
proves~\eqref{eq:projective-line-degree-zero-tate-counterexample}.
\end{proof}

\begin{remark}[Arithmetic realization and scope]
\label{rem:scope-of-tate-instability-examples}
Let \(X=\mathbf P^1_{\mathbf Z}\) with the identity compactification
\(j=\mathrm{id}_X\). Since \(\partial_jX=\varnothing\), the underlying
spectrum of \(C_{\mathbf C,j}(X;H\FF_2)\) is canonically equivalent to
\[
        C\bigl(h(\mathbf P^1_{\mathbf C});H\FF_2\bigr).
\]
Friedlander's functorial comparison theorem identifies, after profinite
completion, the \'{e}tale homotopy type of
\(\mathbf P^1_{\mathbf C}\) with the singular homotopy type of
\(\mathbf P^1(\mathbf C)\);
see~\cite[Theorem~8.4]{Friedlander}. Since this comparison is an isomorphism
on mod-\(2\) cohomology in every degree, it induces an equivalence of
\(H\FF_2\)-valued cochain spectra. Its compatibility with complex
conjugation promotes this to an equivalence
\[
        C_{\mathbf C,j}(X;H\FF_2)
        \xrightarrow{\ \sim\ }
        M_{\mathbf P^1}
\]
in \(\mathrm{Sp}^{BC_2}\).
Taking Tate constructions and applying
Theorem~\ref{thm:spectral-arithmetic-boundary-comparison} gives
\[
\begin{aligned}
        \widehat R\Gamma_\infty(X,\FF_2)
        &\simeq
        C_{\infty,j}(X;H\FF_2)
\\
        &\simeq
        C_{\mathbf C,j}(X;H\FF_2)^{tC_2}
        \simeq
        M_{\mathbf P^1}^{tC_2}.
\end{aligned}
\]
Compatibility of the comparison between Betti and \'{e}tale cohomology with
diagonals, together with
Proposition~\ref{prop:multiplicative-spectrum-valued-comparison},
shows that this equivalence preserves products. Its naturality in the
coefficient spectrum shows that it commutes with stable coefficient
operations. Thus, \(z=\beta_{\mathbf R}^{-2}\eta\) determines a class in
\(\widehat H_\infty^0(X;\FF_2)\) for which both upper vanishing and the
top-square identity fail.

The preceding calculations involve only the operations \(\Sq^r\) with
\(r\geq0\): Tate inversion creates negative-degree classes, not negatively
indexed operations. They do not imply that every modified compact-support
group contains a class violating instability. In general,
Theorem~\ref{thm:instability-modulo-tate-boundary} shows that such defects
lie in the ideal \(I_\infty(X)\) contributed by the Tate boundary.
\end{remark}

\section{Arithmetic \'{e}tale characteristic classes}\label{chapter:char_classes}
In this chapter we introduce the main characteristic classes studied in this paper---the \'{e}tale Stiefel--Whitney classes and the Wu classes---and establish one of our main theorems, Theorem~\ref{thm:main-wu}, which relates them for flat projective regular schemes over bases that are either finite fields or rings of \(S\)-integers, under the standing assumption that \(2\) is invertible on the base. We begin in~\S\ref{ssec:sw-classes} by defining \'{e}tale Stiefel--Whitney classes, and then in~\S\ref{sec:absolute-relative-wu-formulas} we introduce Wu classes and prove Theorem~\ref{thm:main-wu}.

\subsection{\'{E}tale Stiefel--Whitney classes}\label{ssec:sw-classes}
We follow Benoist~\cite[\S2.4]{Benoist}; see also
Feng~\cite[\S5]{FengEtaleSteenrod} for the finite-field case. Let \(X\)
be a regular Noetherian scheme on which \(2\) is invertible. Writing \(\underline{\mu_2}_X\) for the constant sheaf of square roots
of unity on \(X\), we use the canonical isomorphism
\[
        \underline{\FF_2}_X
        \xrightarrow{\ \sim\ }
        \underline{\mu_2}_X,
        \qquad
        1\longmapsto-1,
\]
and its tensor powers to identify
\(\underline{\mu_2}_X^{\otimes a}\simeq\underline{\FF_2}_X\)
for \(a\in\mathbf Z\). Throughout this chapter, we write \(\FF_2\)
and \(\mu_2^{\otimes a}\) for these constant coefficient sheaves
whenever the ambient scheme is clear. Through
these identifications, the Steenrod operations on cohomology with
supports used below are those constructed in
\S\ref{subsec:spectral-relative-steenrod-construction}.

Let \(p\colon E\to X\) be a vector bundle of rank \(r\), and write
\(i\colon X\hookrightarrow E\) for its zero section. We denote by
\(H^*_{X,\mathrm{\acute{e}t}}(E;-)\) the \'{e}tale cohomology of \(E\)
with supports in \(i(X)\). By Gabber's absolute purity, there is a Gysin
isomorphism
\[
        \phi_E\colon
        H^m_{\mathrm{\acute{e}t}}(X;\mathbb{F}_2)
        \xrightarrow{\ \sim\ }
        H^{m+2r}_{X,\mathrm{\acute{e}t}}
        (E;\mu_2^{\otimes r}),
\]
and we set
\[
        s_{X/E}:=
        \phi_E(1)
        \in
        H^{2r}_{X,\mathrm{\acute{e}t}}
        (E;\mu_2^{\otimes r}).
\]
This is the Thom class of \(E\); see~\cite[Expos\'e~XVI, Th\'eor\`eme~3.1.1]{Riou}.

\begin{definition}[See Benoist~{\cite[\S2.4]{Benoist}}]\label{def:etale-sw}
For \(j\ge 0\), the \(j\)-th \'{e}tale Stiefel--Whitney class of \(E\) is
\[
w_j(E):=\phi_E^{-1}\bigl(\Sq^j(s_{X/E})\bigr)\in
H^j_{\mathrm{\acute{e}t}}(X;\mathbb{F}_2),
\]
where \(\Sq^j\) denotes the \(j\)-th \'{e}tale Steenrod square on $H^{2r}_{X,\mathrm{\acute{e}t}}(E;\mu_2^{\otimes r})$. We write
\[
        w(E):=
        \sum_{j\geq0}w_j(E)
        \in
        H^*_{\mathrm{\acute{e}t}}(X;\mathbb{F}_2)
\]
for the total \'{e}tale Stiefel--Whitney class.
Proposition~\ref{prop:etale-sw-basic}(i) below shows that this sum is
finite. Equivalently, \(w_j(E)\) is characterized by
\[
\Sq^j(s_{X/E})=p^*\bigl(w_j(E)\bigr)\smile s_{X/E},
\]
and, after summing over \(j\),
\[
\Sq(s_{X/E})=p^*\bigl(w(E)\bigr)\smile s_{X/E}, \qquad
\Sq:=\sum_{j\ge 0}\Sq^j.
\]
\end{definition}

\begin{proposition}[cf.~{\cite[\S2.4]{Benoist}}, see also~{\cite[\S5.4]{FengEtaleSteenrod}}]\label{prop:etale-sw-basic}
The classes \(w_j(E)\) satisfy the following properties.
\begin{enumerate}
\item[(i)] One has \(w_0(E)=1\) and \(w_j(E)=0\) for \(j>2r\).
\item[(ii)] (Naturality) If \(f\colon Y\to X\) is a morphism of regular Noetherian \(\mathbf{Z}[1/2]\)-schemes, then
\[
f^*w_j(E)=w_j(f^*E)\in H^j_{\mathrm{\acute{e}t}}(Y;\mathbb{F}_2).
\]
\item[(iii)] (Whitney product formula) For every short exact sequence
\[
0\longrightarrow E'\longrightarrow E\longrightarrow E''\longrightarrow 0
\]
of vector bundles on \(X\), one has
\[
w(E)=w(E')\smile w(E'').
\]
In particular,
\[
w(E'\oplus E'')=w(E')\smile w(E'').
\]
\end{enumerate}
\end{proposition}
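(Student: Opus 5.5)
The plan is to prove the three properties of Proposition~\ref{prop:etale-sw-basic} directly from the Thom-class characterization \(\Sq(s_{X/E})=p^*w(E)\smile s_{X/E}\) of Definition~\ref{def:etale-sw}. We combine it with the formal properties of the supported Steenrod operations: naturality, Cartan formula, instability and top square, as recorded in Proposition~\ref{prop:instability-relative-pro-anima-terms} and the mixed Cartan formulas of \S\ref{subsec:steenrod-products-formal-properties}. We also use the standard properties of Riou's Gysin isomorphism \(\phi_E\): it is given by cup product with \(s_{X/E}\), it is compatible with base change, and Thom classes are multiplicative under direct sums.

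For (i), the case \(j=0\) is \(\Sq^0=\mathrm{id}\), which gives \(w_0(E)=\phi_E^{-1}(s_{X/E})=1\). For \(j>2r\), note that \(s_{X/E}\) has degree \(2r\) in the supported cohomology \(H^{*}_{X}(E;\FF_2)\). This is the relative cohomology of \(h(E\setminus X)\to h(E)\), so instability (Proposition~\ref{prop:instability-relative-pro-anima-terms}) gives \(\Sq^j(s_{X/E})=0\). Since \(\phi_E\) is an isomorphism, \(w_j(E)=0\).

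For (ii), let \(f\colon Y\to X\) and \(f^*E=E\times_XY\). The induced map \(F\colon f^*E\to E\) is Cartesian over the zero sections, so it induces a pullback on cohomology with supports, and \(F^*s_{X/E}=s_{Y/f^*E}\) by base-change compatibility of Riou's Thom class. Naturality of the supported squares \(\Sq_Z\) under such maps of open--closed pairs then gives
\[
        \Sq^j(s_{Y/f^*E})
        =F^*\Sq^j(s_{X/E})
        =F^*\bigl(p^*w_j(E)\smile s_{X/E}\bigr)
        =p_Y^*f^*w_j(E)\smile s_{Y/f^*E}.
\]
Injectivity of \(\phi_{f^*E}\) yields \(f^*w_j(E)=w_j(f^*E)\). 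This naturality of supported squares comes from the functoriality of \(h\) on the relevant arrows and does not require \(f\) to be proper.

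For (iii), first treat direct sums \(E=E'\oplus E''\). Writing \(\pi',\pi''\) for the projections of \(E\) onto \(E'\) and \(E''\), the Thom class satisfies \(s_{X/E}=\pi'^*s_{X/E'}\smile\pi''^*s_{X/E''}\), as a cup product of classes supported on the two coordinate subbundles whose intersection is the zero section. The Cartan formula for supported products, proved exactly as Theorem~\ref{thm:modified-supported-mixed-cartan} from the stable identity~\eqref{eq:stable-steenrod-coproduct-cartan}, then gives \(\Sq(s_{X/E})=p^*(w(E')w(E''))\smile s_{X/E}\), hence \(w(E)=w(E')w(E'')\). A general short exact sequence reduces to the split case by the standard deformation trick. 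Consider the extension over \(X\times\mathbf A^1\) whose fiber at \(t\) is the extension class scaled by \(t\). It restricts to \(E\) at \(t=1\) and to \(E'\oplus E''\) at \(t=0\). Homotopy invariance \(H^*(X\times\mathbf A^1;\FF_2)\cong H^*(X;\FF_2)\) holds since \(2\) is invertible and \(X\) is regular. Combined with naturality (ii) along the two sections, it identifies \(w(E)\) with \(w(E'\oplus E'')\).

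The main obstacle is the multiplicativity of Thom classes together with the supported-product Cartan formula. This requires checking that the cup product of supported classes matches the Gysin/Thom class of the sum under Riou's construction, which we take from Riou's Exposé~XVI as Benoist does.
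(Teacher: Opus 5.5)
Your proposal is correct. Parts (i) and (ii) follow the paper's own argument: \(\Sq^0=\mathrm{id}\) plus instability for \(s_{X/E}\), viewed as a relative class of \(h(E\setminus X)\to h(E)\); then base change of Riou's Thom class, naturality of the supported squares, and injectivity of \(\phi_{f^*E}\).

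The real difference is in (iii). There the paper gives no argument and simply invokes Benoist's Lemma~2.3, whereas you reconstruct a proof in two steps. The split case comes from the product formula \(s_{X/E'\oplus E''}=\pi'^*s_{X/E'}\smile\pi''^*s_{X/E''}\) (transitivity plus transversal base change of Riou's purity classes), together with a Cartan formula for the product of classes supported on \(Z_1\) and \(Z_2\), landing in supports on \(Z_1\cap Z_2\). The general case comes from scaling the extension class by \(t\) over \(X\times\mathbf A^1\) and comparing the fibres at \(t=0,1\) via homotopy invariance and (ii).

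The citation is shorter but leans on Benoist's own setup of supported squares and Thom-class identities. Your version stays inside the paper's framework, at the cost of a supported-product Cartan formula that the paper never states; Theorem~\ref{thm:modified-supported-mixed-cartan} concerns a different pairing. As you indicate, that formula follows in the same way from~\eqref{eq:stable-steenrod-coproduct-cartan}, applied to the coefficient-natural supported product.

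A minor remark: \(\mathbf A^1\)-invariance of \(H^*_{\acute et}(-;\FF_2)\) needs only that \(2\) be invertible. What regularity of \(X\), hence of \(X\times\mathbf A^1\), actually buys you is the ability to apply Definition~\ref{def:etale-sw} and (ii) on \(X\times\mathbf A^1\).
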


\begin{proof}
For \textup{(i)}, this is immediate from Definition~\ref{def:etale-sw} and the
standard properties of Steenrod squares: one has \(\Sq^0=\mathrm{id}\), hence
\[
w_0(E)=\phi_E^{-1}\bigl(\Sq^0(s_{X/E})\bigr)=\phi_E^{-1}(s_{X/E})=1,
\]
while Proposition~\ref{prop:instability-relative-pro-anima-terms},
applied to the supported class
\[
        s_{X/E}\in
        H^{2r}_{X,\mathrm{\acute{e}t}}
        (E;\mu_2^{\otimes r}),
\]
gives
\[
        \Sq^j(s_{X/E})=0
        \qquad(j>2r).
\]
Hence, \(w_j(E)=0\) for \(j>2r\).

For \textup{(ii)}, let \(\widetilde f\colon f^*E\to E\) be the canonical morphism
of total spaces. Since the square defined by the zero sections is cartesian,
the compatibility of Gysin morphisms (equivalently, of Thom classes) with
base change implies
\[
\widetilde f^*(s_{X/E})=s_{Y/f^*E};
\]
see~\cite[Expos\'e~XVI, Proposition 2.3.2]{Riou}. On the other hand,
\'etale Steenrod squares commute with pullback;
see~\cite[\S2.1--2.2]{Benoist}. Therefore,
\[
f^*w_j(E)
=\phi_{f^*E}^{-1}\bigl(\widetilde f^*\Sq^j(s_{X/E})\bigr)
=\phi_{f^*E}^{-1}\bigl(\Sq^j(\widetilde f^*s_{X/E})\bigr)
=\phi_{f^*E}^{-1}\bigl(\Sq^j(s_{Y/f^*E})\bigr)
=w_j(f^*E).
\]

\textup{(iii)} is exactly~\cite[Lemma~2.3]{Benoist}.
\end{proof}

\begin{remark}[Virtual bundles]
\label{rem:etale-sw-virtual-bundles}
Write
\[
        H_{\acute et}^*(X;\FF_2)^{\wedge,+}
        :=
        \prod_{n\geq0}H_{\acute et}^n(X;\FF_2)
\]
for the forward-degree completion. All identities below involving total
operations or virtual total characteristic classes are interpreted degreewise
in the corresponding forward-degree completion. Since \(w_0(E)=1\), the Whitney product
formula extends the total Stiefel--Whitney class uniquely to a homomorphism
\[
        w^{\acute et}\colon
        K_0(X)
        \longrightarrow
        \left(H_{\acute et}^*(X;\FF_2)^{\wedge,+}\right)^\times,
\]
characterized by
\[
        w^{\acute et}([E]-[F])
        =
        w(E)\,w(F)^{-1}.
\]
In particular, \(w^{\acute et}([E])=w(E)\) for every vector bundle \(E\).
\end{remark}

\begin{remark}[Grothendieck's splitting principle]\label{rem:etale-sw-splitting}
Let \(\pi\colon \operatorname{Fl}(E)\to X\) be the complete flag bundle of \(E\). By repeated use of the projective bundle formula, the pullback
\[
\pi^*\colon H^*_{\mathrm{\acute{e}t}}(X;\mathbb{F}_2)
\longrightarrow
H^*_{\mathrm{\acute{e}t}}(\operatorname{Fl}(E);\mathbb{F}_2)
\]
is injective. Moreover, \(\pi^*E\) admits a filtration by subbundles
\[
0=E_0\subset E_1\subset \cdots \subset E_r=\pi^*E
\]
whose successive quotients \(L_i:=E_i/E_{i-1}\) are line bundles. Consequently, any universal polynomial identity among characteristic classes may be checked after pullback to \(\operatorname{Fl}(E)\). If \(\overline{c}_i(E)\in H^{2i}_{\mathrm{\acute{e}t}}(X;\mathbb{F}_2)\) denotes the reduction modulo \(2\) of the \'{e}tale Chern class \(c_i(E)\), and if we set
\[
\overline{c}(E):=1+\overline{c}_1(E)+\cdots+\overline{c}_r(E),
\]
then on \(\operatorname{Fl}(E)\) one may write formally
\[
\pi^*\overline{c}(E)=\prod_{i=1}^r \bigl(1+\lambda_i\bigr),
\qquad
\lambda_i:=\overline{c}_1(L_i).
\]
See Grothendieck~\cite{GrothendieckChern}; see also Riou~\cite[Expos\'e~XVI, Th\'eor\`eme~1.3, Prop.~1.4-1.5]{Riou}.
In formulas involving Chern roots, we may suppress the injective
pullback \(\pi^*\), identifying classes on \(X\) with their images
on \(\operatorname{Fl}(E)\). The individual roots \(\lambda_i\)
remain classes on the flag bundle.
\end{remark}

\begin{definition}[Bockstein class]
\label{def:bockstein_class}
For a scheme \(Z\) on which \(2\) is invertible, the Bockstein class
\(\beta_Z\in H_{\acute et}^1(Z;\FF_2)\) is the image of
\(1_\mu:=-1\in H_{\acute et}^0(Z;\mu_2)\) under the connecting
morphism of
\[
        0\longrightarrow\mu_2\longrightarrow\mu_4
        \xrightarrow{(\cdot)^2}\mu_2\longrightarrow0,
\]
using the standing coefficient identifications.
\end{definition}

\begin{remark}
\label{lem:bockstein-equals-kummer}
Write \(\kappa_Z(u)\in H_{\acute et}^1(Z;\mu_2)\) for the Kummer
class of a unit \(u\in\Gamma(Z,\mathcal O_Z^\times)\);
see~\cite[Tag~03PK]{STACKS-PROJECT}. Comparing the defining sequence
with the Kummer sequence gives, by naturality of connecting morphisms,
\[
        \beta_Z=\kappa_Z(-1).
\]
For a field \(F\), we abbreviate \(\kappa_{\Spec F}\) to \(\kappa_F\).
\end{remark}

Combining Benoist's rank-one calculation with the Whitney product formula
and the splitting principle gives the following description;
compare~\cite[Theorem~5.10]{FengEtaleSteenrod} in the finite-field case.

\begin{theorem}[Benoist~{\cite[Lemma~2.4]{Benoist}}]
\label{thm:etale-sw-chern}
Let \(\pi\colon\operatorname{Fl}(E)\to X\) be the complete flag bundle of
Remark~\ref{rem:etale-sw-splitting}, and let \(L_1,\dots,L_r\) be the
successive line-bundle quotients of \(\pi^*E\). Put
\(\lambda_i:=\overline{c}_1(L_i)\). Then, in
\(H^*_{\mathrm{\acute{e}t}}(\operatorname{Fl}(E);\mathbb{F}_2)\),
\[
\begin{aligned}
        \pi^*w(E)
        &=\prod_{i=1}^r(1+\pi^*\beta_X+\lambda_i),\\
        \pi^*\overline{c}(E)
        &=\prod_{i=1}^r(1+\lambda_i).
\end{aligned}
\]
In particular, if \(\beta_X=0\) (for instance, if \(-1\) has a square root on \(X\)), then
\[
w(E)=\overline{c}(E).
\]
\end{theorem}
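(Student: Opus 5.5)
The plan is to reduce, via Whitney and the splitting principle, to the case of a line bundle, and then compute the Thom-class Steenrod square directly. Since \(\pi^*\) is injective and commutes with Steenrod squares and Gysin maps (Proposition~\ref{prop:etale-sw-basic}(ii) together with Remark~\ref{rem:etale-sw-splitting}), it suffices to prove the identities on \(\operatorname{Fl}(E)\). There \(\pi^*E\) is filtered with line-bundle quotients \(L_i\). The Whitney product formula (Proposition~\ref{prop:etale-sw-basic}(iii)) and multiplicativity of mod-\(2\) Chern classes give
\[
\pi^*w(E)=\prod_i w(L_i),\qquad \pi^*\overline c(E)=\prod_i\bigl(1+\overline c_1(L_i)\bigr).
\]
The Chern identity is therefore immediate from the definition of Chern roots. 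Since \(\beta_X\) pulls back to \(\beta_{\operatorname{Fl}(E)}\) by naturality of connecting maps, everything reduces to showing \(w(L)=1+\beta_X+\overline c_1(L)\) for a line bundle \(L\) on a regular \(\mathbf Z[1/2]\)-scheme.

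For a line bundle, \(s:=s_{X/L}\in H^2_X(L;\mu_2)\) has degree \(2\). Instability (Proposition~\ref{prop:instability-relative-pro-anima-terms} for supported cohomology) gives \(\Sq^j s=0\) for \(j>2\), and the top-square identity gives \(\Sq^2 s=s^2\). By the self-intersection formula for Riou's Gysin classes, \(s\smile s=p^*\overline c_1(L)\smile s\), so \(w_2(L)=\overline c_1(L)\). The remaining degree is \(\Sq^1 s\), the Bockstein of \(s\) for \(\FF_2\to\mathbf Z/4\to\FF_2\). This is the crux, because the untwisted Bockstein differs from the twisted one for \(\mu_2\to\mu_4\to\mu_2\).

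To handle \(\Sq^1 s\), I would lift \(s\) to a \(\mu_4\)-Thom class \(s_4\in H^2_X(L;\mu_4)\); this exists by purity with \(\mathbf Z/4\)-coefficients, \(2\) being invertible. Then the \(\mu\)-twisted Bockstein of \(s\) vanishes. Next I compare the two Bocksteins. The identification \(\FF_2\simeq\mu_2\) via \(1\mapsto-1\) does not extend to \(\mathbf Z/4\simeq\mu_4\). Instead, \(\mu_4\) is the extension of \(\mu_2\) by \(\mu_2\) whose class differs from that of \(\mathbf Z/4\) by the cup product with \(\beta_X\) (Definition~\ref{def:bockstein_class}): the difference of the two extension classes in \(\operatorname{Ext}^1(\FF_2,\FF_2)=H^1(X;\FF_2)\) is exactly \(\beta_X\). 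This yields \(\Sq^1 s=\beta_{\mathrm{tw}}(s)+\beta_X\smile s=p^*\beta_X\smile s\), hence \(w_1(L)=\beta_X\).

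Assembling, \(w(L)=1+\beta_X+\overline c_1(L)\), which gives the product formula. When \(\beta_X=0\) each factor is \(1+\lambda_i\), so \(w(E)=\overline c(E)\) by injectivity of \(\pi^*\). The main obstacle is the extension-class comparison in the \(\Sq^1\) step. If it proves delicate, I would first check it universally on \(\mathbf P^\infty\)-approximations over \(\Spec\mathbf Z[1/2]\), then use naturality: pull back from \(\mathbf P^N_{\mathbf Z[1/2]}\) along the classifying map of \(L\) locally, and reduce to the tautological bundle there.
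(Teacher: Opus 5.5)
Your proposal is correct and follows the same route as the paper, which obtains the statement by combining Benoist's rank-one computation $w(L)=1+\beta_X+\overline c_1(L)$ with the Whitney product formula and the splitting principle. Your line-bundle argument is exactly that rank-one computation: $\Sq^2 s=s^2=p^*\overline c_1(L)\cdot s$ by the self-intersection formula, and $\Sq^1 s=\delta_1(s)+p^*\beta_X\cdot s=p^*\beta_X\cdot s$ because the Thom class lifts to $\mu_4$-coefficients. Your extension-class comparison is Lemma~\ref{lem:benoist-bockstein} with $r=1$; strictly, the global $\operatorname{Ext}^1(\FF_2,\FF_2)$ is larger than $H^1(X;\FF_2)$, but the difference of the two extension classes lies in that $H^1$ part, which is all you use.
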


\begin{remark}\label{rem:etale-sw-finite-field}
Theorem~\ref{thm:etale-sw-chern} shows that $w(E) = \sum_{k=0}^r(1+\beta_X)^{r-k}\overline{c}_k(E)$. When $\beta_X^2 = 0$ so that $(1+\beta_X)^2 = 1$, e.g.~when the base is a finite field, one recovers Feng's formula~\cite[Theorem~5.10]{FengEtaleSteenrod}.
\end{remark}

\subsection{Wu classes and Wu formulas}
\label{sec:absolute-relative-wu-formulas}
We define total absolute Wu classes and derive the absolute Wu formula,
Theorem~\ref{thm:absolute-wu-formula}, from the relative Wu formulas
discussed in~\S\ref{subsubsec:rel_wu}. We compute the Wu class of the
arithmetic base in~\S\ref{subsec:absolute-arithmetic-vt}, making the
arithmetic formula explicit.

Throughout this subsection, \(B\) is either \(\Spec(k)\), with \(k\)
a finite field of odd characteristic, or \(\Spec(\mathcal O_{K,S})\),
where \(\mathcal O_{K,S}\) is the ring of \(S\)-integers in a number
field \(K\) and \(2\in\mathcal O_{K,S}^{\times}\).
Put \(\epsilon_B:=\dim(B)\in\{0,1\}\). Let \(X\) be a regular
separated \(B\)-scheme of finite type, flat and pure of relative
dimension \(d\), and set
\[
        D_X:=d+\epsilon_B=\dim(X),
        \qquad
        N_X:=2D_X+1.
\]

We write \(\widehat H_c^*(X;-)\) for ordinary compactly supported
cohomology in the finite-field case and for the modified compactly
supported cohomology of Definition~\ref{def:compact-support-over-X}
in the \(S\)-integer case. The Steenrod operations on these groups
are induced by the stable coefficient maps of
Definition~\ref{def:spectral-relative-steenrod-operations}, using
the intrinsic construction of
Definition~\ref{def:arithmetic-boundary-steenrod-operations}
in the \(S\)-integer case.

For a coefficient ring \(\Lambda\), we write
\(\underline{\mathrm{Hom}}_{X,\Lambda}\) for internal Hom in the
category of sheaves of \(\Lambda\)-modules on \(X_{\acute et}\),
and \(\Ext^r_{X,\Lambda}\) for global Ext in this category.
With these conventions, duality takes the following uniform form.

\begin{theorem}[Poincar\'e--Artin--Verdier duality]
\label{thm:milne-7-6}
Let \(F\) be a constructible sheaf of \(\FF_2\)-modules on
\(X_{\acute et}\). For every \(r\in\mathbf Z\), the Yoneda pairing
followed by the trace map induces a perfect pairing of finite groups
\begin{equation}
\label{eq:av}
\Ext^r_{X,\FF_2}
        \bigl(F,\mu_2^{\otimes D_X}\bigr)
\times
\widehat H_c^{N_X-r}(X;F)
\longrightarrow
\widehat H_c^{N_X}
        \bigl(X;\mu_2^{\otimes D_X}\bigr)
\xrightarrow{\ \int_X\ }
\FF_2.
\end{equation}
If \(F\) is locally constant and constructible, this becomes
\[
        H_{\acute et}^r\bigl(X;F^\vee(D_X)\bigr)
        \times
        \widehat H_c^{N_X-r}(X;F)
        \longrightarrow
        \FF_2,
\]
where
\[
        F^\vee:=
        \underline{\mathrm{Hom}}_{X,\FF_2}(F,\FF_2),
        \qquad
        F^\vee(D_X):=
        F^\vee\otimes_{\FF_2}\mu_2^{\otimes D_X}.
\]
\end{theorem}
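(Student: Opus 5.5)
The plan is to reduce to Artin--Verdier duality on the base \(B\) via \(Rf_!\dashv Rf^!\), with absolute purity identifying \(Rf^!\) of the base dualizing sheaf on the regular scheme \(X\). Let \(f\colon X\to B\) be the structure morphism.

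\textbf{Base case.} I would first take duality on \(B\) as given, for constructible \(\FF_2\)-sheaves \(G\):
\[
\Ext^r_{B,\FF_2}\bigl(G,\mu_2^{\otimes\epsilon_B}\bigr)\times\widehat H_c^{2\epsilon_B+1-r}(B;G)\longrightarrow\widehat H_c^{2\epsilon_B+1}\bigl(B;\mu_2^{\otimes\epsilon_B}\bigr)\cong\FF_2 .
\]
For \(B=\Spec k\) this is duality for the profinite group \(\widehat{\mathbf Z}\) in degree \(1\). For \(B=\Spec\mathcal O_{K,S}\) with \(2\) invertible, this is the modified compactly supported Artin--Verdier theorem of Milne and Geisser--Schmidt~\cite{GeisserSchmidt}, in degree \(3\). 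The Tate correction at real places is exactly what makes it perfect in all degrees \(r\in\mathbf Z\). By dévissage along truncation triangles and the five lemma, I would extend this to \(G\in\mathcal D^b_c(B_{\acute et},\FF_2)\). On the \(\widehat H_c\) side this uses exactness (Proposition~\ref{prop:modified-functoriality}); on the \(\Ext\) side it uses exactness of \(R\mathrm{Hom}\).

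\textbf{Reduction from \(X\) to \(B\).} Put \(G:=Rf_!F\), which is bounded constructible by Deligne's finiteness theorem (\(2\) invertible, \(f\) of finite type). By Definition~\ref{def:compact-support-over-X}, the structure map to \(\Spec\mathbf Z\) factors through \(f\), so \(\widehat H_c^q(X;F)=\widehat H_c^q(B;Rf_!F)\) tautologically. The Tate boundary is built into the definition, so no separate compatibility is needed there. Adjunction then gives
\[
\Ext^r_{B}\bigl(Rf_!F,\mu_2^{\otimes\epsilon_B}\bigr)\cong\Ext^r_{X}\bigl(F,Rf^!\mu_2^{\otimes\epsilon_B}\bigr).
\]

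\textbf{The main obstacle: computing \(Rf^!\).} I expect the key step to be proving
\[
Rf^!\mu_2^{\otimes\epsilon_B}\simeq\mu_2^{\otimes D_X}[2d].
\]
When \(f\) is smooth this is Poincaré duality. In general \(X\) is only regular and \(f\) only flat, so I would use Gabber's absolute purity in the form of~\cite[Expos\'e~XVI]{Riou}. Locally, factor \(f\) as a closed immersion \(X\hookrightarrow P\) into a smooth \(B\)-scheme, with \(P\) regular. The immersion is then a regular closed immersion between regular schemes, of codimension \(c=\dim P-\dim X\). Absolute purity gives \(Ri^!\Lambda(n)\simeq\Lambda(n-c)[-2c]\), and combining this with smooth purity for \(P\to B\) yields the displayed formula. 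The canonicity of the local isomorphisms, via Riou's Gysin formalism, lets them glue. Substituting into the adjunction identifies \(\Ext^r_X(F,\mu_2^{\otimes D_X})\) with \(\Ext^{r+2d}_B(Rf_!F,\mu_2^{\otimes\epsilon_B})\). This pairs perfectly with \(\widehat H_c^{2\epsilon_B+1-r-2d}(B;Rf_!F)=\widehat H_c^{N_X-r}(X;F)\). I would define \(\int_X\) as the composite of the trace for \(f\) (the counit \(Rf_!Rf^!\to\mathrm{id}\)) with \(\int_B\). Then I would check that the adjunction isomorphism intertwines the Yoneda pairings, which is a formal naturality argument. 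Finiteness of the groups follows from finiteness on \(B\).

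\textbf{Locally constant case.} If \(F\) is locally constant constructible, it is étale-locally a finite free \(\FF_2\)-module. Hence
\[
R\underline{\mathrm{Hom}}_{X,\FF_2}\bigl(F,\mu_2^{\otimes D_X}\bigr)\simeq F^\vee(D_X),
\]
and \(\Ext^r_X(F,\mu_2^{\otimes D_X})\cong H^r_{\acute et}(X;F^\vee(D_X))\), which gives the second form of the pairing.
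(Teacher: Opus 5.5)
Your argument is correct, and it is essentially the argument behind the paper's citation. The paper proves nothing here itself: it invokes Geisser--Schmidt's Theorem~4.6, which the paper elsewhere notes is constructed via \(Rf_!\) from duality on the base, together with classical Poincar\'e duality over finite fields. Your proof unpacks exactly that reduction.

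There is one index slip to fix. Since \(\mu_2^{\otimes D_X}\simeq Rf^!\mu_2^{\otimes\epsilon_B}[-2d]\), adjunction gives \(\Ext^r_X(F,\mu_2^{\otimes D_X})\cong\Ext^{r-2d}_B(Rf_!F,\mu_2^{\otimes\epsilon_B})\), not \(\Ext^{r+2d}_B\). Its dual degree on \(B\) is then \(2\epsilon_B+1-(r-2d)=N_X-r\), which is what you want. As written, your claimed equality \(\widehat H_c^{2\epsilon_B+1-r-2d}(B;Rf_!F)=\widehat H_c^{N_X-r}(X;F)\) is off by \(4d\).
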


In the \(S\)-integer case, this is generalized Artin--Verdier
duality; see~\cite[Thm.~4.6, Lem.~3.7, Thm.~3.9 and Lem.~3.11]{GeisserSchmidt}.
In the finite-field case, it is the usual Poincar\'e duality over a
finite field.

\begin{definition}[Absolute Wu class]
\label{def:absolute-wu-class}
For \(n\geq0\), the \(n\)-th \emph{absolute Wu class} is the unique
\(v_{X,n}\in H_{\acute et}^n(X;\FF_2)\) satisfying
\begin{equation}
\label{eq:wu_formula}
        \int_X\widehat\Sq_c^n(x)
        =
        \int_X x\cdot v_{X,n}
        \qquad
        \text{for all }
        x\in\widehat H_c^{N_X-n}
        \bigl(X;\mu_2^{\otimes D_X}\bigr).
\end{equation}
The \emph{total absolute Wu class} is
\[
        v_X:=\sum_{n\geq0}v_{X,n}
        \in H_{\acute et}^*(X;\FF_2)^{\wedge,+}.
\]
\end{definition}

Existence and uniqueness follow componentwise from
Theorem~\ref{thm:milne-7-6}: taking
\(F=\mu_2^{\otimes D_X}\) in~\eqref{eq:av} identifies the factor dual
to
\[
        \widehat H_c^{N_X-n}
        \bigl(X;\mu_2^{\otimes D_X}\bigr)
\]
with \(H_{\acute et}^n(X;\FF_2)\).

We require the definition of the virtual relative tangent and normal bundles for smoothable lci morphisms.
\begin{definition}[Virtual tangent and normal bundles, cf.~{\cite[Example~2.3.10]{DegliseJinKhanFundamental}}, see also~{\cite[Tag 08SH]{STACKS-PROJECT}}]
\label{def:virtual-normal-bundle}
Let
\[
f\colon Y\to X
\]
be a smoothable local complete intersection morphism, i.e.~a morphism admitting a factorization as
\[
Y \xhookrightarrow{i} P \xrightarrow{p} X
\]
with \(i\) a regular closed immersion and \(p\) smooth. The \emph{virtual tangent bundle} of \(f\) is the class:
\[
\tau_f=[i^*T_{P/X}]-[N_{Y/P}] \in K_0(Y),
\]
where $T_{P/X}$ and $N_{Y/P}$ are the honest tangent and normal bundles associated to the maps $i$ and $p$, respectively. We define the \emph{virtual normal bundle} of \(f\) to be the class
\[
\nu_f:=-\tau_f\in K_0(Y).
\]
In particular:
\begin{enumerate}
\item if \(f\) is smooth, then
\[
\tau_f = [T_{Y/X}],\qquad \nu_f = -[T_{Y/X}].
\]
\item If \(f=i\) is a regular closed immersion, then
\[
\tau_f = -[N_{Y/X}],\qquad \nu_f=[N_{Y/X}].
\]
\end{enumerate}
\end{definition}

For the morphisms considered below, the virtual tangent bundle is always
defined. Indeed, if \(f\colon X\to B\) is projective with \(X\) regular,
choose a factorization
\[
        X\xhookrightarrow{i}\mathbb P_B^N\xrightarrow{p}B.
\]
Since \(B\), and hence \(\mathbb P_B^N\), is regular, \(i\) is a regular
closed immersion by~\cite[Tag~0E9J]{STACKS-PROJECT}. 
Thus, under our standing flatness and dimension hypotheses, \(f\) is smoothable lci of virtual relative dimension \(d\).

The total stable Steenrod square on ordinary \'{e}tale cohomology,
\[
        \Sq:=\sum_{n\geq0}\Sq^n,
\]
extends degreewise to the forward-degree completion. More precisely, if
\[
        a=\sum_{j\geq0}a_j
        \in H_{\acute et}^*(X;\FF_2)^{\wedge,+},
\]
then
\[
        \bigl(\Sq(a)\bigr)_m
        =
        \sum_{i+j=m}\Sq^i(a_j).
\]
The Cartan formula~\eqref{eq:ordinary-etale-cartan} makes \(\Sq\) a
continuous ring endomorphism. Since \(\Sq^0=\mathrm{id}\) and every
other component strictly raises forward degree, it is a continuous
ring automorphism. We denote its inverse by \(\Sq^{-1}\). Thus,
\(\Sq^{-1}\) always refers to the inverse of the total stable square
on ordinary \'{e}tale cohomology.

\begin{theorem}[Absolute Wu formula]
\label{thm:absolute-wu-formula}
Let \(X\) and \(B\) be as above, and suppose that the structural
morphism \(f\colon X\to B\) is projective. Writing \(v_B\) for the
total absolute Wu class of \(B\), one has
\[
        v_X
        =
        \Sq^{-1}\!\bigl(w^{\acute et}(\tau_f)\bigr)\cdot f^*v_B
        \in H_{\acute et}^*(X;\FF_2)^{\wedge,+}.
\]
\end{theorem}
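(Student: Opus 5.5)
The identity will follow from the modified compactly supported relative Wu formula~\eqref{eq:mod-bdry-rel-wu-intro} for \(f\) itself (Theorem~\ref{thm:modified-compactly-supported-relative-wu-declared}), combined with the projection formula, compatibility of traces with pushforward, and the uniqueness in Definition~\ref{def:absolute-wu-class} coming from Theorem~\ref{thm:milne-7-6}. Since \(f\) is projective with \(X\) and \(B\) regular, it is smoothable lci of virtual relative dimension \(d\), so \(c=-d\). Riou's Gysin maps therefore give
\[
        \widehat f_{*,c}\colon
        \widehat H_c^q(X;\mu_2^{\otimes D_X})
        \longrightarrow
        \widehat H_c^{q-2d}(B;\mu_2^{\otimes\epsilon_B}),
\]
with \(N_X-2d=N_B\). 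I will use two facts about this pushforward. First, it is compatible with traces: \(\int_X=\int_B\circ\widehat f_{*,c}\) in top degree. This is the standard compatibility of the Artin--Verdier trace with the Gysin trace for proper lci maps, and I would take it from Geisser--Schmidt and Riou. Second, it satisfies the projection formula \(\widehat f_{*,c}(x\cdot f^*b)=\widehat f_{*,c}(x)\cdot b\) for \(b\in H_{\acute et}^*(B;\FF_2)\), using the mixed module structure of Proposition~\ref{prop:multiplicative-arithmetic-boundary-diagram}.

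Fix \(x\in\widehat H_c^{N_X-n}(X;\mu_2^{\otimes D_X})\). Put \(u:=\Sq^{-1}(w^{\acute et}(\tau_f))\), so that \(w^{\acute et}(\nu_f)=w^{\acute et}(\tau_f)^{-1}=\Sq(u)^{-1}\). Then
\begin{align*}
        \int_X\widehat\Sq_c(x)
        &=\int_B\widehat f_{*,c}\widehat\Sq_c(x)
        =\int_B\widehat\Sq_c\widehat f_{*,c}\bigl(x\cdot w^{\acute et}(\tau_f)\bigr)\\
        &=\int_B\widehat f_{*,c}\bigl(x\cdot w^{\acute et}(\tau_f)\bigr)\cdot v_B
        =\int_X x\cdot w^{\acute et}(\tau_f)\cdot f^*v_B.
\end{align*}
The second equality applies~\eqref{eq:mod-bdry-rel-wu-intro} to the class \(y\) with \(\widehat\Sq_c(y)=\widehat\Sq_c(x)\,w^{\acute et}(\tau_f)\). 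To make this precise, I would instead apply the relative formula to \(y:=x\cdot u\) and use the mixed Cartan formula (Theorem~\ref{thm:arithmetic-boundary-internal-mixed-cartan}), which gives \(\widehat\Sq_c(x\cdot u)=\widehat\Sq_c(x)\cdot\Sq(u)\). The cleaner route is to test the candidate directly. Set \(v':=u\cdot f^*v_B\). For \(x\) as above,
\[
        \int_X x\cdot v'
        =\int_B\widehat f_{*,c}(x\cdot u)\cdot v_B
        =\int_B\widehat\Sq_c\bigl(\widehat f_{*,c}(x\cdot u)\bigr)
        =\int_B\widehat f_{*,c}\bigl(\widehat\Sq_c(x)\cdot\Sq(u)\cdot w^{\acute et}(\nu_f)\bigr)
        =\int_X\widehat\Sq_c(x).
\]
These equalities use, in order, the projection formula and traces; the Wu identity on \(B\) applied degreewise; the relative formula together with the mixed Cartan formula; and the relation \(\Sq(u)\,w^{\acute et}(\nu_f)=1\). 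Taking degree-\(n\) components, only \(v'_n\) pairs nontrivially with \(x\) because the total degree must be \(N_X\). Duality (Theorem~\ref{thm:milne-7-6}) then forces \(v'_n=v_{X,n}\) for every \(n\).

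Two points need care. The first is the bookkeeping of completions and Tate twists. All total operations involve infinitely many degrees, but each degree-\(n\) identity involves only finitely many terms. The twists \(\mu_2^{\otimes a}\) are trivialized compatibly with the Gysin maps, and the relative formula is stated with these twists. The second point is the main obstacle: making sure the projection formula and the trace compatibility hold for modified compact support, not just for ordinary compact support, and that they are compatible with the spectral Steenrod operations. I would deduce both from the Gysin morphism \(Rf_!\mu_2^{\otimes D_X}[2d]\to\mu_2^{\otimes\epsilon_B}\) at the sheaf level on \(B\). Applying the functorial boundary construction \(\widehat R\Gamma\) over \(\Spec\mathbf Z\) to this morphism and to its linearity over \(R\Gamma(B)\) transports both facts through the Tate term by naturality.

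In the finite-field case the same argument runs verbatim, with ordinary compact support and Poincaré duality in degree \(2d+1\).
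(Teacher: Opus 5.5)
Your ``cleaner route'' is essentially the paper's own proof. It tests \(\Sq^{-1}(w^{\acute et}(\tau_f))\cdot f^*v_B\) against \(x\) using the projection formula, trace compatibility, the Wu identity on \(B\), the modified relative Wu formula applied to \(x\cdot u\) together with the mixed Cartan formula, and finally \(\Sq(u)\,w^{\acute et}(\nu_f)=1\) and uniqueness from duality (the paper uses Benoist's ordinary relative formula in the finite-field case). Discard your first displayed chain, whose final expression has \(w^{\acute et}(\tau_f)\) where \(u\) belongs and is therefore wrong; the corrected computation you give afterwards is the right one.
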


For later reference, if \(B=\Spec(k)\) is a finite field, then
\[
        v_B=1.
\]
Indeed, \(\Sq^0=\mathrm{id}\) gives \(v_{B,0}=1\), while
\(\Sq^1(1)=0\) gives \(v_{B,1}=0\). There are no further components
because
\[
        H_{\acute et}^n(B;\FF_2)=0
        \qquad(n>1).
\]

For geometrically connected \(X\) in the finite-field case,
Theorem~\ref{thm:absolute-wu-formula} therefore recovers
Feng's Wu formula~\cite[Theorem~6.5]{FengEtaleSteenrod}.
\begin{proof}[Proof of Theorem~\ref{thm:absolute-wu-formula}]
Put
\[
        t_f:=
        \Sq^{-1}\!\bigl(w^{\acute et}(\tau_f)\bigr).
\]
Let
\[
        x\in
        \widehat H_c^q
        \bigl(X;\mu_2^{\otimes D_X}\bigr)
\]
be homogeneous. Products with \(t_f\), total Steenrod operations, and
Gysin pushforwards are interpreted degreewise on the resulting
bounded-below forward series. Thus, only finitely many summands
contribute in each fixed degree.

Write \(\widehat f_{*,c}\) for the ordinary Gysin pushforward \(f_*\)
in the finite-field case and for the modified compact-support
pushforward of Definition~\ref{def:compact-support-pushforwards}
in the \(S\)-integer case.
Since \(f\) has relative dimension \(d\), its Gysin pushforward lowers
cohomological degree by \(2d\) and Tate twist by \(d\). In particular,
it carries the dualizing twist
\[
        D_X=d+\epsilon_B
\]
of \(X\) to the dualizing twist \(\epsilon_B\) of \(B\).

The projection formula, compatibility with the duality traces
in~\eqref{eq:modified-gysin-trace-compatibility} and its ordinary
finite-field analog, and the defining property of \(v_B\), applied
degreewise, give
\[
\begin{aligned}
        \int_X x\cdot t_f\cdot f^*v_B
        &=
        \int_B
        \widehat f_{*,c}(x\cdot t_f)\cdot v_B
        \\
        &=
        \int_B
        \widehat\Sq_c\!\left(
                \widehat f_{*,c}(x\cdot t_f)
        \right).
\end{aligned}
\]
Here and below, each integral means that the component in the
appropriate duality degree is taken before applying the trace.

By Theorem~\ref{thm:benoist-ordinary-wu-modified-section}
in the finite-field case and
Theorem~\ref{thm:modified-compactly-supported-relative-wu-declared}
in the \(S\)-integer case, we have
\[
\begin{aligned}
        \widehat\Sq_c\!\left(
                \widehat f_{*,c}(x\cdot t_f)
        \right)
        &=
        \widehat f_{*,c}\!\left(
                \widehat\Sq_c(x\cdot t_f)\cdot
                w^{\acute et}(\nu_f)
        \right)
        \\
        &=
        \widehat f_{*,c}\!\left(
                \widehat\Sq_c(x)\cdot
                \Sq(t_f)\cdot
                w^{\acute et}(\nu_f)
        \right).
\end{aligned}
\]
The second equality is the stable mixed Cartan formula; in the
arithmetic case, see
Theorem~\ref{thm:arithmetic-boundary-internal-mixed-cartan}. By the
definition of \(t_f\),
\[
        \Sq(t_f)=w^{\acute et}(\tau_f),
\]
and, since \(\nu_f=-\tau_f\),
\[
        w^{\acute et}(\tau_f)\cdot
        w^{\acute et}(\nu_f)=1.
\]
Consequently,
\[
        \widehat\Sq_c\!\left(
                \widehat f_{*,c}(x\cdot t_f)
        \right)
        =
        \widehat f_{*,c}\!\left(\widehat\Sq_c(x)\right).
\]
Trace compatibility now yields
\[
        \int_X x\cdot t_f\cdot f^*v_B
        =
        \int_B
        \widehat f_{*,c}\!\left(\widehat\Sq_c(x)\right)
        =
        \int_X\widehat\Sq_c(x).
\]
Thus, \(t_f\cdot f^*v_B\) satisfies the componentwise identities of
Definition~\ref{def:absolute-wu-class}. Their uniqueness proves the
formula.
\end{proof}

\begin{theorem}[Absolute arithmetic Wu class]
\label{thm:absolute-arithmetic-wu-class}
Let
\[
        B=\Spec\mathcal O_{K,S},
        \qquad
        2\in\mathcal O_{K,S}^{\times}.
\]
Then, in the forward-degree completion,
\[
        v_B
        =
        \sum_{n\geq0}
        \binom{2-n}{n}\beta_B^n
        =
        1+\beta_B+\beta_B^3+\beta_B^4+\beta_B^5
        +\beta_B^8+\beta_B^9+\cdots ,
\]
where the generalized integral binomial coefficients are reduced
modulo \(2\).
\end{theorem}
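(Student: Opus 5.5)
We compute \(v_{B,n}\) degree by degree. Here \(D_B=\epsilon_B=1\) and \(N_B=3\), so \(v_{B,n}\) is the functional \(x\mapsto\int_B\widehat\Sq_c^n(x)\) on \(\widehat H_c^{3-n}(B;\mu_2)\). We treat low degrees \(n\le 2\) and high degrees \(n\ge 3\) separately. In each range we first identify the relevant cohomology of \(B\) and then evaluate the operation. At the end we match the answer with the closed form, using the formal identity \(\Sq(\beta)=\beta+\beta^2\).

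\textbf{Low degrees.} Since \(\Sq^0=\mathrm{id}\), we get \(v_{B,0}=1\). For \(n=1\), the class \(v_{B,1}\) is dual to \(\widehat\Sq_c^1=\) Bockstein on \(\widehat H_c^2(B;\FF_2)\). Here we would use the standard fact that \(\int_B\Sq^1(x)=\int_B x\cdot\beta_B\), for instance via Artin--Verdier duality and the fact that the Bockstein on the dualizing object \(\mu_2\) is twisted by the Kummer class \(\kappa(-1)\). This gives \(v_{B,1}=\beta_B\).

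For \(n=2\), the test classes are \(x\in\widehat H_c^1(B)\). The target \(\widehat\Sq_c^2(x)\) lies in \(\widehat H_c^3\). We expect it to vanish: by Theorem~\ref{thm:instability-modulo-tate-boundary} it lies in \(\operatorname{im}\partial_\infty\) from \(\widehat H^2_\infty\). Write \(\widehat H^*_\infty=\bigoplus_{\nu\text{ real}}\FF_2[\beta_\nu^{\pm1}]\) (Proposition~\ref{prop:c2-point-tate-steenrod-calculation}). We would compute directly using the lift of \(x\) to the Tate term or to \(H^1_{c,\mathrm{fin}}\). The expected answer \(\binom{0}{2}=0\) gives \(v_{B,2}=0\). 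This step is delicate, and I expect it to be the main obstacle, since \(\widehat H_c^1\) mixes finite and archimedean contributions.

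\textbf{High degrees \(n\ge3\).} Here \(3-n\le0\). For \(q=3-n\le 0\), the long exact sequence of the modified boundary sequence shows \(\widehat H^{q}_c(B)\cong\widehat H^{q-1}_\infty(B)=\bigoplus_\nu\FF_2\beta_\nu^{q-1}\), because \(H^{q}_{c,\mathrm{fin}}=0\) for \(q<0\). For \(q=0\) some care with \(H^0_{c,\mathrm{fin}}\) is needed. Dually, \(H^n_{\acute et}(B;\FF_2)\cong\bigoplus_\nu H^n(\mathbf R)\) for \(n\ge3\), spanned by the restrictions of \(\beta_B^n\).

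Let \(x=\partial_\infty\beta_\nu^{2-n}\). Since \(\partial_\infty\) is \(\mathcal A_2\)-linear,
\[
\widehat\Sq_c^n(x)=\partial_\infty\Sq^n(\beta_\nu^{2-n})=\binom{2-n}{n}\partial_\infty\beta_\nu^{2}.
\]
The trace pairs \(\partial_\infty\beta_\nu^2\) nontrivially against the \(\nu\)-component of degree \(3\). By the mixed product compatibility, \(\partial_\infty(y)\cdot\beta_B^n=\partial_\infty(y\beta_\nu^n)\). Hence \(v_{B,n}=\binom{2-n}{n}\beta_B^n\) in each real component, and so globally, since \(H^n\) injects into \(\bigoplus_\nu\).

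\textbf{Closed form.} We check \(\sum_n\binom{2-n}{n}\beta^n=T(\beta)^3\) by Lemma~\ref{lem:T-power-coefficients}, where \(T(\beta)=(1+\beta)\cdot\) (the series with \(\Sq\) inverse properties). In effect this is the equation \(\Sq(v_B)=(1+\beta_B)^3\): applying the formula \(\Sq(\beta^m)=\beta^m(1+\beta)^m\), the generating function \(\sum_n\binom{2-n}{n}\beta^n(1+\beta)^n\) telescopes to \((1+\beta)^3\) by the Lagrange inversion coefficient identity. Finally, we expand the first terms to confirm \(1+\beta+\beta^3+\beta^4+\beta^5+\beta^8+\cdots\).
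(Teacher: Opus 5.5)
Your steps for \(n=0\) and \(n\ge 3\) are the paper's argument. The paper also evaluates on the dual basis \(e_{\nu,n}=\partial_\infty(\beta_\nu^{2-n})\). It uses that \(\partial_\infty\) commutes with Steenrod operations and is module-linear, the local--global trace formula of Lemma~\ref{lem:s-integer-real-boundary-duality}, and \(\Sq^n(\beta_\nu^{2-n})=\binom{2-n}{n}\beta_\nu^2\). Your worry at \(q=0\) is harmless: \(H^0_{c,\mathrm{fin}}(B;\FF_2)=0\) because \(B\hookrightarrow\Spec\mathcal O_K\) is a non-surjective open immersion. Your \(n=1\) step is only gestured at. The paper makes it precise through Lemma~\ref{lem:delta-adjoint}, which identifies \(\widehat\delta_0=\widehat\Sq_c^1\) on \(\widehat H_c^2(B;\mu_2)\) as the transpose of \(\delta_1\) on \(H^0(B;\mu_2)\). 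Hence \(\int_B\widehat\Sq_c^1(x)=\langle\delta_1(1_\mu),x\rangle_B=\langle\beta_B,x\rangle_B\).

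The genuine gap is \(n=2\), and the tools you propose cannot close it. Theorem~\ref{thm:instability-modulo-tate-boundary} only gives \(\widehat\Sq_c^2(x)\in\operatorname{im}(\partial_\infty\colon\widehat H^2_\infty(B)\to\widehat H^3_c(B))\). As soon as \(K\) has a real place \(\nu\), this image is all of \(\widehat H^3_c(B;\mu_2)\cong\FF_2\), because \(\int_B\partial_\infty(\beta_\nu^2)=\operatorname{tr}_\nu(\beta_\nu^2)=1\). So that statement carries no information. Pushing \(x\) to \(H^1_{c,\mathrm{fin}}\) merely reproduces it, since that is how the theorem is proved. Lifting through the Tate term reaches only classes \(\partial_\infty(y)\) with \(y\in\widehat H^0_\infty\). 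Matching \(\binom{0}{2}=0\) is a pattern, not a proof.

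The missing idea is to pass through the intermediate, homotopy-fixed-point compact support \(H_c^*\). This theory does satisfy instability, by Lemma~\ref{lem:ordinary-compact-support-relative-cochains} and Proposition~\ref{prop:instability-relative-pro-anima-terms}. The key observation is that \(\gamma\colon H_c^1(B;\FF_2)\to\widehat H_c^1(B;\FF_2)\) is surjective. Consider the fiber sequence \(R\Gamma_c(B;\FF_2)\xrightarrow{\gamma}\widehat R\Gamma_c(B;\FF_2)\to(M_B)_{hG_{\mathbf R}}\) of Remark~\ref{rem:ordinary-modified-boundary-triangles}. Here \(M_B=R\Gamma_c(B_{\mathbf C};\FF_2)\) is concentrated in degree \(0\), because \(B_{\mathbf C}\) is a finite set of points. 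Hence \((M_B)_{hG_{\mathbf R}}\) has no cohomology in positive degrees, which gives the surjectivity. Writing \(x=\gamma(\widetilde x)\), compatibility of \(\gamma\) with the coefficient operations gives \(\widehat\Sq_c^2(x)=\gamma(\Sq_c^2\widetilde x)=0\), since \(\deg\widetilde x=1<2\). Perfectness of the pairing then yields \(v_{B,2}=0\). In short, the archimedean part of \(\widehat H^1_c\) that worried you enters only through homotopy orbits, which cannot reach positive degrees.
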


Theorem~\ref{thm:absolute-arithmetic-wu-class} is proved in
\S\ref{subsec:absolute-arithmetic-vt}.

\subsubsection{The modified compact-support relative Wu formula}
\label{subsubsec:rel_wu}

Throughout this subsection, let
\[
        B=\Spec\mathcal O_{K,S},
        \qquad
        2\in\mathcal O_{K,S}^{\times}.
\]
Unless otherwise stated, schemes are regular, separated, and of finite
type over \(B\), and admit ample invertible sheaves. Modified compact
support is understood intrinsically, independently of a chosen
compactification.

We first recall the ordinary formula used in the projective-bundle
calculation.

\begin{theorem}[Benoist's relative Wu formula
{\cite[Theorem~2.5]{Benoist}}]
\label{thm:benoist-ordinary-wu-modified-section}
Let \(f\colon Y\to X\) be a proper morphism of regular Noetherian
\(\mathbf Z[1/2]\)-schemes admitting ample invertible sheaves, pure of
virtual relative dimension \(-c\), with virtual normal bundle \(\nu_f\).
For \(q,r\in\mathbf Z\) and \(x\in H_{\acute et}^q(Y;\mu_2^{\otimes r})\),
one has, in the forward-degree completion,
\[
        \Sq(f_*x)
        =
        f_*\!\left(\Sq(x)\cdot w^{\acute et}(\nu_f)\right).
\]
\end{theorem}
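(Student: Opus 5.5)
This is Benoist's Theorem~2.5, so the plan is to reproduce his strategy within the present conventions rather than invent a new route. There are three stages: factor $f$, treat a closed immersion and a projective bundle separately, and glue the two along compositions.

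\emph{Factorization.} Since $Y$ admits an ample invertible sheaf and $f$ is proper, $f$ is projective. So it factors as
\[
        Y\xhookrightarrow{i}\mathbf P^N_X\xrightarrow{p}X,
\]
where $i$ is a closed immersion between regular schemes, hence a regular immersion. Additivity of virtual normal bundles gives $\nu_f=\nu_i+i^*\nu_p$. The Whitney formula (Proposition~\ref{prop:etale-sw-basic}(iii) and Remark~\ref{rem:etale-sw-virtual-bundles}) then gives
\[
        w^{\acute et}(\nu_f)=w^{\acute et}(\nu_i)\cdot i^*w^{\acute et}(\nu_p).
\]
Combined with $f_*=p_*i_*$ and the projection formula $i_*(a\cdot i^*b)=i_*(a)\cdot b$, the formula for $f$ follows formally from the formulas for $i$ and $p$.

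\emph{Closed immersion of codimension $c$.} Here $i_*$ is the composite of three maps:
\begin{itemize}
\item the purity isomorphism $H^q(Y)\cong H^{q+2c}_Y(P;\mu_2^{\otimes c})$;
\item deformation to the normal cone, which identifies this isomorphism with the Thom isomorphism $\phi_N$ of $N=N_{Y/P}$;
\item the forget-supports map.
\end{itemize}
The last map commutes with $\Sq$ by naturality. On the Thom side, $\phi_N(x)=p^*x\cdot s_{Y/N}$. The mixed Cartan formula \eqref{eq:ordinary-etale-cartan}, in its supported form, together with Definition~\ref{def:etale-sw}, gives
\[
        \Sq(\phi_N(x))=\phi_N\bigl(\Sq(x)\cdot w(N)\bigr),
\]
and $w(N)=w^{\acute et}(\nu_i)$.

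The step that needs care is that deformation to the normal cone is compatible with Steenrod squares. Both specialization maps
\[
        H_Y(P)\leftarrow H_{Y\times\mathbf A^1}(D)\to H_Y(N)
\]
are restrictions of supported cohomology along morphisms of pairs, so Proposition~\ref{prop:classic_sq_properties}(ii) applies. One also needs both maps to be isomorphisms, which is where absolute purity is used.

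\emph{Projective bundle $p\colon\mathbf P^N_X\to X$.} Here $\nu_p=-T_p$. By the projective bundle formula every class is a sum $\sum_k p^*a_k\cdot h^k$, with $h=\overline c_1(\mathcal O(1))$. By the projection formula and Cartan it suffices to check $x=h^k$. Then $p_*(h^k)$ is $1$ if $k=N$ and $0$ otherwise. The mod-2 Euler sequence gives
\[
        w(T_p)=(1+\beta+h)^{N+1}
\]
by Theorem~\ref{thm:etale-sw-chern}, and $\Sq(h)=h+h^2$ since $h$ is the first Chern class of a line bundle. The required identity therefore becomes a residue computation: the $h^N$-coefficient of
\[
        (h+h^2)^k(1+\beta+h)^{-N-1}
\]
must equal $\Sq(p_*h^k)$, that is, $1$ for $k=N$ and $0$ otherwise, in the appropriate degrees. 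In the Tate-twisted bookkeeping $\beta$ enters through the twists, and this residue identity is where I expect the main computational obstacle.

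If the direct computation resists, I would reduce to a universal case: pull back along $X\to\Spec\mathbf Z[1/2]$, use naturality of $p_*$ under base change, and verify injectively on a suitable model such as $\mathbf P^N_{\mathbf Z[1/2]}$. Failing that, I would deduce the projective-bundle case from the immersion case via the diagonal embedding $\mathbf P^N_X\hookrightarrow\mathbf P^N_X\times_X\mathbf P^N_X$ together with the trace characterization of $p_*$.
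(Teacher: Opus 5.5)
You follow Benoist's route and the one the paper uses for its modified analog, Theorem~\ref{thm:modified-compactly-supported-relative-wu-declared}: factor as $Y\hookrightarrow\mathbf P^N_X\to X$, reduce along compositions, and treat the closed immersion via the Thom class, the supported Cartan formula and deformation to the normal cone. (The paper cites the present statement rather than reproving it.) Those parts are fine. The gap is in the projective-bundle step, where both inputs to your residue identity are wrong and the identity you write down is false.

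First, $\Sq^1(h)\neq0$ in general. The class $h=\overline c_1(\mathcal O(1))$ lifts to $\mu_4$-coefficients, so it is the \emph{twisted} Bockstein $\delta_1(h)$ that vanishes. Lemma~\ref{lem:benoist-bockstein} then gives $\Sq^1(h)=\delta_0(h)=\beta h$, hence $\Sq(h)=h(1+\beta+h)$, as in the proof of Lemma~\ref{lem:2td-line-bundle}.

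Second, the theorem you cite gives $w^{\acute et}(\mathcal O)=1+\beta$ for the trivial line bundle. The Euler sequence $0\to\mathcal O\to\mathcal O(1)^{\oplus(N+1)}\to T_p\to0$ therefore yields $w^{\acute et}(T_p)=(1+\beta+h)^{N+1}(1+\beta)^{-1}$. Your formula already fails for $N=0$: there $h=0$, and it returns $1+\beta$ for the tangent bundle of the identity map.

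With your inputs, the case $k=N$ asks that the $h^N$-coefficient of $(h+h^2)^N(1+\beta+h)^{-N-1}$, namely $(1+\beta)^{-N-1}$, equal $1$. For even $N$ its degree-one part is $\beta$, which is nonzero, e.g.\ over $\Spec\mathbf Z[1/2]$. So the obstacle you anticipated comes from these two errors, not from the combinatorics. Neither fallback (base change to $\mathbf P^N_{\mathbf Z[1/2]}$, or the diagonal) would repair it.

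With the corrected inputs the step closes in a line. For $x=p^*a\cdot h^k$ you need $p_*\bigl(\Sq(h^k)\,w^{\acute et}(\nu_p)\bigr)=\epsilon_k$, and
\[
\Sq(h^k)\,w^{\acute et}(\nu_p)=(1+\beta)\,h^k(1+\beta+h)^{k-N-1}.
\]
Since $h^{N+1}=0$ and $p_*(p^*b\cdot h^j)=b\,\epsilon_j$, the map $p_*$ extracts the $h^N$-coefficient. Put $m=N-k$ and expand
\[
(1+\beta+h)^{-m-1}=(1+\beta)^{-m-1}\bigl(1+h(1+\beta)^{-1}\bigr)^{-m-1}.
\]
The coefficient of $h^m$, multiplied by $1+\beta$, is
\[
\binom{-m-1}{m}(1+\beta)^{-2m}\equiv\binom{2m}{m}(1+\beta)^{-2m}\pmod 2.
\]
This equals $1$ for $m=0$ and $0$ for $m\geq1$, because $\binom{2m}{m}=2\binom{2m-1}{m-1}$. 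That is exactly $\Sq(\epsilon_k)=\epsilon_k$. Both corrections are needed: keeping either of your original inputs leaves a stray power of $1+\beta$ in the case $k=N$.
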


We next construct the pushforward appearing in the modified formula.

\begin{definition}[Riou's Gysin morphism]
\label{def:riou-gysin-morphism}
Let \(f\colon Y\to X\) be a smoothable local complete intersection
morphism of pure virtual relative dimension \(-c\). For \(r\in\mathbf Z\),
Riou's canonical Gysin class is
\[
        \operatorname{Cl}_f^r\colon
        \mu_{2,Y}^{\otimes r}
        \longrightarrow
        Rf^!\mu_{2,X}^{\otimes(r+c)}[2c],
\]
where
\(\mu_{2,Y}^{\otimes r}\simeq f^*\mu_{2,X}^{\otimes r}\); see~\cite[Expos\'e~XVI,
D\'efinition~2.5.11]{Riou}. If \(f\) is proper, its
adjoint is
\begin{equation}
\label{eq:riou_gysin}
        \operatorname{gys}_f^r\colon
        Rf_!\mu_{2,Y}^{\otimes r}
        \longrightarrow
        \mu_{2,X}^{\otimes(r+c)}[2c];
\end{equation}
compare~\cite[Expos\'e~XVI, Remarque~2.5.14]{Riou}. It induces the
ordinary Gysin pushforward
\[
        f_*\colon
        H_{\acute et}^q(Y;\mu_2^{\otimes r})
        \longrightarrow
        H_{\acute et}^{q+2c}
        (X;\mu_2^{\otimes(r+c)}).
\]
For a regular closed immersion, \(\operatorname{Cl}_f^r\) is the
absolute-purity equivalence;
see~\cite[Expos\'e~XVI, Th\'eor\`eme~3.1.1]{Riou}.
\end{definition}

\begin{definition}[Arithmetic Gysin pushforwards]
\label{def:compact-support-pushforwards}
Let \(f\colon Y\to X\) be proper and as in
Definition~\ref{def:riou-gysin-morphism}. Applying~\eqref{eq:riou_gysin} to the
finite and archimedean boundary
constructions gives a natural morphism of fiber sequences
\[
\begin{tikzcd}[column sep=large,row sep=large]
\widehat R\Gamma_c(Y;\mu_2^{\otimes r})
  \arrow[r]\arrow[d]
&
R\Gamma_{c,\mathrm{fin}}(Y;\mu_2^{\otimes r})
  \arrow[r]\arrow[d]
&
\widehat R\Gamma_\infty(Y;\mu_2^{\otimes r})
  \arrow[d]
\\
\widehat R\Gamma_c(X;\mu_2^{\otimes(r+c)})[2c]
  \arrow[r]
&
R\Gamma_{c,\mathrm{fin}}(X;\mu_2^{\otimes(r+c)})[2c]
  \arrow[r]
&
\widehat R\Gamma_\infty(X;\mu_2^{\otimes(r+c)})[2c].
\end{tikzcd}
\]
On cohomology, denote the three vertical maps by
\[
\begin{aligned}
\widehat f_{*,c}&\colon
\widehat H_c^q(Y;\mu_2^{\otimes r})
\longrightarrow
\widehat H_c^{q+2c}(X;\mu_2^{\otimes(r+c)}),
\\
f_{*,c,\mathrm{fin}}&\colon
H_{c,\mathrm{fin}}^q(Y;\mu_2^{\otimes r})
\longrightarrow
H_{c,\mathrm{fin}}^{q+2c}(X;\mu_2^{\otimes(r+c)}),
\\
\widehat f_{*,\infty}&\colon
\widehat H_\infty^q(Y;\mu_2^{\otimes r})
\longrightarrow
\widehat H_\infty^{q+2c}(X;\mu_2^{\otimes(r+c)}).
\end{aligned}
\]
\end{definition}

\begin{proposition}[Formal properties of arithmetic Gysin pushforwards]
\label{prop:modified-gysin-formalism}
Let
\[
        Z\xrightarrow{\,g\,}Y\xrightarrow{\,f\,}X
\]
be composable proper morphisms as above, of pure virtual relative
dimensions \(-d\) and \(-c\), respectively. Then
\[
        \widehat{(f\circ g)}_{*,c}(x)
        =
        \widehat f_{*,c}\!\left(\widehat g_{*,c}(x)\right).
\]
Moreover, for
\[
\begin{aligned}
x&\in\widehat H_c^q(Y;\mu_2^{\otimes r}),
&
\alpha&\in H_{\acute et}^p(X;\mu_2^{\otimes s}),
\\
y&\in\widehat H_c^q(X;\mu_2^{\otimes r}),
&
\beta&\in H_{\acute et}^p(Y;\mu_2^{\otimes s}),
\end{aligned}
\]
one has
\begin{align}
\label{eq:modified-gysin-projection-formula}
        \widehat f_{*,c}(x\cdot f^*\alpha)
        &=
        \widehat f_{*,c}(x)\cdot\alpha,
\\
\label{eq:modified-gysin-mixed-projection-formula}
        \widehat f_{*,c}(\widehat f_c^*y\cdot\beta)
        &=
        y\cdot f_*(\beta),
\end{align}
where \(\widehat f_c^*\) denotes proper pullback on modified
compact-support cohomology.

Finally, suppose that \(X\) and \(Y\) are equidimensional and lie in
the duality setting of Theorem~\ref{thm:milne-7-6}. Write
\[
        D_X:=\dim(X),
        \qquad
        D_Y:=\dim(Y).
\]
Since \(f\) has pure virtual relative dimension \(-c\), one has
\(D_X=D_Y+c\). The top-degree pushforward is compatible with the
Artin--Verdier trace maps: for
\[
        z\in
        \widehat H_c^{2D_Y+1}
        \bigl(Y;\mu_2^{\otimes D_Y}\bigr),
\]
one has
\begin{equation}
\label{eq:modified-gysin-trace-compatibility}
        \int_X\widehat f_{*,c}(z)=\int_Y z.
\end{equation}
\end{proposition}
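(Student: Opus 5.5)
The plan is to reduce every assertion to the corresponding statement for Riou's Gysin morphism \eqref{eq:riou_gysin} on sheaves. Then we propagate it through the exact, coefficient-natural boundary construction of Definition~\ref{def:compact-support-over-X}. All three boundary terms are built from \(Rs_!\) of a sheaf by exact functors (Proposition~\ref{prop:modified-functoriality}). So any identity of sheaf morphisms \(Rf_!\mathcal F\to\mathcal G\), or of pairings, induces the same identity on the finite, archimedean and modified terms, compatibly with the fiber sequences.

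\emph{Composition.} Riou's Gysin classes are compatible with composition:
\[
\operatorname{Cl}_{f\circ g}\simeq Rg^!(\operatorname{Cl}_f)\circ\operatorname{Cl}_g
\]
(Riou, Exposé~XVI, transitivity of the classes for composites of smoothable lci morphisms). Adjoining, \(\operatorname{gys}_{f\circ g}\) identifies with \(\operatorname{gys}_f\circ Rf_!(\operatorname{gys}_g)\), up to the identification \(R(fg)_!\simeq Rf_!Rg_!\). Applying the boundary functors \(R\Gamma_{\mathrm{fin}}\), \(\widehat R\Gamma_\infty\) and taking fibers gives a commuting diagram of fiber sequences. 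The composition formula follows on \(\widehat H_c\).

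\emph{Projection formulas.} For \eqref{eq:modified-gysin-projection-formula}, I would use the sheaf-level projection formula
\[
Rf_!(\mathcal F\otimes f^*\mathcal A)\simeq Rf_!\mathcal F\otimes\mathcal A,
\]
together with the compatibility of \(\operatorname{gys}_f\) with it. That compatibility comes from the \(f^*\)-linearity of \(\operatorname{Cl}_f\), i.e.\ \(\operatorname{Cl}_f\otimes f^*\) intertwines with the pairing \(f^*\mathcal A\otimes Rf^!\to Rf^!(\mathcal A\otimes-)\). We then feed this into the mixed pairing \eqref{eq:finite-compact-support-ordinary-mixed-pairing} on all three terms. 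Proposition~\ref{prop:multiplicative-arithmetic-boundary-diagram} ensures that the pairings pass to the fiber \(\widehat R\Gamma_c\) compatibly. The mixed formula \eqref{eq:modified-gysin-mixed-projection-formula} is handled the same way. Here we use \(Rf_!(f^*\mathcal G\otimes\mathcal B)\simeq\mathcal G\otimes Rf_!\mathcal B\) with \(\mathcal G\) the compact-support sheaf and \(\mathcal B\) the class \(\beta\), and the ordinary Gysin map \(f_*\) is induced on the absolute factor. I expect the only subtlety here to be bookkeeping of signs, which vanish mod \(2\).

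\emph{Trace compatibility.} This is the step I expect to be the main obstacle, because it requires comparing Riou's class with the normalization of the Artin--Verdier trace of Geisser--Schmidt. The plan is to first reduce via the composition formula to the structure maps \(X\to B\) and \(Y\to B\). Since \(f\) is proper, \(\int_X\) and \(\int_Y\) factor through the respective pushforwards to \(B\) followed by \(\int_B\). Concretely, we factor through \(\mathbf P^N_B\): the trace on a regular \(X\) of dimension \(D_X\) is characterized by Geisser--Schmidt as \(\int_B\) composed with the Gysin pushforward of \(X\to B\), given by the purity/trace isomorphism \(Rs_!\mu_2^{\otimes D_X}[2D_X-2\epsilon_B]\to\mu_2^{\otimes\epsilon_B}\). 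Riou's construction agrees with the SGA4 trace for smooth morphisms and with absolute purity for regular immersions. So both \(\int_Y\) and \(\int_X\circ\widehat f_{*,c}\) equal \(\int_B\circ\widehat{(s_Y)}_{*,c}\) by the composition formula. If this identification of traces is not directly citable, I would check it on the top group: \(\widehat H_c^{2D_X+1}(X;\mu_2^{\otimes D_X})\) is one-dimensional and surjected onto from a closed point. It therefore suffices to test on classes supported at a closed point \(x\in Y\). There both traces reduce, via the immersion of the point and local class field theory, to the invariant map on \(H^1(\kappa(x))\) for \(\kappa(x)\) finite, where compatibility is the standard normalization.
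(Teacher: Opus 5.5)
Your proposal is correct and follows essentially the paper's own argument: composition and both projection formulas come from Riou's transitivity of Gysin classes (Expos\'e~XVI, Th\'eor\`eme~2.5.12), the \(f^*\)-linearity of the Gysin class and the projection formula for \(Rf_!\), propagated through the exact boundary construction; trace compatibility comes from the Geisser--Schmidt trace being built from \(Rf_!\) and purity, so that its transitivity matches Riou's. Your closed-point fallback is not needed, and as written it is incomplete: it would have to be run on each connected component of \(Y\), since the top group is dual to \(H^0_{\acute et}(Y;\FF_2)\), and it presupposes the normalization \(\int_Y\circ\iota_{x*}=\operatorname{inv}_{\kappa(x)}\), which is itself an instance of the compatibility being proved.
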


\begin{proof}
Compatibility with composition follows
from~\cite[Expos\'e~XVI, Th\'eor\`eme~2.5.12]{Riou}. The two projection
formulas follow from the \(f^*(-)\)-module compatibility of Riou's Gysin
class and the projection formula for \(Rf_{!}\). These identities persist
under archimedean base change and passage to the defining boundary
fiber sequences.

Trace compatibility follows from the construction of generalized
Artin--Verdier duality via \(Rf_{!}\);
see~\cite[Proof of Theorem~4.6, especially (4.8)--(4.9)]{GeisserSchmidt}.
Under the purity identifications used above, its transitivity agrees
with Riou's compatibility of Gysin classes with composition.
\end{proof}

\begin{theorem}[Modified compact-support relative Wu formula]
\label{thm:modified-compactly-supported-relative-wu-declared}
Let \(f\colon Y\to X\) be a proper smoothable local complete intersection
morphism of pure virtual relative dimension \(-c\) under the standing
hypotheses above, and let
\(\nu_f\in K_0(Y)\) be its virtual normal bundle. For
\(q,r\in\mathbf Z\), \(n\geq0\), and
\[
        x\in\widehat H_c^q(Y;\mu_2^{\otimes r}),
\]
one has
\begin{equation}
\label{eq:modified-compact-support-relative-wu-componentwise}
        \widehat\Sq_c^n\!\left(\widehat f_{*,c}x\right)
        =
        \sum_{a+b=n}
        \widehat f_{*,c}\!\left(
                \widehat\Sq_c^a(x)\cdot
                w_b^{\acute et}(\nu_f)
        \right).
\end{equation}
Equivalently, in the forward-degree completion,
\[
        \widehat\Sq_c\!\left(\widehat f_{*,c}x\right)
        =
        \widehat f_{*,c}\!\left(
                \widehat\Sq_c(x)\cdot
                w^{\acute et}(\nu_f)
        \right).
\]
\end{theorem}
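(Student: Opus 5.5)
Following the strategy announced in the introduction, the plan is to reduce to two basic cases and then combine them by multiplicativity. Since \(f\) is proper smoothable lci between schemes admitting ample invertible sheaves, it factors as
\[
        Y\xhookrightarrow{\,i\,}\mathbb P^N_X\xrightarrow{\,p\,}X,
\]
with \(i\) a regular closed immersion, of codimension \(e\), and \(p\) the projective-space bundle. Then \(\nu_f=\nu_i+i^*\nu_p\) in \(K_0(Y)\). Proposition~\ref{prop:etale-sw-basic}(iii) and Remark~\ref{rem:etale-sw-virtual-bundles} give \(w^{\acute et}(\nu_f)=w^{\acute et}(\nu_i)\cdot i^*w^{\acute et}(\nu_p)\). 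Since \(\widehat f_{*,c}=\widehat p_{*,c}\widehat i_{*,c}\) by Proposition~\ref{prop:modified-gysin-formalism}, assume the formula holds for \(i\) and \(p\). Then
\[
        \widehat\Sq_c\widehat p_{*,c}\widehat i_{*,c}x
        =\widehat p_{*,c}\bigl(\widehat i_{*,c}(\widehat\Sq_c x\cdot w(\nu_i))\cdot w(\nu_p)\bigr),
\]
and the mixed projection formula \eqref{eq:modified-gysin-projection-formula} for \(i\) moves \(w(\nu_p)\) inside as \(i^*w(\nu_p)\). This reduces the theorem to the two cases. Throughout, everything is done componentwise in degree \(n\), so only finitely many terms appear.

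\emph{Closed immersion case.} Here \(\widehat i_{*,c}\) factors as
\[
        \widehat H_c^q(Y)\xrightarrow{\ \cdot s\ }\widehat H^{q+2e}_{c,Y}(X)\longrightarrow\widehat H^{q+2e}_c(X),
\]
where the first map is the supported mixed pairing \eqref{eq:supported-mixed-pairing-spectra} with the purity class \(s\in H^{2e}_Y(X)\), and the second is the forget-support map \eqref{eq:modified-forget-support-spectrum}. The first task is to check this factorization from Riou's definition: for an immersion, \(\operatorname{Cl}_i\) is the absolute purity equivalence, so the Gysin map is ``multiply by the fundamental class in \(Ri^!\), then apply the counit''. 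The forget-support map commutes with \(\widehat\Sq_c\) because it is natural in the coefficient sheaf. Theorem~\ref{thm:modified-supported-mixed-cartan} then gives
\[
        \widehat\Sq_{c,Y}(x\cdot s)=\widehat\Sq_c(x)\cdot\Sq_Y(s).
\]
It remains to show \(\Sq_Y(s)=w(N_{Y/X})\cdot s\). The plan is to deform to the normal cone and use naturality of supported squares under the excision isomorphism for the deformation space. This identifies \(s\) with the Thom class \(s_{Y/N}\), where Definition~\ref{def:etale-sw} applies directly; this is essentially Benoist's argument in the ordinary case. Since \(\nu_i=[N_{Y/X}]\), this settles the immersion case.

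\emph{Projective bundle case.} Here \(w(\nu_p)=w(T_{P/X})^{-1}\). By the projective bundle formula, which persists for \(\widehat H_c\) because it holds termwise on the boundary diagram, every \(x\in\widehat H_c^*(\mathbb P^N_X)\) is a sum \(\sum_k p^*_c y_k\cdot h^k\), with \(y_k\in\widehat H_c(X)\) and \(h=\overline c_1(\mathcal O(1))\). The mixed projection formula \eqref{eq:modified-gysin-mixed-projection-formula} gives \(\widehat p_{*,c}(p_c^*y\cdot\beta)=y\cdot p_*\beta\). The Cartan formula (Theorem~\ref{thm:arithmetic-boundary-internal-mixed-cartan}) splits
\[
        \widehat\Sq_c(p_c^*y\cdot h^k)=p_c^*\widehat\Sq_c(y)\cdot\Sq(h^k),
\]
using that \(p_c^*\) commutes with \(\widehat\Sq_c\) by proper naturality (Theorem~\ref{thm:canonical-arithmetic-boundary-operations}). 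Both sides of the desired identity therefore become
\[
        \widehat\Sq_c(y)\cdot\Sq(p_*h^k)
        \quad\text{versus}\quad
        \widehat\Sq_c(y)\cdot p_*\bigl(\Sq(h^k)\,w(\nu_p)\bigr),
\]
which agree by Benoist's ordinary identity, Theorem~\ref{thm:benoist-ordinary-wu-modified-section}, applied to \(\beta=h^k\).

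The main obstacle is the immersion step. There one must verify that Riou's Gysin map on modified compact support agrees with the supported product followed by forgetting supports, compatibly across the Tate boundary and not only on the finite term. One must also transport the normal-cone identification of the Thom class through the supported theory \(H_Y^*\), where instability and naturality hold by Proposition~\ref{prop:instability-relative-pro-anima-terms}. I would first establish the factorization at the level of sheaves, via \(i_*Ri^!\to\mathrm{id}\) and \eqref{eq:exceptional-pullback-module-pairing}, which makes it automatic on all boundary terms.
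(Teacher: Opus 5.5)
Your proposal is correct and follows the paper's proof essentially step for step. You use the same reduction to a regular closed immersion into \(\mathbb P^N_X\) followed by the projection, via compatibility with composition and the projection formula. The immersion case goes through the Thom class, the supported mixed Cartan formula and the forget-support map, and the projective-bundle case through the projective bundle formula, the mixed projection formula and Benoist's ordinary identity. The only difference is that you sketch a deformation-to-the-normal-cone argument for \(\Sq_Y(s)=w^{\acute et}(N_{Y/X})\cdot s\), whereas the paper simply cites this identity from Benoist's Thom-class characterization.
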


\begin{proof}
We prove the componentwise
identity~\eqref{eq:modified-compact-support-relative-wu-componentwise}.

We first observe that the formula is stable under composition. Suppose that
\[
        Y\xrightarrow{\,g\,}P\xrightarrow{\,p\,}X
\]
are morphisms for which~\eqref{eq:modified-compact-support-relative-wu-componentwise} holds, and put \(f=p\circ g\). By
Proposition~\ref{prop:modified-gysin-formalism}, the Gysin maps are
compatible with composition and satisfy the projection formula. Hence,
\[
\begin{aligned}
\widehat\Sq_c^n\!\left(\widehat f_{*,c}x\right)
&=
\sum_{a+b=n}
\widehat p_{*,c}\!\left(
        \widehat\Sq_c^a(\widehat g_{*,c}x)
        \cdot w_b^{\acute et}(\nu_p)
\right)
\\
&=
\sum_{u+v+b=n}
\widehat p_{*,c}\!\left(
        \widehat g_{*,c}\!\left(
                \widehat\Sq_c^u(x)\cdot
                w_v^{\acute et}(\nu_g)
        \right)
        \cdot w_b^{\acute et}(\nu_p)
\right)
\\
&=
\sum_{u+v+b=n}
\widehat f_{*,c}\!\left(
        \widehat\Sq_c^u(x)\cdot
        w_v^{\acute et}(\nu_g)\cdot
        g^*w_b^{\acute et}(\nu_p)
\right).
\end{aligned}
\]
Since
\[
        \nu_f=\nu_g+g^*\nu_p
\]
and \(w^{\acute et}\) is multiplicative, the last expression is
\[
        \sum_{u+k=n}
        \widehat f_{*,c}\!\left(
                \widehat\Sq_c^u(x)\cdot
                w_k^{\acute et}(\nu_f)
        \right).
\]
By the standing properness and ampleness hypotheses, \(f\) is projective
and hence admits a factorization
\[
        Y\xhookrightarrow{\,g\,}\mathbb P_X^N
        \xrightarrow{\,p\,}X.
\]
Since \(f\) is lci and \(p\) is smooth, \(g\) is a regular closed
immersion. It therefore suffices to prove the formula for regular closed
immersions and projective-space bundles.

\medskip
\noindent
\emph{Regular closed immersions.}
Let \(g\colon Z\hookrightarrow T\) be a regular closed immersion of
codimension \(d\), with normal bundle \(N_g\). Absolute purity gives Thom
isomorphisms
\[
\begin{aligned}
\phi_g^r\colon
H_{\acute et}^q(Z;\mu_2^{\otimes r})
&\xrightarrow{\ \sim\ }
H_Z^{q+2d}(T;\mu_2^{\otimes(r+d)}),
\\
\widehat\phi_{g,c}^r\colon
\widehat H_c^q(Z;\mu_2^{\otimes r})
&\xrightarrow{\ \sim\ }
\widehat H_{c,Z}^{q+2d}(T;\mu_2^{\otimes(r+d)}).
\end{aligned}
\]
Write
\[
        s_g:=\phi_g^0(1)
        \in H_Z^{2d}(T;\mu_2^{\otimes d})
\]
for the Thom class. Compatibility of Riou's purity classes with tensor
products identifies the Thom maps with multiplication by \(s_g\); in
particular,
\[
        \widehat\phi_{g,c}^r(x)=x\cdot s_g.
\]
The Thom-class characterization of the Stiefel--Whitney classes gives
\[
        \Sq_Z^b(s_g)
        =
        w_b^{\acute et}(N_g)\cdot s_g;
\]
compare~\cite[\S2.4, (2.11), and the proof of Theorem~2.5]{Benoist}. Therefore,
Theorem~\ref{thm:modified-supported-mixed-cartan} gives
\[
\begin{aligned}
\widehat\Sq_{c,Z}^n\!\left(\widehat\phi_{g,c}^r(x)\right)
&=
\sum_{a+b=n}
\widehat\Sq_c^a(x)\cdot\Sq_Z^b(s_g)
\\
&=
\sum_{a+b=n}
\widehat\Sq_c^a(x)\cdot
w_b^{\acute et}(N_g)\cdot s_g
\\
&=
\sum_{a+b=n}
\widehat\phi_{g,c}^r\!\left(
        \widehat\Sq_c^a(x)\cdot
        w_b^{\acute et}(N_g)
\right).
\end{aligned}
\]
By construction, \(\widehat g_{*,c}\) is the composite of
\(\widehat\phi_{g,c}^r\) with the forget-support
morphism~\eqref{eq:modified-forget-support-spectrum}. Since the forget-support morphism commutes
with stable coefficient operations,
\[
        \widehat\Sq_c^n\!\left(\widehat g_{*,c}x\right)
        =
        \sum_{a+b=n}
        \widehat g_{*,c}\!\left(
                \widehat\Sq_c^a(x)\cdot
                w_b^{\acute et}(N_g)
        \right).
\]
Since \(\nu_g=[N_g]\), this proves the regular-immersion case.

\medskip
\noindent
\emph{Projective-space bundles.}
Let
\[
        p\colon P:=\mathbb P_X^N\longrightarrow X
\]
be the projection, and put
\[
        \lambda:=
        c_1^{\acute et}\!\left(\mathcal O_P(1)\right)
        \in H_{\acute et}^2(P;\mu_2).
\]
Riou's projective-bundle formula gives
\[
        \bigoplus_{m=0}^N
        \mu_2^{\otimes(r-m)}[-2m]
        \xrightarrow{\ \sim\ }
        Rp_*\mu_2^{\otimes r},
\]
whose \(m\)-th component is multiplication by \(\lambda^m\);
see~\cite[Th\'eor\`eme~1.2]{Riou}. Applying
\(\widehat R\Gamma_c(X;-)\) gives
\[
        \bigoplus_{m=0}^N
        \widehat H_c^{q-2m}
        (X;\mu_2^{\otimes(r-m)})
        \xrightarrow{\ \sim\ }
        \widehat H_c^q(P;\mu_2^{\otimes r}),
        \qquad
        (y_m)_m\longmapsto
        \sum_{m=0}^N
        \widehat p_c^*(y_m)\cdot\lambda^m.
\]
It is therefore enough to consider a class
\[
        x=\widehat p_c^*(y)\cdot\lambda^k,
        \qquad 0\leq k\leq N.
\]

Let \(p_*\) denote the ordinary Gysin pushforward. Its trace
normalization gives
\[
        \epsilon_k:=p_*(\lambda^k)
        =
        \begin{cases}
        0,&0\leq k<N,\\
        1,&k=N;
        \end{cases}
\]
see~\cite[D\'efinition~2.4.1]{Riou}. The mixed
projection formula of
Proposition~\ref{prop:modified-gysin-formalism} gives
\[
        \widehat p_{*,c}(x)=y\cdot\epsilon_k.
\]

For \(n\geq0\), put
\[
        R_n:=
        \sum_{a+b=n}
        \widehat p_{*,c}\!\left(
                \widehat\Sq_c^a(x)\cdot
                w_b^{\acute et}(\nu_p)
        \right).
\]
Naturality under pullback along \(p\), together with
Theorem~\ref{thm:arithmetic-boundary-internal-mixed-cartan}, gives
\[
        \widehat\Sq_c^a(x)
        =
        \sum_{u+v=a}
        \widehat p_c^*\!\left(\widehat\Sq_c^u(y)\right)
        \cdot\Sq^v(\lambda^k).
\]
Applying the second projection formula of
Proposition~\ref{prop:modified-gysin-formalism}, we obtain
\[
        R_n
        =
        \sum_{u+v+b=n}
        \widehat\Sq_c^u(y)\cdot
        p_*\!\left(
                \Sq^v(\lambda^k)\cdot
                w_b^{\acute et}(\nu_p)
        \right).
\]
The componentwise form of Benoist's relative Wu formula,
Theorem~\ref{thm:benoist-ordinary-wu-modified-section}, says that
\[
        \Sq^\ell(\epsilon_k)
        =
        \sum_{v+b=\ell}
        p_*\!\left(
                \Sq^v(\lambda^k)\cdot
                w_b^{\acute et}(\nu_p)
        \right).
\]
Consequently,
\[
\begin{aligned}
        R_n
        &=
        \sum_{u+\ell=n}
        \widehat\Sq_c^u(y)\cdot\Sq^\ell(\epsilon_k)
\\
        &=
        \widehat\Sq_c^n(y\cdot\epsilon_k)
\\
        &=
        \widehat\Sq_c^n\!\left(\widehat p_{*,c}x\right).
\end{aligned}
\]
This proves the projective-bundle case, and together with the
composition reduction, completes the proof.
\end{proof}

\subsubsection{The absolute Wu class of
\texorpdfstring{$\mathcal O_{K,S}$}{O(K,S)}}
\label{subsec:absolute-arithmetic-vt}

We now take \(B=\Spec\mathcal O_{K,S}\) and compute its total absolute
Wu class in terms of \(\beta_B\); see
Proposition~\ref{prop:absolute-arithmetic-wu-formula}.

For a scheme \(Z\) on which \(2\) is invertible and \(r\in\mathbf Z\),
consider the coefficient sequence
\begin{equation}
\label{eq:absolute-bockstein-sequence}
0\longrightarrow\mu_2^{\otimes r}
\longrightarrow\mu_4^{\otimes r}
\longrightarrow\mu_2^{\otimes r}
\longrightarrow0,
\qquad
\mu_n^{\otimes0}:=\mathbf Z/n.
\end{equation}
Denote its degree-one connecting operator on ordinary \'etale
cohomology by \(\delta_{Z,r}\). For \(Z=B\), write
\(\delta_r:=\delta_{B,r}\) and denote the corresponding operator on
\(\widehat H_c^*(B;\mu_2^{\otimes r})\), in all integer degrees, by
\(\widehat\delta_r\). In this notation,
\(\beta_Z=\delta_{Z,1}(1_\mu)\).

We shall also use Benoist's comparison between twisted and untwisted
Bocksteins; his class \(\varpi\) is the class denoted \(\beta_Z\) here.

\begin{lemma}[{\cite[Lemma~2.2]{Benoist}}]
\label{lem:benoist-bockstein}
For every scheme \(Z\) on which \(2\) is invertible,
\(r\in\mathbf Z\), and
\(x\in H_{\acute et}^q(Z;\mu_2^{\otimes r})\), one has
\[
        \delta_{Z,0}(x)
        =
        \delta_{Z,r}(x)-r\,\beta_Z\cdot x,
\]
where \(r\) is reduced modulo \(2\) and the coefficient systems are
identified as above.
\end{lemma}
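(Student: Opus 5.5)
The plan is to compare both sides of the identity through the coefficient sequence \(\mu_2\to\mu_4\to\mu_4\otimes\mu_2^{\otimes(r-1)}\)-type identifications. Concretely, I will write \(\mu_4^{\otimes r}\simeq\mathbf Z/4\otimes\mu_4^{\otimes r}\) and compare the twisted sequence~\eqref{eq:absolute-bockstein-sequence} with the untwisted one \(0\to\mathbf Z/2\to\mathbf Z/4\to\mathbf Z/2\to0\). The mod-\(2\) reductions are identified using \(1\mapsto-1\), so the two sequences differ only in their middle terms.

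First, I will reduce to \(r=1\) and to the universal case. Tensoring with \(\mu_4^{\otimes(r-1)}\) over \(\mathbf Z/4\) turns the sequence for \(r-1\) into the sequence for \(r\). Using the derivation property of connecting maps on products, namely \(\delta(a\cdot b)=\delta(a)\cdot b+a\cdot\delta(b)\) for cup products of classes in \(\mu_4\)-type extensions, I will obtain \(\delta_{Z,r}(x)=\delta_{Z,r-1}(x)+\beta_Z\cdot x\) modulo \(2\). This step is carried out by writing \(x=x'\cdot 1_\mu\), where \(1_\mu=-1\in H^0(Z;\mu_2)\) realizes the coefficient identification, and noting that \(\delta_{Z,1}(1_\mu)=\beta_Z\) by Definition~\ref{def:bockstein_class}.

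Induction on \(r\), in both directions, then gives \(\delta_{Z,r}=\delta_{Z,0}+r\beta_Z\). Negative \(r\) is handled through \(\mu_2^{\otimes(-1)}\) as the dual, and the mod-\(2\) sign issues disappear.

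The main obstacle will be justifying the Leibniz rule in the form needed. Here \(1_\mu\) is a section of \(\mu_2\), not of \(\mu_4\), so the product \(x'\cdot 1_\mu\) must be formed in the pairing \(\mu_4^{\otimes(r-1)}\otimes\mu_4\to\mu_4^{\otimes r}\) after lifting. I would try first to lift \(1_\mu\) locally to a fourth root of unity \(i\), in the étale sense. Its Čech coboundary then computes \(\beta_Z\) as the Kummer class of \(-1\), in agreement with Remark~\ref{lem:bockstein-equals-kummer}. I would then check the cocycle-level formula for the connecting map of a tensor product sequence directly. This is essentially Benoist's argument, so in the paper I would simply cite \cite[Lemma~2.2]{Benoist}.
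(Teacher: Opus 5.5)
Your argument is correct. It does more than the paper, which gives no proof of this lemma and simply imports it from \cite[Lemma~2.2]{Benoist}. Your route rests on the one-step identity
\[
\delta_{Z,r}(x'\cdot 1_\mu)=\delta_{Z,r-1}(x')\cdot 1_\mu+x'\cdot\delta_{Z,1}(1_\mu)=\delta_{Z,r-1}(x')\cdot 1_\mu+x'\cdot\beta_Z ,
\]
which comes from the pairing \(\mu_4^{\otimes(r-1)}\otimes_{\mathbf Z/4}\mu_4\simeq\mu_4^{\otimes r}\).

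Two small corrections to the write-up:
\begin{itemize}
\item It is tensoring with \(\mu_4\), not with \(\mu_4^{\otimes(r-1)}\), that carries the \((r-1)\)-sequence to the \(r\)-sequence.
\item Since \(\mu_4\) is an invertible \(\mathbf Z/4\)-module sheaf, the one-step identity holds verbatim for every \(r\in\mathbf Z\). Telescoping therefore gives \(\delta_{Z,r}=\delta_{Z,0}+r\beta_Z\cdot(-)\) for negative \(r\) as well, without a separate duality argument.
\end{itemize}

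Because the factor \(1_\mu\) has degree \(0\), you also do not need a general cochain-level Leibniz rule. For arbitrary \(Z\), Čech cochains would in any case have to be replaced by hypercovers or a multiplicative resolution. Instead, argue with extension classes:
\begin{itemize}
\item The connecting maps are Yoneda products with the classes of the two extensions in \(\operatorname{Ext}^1(\FF_2,\FF_2)\).
\item The local maps \(y\mapsto y\otimes i\), with \(i^2=-1\), are isomorphisms of extensions \(\mu_4^{\otimes(r-1)}\to\mu_4^{\otimes r}\). They induce the standing identifications on both ends.
\item The two choices \(\pm i\) differ by \(-1=1+2\) on the middle term, that is, by the nonzero element of \(\operatorname{Hom}(\FF_2,\FF_2)\).
\item Hence the two extension classes differ by the image of the class of the torsor of square roots of \(-1\). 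This class is \(\beta_Z\) by Remark~\ref{lem:bockstein-equals-kummer}.
\end{itemize}
Bilinearity of the Yoneda product then yields the one-step identity immediately. Either version is a self-contained substitute for the citation.
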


\begin{lemma}
\label{lem:delta-adjoint}
Let \((-)^\vee:=\mathrm{Hom}_{\FF_2}(-,\FF_2)\).
Under the Artin--Verdier duality isomorphisms
\[
H^1(B;\mathbf Z/2)\simeq \widehat H_c^2(B;\mu_2)^\vee
\qquad\text{and}\qquad
H^0(B;\mu_2)\simeq \widehat H_c^3(B;\mathbf Z/2)^\vee,
\]
the map
\[
\delta_1 \colon H^0(B;\mu_2)\longrightarrow H^1(B;\mu_2)\simeq H^1(B;\mathbf Z/2)
\]
is the transpose of
\[
\widehat{\delta}_0 \colon \widehat H_c^2(B;\mu_2)\longrightarrow \widehat H_c^3(B;\mathbf Z/2).
\]
Equivalently, for all \(a\in H^0(B;\mu_2)\) and \(x\in \widehat H_c^2(B;\mu_2)\),
\[
\langle \delta_1(a),x\rangle
=
\langle a,\widehat{\delta}_0(x)\rangle.
\]
\end{lemma}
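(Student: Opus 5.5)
The plan is to derive the adjointness from the compatibility of the Artin--Verdier pairing with connecting homomorphisms of short exact coefficient sequences. Theorem~\ref{thm:milne-7-6} realizes the pairing as a Yoneda pairing
\[
        \Ext^r_{B,\FF_2}(F,\mu_2)\times\widehat H_c^{3-r}(B;F)\to\widehat H_c^3(B;\mu_2)\xrightarrow{\int_B}\FF_2,
\]
so it is natural in \(F\). I would apply this to the sequence~\eqref{eq:absolute-bockstein-sequence}. The key identity to establish is the following: for a short exact sequence \(0\to F'\to F\to F''\to0\) of constructible sheaves, the connecting maps \(\partial\colon\Ext^r(F',\mu_2)\to\Ext^{r+1}(F'',\mu_2)\) and \(\widehat\partial\colon\widehat H_c^{s}(B;F'')\to\widehat H_c^{s+1}(B;F')\) are adjoint up to sign. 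Signs are irrelevant modulo \(2\).

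\emph{Step 1.} Prove this adjointness at the level of the functor \(\widehat R\Gamma_c(B;-)\) and the Yoneda product. Both connecting maps are obtained by composing with the single class \(e\in\Ext^1(F'',F')\) of the extension. Indeed, \(\widehat\partial\) is induced by applying the exact functor \(\widehat R\Gamma_c(B;-)\) (Proposition~\ref{prop:modified-functoriality}) to \(e\colon F''\to F'[1]\). Likewise \(\partial\) is precomposition with \(e\). Associativity of the Yoneda pairing \(\Ext(F',\mu_2)\times\Ext(F'',F')\times\widehat H_c(B;F'')\to\widehat H_c^3(B;\mu_2)\) then gives \(\langle\partial a,x\rangle=\pm\langle a,\widehat\partial x\rangle\).

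\emph{Step 2.} Specialize to the case at hand. For the sequence \(0\to\mathbf Z/2\to\mathbf Z/4\to\mathbf Z/2\to0\) of untwisted sheaves, \(\widehat\partial\) is \(\widehat\delta_0\). Its Ext-dual, after the identification \(\underline{\mathrm{Hom}}(\mathbf Z/n,\mu_2)\), is the connecting map of \(0\to\mu_2\to\mu_4\to\mu_2\to0\) on \(H^*(B;\mu_2)\) coming from the dual sequence \(\underline{\mathrm{Hom}}(-,\mu_4)\). This is \(\delta_1\).

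I would check this identification concretely. The dual of the sequence \(\mathbf Z/2\to\mathbf Z/4\to\mathbf Z/2\) into \(\mu_4\) (and hence into \(\mu_2\) after the twist bookkeeping) is the Kummer-type sequence \(\mu_2\to\mu_4\xrightarrow{2}\mu_2\). Rather than computing through \(\Ext\) into \(\mu_2\) directly, it is cleaner to use the locally constant version of Theorem~\ref{thm:milne-7-6}, with \(F^\vee(1)\), and to note that \((\mathbf Z/4)^\vee(1)\) should be read as \(\mu_4\) through \(\Ext\) into \(\mu_4\)-coefficients. So I would instead apply Step~1 to pairings with values in \(\mathbf Z/4\)-coefficient dualizing objects and reduce.

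\emph{Step 3.} Match the degrees. Here \(a\in H^0(B;\mu_2)=\Ext^0(\mathbf Z/2,\mu_2)\), \(x\in\widehat H_c^2(B;\mu_2)\), and \(\widehat\delta_0 x\in\widehat H_c^3(B;\mathbf Z/2)\). This is consistent with \(N_B=3\), and with the identification \(\mu_2\simeq\mathbf Z/2\) fixed at the start of the chapter.

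The main obstacle is Step~2: verifying rigorously that the dual of \(\delta_0\) is \(\delta_1\) and not \(\delta_0\). These differ by \(\beta_B\cdot a\) by Lemma~\ref{lem:benoist-bockstein}. The difference comes precisely from the twist in the duality \(F\mapsto F^\vee(1)\): \(\underline{\mathrm{Hom}}(\mathbf Z/4,\mu_4)\simeq\mu_4\) and not \(\mathbf Z/4\). I would handle this by writing the short exact sequence and its Hom-dual into \(\mu_4\) explicitly and observing that the dual sequence is exactly \(\mu_2\to\mu_4\to\mu_2\).
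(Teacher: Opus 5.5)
Your final plan is correct and is essentially the paper's proof. You dualize $0\to\mathbf Z/2\to\mathbf Z/4\to\mathbf Z/2\to0$ into $\mu_4$ (equivalently, take Cartier duals $\underline{\mathrm{Hom}}(-,\mathbb G_m)$) to get $0\to\mu_2\to\mu_4\xrightarrow{(\cdot)^2}\mu_2\to0$, and then use that the Artin--Verdier pairing is compatible with connecting maps, signs being irrelevant mod $2$. The only difference is that the paper cites Ahlqvist--Carlson and Demarche--Harari for that compatibility, whereas you derive it from functoriality of $\widehat R\Gamma_c(B;-)$ applied to the extension class; as you eventually note, Step~1 must be run with the $\mu_4$- (or $\mathbb G_m$-) valued pairing rather than the $\FF_2$-linear one of Theorem~\ref{thm:milne-7-6}, since $\mathbf Z/4$ is not an $\FF_2$-module.
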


\begin{proof}
Set \(D(F):=\underline{\mathrm{Hom}}_{B,\mathbf Z}(F,\mathbb G_m)\).
Since \(2\) is invertible on \(B\), for \(\ell\in\{2,4\}\) we have
canonical identifications
\[
D(\mu_\ell)\simeq \mathbf Z/\ell,
\qquad
D(\mathbf Z/\ell)\simeq \mu_\ell,
\]
and the short exact sequence
\[
0\longrightarrow \mu_2\longrightarrow \mu_4\longrightarrow \mu_2\longrightarrow 0
\]
is Cartier dual to
\[
0\longrightarrow \mathbf Z/2\longrightarrow \mathbf Z/4\longrightarrow \mathbf Z/2\longrightarrow 0.
\]

For every finite locally constant sheaf \(F\) of
\(\mathbf Z/4\)-modules on \(B_{\acute et}\), Artin--Verdier duality
gives a functorial perfect pairing
\[
H^i(B;D(F))\times \widehat H_c^{3-i}(B;F)\longrightarrow \mathbf Q/\mathbf Z.
\]
Applying Artin--Verdier duality to this pair of exact sequences gives a
diagram that commutes up to sign; see~\cite[Lemma~3.4]{AhlqvistCarlsonPunctured}
and, more generally,~\cite[Lemma~4.3]{DemarcheHarari}:
\[
\begin{tikzcd}
H^0(B;\mu_2) \arrow[r,"\delta_1"] \arrow[d,"\sim"'] &
H^1(B;\mu_2)\arrow[d,"\sim"]\\
\widehat H_c^3(B;\mathbf Z/2)^\vee \arrow[r,"(\widehat\delta_0)^\vee"'] &
\widehat H_c^2(B;\mathbf{Z}/2)^\vee.
\end{tikzcd}
\]
Since we are working modulo $2$, there is no sign ambiguity, hence the diagram commutes.
\end{proof}

\begin{lemma}[Real-place boundary and duality]
\label{lem:s-integer-real-boundary-duality}
Let \(\Omega_{\mathbf R}(K)\) denote the set of real places of \(K\),
and let
\[
        \beta_\nu\in H_{\acute et}^1(\mathbf R;\FF_2)
\]
be the standard generator. There are natural identifications
\begin{equation}
\label{eq:s-integer-tate-boundary-ring}
        \widehat H_\infty^*(B;\FF_2)
        \cong
        \prod_{\nu\in\Omega_{\mathbf R}(K)}
        \FF_2[\beta_\nu^{\pm1}]
\end{equation}
and, for \(n\geq3\),
\begin{equation}
\label{eq:s-integer-high-degree-real-restriction}
        H_{\acute et}^n(B;\FF_2)
        \xrightarrow{\ \sim\ }
        \prod_{\nu\in\Omega_{\mathbf R}(K)}
        \FF_2\beta_\nu^n.
\end{equation}
Moreover, the Tate-boundary morphism induces an isomorphism
\begin{equation}
\label{eq:s-integer-tate-boundary-isomorphism}
        \partial_\infty\colon
        \widehat H_\infty^{2-n}(B;\mu_2)
        \xrightarrow{\ \sim\ }
        \widehat H_c^{3-n}(B;\mu_2)
        \qquad(n\geq3).
\end{equation}
For
\[
        a\in H_{\acute et}^n(B;\FF_2),
        \qquad
        y=(y_\nu)_\nu
        \in\widehat H_\infty^{2-n}(B;\mu_2),
\]
one has
\begin{equation}
\label{eq:s-integer-local-global-duality}
        \int_B a\cdot\partial_\infty(y)
        =
        \sum_{\nu\in\Omega_{\mathbf R}(K)}
        \operatorname{tr}_\nu\!\left(
                \operatorname{res}_\nu(a)\cdot y_\nu
        \right),
\end{equation}
where \(\operatorname{res}_\nu\) denotes restriction along the morphism
\(\Spec\mathbf R\to B\) corresponding to the real place \(\nu\), and
\[
        \operatorname{tr}_\nu\colon
        \widehat H_{\acute et}^2(\mathbf R;\mu_2)
        \longrightarrow\FF_2
\]
is normalized by \(\operatorname{tr}_\nu(\beta_\nu^2)=1\).
The morphism \(\partial_\infty\) commutes with the stable Steenrod
operations and is \(H_{\acute et}^*(B;\FF_2)\)-linear via restriction.
\end{lemma}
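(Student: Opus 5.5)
The plan is to treat the four assertions of Lemma~\ref{lem:s-integer-real-boundary-duality} in order, using the definitions of \S\ref{sec:modified-compact-support} and classical Galois cohomology of \(\mathcal O_{K,S}\).

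\emph{The Tate boundary ring.} For \eqref{eq:s-integer-tate-boundary-ring}, apply Definition~\ref{def:compact-support-over-X} with \(X=B\). The geometric fiber at infinity \(B_{\mathbf C}\) is the finite set of complex embeddings of \(K\), so \(M_\infty(B;\FF_2)\) is the \(G_{\mathbf R}\)-module \(\FF_2^{\mathrm{Hom}(K,\mathbf C)}\) concentrated in degree \(0\). It decomposes into two kinds of summands. Each complex place contributes a free summand \(\FF_2[G_{\mathbf R}]\), whose Tate construction vanishes by Remark~\ref{rem:modified-local-vanishing}. Each real place contributes a trivial summand \(\FF_2\), whose Tate construction is computed by Proposition~\ref{prop:c2-point-tate-steenrod-calculation}. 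Since the summands are finite in number, the Tate construction takes the direct sum to a product. The ring structure and the Steenrod action are compatible with this decomposition by Proposition~\ref{prop:multiplicative-arithmetic-boundary-diagram}.

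\emph{High-degree cohomology of \(B\) and the boundary isomorphism.} For \eqref{eq:s-integer-high-degree-real-restriction}, I would cite the classical fact that \(\mathrm{cd}_2\) of \(\mathcal O_{K,S}[\text{totally imaginary}]\) is at most \(2\). Equivalently, this is the Tate--Poitou statement that restriction \(H^n(G_S;\FF_2)\to\bigoplus_\nu H^n(\mathbf R;\FF_2)\) is an isomorphism for \(n\geq3\); see Milne's \emph{Arithmetic Duality Theorems}, I.4.10(c). Since \(2\in\mathcal O_{K,S}^\times\), \'etale cohomology of \(B\) equals \(G_S\)-cohomology, and \(H^n(\mathbf R;\FF_2)=\FF_2\beta_\nu^n\).

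For \eqref{eq:s-integer-tate-boundary-isomorphism}, use the exact sequence \eqref{eq:tate-boundary-instability-defect-sequence} with \(\mu_2\)-coefficients:
\[
H^{2-n}_{c,\mathrm{fin}}(B;\mu_2)\to\widehat H^{2-n}_\infty(B;\mu_2)\xrightarrow{\partial_\infty}\widehat H^{3-n}_c(B;\mu_2)\to H^{3-n}_{c,\mathrm{fin}}(B;\mu_2).
\]
For \(n\geq3\), both outer terms vanish because \(B\) is affine and so \(Rs_!=Rs_*\). The left term vanishes since \(2-n<0\), and the right term vanishes when \(3-n<0\). The edge case \(n=3\) is the main obstacle: there one needs \(H^0_{c,\mathrm{fin}}(B;\mu_2)\to\widehat H^0_\infty(B;\mu_2)\) to be handled, and \(H^0_{c,\mathrm{fin}}=H^0(B;\mu_2)\) is nonzero. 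I would instead prove bijectivity by dimension count, using perfect duality. By Theorem~\ref{thm:milne-7-6} with \(D_B=1\), \(\widehat H^{3-n}_c(B;\mu_2)\) is dual to \(H^n(B;\FF_2)\), which has dimension \(r_1\) by \eqref{eq:s-integer-high-degree-real-restriction}. The source \(\widehat H^{2-n}_\infty(B;\mu_2)\) also has dimension \(r_1\). So it suffices to prove injectivity, or equivalently the pairing formula.

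\emph{The pairing formula and linearity.} For \eqref{eq:s-integer-local-global-duality}, I would invoke the compatibility of the Artin--Verdier trace with local Tate duality at the real places, as built into Geisser--Schmidt's construction of the modified complex (the proof of their Theorem~4.6). Concretely, the composite \(\widehat H^2_\infty(B;\mu_2)\xrightarrow{\partial_\infty}\widehat H^3_c(B;\mu_2)\xrightarrow{\int_B}\FF_2\) is the sum of the local traces \(\operatorname{tr}_\nu\). Combined with \(H^*(B)\)-linearity of \(\partial_\infty\) via restriction, this gives the formula. That linearity comes from the module structure in Proposition~\ref{prop:multiplicative-arithmetic-boundary-diagram}, since \(\partial_\infty\) is a connecting map of a fiber sequence of modules.

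Now take \(a=\beta_\nu^n\) at a single place, so that \(a\cdot\beta_\nu^{2-n}=\beta_\nu^2\) has trace \(1\). This shows the pairing between \(H^n(B)\) and the source of \(\partial_\infty\) is perfect, hence \(\partial_\infty\) is injective, which settles the edge case \(n=3\). Finally, commutation of \(\partial_\infty\) with the stable Steenrod operations is Theorem~\ref{thm:canonical-arithmetic-boundary-operations}, since every map in the modified boundary long exact sequence is \(\mathcal A_2\)-linear.
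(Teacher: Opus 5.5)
Your proposal is correct and follows essentially the paper's route. You use the induced/real-place decomposition with the $C_2$ Tate calculation, the classical high-degree restriction isomorphism, and the compatibility of the Artin--Verdier trace with the real local invariants, which says precisely that $\partial_\infty$ is the transpose of restriction and so gives both the boundary isomorphism and the pairing formula.

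One correction: $s\colon B\to\Spec\mathbf Z$ is not proper, since $S$ contains the primes above $2$. Hence $Rs_!=\pi_*j_!$ for $j\colon B\hookrightarrow\Spec\mathcal O_K$ and $\pi\colon\Spec\mathcal O_K\to\Spec\mathbf Z$, rather than $Rs_*$, and in fact $H^0_{c,\mathrm{fin}}(B;\mu_2)=H^0(\Spec\mathcal O_K;j_!\mu_2)=0$. So the exact sequence already gives the isomorphism at $n=3$, and your injectivity-plus-dimension-count detour, while valid, is unnecessary.
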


\begin{proof}
The geometric fiber \(B_{\mathbf C}\) is the finite set of complex
embeddings of \(K\). Complex conjugation fixes the real embeddings and
permutes the remaining embeddings in free two-element orbits. Tate
cohomology vanishes on the corresponding induced spectra;
see~\cite[Lemma~I.3.8(i)]{NikolausScholze2018}.
Thus,~\eqref{eq:s-integer-tate-boundary-ring} follows from
Proposition~\ref{prop:c2-point-tate-steenrod-calculation}.

Taking \(S_f\) in the notation of Ahlqvist--Carlson to be the set of
finite places inverted in \(\mathcal O_{K,S}\),
the description~\eqref{eq:s-integer-high-degree-real-restriction} follows
from~\cite[Theorem~2.5 and the final paragraph of \S3]{AhlqvistCarlsonPunctured}.

Compatibility of the compact-support localization sequence with the
global and local trace pairings identifies \(\partial_\infty\) with the
transpose of the restriction map
in~\eqref{eq:s-integer-high-degree-real-restriction};
see~\cite[Chapter II, \S3, p.176 and proof of Corollary 3.4]{MILNE_ARITHMETIC}
and~\cite[Theorem~4.3, especially (4.1)]{GeisserSchmidt}. Since the global
and local pairings are perfect, this
proves~\eqref{eq:s-integer-tate-boundary-isomorphism}
and~\eqref{eq:s-integer-local-global-duality}. Under
\[
        \tfrac12\mathbf Z/\mathbf Z\simeq\FF_2,
\]
the local invariant of \(\beta_\nu^2\) is \(1\), giving the stated
normalization.

Compatibility with Steenrod operations and module structures follows
from Theorem~\ref{thm:canonical-arithmetic-boundary-operations} and
Proposition~\ref{prop:multiplicative-arithmetic-boundary-diagram}.
\end{proof}

Write
\[
        v_B=\sum_{n\geq0}v_{B,n},
        \qquad
        v_{B,n}\in H_{\acute et}^n(B;\FF_2).
\]

\begin{proposition}[Absolute arithmetic Wu class]
\label{prop:absolute-arithmetic-wu-formula}
For every \(n\geq0\),
\begin{equation}
\label{eq:absolute-arithmetic-wu-components}
        v_{B,n}
        =
        \binom{2-n}{n}\beta_B^n,
\end{equation}
where the generalized integral binomial coefficient is reduced modulo
\(2\). Equivalently, in the forward-degree completion,
\[
        v_B
        =
        \sum_{n\geq0}
        \binom{2-n}{n}\beta_B^n
        =
        1+\beta_B+\beta_B^3+\beta_B^4+\beta_B^5
        +\beta_B^8+\beta_B^9+\cdots .
\]
In particular,
\[
        v_{B,0}=1,\qquad
        v_{B,1}=\beta_B,\qquad
        v_{B,2}=0.
\]
\end{proposition}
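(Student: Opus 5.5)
We treat separately the low degrees $n\in\{0,1,2\}$ and the high degrees $n\geq3$. Here $B$ has $D_B=1$ and $N_B=3$, so by Definition~\ref{def:absolute-wu-class} the class $v_{B,n}$ is characterized by
\[
        \int_B\widehat\Sq_c^n(x)=\int_B x\cdot v_{B,n}
        \qquad\bigl(x\in\widehat H_c^{3-n}(B;\mu_2)\bigr).
\]

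\emph{Low degrees.} For $n=0$ we use $\widehat\Sq_c^0=\mathrm{id}$, which gives $v_{B,0}=1$. For $n=1$ we use that $\widehat\Sq_c^1$ is the Bockstein (Theorem~\ref{thm:canonical-arithmetic-boundary-operations}), namely the untwisted connecting map $\widehat\delta_0$ on $\widehat H_c^2(B;\mu_2)$. Lemma~\ref{lem:delta-adjoint} gives
\[
        \int_B\widehat\delta_0(x)=\langle 1_\mu,\widehat\delta_0(x)\rangle=\langle\delta_1(1_\mu),x\rangle=\int_B x\cdot\beta_B ,
\]
so $v_{B,1}=\beta_B$. Some care is needed to identify the pairing with $1_\mu=-1\in H^0(B;\mu_2)$ with the trace $\int_B$ under the identification $\FF_2\simeq\mu_2$. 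For $n=2$ we take $x\in\widehat H_c^1(B;\mu_2)$. By Adem, $\Sq^2=\Sq^1\Sq^1+\dots$ does not simplify directly, so we argue via duality instead: $\int_B\widehat\Sq_c^2(x)$ pairs with $\widehat H^3_c$. We would use Theorem~\ref{thm:instability-modulo-tate-boundary}, which says $\widehat\Sq_c^2(x)\in\operatorname{im}\partial_\infty$, together with~\eqref{eq:s-integer-local-global-duality} with $a=1$. Then $\widehat\Sq_c^2(x)=\partial_\infty(y)$, and the trace becomes $\sum_\nu\operatorname{tr}_\nu(y_\nu)$ on $\widehat H^2_\infty$. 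It remains to show this vanishes. To do so we lift $x$: by the fiber sequence~\eqref{eq:tate-boundary-instability-defect-sequence} and the computation~\eqref{eq:s-integer-tate-boundary-ring}, we reduce to $x=\partial_\infty(y')$ with $y'\in\widehat H^0_\infty$, plus classes pulled from $H^1_{c,\mathrm{fin}}$. For the first kind, $\Sq^2(\beta_\nu^0)=\binom{0}{2}=0$, so $v_{B,2}=0$ on that part. For the second kind we still need an argument, and the plan is to show that the trace of their squares factors through $H^3_{c,\mathrm{fin}}$, which is killed by the trace. This low-degree bookkeeping, especially at $n=2$, is the main obstacle; our first attempt would be to compute everything through the sequence~\eqref{eq:tate-boundary-instability-defect-sequence} using $\partial_\infty$-linearity.

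\emph{High degrees.} For $n\geq3$, every $x\in\widehat H_c^{3-n}(B;\mu_2)$ has the form $\partial_\infty(y)$ by~\eqref{eq:s-integer-tate-boundary-isomorphism}, with $y=(c_\nu\beta_\nu^{2-n})$. Since $\partial_\infty$ commutes with Steenrod operations, Proposition~\ref{prop:c2-point-tate-steenrod-calculation} gives
\[
        \widehat\Sq_c^n(x)=\partial_\infty\Bigl(\bigl(c_\nu\tbinom{2-n}{n}\beta_\nu^2\bigr)_\nu\Bigr).
\]
By~\eqref{eq:s-integer-local-global-duality} with $a=1$, its trace is $\binom{2-n}{n}\sum_\nu c_\nu$. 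On the other side, by~\eqref{eq:s-integer-local-global-duality} and the linearity of $\partial_\infty$,
\[
        \int_B x\cdot\binom{2-n}{n}\beta_B^n=\binom{2-n}{n}\sum_\nu c_\nu\operatorname{tr}_\nu\bigl(\beta_\nu^{2-n}\beta_\nu^n\bigr)=\binom{2-n}{n}\sum_\nu c_\nu ,
\]
since $\operatorname{res}_\nu\beta_B=\beta_\nu$ (the Kummer class of $-1$ at a real place). The two sides agree, so uniqueness in Theorem~\ref{thm:milne-7-6} gives $v_{B,n}=\binom{2-n}{n}\beta_B^n$. The displayed series then follows by evaluating the binomial coefficients modulo $2$.
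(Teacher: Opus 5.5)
Your arguments for $n=0$, $n=1$ and $n\geq3$ are correct and match the paper. For $n\geq3$ the paper evaluates on the dual basis $e_{\nu,n}=\partial_\infty(\beta_\nu^{2-n})$, which is the same computation. The gap is at $n=2$, and the route you sketch there cannot close it.

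By Theorem~\ref{thm:instability-modulo-tate-boundary}, $\widehat\Sq_c^2(x)$ lies in $\operatorname{im}\partial_\infty=\ker\iota_c$ for \emph{every} $x\in\widehat H_c^1(B;\mu_2)$, since $2>1$. So an argument that only controls the image of $\widehat\Sq_c^2(x)$ in $H^3_{c,\mathrm{fin}}$ carries no information. The trace also does not factor through $\iota_c$. Your own high-degree step uses $\int_B\partial_\infty(y)=\sum_\nu\operatorname{tr}_\nu(y_\nu)$, and this equals $1$ for $y=\beta_\nu^2$ at a single real place. Hence "the trace factors through $H^3_{c,\mathrm{fin}}$" is false as soon as $K$ has a real place, which is exactly the case at issue.

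Your proposed decomposition of $x$ is also unavailable. The sequence~\eqref{eq:tate-boundary-instability-defect-sequence} gives a map $\widehat H_c^1\to H^1_{c,\mathrm{fin}}$, not a splitting, so there are no ``classes pulled from $H^1_{c,\mathrm{fin}}$'' inside $\widehat H_c^1$.

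The missing idea is to lift along the ordinary-to-modified comparison $\gamma$, rather than project to finite compact support.
\begin{itemize}
\item The cofiber of $\gamma$ is $(M_B)_{hG_{\mathbf R}}$, with $M_B=R\Gamma_c(B_{\mathbf C};\FF_2)$.
\item Since $B_{\mathbf C}$ is a finite set of points, $M_B$ is concentrated in degree $0$. So $(M_B)_{hG_{\mathbf R}}$ is connective and has no $H^1$.
\item Hence $\gamma\colon H^1_c(B;\FF_2)\to\widehat H^1_c(B;\FF_2)$ is surjective, and you may write $x=\gamma(\widetilde x)$.
\item $\gamma$ commutes with the stable operations, and ordinary compact support satisfies instability (Proposition~\ref{prop:instability-relative-pro-anima-terms}). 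Therefore $\widehat\Sq_c^2(x)=\gamma(\Sq_c^2\widetilde x)=0$ outright, not merely after taking traces.
\item Perfectness of~\eqref{eq:av} then gives $v_{B,2}=0$.
\end{itemize}
Your separate computation for $x=\partial_\infty(y')$ with $y'\in\widehat H^0_\infty$ is correct, but it becomes unnecessary.
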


\begin{proof}
Write
\[
        \langle a,x\rangle_B:=\int_B a\cdot x.
\]
By Definition~\ref{def:absolute-wu-class} and the perfectness of
Artin--Verdier duality, \(v_{B,n}\) is characterized by
\begin{equation}
\label{eq:absolute-arithmetic-wu-component-characterization}
        \langle v_{B,n},x\rangle_B
        =
        \int_B\widehat\Sq_c^n(x),
        \qquad
        x\in\widehat H_c^{3-n}(B;\mu_2).
\end{equation}

For \(n=0\), the identity \(\widehat\Sq_c^0=\mathrm{id}\) gives
\(v_{B,0}=1\). For \(n=1\), the equality
\(\widehat\Sq_c^1=\widehat\delta_0\) and
Lemma~\ref{lem:delta-adjoint} give
\[
\begin{aligned}
        \int_B\widehat\Sq_c^1(x)
        &=
        \left\langle1_\mu,\widehat\delta_0(x)\right\rangle_B
\\
        &=
        \left\langle\delta_1(1_\mu),x\right\rangle_B
        =
        \left\langle\beta_B,x\right\rangle_B.
\end{aligned}
\]
Thus, \(v_{B,1}=\beta_B\).

Let \(n=2\). Put
\[
        M_B:=R\Gamma_c(B_{\mathbf C};\FF_2),
\]
regarded as a \(G_{\mathbf R}\)-spectrum. The boundary diagram and the
cofiber sequence defining the Tate construction give a fiber sequence
\[
        R\Gamma_c(B;\FF_2)
        \longrightarrow
        \widehat R\Gamma_c(B;\FF_2)
        \longrightarrow
        (M_B)_{hG_{\mathbf R}}.
\]
Since \(B_{\mathbf C}\) is a finite disjoint union of points, \(M_B\),
and hence \((M_B)_{hG_{\mathbf R}}\), is connective. It follows that
\[
        H_c^1(B;\FF_2)
        \longrightarrow
        \widehat H_c^1(B;\FF_2)
\]
is surjective. Therefore, every
\(x\in\widehat H_c^1(B;\FF_2)\) is the image of some
\(\widetilde x\in H_c^1(B;\FF_2)\). Compatibility with coefficient
operations and ordinary instability give
\[
        \widehat\Sq_c^2(x)
        =
        \gamma_{B,\FF_2}\!\left(\Sq_c^2(\widetilde x)\right)
        =
        0.
\]
Equation~\eqref{eq:absolute-arithmetic-wu-component-characterization}
and perfectness therefore imply \(v_{B,2}=0\).

Finally, suppose that \(n\geq3\). For each
\(\nu\in\Omega_{\mathbf R}(K)\), put
\[
        e_{\nu,n}
        :=
        \partial_\infty
        \bigl(0,\ldots,\beta_\nu^{\,2-n},\ldots,0\bigr)
        \in\widehat H_c^{3-n}(B;\mu_2).
\]
By Lemma~\ref{lem:s-integer-real-boundary-duality}, these classes form
the basis dual to the real-place basis
in~\eqref{eq:s-integer-high-degree-real-restriction}. Since
\(\partial_\infty\) commutes with Steenrod operations,
Proposition~\ref{prop:c2-point-tate-steenrod-calculation} gives
\[
\begin{aligned}
        \int_B\widehat\Sq_c^n(e_{\nu,n})
        &=
        \operatorname{tr}_\nu
        \bigl(\Sq^n(\beta_\nu^{\,2-n})\bigr)
\\
        &=
        \binom{2-n}{n}
        \operatorname{tr}_\nu(\beta_\nu^2)
        =
        \binom{2-n}{n}.
\end{aligned}
\]
Naturality of the Bockstein gives
\[
        \operatorname{res}_\nu(\beta_B)=\beta_\nu,
\]
and hence
\[
        \langle\beta_B^n,e_{\nu,n}\rangle_B=1.
\]
Evaluating~\eqref{eq:absolute-arithmetic-wu-component-characterization} on the
dual basis \(\{e_{\nu,n}\}_\nu\) proves
\[
        v_{B,n}
        =
        \binom{2-n}{n}\beta_B^n.
\]
When \(K\) has no real places, both sides vanish for \(n\geq3\)
by~\eqref{eq:s-integer-high-degree-real-restriction}.
\end{proof}

\begin{remark}[Dependence on \(S\)]
\label{rem:dependence-on-S}
The Kummer sequence gives an exact sequence
\[
0\longrightarrow
\mathcal O_{K,S}^{\times}/(\mathcal O_{K,S}^{\times})^2
\longrightarrow
H_{\acute et}^1(B;\mu_2)
\longrightarrow
\operatorname{Pic}(B)[2]
\longrightarrow0;
\]
see~\cite[Tag~03PK]{STACKS-PROJECT}. Thus, \(\beta_B\) belongs to the
unit part and is represented there by \(-1\).

If \(S\subseteq S'\), put
\[
        B':=\Spec\mathcal O_{K,S'}
\]
and let \(j\colon B'\hookrightarrow B\) be the induced open immersion.
Naturality gives
\[
        j^*\beta_B=\beta_{B'}.
\]
Consequently, Proposition~\ref{prop:absolute-arithmetic-wu-formula}
gives
\[
        j^*v_B=v_{B'}.
\]
Thus enlarging \(S\) changes the ambient cohomology ring but not the
universal expression for the absolute Wu class. In particular,
\(\beta_B=0\) if and only if \(-1\) is a square in \(K\), equivalently
if and only if \(i\in K\).
\end{remark}

\begin{remark}[Comparison with Feng's local class]
\label{rem:comparison-with-feng}
Let \(\mathfrak p\notin S\) be a finite prime of \(K\), let
\(D_{\mathfrak p}\) be a decomposition group, and let
\(I_{\mathfrak p}\subset D_{\mathfrak p}\) be its inertia subgroup. Since
\(\beta_B\in H^1(B;\mu_2)\), its restriction to \(D_{\mathfrak p}\) is
unramified and hence factors through
\[
D_{\mathfrak p}/I_{\mathfrak p}\simeq
\mathrm{Gal}(\overline{\kappa(\mathfrak p)}/\kappa(\mathfrak p)).
\]
The resulting class in
\[
H^1(\kappa(\mathfrak p);\mu_2)
\]
is precisely Feng's class \(\alpha_{\kappa(\mathfrak p)}\), since both are
obtained by applying the connecting morphism for
\[
0\to \mu_2\to \mu_4\to \mu_2\to 0
\]
to the section \(1_\mu=-1\). By~\cite[Remark~5.9]{FengEtaleSteenrod}, this class vanishes if
and only if
\[
\#\kappa(\mathfrak p)\equiv 1 \pmod 4.
\]
When \(\kappa(\mathfrak p)=\mathbb{F}_p\), this is equivalent to
\[
\left(\frac{-1}{p}\right)=1,
\]
that is, to \(p\equiv 1 \pmod 4\); equivalently, the local class is nonzero
exactly when \(p\equiv 3 \pmod 4\).
\end{remark}

\paragraph{Arithmetic Bockstein powers}
Although \(v_{B,2}=0\), the class \(\beta_B^2\) need not vanish. We
record its arithmetic interpretation and then determine when the higher
powers of \(\beta_B\) are nonzero.

Let
\[
        j\colon\eta=\Spec K\hookrightarrow B
\]
be the generic point, and write
\[
        j^*(\beta_B^2)
        \in H_{\acute et}^2(K;\FF_2)
\]
for the restriction of \(\beta_B^2\).

\begin{proposition}[The second Bockstein power;
see~{\cite[Proposition~4.7.1 and \S2.5]{GilleSzamuely}}]
\label{prop:beta-square-generic-point}
Under the coefficient identifications fixed above and the Kummer
isomorphism,
\[
        H_{\acute et}^2(K;\FF_2)
        \simeq
        H_{\acute et}^2(K;\mu_2^{\otimes2})
        \simeq
        \Br(K)[2],
\]
one has
\[
        j^*(\beta_B^2)=[(-1,-1)],
\]
where \((-1,-1)\) is the quaternion algebra
\[
        K\langle u,v\rangle/
        (u^2+1,\ v^2+1,\ uv+vu).
\]
\end{proposition}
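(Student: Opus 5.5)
The plan is to reduce both sides to cup products of Kummer classes in Galois cohomology of \(K\), and then use the standard identification of Kummer cup products with quaternion algebras. First I check that the coefficient identifications are compatible with Remark~\ref{lem:bockstein-equals-kummer}. The restriction \(j^*\beta_B\) is the Bockstein class \(\beta_K\) of \(\Spec K\), by naturality of the connecting morphism for \(0\to\mu_2\to\mu_4\to\mu_2\to0\) along \(j\). That remark then identifies it with the Kummer class \(\kappa_K(-1)\in H^1(K;\mu_2)\), so \(j^*(\beta_B^2)=\kappa_K(-1)\smile\kappa_K(-1)\). Under the identification \(\FF_2\simeq\mu_2\), \(1\mapsto-1\), the cup product of two \(\FF_2\)-classes corresponds to the cup product of the \(\mu_2\)-classes landing in \(\mu_2^{\otimes2}\). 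Since the identification is a ring isomorphism on constant coefficients, this needs only a bookkeeping check.

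Second, I invoke the classical fact, as in Gille--Szamuely Proposition~4.7.1, that for \(a,b\in K^\times\) and \(2\) invertible, the cup product \(\kappa_K(a)\smile\kappa_K(b)\in H^2(K;\mu_2^{\otimes2})\) maps to the class of the quaternion algebra \((a,b)\) in \(\Br(K)[2]\). The map used is the Kummer isomorphism \(H^2(K;\mu_2)\simeq\Br(K)[2]\), composed with the untwisting \(\mu_2^{\otimes2}\simeq\mu_2\); this untwisting is canonical because \(\mu_2\) has a canonical generator \(-1\). Taking \(a=b=-1\) gives \([(-1,-1)]\). The presentation \(K\langle u,v\rangle/(u^2+1,v^2+1,uv+vu)\) is exactly the quaternion algebra with \(u^2=-1\), \(v^2=-1\), \(uv=-vu\).

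The main obstacle is sign and normalization conventions. Gille--Szamuely's isomorphism uses a specific convention for the norm-residue symbol. It also involves the chosen trivialization of \(\mu_2^{\otimes2}\) and an ordering of the cup product. Since everything lives in \(2\)-torsion groups, signs \(\pm1\) are irrelevant, and \((a,b)\simeq(b,a)\) anyway. So I only need the displayed isomorphisms to be the ones in \S2.5 of Gille--Szamuely, namely the Kummer boundary into \(\Br(K)\). In particular, I verify that the identification \(H^2(K;\FF_2)\simeq H^2(K;\mu_2^{\otimes2})\) used in the statement is the one induced by \(1\mapsto-1\). With that checked, the identity follows.
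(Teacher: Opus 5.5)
Your proposal is correct and follows essentially the same route as the paper. Both identify $j^*\beta_B$ with the Kummer class $\kappa_K(-1)$, so that $j^*(\beta_B^2)=\kappa_K(-1)\cdot\kappa_K(-1)$, and then invoke Gille--Szamuely to match this Kummer cup product with the class of $(-1,-1)$ in $\mathrm{Br}(K)[2]$. Your extra bookkeeping about the untwisting $\mu_2^{\otimes 2}\simeq\mu_2$ and sign conventions is harmless and correctly dismissed, since everything is $2$-torsion.
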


\begin{proof}
By Remark~\ref{lem:bockstein-equals-kummer},
\(j^*\beta_B=\kappa_K(-1)\). Hence, \(j^*(\beta_B^2)\) is the
Kummer symbol
\[
        \kappa_K(-1)\cdot\kappa_K(-1),
\]
which corresponds to the Brauer class of \((-1,-1)\).
\end{proof}

\begin{corollary}
\label{cor:beta-square-generic-point-splitting}
With notation as above,
\[
(\beta_B^2)|_\eta=0
\iff
(-1,-1)\ \text{is split over }K
\iff
-1\in N_{K(i)/K}\bigl(K(i)^\times\bigr).
\]
\end{corollary}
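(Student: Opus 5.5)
The plan is to reduce the first equivalence to Proposition~\ref{prop:beta-square-generic-point} and the second to the classical norm criterion for splitting of quaternion algebras.

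\emph{First equivalence.} By Proposition~\ref{prop:beta-square-generic-point}, under the Kummer identification \(H_{\acute et}^2(K;\FF_2)\simeq\Br(K)[2]\), the class \((\beta_B^2)|_\eta=j^*(\beta_B^2)\) corresponds to \([(-1,-1)]\). This identification is an isomorphism of groups, so \((\beta_B^2)|_\eta=0\) exactly when \([(-1,-1)]=0\) in \(\Br(K)\). A central simple algebra has trivial Brauer class exactly when it is split. For the quaternion algebra \((-1,-1)\) this means it is isomorphic to \(M_2(K)\).

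\emph{Second equivalence.} This is a standard fact about quaternion algebras, for example \cite[Proposition~1.3.2 and Corollary~4.7.5]{GilleSzamuely}: for \(a,b\in K^\times\), the algebra \((a,b)\) splits if and only if \(b\) is a norm from \(K(\sqrt a)\).

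When \(a=-1\) is not a square, \(K(\sqrt{-1})=K(i)\) is a quadratic field. In that case I would invoke the criterion directly with \(a=b=-1\).

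When \(-1\) is a square, \(K(i)=K\) and every element of \(K^\times\), including \(-1\), is a norm. On the other side, \((-1,-1)\simeq(1,-1)\) is split, so both conditions hold and the equivalence is trivially true.

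\emph{Direct check in the quadratic case.} If one prefers to avoid citing the criterion, the argument runs as follows.
\begin{itemize}
\item \((-1,-1)\) splits if and only if its norm form \(x_0^2+x_1^2+x_2^2+x_3^2\) is isotropic. Equivalently, its pure part \(\langle1,1,1\rangle\) is isotropic.
\item This holds if and only if \(-1=x^2+y^2\) for some \(x,y\in K\), after clearing a denominator when some \(z\ne 0\) appears.
\item The latter says exactly that \(-1=N_{K(i)/K}(x+iy)\).
\end{itemize}
The only small point to watch is the degenerate solution with \(z=0\), that is \(x^2+y^2=0\). Since \(-1\) is not a square, this forces \(x=y=0\), so no such solution arises.

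None of these steps is a serious obstacle. The only point needing care is matching the sign and coefficient conventions of the Kummer identification with those in Proposition~\ref{prop:beta-square-generic-point}. This is harmless here, because vanishing of the class does not depend on those conventions.
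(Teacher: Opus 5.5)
Your proposal is correct and follows the paper's proof. The first equivalence comes from Proposition~\ref{prop:beta-square-generic-point}, which identifies \((\beta_B^2)|_\eta\) with the Brauer class of \((-1,-1)\); the second comes from the norm criterion for splitting of \((a,b)\), which the paper cites as Gille--Szamuely, Prop.~1.1.7, applied with \(a=b=-1\). Your separate treatment of the case \(i\in K\) and your direct sums-of-squares check are extra care that the paper does not spell out.
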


\begin{proof}
By Proposition~\ref{prop:beta-square-generic-point},
\((\beta_B^2)|_\eta\) is the Brauer class of \(({-1},{-1})\), so it vanishes
if and only if \(({-1},{-1})\) is split. By~\cite[Prop.~1.1.7]{GilleSzamuely},
a quaternion algebra \((a,b)\) is split if
and only if \(b\) is a norm from \(K(\sqrt a)\). Applying this with
\(a=b=-1\) gives the last equivalence.
\end{proof}

\begin{remark}
\label{rem:beta-square-Q}
For \(B=\Spec\mathbf Z[1/2]\), one has
\[
        j^*(\beta_B^2)\neq0,
\]
and hence \(\beta_B^2\neq0\). Indeed, its generic restriction is the
class of \((-1,-1)\), which remains nonsplit over \(\mathbf R\), since
\[
        -1\notin
        N_{\mathbf C/\mathbf R}(\mathbf C^\times)
        =
        \mathbf R_{>0};
\]
see~\cite[Proposition~1.1.7]{GilleSzamuely}.
\end{remark}

\begin{proposition}[Higher Bockstein powers and real places]
\label{prop:beta-cube-real-place}
For every \(k\geq3\),
\[
        \beta_B^k\neq0
        \qquad\Longleftrightarrow\qquad
        K\text{ admits a real embedding}.
\]
Consequently,
\[
        v_{B,k}\neq0
        \qquad\Longleftrightarrow\qquad
        \binom{2-k}{k}\equiv1\pmod2
        \quad\text{and}\quad
        K\text{ admits a real embedding}.
\]
\end{proposition}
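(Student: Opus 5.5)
The plan is to compute \(\beta_B^k\) through the real-place restriction isomorphism \eqref{eq:s-integer-high-degree-real-restriction} of Lemma~\ref{lem:s-integer-real-boundary-duality}, which covers exactly the degrees \(k\geq3\). For \(k\geq3\) that lemma gives
\[
        H_{\acute et}^k(B;\FF_2)\xrightarrow{\ \sim\ }\prod_{\nu\in\Omega_{\mathbf R}(K)}\FF_2\beta_\nu^k .
\]
The component of this map at \(\nu\) is \(\operatorname{res}_\nu\), and it is a ring map on cohomology. Using \(\operatorname{res}_\nu(\beta_B)=\beta_\nu\), which follows from naturality of the Bockstein as in the proof of Proposition~\ref{prop:absolute-arithmetic-wu-formula}, the image of \(\beta_B^k\) is \((\beta_\nu^k)_\nu\).

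Each \(\beta_\nu^k\) is nonzero, because \(H^*_{\acute et}(\mathbf R;\FF_2)=\FF_2[\beta_\nu]\) by Proposition~\ref{prop:c2-point-tate-steenrod-calculation}. So the image of \(\beta_B^k\) is nonzero exactly when \(\Omega_{\mathbf R}(K)\neq\varnothing\). Injectivity of the displayed isomorphism then gives \(\beta_B^k\neq0\) if and only if \(K\) has a real embedding.

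For the converse direction there is a more elementary route that avoids duality. If \(K\) has a real embedding \(\nu\), then \(\beta_B^k\) restricts to \(\beta_\nu^k\neq0\), so it is nonzero in every degree \(k\), not only \(k\geq3\). If \(K\) has no real embedding, then \(H^k_{\acute et}(B;\FF_2)=0\) for \(k\geq3\); this is the empty-product case of the lemma and also the classical cohomological dimension bound for totally imaginary \(K\). In either case the equivalence follows.

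The statement about the Wu classes is then immediate from Proposition~\ref{prop:absolute-arithmetic-wu-formula}, which gives \(v_{B,k}=\binom{2-k}{k}\beta_B^k\). Since \(\FF_2\) is a field, \(v_{B,k}\neq0\) exactly when the reduced coefficient is \(1\) and \(\beta_B^k\neq0\). By the first part, the second condition is equivalent to the existence of a real embedding when \(k\geq3\).

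The only point needing care is that the restriction isomorphism in Lemma~\ref{lem:s-integer-real-boundary-duality} is really the product of the maps \(\operatorname{res}_\nu\), so that it is multiplicative and sends \(\beta_B\) to \((\beta_\nu)_\nu\). I would check this against the Ahlqvist--Carlson identification cited in the proof of that lemma. Alternatively, the multiplicativity and \(\beta_B\mapsto(\beta_\nu)_\nu\) follow directly from functoriality of \(\operatorname{res}_\nu\), so only the injectivity of the map is needed from the lemma.
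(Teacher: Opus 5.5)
Your proposal is correct and follows the paper's own proof essentially step for step. Like you, the paper passes $\beta_B^k$ through the real-place isomorphism \eqref{eq:s-integer-high-degree-real-restriction}, uses naturality of the Bockstein to get $\operatorname{res}_\nu(\beta_B)=\beta_\nu$ and $H^*_{\acute et}(\mathbf R;\FF_2)\cong\FF_2[\beta_\nu]$, and then reads off the Wu-class statement from Proposition~\ref{prop:absolute-arithmetic-wu-formula}. Your side remark refines this slightly without changing the argument: the forward implication needs only functoriality of $\operatorname{res}_\nu$, so it holds in every degree, while the converse rests on $H^k_{\acute et}(B;\FF_2)=0$ for $k\geq3$ when $K$ is totally imaginary.
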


\begin{proof}
By~\eqref{eq:s-integer-high-degree-real-restriction},
\[
        H_{\acute et}^k(B;\FF_2)
        \xrightarrow{\ \sim\ }
        \prod_{\nu\in\Omega_{\mathbf R}(K)}
        H_{\acute et}^k(\mathbf R;\FF_2).
\]
Naturality of the Bockstein identifies the restriction of \(\beta_B\)
at \(\nu\) with the generator
\[
        \beta_\nu\in H_{\acute et}^1(\mathbf R;\FF_2).
\]
Since
\[
        H_{\acute et}^*(\mathbf R;\FF_2)
        \cong\FF_2[\beta_\nu],
\]
the image of \(\beta_B^k\) is
\[
        (\beta_\nu^k)_{\nu\in\Omega_{\mathbf R}(K)}.
\]
This tuple is nonzero precisely when \(\Omega_{\mathbf R}(K)\) is
nonempty. The assertion concerning \(v_{B,k}\) then follows from
Proposition~\ref{prop:absolute-arithmetic-wu-formula}.
\end{proof}

\section{A generalization of Hecke's theorem}
\label{chap:generalised-hecke}
In~\S\ref{sec:generalized-hecke-theorem}, we establish the
generalized Hecke theorem and describe its archimedean defects.
In~\S\ref{sec:applications_and_examples}, we discuss
specializations, examples, and related results.

\subsection{The generalized Hecke theorem}
\label{sec:generalized-hecke-theorem}
The main result is the generalized Hecke theorem,
Theorem~\ref{thm:gen_hecke}, which gives congruences for the
universal Wu polynomials modulo the archimedean comparison ideal.
We construct these polynomials in~\S\ref{subsubsec:etale-2td-polynomials},
then introduce the ideal and prove the theorem
in~\S\ref{subsubsec:hecke-comparison}.
In~\S\ref{subsubsec:real-place-wu}, we describe the high-degree
arithmetic Wu classes through the real loci and establish
nonvanishing results.
\subsubsection{\'Etale \texorpdfstring{$2$}{2}-Todd classes
and universal polynomials}
\label{subsubsec:etale-2td-polynomials}

Let \(B\) be one of the bases considered in
\S\ref{sec:absolute-relative-wu-formulas}. Define
\[
        V_B(t):=
        \begin{cases}
        1,
        & B=\Spec(k),\quad k\text{ finite},\\[2pt]
        \displaystyle\sum_{n\geq0}\binom{2-n}{n}t^n,
        & B=\Spec(\mathcal O_{K,S}),
        \end{cases}
        \qquad V_B(t)\in\FF_2[[t]],
\]
where the generalized integral binomial coefficients are reduced
modulo \(2\). For a flat projective morphism \(f\colon X\to B\), pure of
relative dimension \(d\), with \(X\) regular,
Theorems~\ref{thm:absolute-wu-formula}
and~\ref{thm:absolute-arithmetic-wu-class} give
\begin{equation}
\label{eq:explicit_wu}
        v_X
        =
        \Sq^{-1}\!\bigl(w^{\acute et}(\tau_f)\bigr)
        \cdot V_B(\beta_X).
\end{equation}

Following Hirzebruch, we introduce the \'etale \(2\)-Todd
class to express the first factor in~\eqref{eq:explicit_wu}
in terms of Chern classes. Its generating series also accounts
for the base factor; see
Corollary~\ref{cor:2td-universal-polynomials}.

\begin{definition}\label{def:2td-class}
Let $X$ be a regular noetherian $B$-scheme and let $\xi\in K^0(X)$. The total
\emph{\'{e}tale $2$-Todd class} of $\xi$ is
\[
2td(\xi):=\Sq^{-1}\bigl(w^{\mathrm{\acute et}}(\xi)\bigr)
\in \left(H_{\acute et}^*(X;\FF_2)^{\wedge,+}\right)^\times.
\]
We write
\[
2td(\xi)=\sum_{m\ge 0} 2td(\xi)_m,
\qquad
2td(\xi)_m\in H^m_{\mathrm{\acute et}}(X;\mathbb F_2).
\]
\end{definition}

\begin{proposition}\label{prop:2td-basic}
For every morphism $g\colon Y\to X$ of regular noetherian $B$-schemes and every
$\xi,\eta\in K^0(X)$, one has
\[
g^*2td(\xi)=2td(g^*\xi),
\qquad
2td(\xi+\eta)=2td(\xi)\cdot 2td(\eta).
\]
In particular, if
\[
0\longrightarrow \xi'\longrightarrow \xi\longrightarrow \xi''\longrightarrow 0
\]
is a short exact sequence of vector bundles on $X$, then
\[
2td(\xi)=2td(\xi')\cdot 2td(\xi'').
\]
\end{proposition}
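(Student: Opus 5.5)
The argument is formal: it combines the naturality and multiplicativity of the total \'etale Stiefel--Whitney class (Proposition~\ref{prop:etale-sw-basic} and Remark~\ref{rem:etale-sw-virtual-bundles}) with the corresponding properties of the inverse total square \(\Sq^{-1}\) on the forward-degree completion.

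\emph{Naturality.} Ordinary \'etale Steenrod squares commute with pullback along any morphism \(g\colon Y\to X\). This holds for \(\Sq^r\) by the stable-shape construction and the naturality of \(h(-)\); see Theorem~\ref{thm:canonical-arithmetic-boundary-operations} and the comparison~\eqref{eq:stable-etale-shape-comparison}, or \cite[\S2.1--2.2]{Benoist}. Since \(g^*\) preserves degree, it extends degreewise to the completions, and there it commutes with the total square \(\Sq\). Applying \(g^*\Sq=\Sq\, g^*\) to \(\Sq^{-1}a\) gives \(g^*\Sq^{-1}(a)=\Sq^{-1}(g^*a)\).

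Next, \(g^*w^{\acute et}(\xi)=w^{\acute et}(g^*\xi)\). For honest bundles this is Proposition~\ref{prop:etale-sw-basic}(ii). For virtual classes \([E]-[F]\) it follows because \(g^*\) is a ring homomorphism on the completions and therefore preserves inverses. Combining the two facts yields \(g^*2td(\xi)=2td(g^*\xi)\).

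\emph{Multiplicativity.} First, \(w^{\acute et}\) is a group homomorphism \(K^0(X)\to(H^{*}(X)^{\wedge,+})^\times\) by Remark~\ref{rem:etale-sw-virtual-bundles}, so \(w^{\acute et}(\xi+\eta)=w^{\acute et}(\xi)\,w^{\acute et}(\eta)\). Second, \(\Sq\) is a continuous ring automorphism of the completion, by the Cartan formula~\eqref{eq:ordinary-etale-cartan} as recorded just before Theorem~\ref{thm:absolute-wu-formula}. Its inverse is therefore also a ring automorphism, and \(\Sq^{-1}(ab)=\Sq^{-1}(a)\Sq^{-1}(b)\). Together these give \(2td(\xi+\eta)=2td(\xi)\,2td(\eta)\).

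\emph{Short exact sequences.} The final claim follows because in \(K^0(X)\) one has \([\xi]=[\xi']+[\xi'']\). Alternatively, it can be read off directly from the Whitney formula in Proposition~\ref{prop:etale-sw-basic}(iii).

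Two points need care. The main one is that \(2td(\xi)\) indeed lies in the unit group: \(w^{\acute et}(\xi)\) has degree-zero term \(1\), and \(\Sq^{-1}\) preserves the degree-zero component because \(\Sq^0=\mathrm{id}\). The other, minor, point is the continuity of the degreewise extensions. In each fixed degree only finitely many terms contribute, so every identity reduces to a finite identity degree by degree.
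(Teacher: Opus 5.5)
Your proof is correct and follows the same route as the paper. The paper deduces both identities immediately from the naturality and multiplicativity of \(w^{\acute et}\) together with the fact that \(\Sq^{-1}\) is a continuous ring automorphism of the forward-degree completion compatible with pullback. One small point on references: Theorem~\ref{thm:canonical-arithmetic-boundary-operations} concerns proper pullback on the arithmetic boundary theories, whereas what you need is naturality of ordinary \'etale squares along arbitrary \(g\). For that, the appropriate references are the naturality of the shape comparison~\eqref{eq:stable-etale-shape-comparison} or Benoist, both of which you also cite.
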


\begin{proof}
This is immediate from the corresponding properties of
$w^{\mathrm{\acute et}}$ and the fact that $\Sq^{-1}$ is a continuous ring
automorphism of $H_{\acute et}^*(X;\FF_2)^{\wedge,+}$.
\end{proof}

To make the class $2td$ explicit, we introduce the associated generating
series. Let
\[
Q(z):=\frac{z}{1-e^{-z}}
=\sum_{m\ge 0}\frac{B_m}{m!}z^m
\in \mathbf Q[[z]]
\]
be the Todd power series, with the convention \(B_1=1/2\).
The following lemma is elementary; see~\cite[Remark following Lemma~1.1]{HIRZEBRUCH}.

\begin{lemma}\label{lem:T-series}
The power series $Q(2z)$ is $2$-integral, and its reduction modulo $2$,
\begin{equation}\label{eq:T-series}
T(z):=Q(2z)\bmod 2
\in \mathbb{F}_2[[z]],
\end{equation}
has the form
\begin{equation}\label{eq:T-expansion}
T(z)=1+\sum_{r\ge 0} z^{2^r}.
\end{equation}
\end{lemma}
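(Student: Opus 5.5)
The plan is to verify $2$-integrality first, and then compute the reduction through a functional equation instead of through Bernoulli numbers. Write
\[
        Q(2z)=\frac{2z}{1-e^{-2z}}.
\]
Since $1-e^{-2z}=2z\,u(z)$ with $u(z)=\sum_{k\geq0}(-1)^k(2z)^k/(k+1)!$, the integrality question becomes whether $u$ is a unit in $\mathbf Z_{(2)}[[z]]$.

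This is the step I expect to be the main obstacle, because the coefficients $2^k/(k+1)!$ have $2$-adic valuation $k-v_2((k+1)!)$, which is nonnegative but not positive in general. For example, $v_2(2/2!)=0$, so $u\equiv 1+z+\cdots$ rather than $1$. Since $v_2((k+1)!)\le k$, every coefficient of $u$ is $2$-integral and its constant term is $1$, so $u$ is indeed a unit and $Q(2z)=u(z)^{-1}$ is $2$-integral.

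For the reduction, I will avoid computing $u\bmod 2$ directly and instead reduce a functional equation. The identity
\[
        e^{-2z}=\bigl(1-2z\,Q(2z)^{-1}\bigr)
\]
is hard to use modulo $2$. Two better options are the logarithmic derivative, or the observation that $\frac{2z}{1-e^{-2z}}=z+z\coth z$, so that $Q(2z)-z=z\coth z$ is even. For the latter I will use the duplication identity $\coth z=\tfrac12(\coth(z/2)+\tanh(z/2))$, or equivalently $2\coth 2z=\coth z+\tanh z$.

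More concretely, set $F(z):=Q(2z)$. From $F(z)-F(-z)=2z$ and the identity
\[
        \frac{1}{1-e^{-2z}}=\frac{1}{(1-e^{-z})(1+e^{-z})}
\]
I will derive the relation
\[
        F(z)^{-1}\ \text{expressed via}\ F(z/2).
\]
The cleanest route is $z\coth z=\sum_{n\ge0}2^{2n}B_{2n}z^{2n}/(2n)!$. By von Staudt--Clausen, $2B_{2n}\equiv 1 \pmod 2$ for $n\geq1$, so
\[
        2^{2n}B_{2n}/(2n)! \;=\; 2^{2n-1}(2B_{2n})/(2n)!,
\]
whose valuation is $2n-1-v_2((2n)!)=s_2(2n)-1$, where $s_2$ is the binary digit sum. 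This valuation is zero exactly when $2n$ is a power of $2$. Hence
\[
        F(z)\equiv 1+z+\sum_{r\geq1}z^{2^r}=1+\sum_{r\geq0}z^{2^r}\pmod 2,
\]
which is \eqref{eq:T-expansion}.

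The key steps, in order, are:
\begin{enumerate}
  \item Show $2$-integrality via the unit $u$.
  \item Split off the odd part $z$ using $F(z)-F(-z)=2z$.
  \item Identify the even part with $\sum 2^{2n}B_{2n}z^{2n}/(2n)!$.
  \item Apply von Staudt--Clausen together with Legendre's formula $v_2((2n)!)=2n-s_2(2n)$.
\end{enumerate}
The only genuine input is von Staudt--Clausen, in the form that the denominator of $B_{2n}$ is exactly divisible by $2$.
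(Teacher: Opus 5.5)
Your proof is correct and, once the unexecuted detour through the duplication identity and the relation ``$F(z)^{-1}$ expressed via $F(z/2)$'' is dropped, it is essentially the paper's argument. The odd part of $Q(2z)$ is exactly $z$; the paper gets this from $B_1=\tfrac12$ and $B_m=0$ for odd $m>1$, while you get it from $F(z)-F(-z)=2z$. The even coefficients $2^{2n}B_{2n}/(2n)!$ then have $2$-adic valuation $s_2(n)-1$ by von Staudt--Clausen and Legendre's formula. Your separate integrality argument via the unit $u$ is valid but redundant for this route, since that valuation computation already gives $2$-integrality. Pushed one step further, though, it would avoid von Staudt--Clausen entirely: the same Legendre estimate gives $zu(z)\equiv L(z):=\sum_{j\ge0}z^{2^j}\pmod 2$, and $L+L^2=z$ in $\mathbb{F}_2[[z]]$ then forces $T=u^{-1}\bmod 2=z/L=1+L$.
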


\begin{proof}
Write
\[
Q(2z)=\sum_{m\ge 0} a_m z^m,
\qquad
a_m=\frac{2^m B_m}{m!}.
\]
We have $a_0=1$ and $a_1=2B_1=1$. If $m>1$ is odd, then $B_m=0$, so
$a_m=0$. It therefore remains to consider the coefficients $a_{2n}$ with
$n\ge 1$.

By the von Staudt--Clausen theorem (see, for example,~\cite[Ch.~15]{IrelandRosen}),
one has $v_2(B_{2n})=-1$. Hence,
\[
v_2(a_{2n})=2n-1-v_2((2n)!).
\]
Let $s_2(N)$ denote the sum of the binary digits of $N$. By Legendre's
formula,
\[
v_2((2n)!)=2n-s_2(2n)=2n-s_2(n),
\]
and therefore
\[
v_2(a_{2n})=s_2(n)-1\ge 0.
\]
This proves that $Q(2z)$ is $2$-integral.

Moreover, \(a_{2n}\) is odd precisely when \(s_2(n)=1\), or
equivalently when \(n\) is a power of \(2\). Thus, the reduction
of \(Q(2z)\) modulo \(2\) is
\[
1+\sum_{r\ge 0} z^{2^r},
\]
as claimed.
\end{proof}

\begin{lemma}
\label{lem:T-power-coefficients}
For \(r\in\mathbf Z\) and \(n\geq0\), one has
\[
        [t^n]T(t)^r
        =
        \binom{r-n-1}{n}
        \quad\text{in }\FF_2,
\]
where \([t^n]\) denotes coefficient extraction and generalized
integral binomial coefficients are reduced modulo \(2\).
In particular, recalling \(\epsilon_B=\dim B\in\{0,1\}\), one has
\begin{equation}
\label{eq:base-wu-T-power}
        V_B(t)=T(t)^{3\epsilon_B}.
\end{equation}
\end{lemma}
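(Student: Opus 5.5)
We work in \(\FF_2[[t]]\) and read \(T(t)^r\) as an honest power series for every \(r\in\mathbf Z\). This is legitimate because \(T(t)=1+t+t^2+t^4+\cdots\) has constant term \(1\) and is therefore a unit. The plan is to replace \(T\) by a closed form whose powers are easy to expand.

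\textbf{Step 1: a closed form for \(T\).} Put
\[
        S(t):=\sum_{s\geq0}t^{2^s}.
\]
Since squaring is additive in characteristic \(2\), we have \(S(t)^2=S(t)-t=S(t)+t\). We also have \(T=1+S\). Hence
\[
        T^2=1+S^2=1+S+t=T+t,
\]
so that
\[
        T(T+1)=t.
\]
This is the functional equation to exploit. Set \(u:=T+1=S\). Then \(u=t+u^2\), and \(u\) is the unique solution of this equation with \(u(0)=0\).

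\textbf{Step 2: Lagrange inversion.} Since \(t=u-u^2=u(1-u)\), the series \(u\) is the compositional inverse of \(\phi(u)=u(1-u)\). We apply the Lagrange inversion formula to \(H(u)=(1+u)^r\), first over \(\mathbf Z\) and then reduce modulo \(2\). Over \(\mathbf Z[[t]]\), let \(u_{\mathbf Z}\) solve \(u=t/(1-u)\), that is, \(u=t\,(1-u)^{-1}\).

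For \(n\geq1\), Lagrange inversion gives
\[
        [t^n]H(u)=\tfrac1n[u^{n-1}]H'(u)(1-u)^{-n}.
\]
Working directly modulo \(2\) is cleaner. There \(1-u=1+u\), so \(u=t/(1+u)\) and \(T=1+u\). The functional equation becomes \(T(T+1)=t\) in the \(\mathbf Z\)-lift \(T=1+u\), \(u=t/(1-u)\). Carrying this out gives
\[
        [t^n](1+u)^r=\frac rn[u^{n-1}](1+u)^{r-1}(1-u)^{-n}.
\]
The fraction \(1/n\) is awkward modulo \(2\). To avoid it, use the variant of Lagrange inversion
\[
        [t^n]H(u)=[u^n]H(u)\,\phi'(u)\,\bigl(u/\phi(u)\bigr)^{n+1}.
\]
Here \(\phi'(u)=1-2u\) and \(u/\phi=(1-u)^{-1}\). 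Reducing modulo \(2\) kills \(2u\), which gives
\[
        [t^n]T^r=[u^n](1+u)^{r}(1+u)^{-(n+1)}=[u^n](1+u)^{r-n-1}=\binom{r-n-1}{n}.
\]
This variant has integral coefficients, so the reduction modulo \(2\) is legitimate. It holds for all integers \(r\), because \(H(u)=(1+u)^r\) is a power series in \(u\) for every \(r\in\mathbf Z\).

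\textbf{Step 3: the base series.} With \(r=3\), Step 2 gives
\[
        [t^n]T^3=\binom{2-n}{n},
\]
which equals the coefficient of \(V_B\) in the \(S\)-integer case. In the finite-field case \(\epsilon_B=0\), and we have \(T^0=1=V_B\). This proves \eqref{eq:base-wu-T-power}.

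\textbf{Main obstacle.} The main obstacle is justifying Lagrange inversion in the integral (non-division) form and its reduction modulo \(2\). I would first verify the derivative form by the standard residue argument over \(\mathbf Z[[t]]\), then reduce. As a fallback, one can avoid Lagrange inversion entirely. The equation \(T^2=T+t\) gives a recursion for the coefficients of \(T^r\), and one checks that the binomial numbers \(\binom{r-n-1}{n}\) satisfy the same recursion, namely Pascal's rule together with the identity for \(r\mapsto r+1\).
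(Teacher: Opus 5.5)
Your proposal is correct and is essentially the paper's argument: your derivative form of Lagrange inversion, $[t^n]H(u(t))=[u^n]H(u)\,\phi'(u)\,(u/\phi(u))^{n+1}$, is exactly the residue substitution $t=u+u^2$ used in the paper, where $dt=du$ in characteristic $2$ gives $[t^n]T(t)^r=\operatorname{Res}_{u=0}(1+u)^{r-n-1}u^{-n-1}\,du=\binom{r-n-1}{n}$. Your lift to $\phi(u)=u(1-u)$ over $\mathbf Z$, with $u_{\mathbf Z}\equiv S\pmod 2$, only serves to justify this change of variables in characteristic $2$. The exploratory detour through the $\frac{r}{n}$ form can be deleted, as can the claim that $T(T+1)=t$ holds in the $\mathbf Z$-lift, which is true only modulo $2$.
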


\begin{proof}
By~\eqref{eq:T-expansion}, \(T(u+u^2)=1+u\).
The substitution \(t=u+u^2\), with \(dt=du\), gives
\[
        [t^n]T(t)^r
        =
        \operatorname{Res}_{u=0}
        \frac{(1+u)^{r-n-1}}{u^{n+1}}\,du
        =
        \binom{r-n-1}{n}.
\]
The last assertion follows by taking \(r=3\) in the arithmetic
case; in the finite-field case both sides are \(1\).
\end{proof}

\begin{remark}
In the complex topological setting, Hirzebruch proved that the multiplicative
class determined by the power series $T(z)$ is the inverse image under
$\Sq$ of the Stiefel--Whitney class; see~\cite[Thm.~5.1]{HIRZEBRUCH}.
The theorem below is the corresponding
arithmetic statement.
\end{remark}

\begin{lemma}\label{lem:2td-line-bundle}
Let $L$ be a line bundle on a regular noetherian $B$-scheme $X$, and put
\[
\bar x:=\overline{c}_1(L)\in H^2_{\mathrm{\acute et}}(X;\mathbb F_2).
\]
Define
\[
\Theta_\beta(x):=T(\beta)\cdot T\!\bigl(T(\beta)^{-2}x\bigr)
\in \mathbb{F}_2[[\beta,x]],
\]
where $T(z)=1+\sum_{r\ge 0} z^{2^r}$ and $T(\beta)^{-1}$ denotes the
inverse of $T(\beta)$ in $\mathbb{F}_2[[\beta]]$.
Then
\[
2td(L)=\Theta_{\beta_X}(\bar x).
\]
\end{lemma}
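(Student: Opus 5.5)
The plan is to verify directly that \(\Sq\bigl(\Theta_{\beta_X}(\bar x)\bigr)=w(L)\), since \(\Sq\) is a continuous ring automorphism of \(H_{\acute et}^*(X;\FF_2)^{\wedge,+}\) and \(2td(L)=\Sq^{-1}(w(L))\) by Definition~\ref{def:2td-class}. By Theorem~\ref{thm:etale-sw-chern} with \(r=1\) (no flag bundle needed), \(w(L)=1+\beta_X+\bar x\). So it suffices to show
\[
        \Sq\bigl(T(\beta_X)\bigr)\cdot\Sq\bigl(T(T(\beta_X)^{-2}\bar x)\bigr)
        =1+\beta_X+\bar x .
\]

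The first step is the formal identity \(T(u+u^2)=1+u\), valid for any element \(u\) of positive degree in a forward-completed graded \(\FF_2\)-algebra. It follows by telescoping from \eqref{eq:T-expansion}: since \((u+u^2)^{2^r}=u^{2^r}+u^{2^{r+1}}\) in characteristic \(2\), the sum \(\sum_{r\geq0}(u+u^2)^{2^r}\) collapses to \(u\). This is the same identity used in Lemma~\ref{lem:T-power-coefficients}. Substitution is legitimate because \(\beta_X\) and \(\bar x\) have positive degree, so all series converge degreewise. Moreover, because \(\Sq\) is a continuous ring homomorphism, it commutes with substitution into power series, so \(\Sq(F(a))=F(\Sq a)\) for \(F\in\FF_2[[t]]\).

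The second step computes the total squares of the generators.
\begin{itemize}
\item For \(\beta_X\): it has degree \(1\), so instability (Proposition~\ref{prop:instability-relative-pro-anima-terms}) gives \(\Sq(\beta_X)=\beta_X+\beta_X^2\).
\item For \(\bar x\): the top-square identity gives \(\Sq^2\bar x=\bar x^2\), and \(\Sq^r\bar x=0\) for \(r>2\). The key point, and the main obstacle, is \(\Sq^1\bar x=\beta_X\bar x\).
\end{itemize}
For the latter I would argue as follows. \(\Sq^1\) is the untwisted Bockstein \(\delta_{X,0}\) (Theorem~\ref{thm:relative-stable-steenrod-operations}). Lemma~\ref{lem:benoist-bockstein} with \(r=1\) gives \(\delta_{X,0}(\bar x)=\delta_{X,1}(\bar x)+\beta_X\bar x\), regarding \(\bar x\in H^2(X;\mu_2)\). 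Next, \(\delta_{X,1}(\bar x)=0\): the mod-\(2\) Chern class is the Kummer image of \([L]\in H^1(X;\mathbb G_m)\), and it lifts to \(H^2(X;\mu_4)\) as the mod-\(4\) Kummer class, since the Kummer sequences for \(2\) and \(4\) are compatible under \(\mu_4\to\mu_2\), \(\zeta\mapsto\zeta^2\). Hence \(\Sq(\bar x)=\bar x(1+\beta_X+\bar x)\).

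Finally, assemble. Write \(\beta=\beta_X\). By the first step, \(\Sq T(\beta)=T(\beta+\beta^2)=1+\beta\). Next, \(\Sq\bigl(T(\beta)^{-2}\bar x\bigr)=(1+\beta)^{-2}\bar x(1+\beta+\bar x)=u+u^2\) with \(u:=\bar x(1+\beta)^{-1}\). Then \(\Sq\,T\bigl(T(\beta)^{-2}\bar x\bigr)=T(u+u^2)=1+u=(1+\beta+\bar x)(1+\beta)^{-1}\). Multiplying by \(1+\beta\) yields \(1+\beta+\bar x=w(L)\), and applying \(\Sq^{-1}\) proves \(2td(L)=\Theta_{\beta_X}(\bar x)\).
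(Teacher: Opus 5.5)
Your proposal is correct and follows essentially the same argument as the paper. Both reduce to showing $\Sq(\Theta_{\beta_X}(\bar x)) = 1+\beta_X+\bar x$, compute $\Sq(\bar x)=\bar x(1+\beta_X+\bar x)$ via Benoist's twisted-Bockstein comparison, and finish with $T(u+u^2)=1+u$. The only difference is cosmetic: you justify $\delta_{X,1}(\bar x)=0$ through the compatible mod-$4$ Kummer class of $L$, whereas the paper lifts the $2$-adic Chern class $c_1(L)\in H^2_{\acute et}(X;\mathbf Z_2(1))$, and these amount to the same thing.
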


\begin{proof}
All formal power series below are evaluated degreewise in the completed graded ring $\prod_{m\ge 0} H^m_{\mathrm{\acute et}}(X;\mathbb F_2)$. Write
\[
\beta:=\beta_X,\qquad x:=\bar x.
\]
Since $T(z)$ has constant term $1$, the class $T(\beta)$ is invertible.

First note that, by~\eqref{eq:T-expansion}, one has the formal identity
\[
T(z+z^2)
=
1+\sum_{r\ge 0}(z+z^2)^{2^r}
=
1+\sum_{r\ge 0}\bigl(z^{2^r}+z^{2^{r+1}}\bigr)
=
1+z.
\]
Since $\deg(\beta)=1$, one has $\Sq(\beta)=\beta+\beta^2$, hence
\[
\Sq\bigl(T(\beta)\bigr)=T(\Sq(\beta))=T(\beta+\beta^2)=1+\beta.
\]

Next, because $\deg(x)=2$, the instability relations give
\[
\Sq(x)=x+\Sq^1(x)+x^2.
\]
By Lemma~\ref{lem:benoist-bockstein},
\[
\Sq^1(a)=\delta_1(a)+\beta\cdot a,
\]
where $\delta_1$ is the Bockstein attached to the exact sequence
\[
0\longrightarrow \mu_2\longrightarrow \mu_4\longrightarrow \mu_2\longrightarrow 0.
\]
Now $x$ is the reduction of
$c_1(L)\in H^2_{\mathrm{\acute et}}(X;\mathbf Z_2(1))$, so its reduction
modulo $2$ lifts to
$H^2_{\mathrm{\acute et}}(X;\mathbf Z/4(1))
\cong H^2_{\mathrm{\acute et}}(X;\mu_4)$.
Therefore, $\delta_1(x)=0$, and so
\[
\Sq(x)=x+\beta x+x^2=x(1+\beta+x).
\]

Since
\[
2td(L)=\Sq^{-1}\bigl(w^{\mathrm{\acute et}}(L)\bigr),
\]
it is enough to prove that
\[
\Sq\bigl(\Theta_\beta(x)\bigr)=w^{\mathrm{\acute et}}(L).
\]
By Theorem~\ref{thm:etale-sw-chern},
\[
w^{\mathrm{\acute et}}(L)=1+\beta+x.
\]

Now
\begin{align*}
\Sq\bigl(\Theta_\beta(x)\bigr)
&=
\Sq\bigl(T(\beta)\bigr)\cdot \Sq\!\left(T\!\bigl(T(\beta)^{-2}x\bigr)\right) \\
&=
(1+\beta)\cdot T\!\left(\Sq\bigl(T(\beta)\bigr)^{-2}\Sq(x)\right) \\
&=
(1+\beta)\cdot
T\!\left(\frac{x(1+\beta+x)}{(1+\beta)^2}\right).
\end{align*}
Since $1+\beta$ is invertible in $H_{\acute et}^*(X;\FF_2)^{\wedge,+}$,
we may set
\[
u:=\frac{x}{1+\beta}.
\]
Then
\[
\frac{x(1+\beta+x)}{(1+\beta)^2}=u+u^2.
\]
Using the identity $T(u+u^2)=1+u$, we obtain
\[
\Sq\bigl(\Theta_\beta(x)\bigr)
=
(1+\beta)\bigl(1+u\bigr)
=
1+\beta+x.
\]
This is exactly $w^{\mathrm{\acute et}}(L)$, and the result follows.
\end{proof}

\begin{remark}[The untwisted case]\label{rem:untwisted-case}
Assume that $\beta_X=0$. This happens, for instance, when $B=\Spec \mathbb{F}_q$ with $q\equiv 1\pmod 4$, or when $B=\Spec \mathcal O_{K,S}$ and $i\in K$. In this case
\[
\Theta_{\beta_X}(x)=T(x)=Q(2x)\bmod 2,
\]
and there is no difference between our arithmetic $2td$-generating function, and that of Hirzebruch.
\end{remark}

\begin{theorem}[cf.~Hirzebruch~{\cite[Thm.~5.1]{HIRZEBRUCH}}]
\label{thm:etale-2td-splitting}
Let $X$ be a regular noetherian $B$-scheme and let $\xi$ be a vector bundle of rank $r$
on $X$. Let
\[
\pi\colon \mathrm{Fl}(\xi)\longrightarrow X
\]
be the complete flag bundle, and let $L_1,\dots,L_r$ be the successive
quotient line bundles of $\pi^*\xi$. Put
\[
x_i:=\overline{c}_1(L_i)\in
H^2_{\mathrm{\acute et}}(\mathrm{Fl}(\xi);\mathbb F_2).
\]
Then, in $H_{\acute et}^*(\mathrm{Fl}(\xi);\FF_2)^{\wedge,+}$, one has
\begin{equation}\label{eq:2td-splitting}
\pi^*2td(\xi)=\prod_{i=1}^r \Theta_{\pi^*\beta_X}(x_i).
\end{equation}
\end{theorem}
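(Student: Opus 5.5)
The argument is a short reduction to the line-bundle case, Lemma~\ref{lem:2td-line-bundle}. It combines naturality and multiplicativity of \(2td\) (Proposition~\ref{prop:2td-basic}) with the filtration supplied by the splitting principle (Remark~\ref{rem:etale-sw-splitting}).

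First, the flag bundle \(\mathrm{Fl}(\xi)\) is again a regular noetherian \(B\)-scheme, since \(\pi\) is smooth over the regular scheme \(X\). So all constructions of \S\ref{ssec:sw-classes} and Definition~\ref{def:2td-class} apply on \(\mathrm{Fl}(\xi)\). By naturality in Proposition~\ref{prop:2td-basic}, applied to \(g=\pi\),
\[
        \pi^*2td(\xi)=2td(\pi^*\xi).
\]
The filtration \(0=E_0\subset E_1\subset\cdots\subset E_r=\pi^*\xi\) has line-bundle quotients \(L_i=E_i/E_{i-1}\). In \(K^0(\mathrm{Fl}(\xi))\) this gives \([\pi^*\xi]=\sum_{i=1}^r[L_i]\). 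Multiplicativity of \(2td\) on \(K^0\), equivalently iterating the short-exact-sequence form of Proposition~\ref{prop:2td-basic} along the filtration, then gives
\[
        2td(\pi^*\xi)=\prod_{i=1}^r 2td(L_i).
\]

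Next, apply Lemma~\ref{lem:2td-line-bundle} on the scheme \(\mathrm{Fl}(\xi)\) to each \(L_i\). This gives \(2td(L_i)=\Theta_{\beta_{\mathrm{Fl}(\xi)}}(x_i)\). It remains to identify \(\beta_{\mathrm{Fl}(\xi)}\) with \(\pi^*\beta_X\). This follows from naturality of the connecting morphism of the coefficient sequence in Definition~\ref{def:bockstein_class} under pullback, together with \(\pi^*(-1)=-1\); compare Remark~\ref{lem:bockstein-equals-kummer}. Substituting gives~\eqref{eq:2td-splitting}.

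No step is a serious obstacle. The only point needing care is that the identities hold in the forward-degree completion \(H_{\acute et}^*(\mathrm{Fl}(\xi);\FF_2)^{\wedge,+}\), where \(\Theta\) is evaluated degreewise. Products of finitely many invertible series with constant term \(1\) are well defined there, and \(\pi^*\) is a continuous ring map. Hence the substitution of the series \(\Theta_\beta\) commutes with pullback and products degree by degree.
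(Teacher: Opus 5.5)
Your proposal is correct and follows the paper's proof: naturality and multiplicativity of \(2td\) (Proposition~\ref{prop:2td-basic}) along the splitting-principle filtration reduce the statement to the line-bundle case, Lemma~\ref{lem:2td-line-bundle}, applied on \(\mathrm{Fl}(\xi)\). You also check explicitly that \(\mathrm{Fl}(\xi)\) is regular and that \(\beta_{\mathrm{Fl}(\xi)}=\pi^*\beta_X\) by naturality of the Bockstein; the paper leaves both points implicit.
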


\begin{proof}
By Proposition~\ref{prop:2td-basic} and Grothendieck's splitting principle (see Remark~\ref{rem:etale-sw-splitting}),
\[
\pi^*2td(\xi)=2td(\pi^*\xi)=\prod_{i=1}^r 2td(L_i).
\]
Lemma~\ref{lem:2td-line-bundle} identifies each factor with
$\Theta_{\pi^*\beta_X}(x_i)$, which proves~\eqref{eq:2td-splitting}.
\end{proof}

\begin{corollary}
\label{cor:2td-universal-polynomials}
For \(m\geq0\) and \(r\in\mathbf Z\), there are canonical polynomials
\[
        P_{m,r}\in
        \FF_2[\beta,c_1,\dots,c_{\lfloor m/2\rfloor}],
\]
homogeneous of weighted degree \(m\) for
\(\deg(\beta)=1\) and \(\deg(c_i)=2i\), such that
\[
        2td(\xi)_m
        =
        P_{m,r}\bigl(\beta_X,\overline c_1(\xi),\dots\bigr)
\]
for every virtual bundle \(\xi\in K^0(X)\) of constant rank \(r\)
on a regular Noetherian \(B\)-scheme \(X\).

If, moreover, \(f\colon X\to B\) is flat and projective of pure
relative dimension \(d\), then
\begin{align}
\label{eq:wu-augmented-tangent}
        v_X
        &=
        2td\!\left(\tau_f+\mathcal O_X^{\oplus3\epsilon_B}\right),
        \\
        v_{X,m}
        &=
        P_{m,d+3\epsilon_B}
        \bigl(\beta_X,\overline c_1(\tau_f),\dots\bigr).
        \notag
\end{align}
\end{corollary}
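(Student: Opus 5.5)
The plan is to build the polynomials from the Chern-root formula of Theorem~\ref{thm:etale-2td-splitting} via symmetric functions, and then feed them into the Wu formula.

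\textbf{Honest bundles.} For a vector bundle \(\xi\) of rank \(r\), consider the universal series
\[
        \Phi_r(\beta;x_1,\dots,x_r)
        :=\prod_{i=1}^r\Theta_\beta(x_i)
        \in\FF_2[[\beta,x_1,\dots,x_r]],
\]
graded by \(\deg\beta=1\) and \(\deg x_i=2\). Each factor \(\Theta_\beta(x)=T(\beta)T(T(\beta)^{-2}x)\) is homogeneous-compatible for this grading, since \(T(\beta)^{-2}x\) has components in degrees \(\geq2\). Hence the degree-\(m\) component \(\Phi_{r,m}\) is a polynomial. It is symmetric in the \(x_i\), so by the fundamental theorem of symmetric polynomials over \(\FF_2[\beta]\) it can be written uniquely as a polynomial in \(\beta\) and the elementary symmetric functions \(e_1,\dots,e_r\). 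Only \(e_i\) with \(2i\leq m\) can occur, which gives \(P_{m,r}\in\FF_2[\beta,c_1,\dots,c_{\lfloor m/2\rfloor}]\).

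Pulling back to \(\operatorname{Fl}(\xi)\), Theorem~\ref{thm:etale-2td-splitting} together with \(\pi^*\overline c_i(\xi)=e_i(x)\) (Remark~\ref{rem:etale-sw-splitting}) gives \(\pi^*2td(\xi)_m=\pi^*P_{m,r}(\beta_X,\overline c(\xi))\). Injectivity of \(\pi^*\) then yields the identity on \(X\).

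\textbf{Virtual bundles.} This is the main obstacle: the rank-\(r\) polynomial has to make sense for arbitrary \(r\in\mathbf Z\), with a uniform formula. My approach is to factor
\[
        \Phi_r=T(\beta)^r\prod_i T\bigl(T(\beta)^{-2}x_i\bigr).
\]
The second factor is a multiplicative sequence \(K_\beta(\overline c)\) in the Chern classes that is independent of \(r\). It therefore extends to \(K^0\) by multiplicativity of total Chern classes in the completion. Set \(P_{m,r}\) equal to the degree-\(m\) part of \(T(\beta)^r K_\beta(c)\), now for all \(r\in\mathbf Z\).

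For a virtual bundle \(\xi=[E]-[F]\) of rank \(r=\operatorname{rk}E-\operatorname{rk}F\), Proposition~\ref{prop:2td-basic} gives \(2td(\xi)=2td(E)\,2td(F)^{-1}\). Substituting the honest case, this equals \(T(\beta_X)^{r}K_\beta(\overline c(E))K_\beta(\overline c(F))^{-1}\), which is \(T(\beta_X)^rK_\beta(\overline c(\xi))\) by multiplicativity of \(K_\beta\) and the Whitney formula for \(\overline c\). The degree bound \(c_i\) with \(2i\le m\) persists because \(c_i\) has weight \(2i\). Canonicity also holds, because the series expression is universal.

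\textbf{The Wu class.} For the second part, start from \eqref{eq:explicit_wu}: \(v_X=2td(\tau_f)\cdot V_B(\beta_X)\). By \eqref{eq:base-wu-T-power}, \(V_B(\beta_X)=T(\beta_X)^{3\epsilon_B}\), and Lemma~\ref{lem:2td-line-bundle} with \(\bar x=0\) gives \(2td(\mathcal O_X)=T(\beta_X)\). Multiplicativity then yields \(v_X=2td(\tau_f+\mathcal O_X^{\oplus3\epsilon_B})\). This virtual bundle has rank \(d+3\epsilon_B\), and its Chern classes agree with those of \(\tau_f\) because trivial summands have \(\overline c=1\). Applying the first part therefore gives \(v_{X,m}=P_{m,d+3\epsilon_B}(\beta_X,\overline c_1(\tau_f),\dots)\).
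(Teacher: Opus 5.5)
Your proposal is correct and follows essentially the same route as the paper. You normalize the Chern-root product by \(T(\beta)^r\), treat the remaining factor \(\prod_i T\bigl(T(\beta)^{-2}x_i\bigr)\) as a rank-independent multiplicative sequence (stable because \(T(0)=1\)), extend it to virtual bundles by division via \(\overline c(E-F)=\overline c(E)/\overline c(F)\), and absorb \(V_B(\beta_X)=T(\beta_X)^{3\epsilon_B}=2td(\mathcal O_X^{\oplus3\epsilon_B})\) into the tangent class. One point to flag: the polynomials in \(e_1,\dots,e_r\) from your first paragraph differ from the final \(P_{m,r}\) by terms in \(c_i\) with \(i>r\). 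Those terms vanish on honest rank-\(r\) bundles but not on virtual ones, so the stable definition from your second paragraph is the one that must be used, as you do.
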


\begin{proof}
Normalizing the product in~\eqref{eq:2td-splitting} gives
\[
        \frac{\prod_{i=1}^{s}\Theta_\beta(x_i)}{T(\beta)^s}
        =
        \prod_{i=1}^{s}T\!\bigl(T(\beta)^{-2}x_i\bigr).
\]
By symmetry, its homogeneous components are polynomials in
\(\beta\) and the Chern classes. Since \(T(0)=1\), adjoining
a zero root leaves this product unchanged.
The resulting Chern-class expressions are therefore compatible
across ranks and extend multiplicatively to virtual bundles by
division, using
\(\overline c(E-F)=\overline c(E)/\overline c(F)\).

By Proposition~\ref{prop:2td-basic}, this normalized expression
evaluates on a virtual bundle \(\xi\) of rank \(r\) as
\(T(\beta_X)^{-r}2td(\xi)\).
Multiplying by \(T(\beta)^r\) and taking weighted degree \(m\)
defines \(P_{m,r}\); only \(c_i\) with \(2i\leq m\) occur.

Lemma~\ref{lem:2td-line-bundle} gives
\(2td(\mathcal O_X)=T(\beta_X)\).
Thus~\eqref{eq:explicit_wu}, \eqref{eq:base-wu-T-power},
and multiplicativity give~\eqref{eq:wu-augmented-tangent}.
The augmented virtual bundle has rank \(d+3\epsilon_B\) and
the same Chern classes as \(\tau_f\), proving the final assertion.
\end{proof}

\subsubsection{The archimedean comparison ideal and the generalized Hecke theorem}
\label{subsubsec:hecke-comparison}
Fix a flat projective morphism \(f\colon X\to B\), pure of relative
dimension \(d\), with \(X\) regular, and retain the notation
\(D_X,N_X\) of~\S\ref{sec:absolute-relative-wu-formulas}.
Over finite fields and rings of \(S\)-integers in totally imaginary
number fields, instability and duality give
\[
        v_{X,m}=0\qquad(2m>N_X).
\]
In the latter case, the Tate boundary vanishes, so
Theorem~\ref{thm:instability-modulo-tate-boundary} supplies
the required instability.
Through the universal Wu polynomials, these vanishings give
generalized Hecke relations.
Theorem~\ref{thm:gen_hecke}, stated for both choices of \(B\),
extends these relations to arbitrary number fields as congruences
modulo the archimedean comparison ideal.

For the constructions and auxiliary results below, assume
\(B=\Spec(\mathcal O_{K,S})\).
For \(q\in\mathbf Z\), let
\[
        \gamma_X^q\colon
        H_c^q(X;\mu_2^{\otimes D_X})
        \longrightarrow
        \widehat H_c^q(X;\mu_2^{\otimes D_X})
\]
denote the ordinary-to-modified comparison.

\begin{definition}[Archimedean comparison ideal]
\label{def:archimedean-comparison-ideal}
For \(m\geq0\), set
\[
        J_\infty^m(X)
        :=
        \bigl(\operatorname{im}\gamma_X^{N_X-m}\bigr)^\perp
        \subset H_{\acute et}^m(X;\FF_2),
        \qquad
        J_\infty(X):=\bigoplus_{m\geq0}J_\infty^m(X),
\]
where orthogonality is taken with respect to the
Artin--Verdier pairing~\eqref{eq:av}.
\end{definition}

\begin{proposition}[Archimedean description]
\label{prop:archimedean-comparison-image}
For each real place \(\nu\in\Omega_{\mathbf R}(K)\), put
\[
        X_\nu:=X\times_B\Spec(\mathbf R),
        \qquad G_\nu:=\Gal(\mathbf C/\mathbf R).
\]
For every \(m\geq0\), there is a canonical map of degree \(3\)
\[
        a_\infty\colon
        \bigoplus_{\nu\in\Omega_{\mathbf R}(K)}
        H_{G_\nu}^{m-3}(X_\nu(\mathbf C);\FF_2)
        \longrightarrow H_{\acute et}^m(X;\FF_2)
\]
whose image is \(J_\infty^m(X)\).
Here \(H_{G_\nu}^*\) denotes ordinary Borel equivariant cohomology
for complex conjugation.
\end{proposition}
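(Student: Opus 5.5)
The plan is to dualize the fiber sequence~\eqref{eq:ordinary-modified-comparison-fiber} of Remark~\ref{rem:ordinary-modified-boundary-triangles}. Take coefficients \(F=\mu_2^{\otimes D_X}\simeq\FF_2\) and put \(M:=R\Gamma_c(X_{\mathbf C};\FF_2)\) with its \(G_{\mathbf R}\)-action. The sequence
\[
        R\Gamma_c(X;\FF_2)\xrightarrow{\gamma_X}\widehat R\Gamma_c(X;\FF_2)\xrightarrow{\ \pi\ }M_{hG_{\mathbf R}}
\]
gives \(\operatorname{im}\gamma_X^q=\ker\bigl(\pi^q\colon\widehat H_c^q\to\pi_{-q}M_{hG_{\mathbf R}}\bigr)\). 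Under the perfect pairing~\eqref{eq:av}, the annihilator of a kernel is the image of the transpose. With \(q=N_X-m\), this gives
\[
        J_\infty^m(X)=\operatorname{im}\bigl((\pi^{N_X-m})^\vee\colon(\pi_{m-N_X}M_{hG_{\mathbf R}})^\vee\to H^m_{\acute et}(X;\FF_2)\bigr).
\]
So the proof reduces to identifying the source \((\pi_{m-N_X}M_{hG_{\mathbf R}})^\vee\) with \(\bigoplus_\nu H^{m-3}_{G_\nu}(X_\nu(\mathbf C);\FF_2)\) and defining \(a_\infty:=(\pi^{N_X-m})^\vee\) under that identification.

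For the identification, I would first decompose \(M\) over the complex embeddings of \(K\). Conjugation fixes real embeddings and swaps complex pairs, so \(M\simeq\bigoplus_{\nu\text{ real}}M_\nu\oplus\bigoplus_{w\text{ complex}}\operatorname{Ind}M_w\). Next I apply Verdier duality on the proper smooth-over-\(\mathbf C\)-in-the-relevant-sense fiber: \(X_{\nu,\mathbf C}\) is proper of complex dimension \(d=D_X-1\), and I would use Friedlander's comparison (as in Remark~\ref{rem:scope-of-tate-instability-examples}) to pass to \(X_\nu(\mathbf C)\). If singularities of the generic fiber are an issue, I would use regularity of \(X\) over \(\mathcal O_{K,S}\), which makes the generic fiber regular, hence smooth in characteristic \(0\). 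This gives a \(G\)-equivariant equivalence \(M_\nu^\vee\simeq\Sigma^{2d}C(X_\nu(\mathbf C);H\FF_2)\), where the twist is trivialized by the standing identification \(\mu_2\simeq\FF_2\). Then
\[
        (M_{\nu,hG})^\vee\simeq(M_\nu^\vee)^{hG}\simeq\Sigma^{2d}C(X_\nu(\mathbf C)_{hG};H\FF_2),
\]
so
\[
        (\pi_{m-N_X}M_{\nu,hG})^\vee\cong H_G^{m-N_X+2d}(X_\nu(\mathbf C))=H_G^{m-3}(X_\nu(\mathbf C)),
\]
since \(N_X=2d+3\). This produces the asserted degree shift of \(3\).

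The step I expect to be the main obstacle is the complex-place summands. For these, \((\operatorname{Ind}M_w)_{hG}\simeq M_w\), which contributes nonequivariant groups \(H^{m-3}(X_w(\mathbf C))\) to the source of the transpose. To recover the statement exactly, I must show that the component of \(\pi\) landing in these summands contributes nothing to \(J_\infty^m\). Concretely, I would show that the composite \(R\Gamma_{c,\mathrm{fin}}\to\widehat R\Gamma_\infty\) has vanishing complex component, because Tate constructions of induced objects vanish (Remark~\ref{rem:modified-local-vanishing}). It follows that, on complex summands, \(\widehat R\Gamma_c\) agrees with the fiber of the ordinary boundary map, so the corresponding part of \(\gamma_X\) is already an equivalence. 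Therefore the complex components of \(\pi\) vanish, and only real places survive. I would check this by splitting the whole boundary diagram of Definition~\ref{def:compact-support-over-specZ} into real and complex pieces before taking fibers.

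Finally, I would verify canonicity, meaning independence of the choices made, by naturality of the duality identifications together with Proposition~\ref{prop:modified-gysin-formalism}. I would also record that \(a_\infty\) is \(H^*_{\acute et}(X)\)-linear via restriction, which shows that \(J_\infty(X)\) is an ideal.
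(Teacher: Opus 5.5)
\textbf{The complex-place step is where the proposal has a genuine gap.}

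Your reduction matches the paper's. You use \(\operatorname{im}\gamma_X^{q}=\ker\pi^{q}\), so \(J_\infty^m(X)\) is the image of the transpose of \(\pi^{N_X-m}\). Equivariant Poincar\'e duality on the smooth proper complex fibers then gives the degree shift \(N_X-2d=3\). The problem is your treatment of the complex places. The statement has content precisely there, and your argument for it fails. Your claim that \(\gamma_X\) is an equivalence on complex summands is false. The cofiber of \(\gamma_X\) is \(M_{hG_{\mathbf R}}\), and for an induced summand \((\operatorname{Ind}M_w)_{hG_{\mathbf R}}\simeq M_w\neq0\); this is exactly the point of Remark~\ref{rem:modified-local-vanishing}. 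Nor can the boundary diagram be split into real and complex pieces before taking fibers, since \(R\Gamma_{c,\mathrm{fin}}(X;\FF_2)\) is a global object. What the vanishing of \((\operatorname{Ind}M_w)^{tG_{\mathbf R}}\) actually gives is that the norm \((\operatorname{Ind}M_w)_{hG_{\mathbf R}}\to(\operatorname{Ind}M_w)^{hG_{\mathbf R}}\simeq M_w\) is an equivalence. Hence the \(w\)-component of \(\pi\) is the composite \(\widehat R\Gamma_c(X;\FF_2)\to R\Gamma_{c,\mathrm{fin}}(X;\FF_2)\to R\Gamma(X_w(\mathbf C);\FF_2)\), which is geometric restriction at \(w\). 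Nothing formal makes this map vanish.

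\textbf{The missing input is arithmetic.} You need that the restriction \(H^q_{c,\mathrm{fin}}(X;\FF_2)\to H^q(X_w(\mathbf C);\FF_2)\) is zero for every \(q\).
\begin{itemize}
\item Write \(a\colon B\to\Spec\mathbf Z\) for the structure map. Using \(\tau_{\geq q}\), exactness of stalks, and \(Rf_!\simeq Rf_*\) (since \(f\) is proper), this restriction factors through the Leray edge map to \(H^0(\Spec\mathbf Z;a_!R^qf_*\FF_2)\).
\item Because \(2\) is inverted on \(B\), a global section of \(a_!(-)\) has closed support avoiding the primes over \(2\). Its support is therefore a finite set of closed points.
\item Such a section vanishes at the geometric generic point, through which \(\Spec\mathbf C\to\Spec\mathbf Z\) factors.
\end{itemize}
Consequently the complex components of \(\pi\) vanish on cohomology, and so do their transposes. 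You may then define \(a_\infty\) as the restriction of the transpose to the real-place summands, and its image is still \(J_\infty^m(X)\). Without this step, your construction only exhibits \(J_\infty^m(X)\) as the image of a map whose source also contains \(\bigoplus_w H^{m-3}(X_w(\mathbf C);\FF_2)\), which is not the asserted statement.
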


\begin{proof}
Put \(M=R\Gamma_c(X_{\mathbf C};\FF_2)\).
The comparison fiber sequence from the proof of
Proposition~\ref{prop:ordinary-tate-boundary-bounded-range-comparison}
is
\[
        R\Gamma_c(X;\FF_2)
        \longrightarrow
        \widehat R\Gamma_c(X;\FF_2)
        \xrightarrow{\ \rho_X\ }
        M_{hG_{\mathbf R}}.
\]
Equivariant Poincar\'e duality on the smooth proper complex fibers gives
\[
        H^q(M_{hG_{\mathbf R}})^\vee
        \simeq H_\infty^{2d-q}(X;\FF_2),
        \qquad
        (-)^\vee:=\operatorname{Hom}_{\FF_2}(-,\FF_2);
\]
see~\cite[\S\S1.1.4--1.1.5]{BenoistWittenbergRealHodgeI}.
Transposing \(\rho_X^{N_X-m}\) under this identification and
Artin--Verdier duality, and using \(N_X=2d+3\), gives a map
\[
        H_\infty^{m-3}(X;\FF_2)
        \longrightarrow H_{\acute et}^m(X;\FF_2)
\]
whose image is \(J_\infty^m(X)\) by exactness.

Its source decomposes as
\[
\begin{aligned}
        H_\infty^{m-3}(X;\FF_2)
        &\simeq
        \bigoplus_{\nu\in\Omega_{\mathbf R}(K)}
        H_{G_\nu}^{m-3}(X_\nu(\mathbf C);\FF_2)
        \\
        &\qquad\oplus
        \bigoplus_{w\in\Omega_{\mathbf C}(K)}
        H^{m-3}(X_w(\mathbf C);\FF_2),
\end{aligned}
\]
where \(\Omega_{\mathbf C}(K)\) denotes the complex places and
\(X_w:=X\times_B\Spec(\mathbf C)\), using one representative
embedding for each \(w\).

The transposed map vanishes on the complex-place summands.
Indeed, the norm is an equivalence on the corresponding induced
spectra, so each complex-place component of \(\rho_X\) factors
through geometric restriction from finite compact supports.
Writing \(a\colon B\to\Spec(\mathbf Z)\), the degree-\(q\)
restriction factors through the Leray edge map to
\[
        H^0\bigl(\Spec(\mathbf Z);
                a_!R^qf_*\FF_2\bigr).
\]
Here \(a_!\) is exact because \(a\) is separated and quasi-finite.
These sections have proper support away from \(2\), hence vanish
at the geometric generic point. Restricting the transposed map
to the real-place summands therefore defines \(a_\infty\),
with image \(J_\infty^m(X)\).
\end{proof}

\begin{lemma}
\label{lem:archimedean-comparison-ideal}
The subgroup \(J_\infty(X)\) is a graded ideal satisfying
\[
        J_\infty^m(X)=
        \begin{cases}
        0,&0\leq m\leq2,\\
        H_{\acute et}^m(X;\FF_2),&m\geq N_X.
        \end{cases}
\]
Moreover, \(J_\infty(X)=0\) if \(K\) is totally imaginary or
\(\beta_X=0\).
\end{lemma}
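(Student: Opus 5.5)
Three claims need proof: \(J_\infty(X)\) is an ideal, its low- and high-degree values, and its vanishing when \(K\) is totally imaginary or \(\beta_X=0\). Throughout we use the comparison fiber sequence
\[
        R\Gamma_c(X;\FF_2)\xrightarrow{\gamma_X}\widehat R\Gamma_c(X;\FF_2)\longrightarrow M_{hG_{\mathbf R}},
        \qquad M=R\Gamma_c(X_{\mathbf C};\FF_2),
\]
together with Artin--Verdier duality (Theorem~\ref{thm:milne-7-6}).

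\emph{Ideal property.} By Proposition~\ref{prop:multiplicative-arithmetic-boundary-diagram}, \(\gamma_X\) is linear for the module structure over \(H_{\acute et}^*(X;\FF_2)\). Hence \(\operatorname{im}\gamma_X\) is a submodule. The pairing satisfies \(\int_X (ab)\cdot x=\int_X a\cdot(bx)\). So if \(a\perp\operatorname{im}\gamma_X\), then for every \(b\) and \(x\in\operatorname{im}\gamma_X\) we have \(bx\in\operatorname{im}\gamma_X\), and therefore \(ab\perp\operatorname{im}\gamma_X\). This shows \(J_\infty\) is a graded ideal.

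\emph{Degree ranges.} For \(m\geq N_X\) we have \(q=N_X-m\leq 0\).
\begin{itemize}
\item If \(q<0\), then \(H_c^q(X;\FF_2)=0\) by Proposition~\ref{prop:instability-relative-pro-anima-terms}, so the image is zero and \(J_\infty^m\) is everything.
\item If \(q=0\), we need \(\gamma_X^0=0\). Here I would use Proposition~\ref{prop:archimedean-comparison-image}: it suffices that \(a_\infty\) is surjective onto \(H^{N_X}\), or, more directly, that \(H_c^0(X)\) vanishes. The latter holds since \(H_c^0(X;\FF_2)\) injects into \(H^0_{c,\mathrm{fin}}\), which is zero because \(X\) is proper over the non-proper \(B\). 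More precisely, \(H^0_{c,\mathrm{fin}}(X)=H^0(\Spec\mathbf Z;Rs_!\FF_2)\) consists of sections of \(a_!f_*\FF_2\) supported properly away from the generic point, hence is zero; this is the same argument as in the proof of Proposition~\ref{prop:archimedean-comparison-image}.
\end{itemize}
For \(0\leq m\leq2\), Proposition~\ref{prop:archimedean-comparison-image} presents \(J_\infty^m\) as the image of \(a_\infty\) from \(H_{G_\nu}^{m-3}\), which is zero for \(m<3\). Equivalently, \(\gamma_X^q\) is surjective for \(q\geq N_X-2=2d+1\), because \(\pi_{-q}(M_{hG_{\mathbf R}})=0\) for \(q>2d\) by the homotopy-orbit spectral sequence, as in Proposition~\ref{prop:ordinary-tate-boundary-bounded-range-comparison}.

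\emph{Vanishing cases.} By Proposition~\ref{prop:archimedean-comparison-image}, \(J_\infty(X)\) is the image of \(a_\infty\), whose source is a sum over real places.
\begin{itemize}
\item If \(K\) is totally imaginary, the source is empty, so \(J_\infty=0\).
\item If \(\beta_X=0\), then for each real place \(\nu\) the restriction of \(\beta_X\) to \(X_\nu\) vanishes. When \(X_\nu(\mathbf R)\neq\varnothing\), this restriction further maps to \(\beta_\nu\neq0\) at a real point, a contradiction. Hence \(X_\nu(\mathbf R)=\varnothing\), so \(G_\nu\) acts freely on \(X_\nu(\mathbf C)\).
\end{itemize}
It remains to show that \(a_\infty\) vanishes on free summands. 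This is the main obstacle. My first attempt: for a free action, \(M_\nu\) is induced-like, in the sense that its Tate construction vanishes (Borel cohomology of a free finite complex is bounded). Then the norm gives \((M_\nu)_{hG}\simeq (M_\nu)^{hG}\), and the complex-place argument in the proof of Proposition~\ref{prop:archimedean-comparison-image} applies verbatim. That argument factors the component through restriction from finite compact support, and the Leray edge-map argument then kills it.

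If this factorization is too delicate, the fallback is a dimension argument. Vanishing of the Tate construction means the \(\nu\)-component of \(\rho_X\) factors through \(M^{hG}=R\Gamma_\infty\) compatibly with \(\delta^{\mathrm{ord}}\), and hence is zero on \(\widehat R\Gamma_c\) modulo the image of \(R\Gamma_c\). I would check this carefully against the transposition in Proposition~\ref{prop:archimedean-comparison-image}.
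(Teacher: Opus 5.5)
The ideal property, the two degree ranges, and the totally imaginary case agree with the paper. One small point: for $m=N_X$, the vanishing of $H^0_{c,\mathrm{fin}}(X;\FF_2)$ uses flatness, i.e.\ that every component of $X$ dominates $B$, not just properness.

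\textbf{The gap is the case $\beta_X=0$.} You use this hypothesis only to get $X_\nu(\mathbf R)=\varnothing$, and then try to show that $a_\infty$ vanishes on every summand where conjugation acts freely. That intermediate claim is false.

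Tate vanishing does identify the $\nu$-component of $\rho_X$ with $\delta^{\mathrm{ord}}\colon H^q_{c,\mathrm{fin}}(X;\FF_2)\to H^q\bigl((M_\nu)^{hG_{\mathbf R}}\bigr)=H^q_{\acute et}(X_\nu;\FF_2)$. But the Leray edge-map argument only kills the composite with the forgetful map to $H^q(X_\nu(\mathbf C);\FF_2)$, i.e.\ the top Hochschild--Serre graded piece. At a complex place, $(M_w)^{hG_{\mathbf R}}$ is just the underlying cochains of one copy, so this suffices. For a free but non-induced action, $(M_\nu)^{hG_{\mathbf R}}$ computes $H^*(X_\nu(\mathbf C)/G_{\mathbf R};\FF_2)$, and classes of positive filtration are invisible to the edge map.

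Here is a concrete counterexample. Take $p\equiv3\pmod4$ and $X=\{x^2+y^2+pz^2=0\}\subset\mathbf P^2_{\mathbf Z[1/2]}$. It is regular, flat and projective, with $X(\mathbf R)=\varnothing$ but $X(\mathbf Q_2)\neq\varnothing$.
\begin{enumerate}
\item The class $c=\overline c_1(\mathcal O(1)|_X)$ dies on $X_{\mathbf Q_2^h}\cong\mathbf P^1$, so it lifts to $H^2_{c,\mathrm{fin}}(X;\FF_2)$.
\item On the other hand, $c|_{X_{\mathbf R}}\neq0$: the bundle $\mathcal O(1)|_{X_{\mathbf R}}$ generates $\Pic(X_{\mathbf R})\cong\mathbf Z$, and $\Pic(X_{\mathbf R})/2$ injects into $H^2_{\acute et}(X_{\mathbf R};\mu_2)$.
\item Since the Tate term vanishes, this gives $\operatorname{coker}\gamma_X^2\neq0$, hence $J^3_\infty(X)\neq0$, although every real locus is empty.
\end{enumerate}
Your fallback does not rescue this. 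It only re-identifies $\operatorname{coker}\gamma_X$ with the image of $\delta^{\mathrm{ord}}$, which is exactly what can be nonzero.

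\textbf{The missing idea is to use the full strength of $\beta_X=0$.} The Kummer sequence embeds $\mathcal O(X)^\times/(\mathcal O(X)^\times)^2$ into $H^1_{\acute et}(X;\mu_2)$, so $\beta_X=0$ means that $-1$ is the square of a global unit. Hence $X\to B$ factors through the finite \'etale cover $B[i]=\Spec\mathcal O_{K,S}[t]/(t^2+1)$. Its components are $S$-integer rings of totally imaginary fields, and $X$ remains regular, flat and projective over $B[i]$. Since $\gamma_X$, the duality pairing and $N_X$ depend only on $X\to\Spec\mathbf Z$, the totally imaginary case gives $J_\infty(X)=0$. This is the paper's argument.

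Equivalently, each $X_\nu$ is then a $\mathbf C$-scheme, so $X_\nu(\mathbf C)$ is $G_{\mathbf R}$-induced rather than merely free. Only in that situation does your complex-place argument apply verbatim.
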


\begin{proof}
The comparison maps are \(H_{\acute et}^*(X;\FF_2)\)-linear by
Proposition~\ref{prop:multiplicative-arithmetic-boundary-diagram}.
Their images form a graded submodule, whose orthogonal complement
under the trace pairing is a graded ideal.

The low-degree vanishing follows from
Proposition~\ref{prop:archimedean-comparison-image}.
For \(m>N_X\), the defining ordinary compact-support group has
negative degree and is zero. At \(m=N_X\), every connected
component of \(X\) dominates \(B\), so
\(H_{c,\mathrm{fin}}^0(X;\FF_2)=0\); the ordinary boundary sequence
then gives \(H_c^0(X;\FF_2)=0\).

For totally imaginary \(K\), the source of \(a_\infty\) is zero.
If \(\beta_X=0\), Kummer theory factors \(X\) through the finite
\'etale base \(B[i]\), whose connected components are totally
imaginary arithmetic bases. The induced morphism remains flat
and projective. Since the comparison and duality are defined
using \(X\to\Spec(\mathbf Z)\), the preceding case applies.
\end{proof}

\begin{theorem}[Generalized Hecke theorem]
\label{thm:gen_hecke}
For either choice of \(B\), with \(J_\infty(X):=0\) in the
finite-field case, one has, for every \(m\geq0\) with \(2m>N_X\),
\[
        P_{m,d+3\epsilon_B}\bigl(
                \beta_X,\overline c_1(\tau_f),\dots,
                \overline c_{\lfloor m/2\rfloor}(\tau_f)
        \bigr)
        \equiv0\pmod{J_\infty(X)}.
\]
These classes vanish in the finite-field case, and also in the
arithmetic case whenever
\(\widehat R\Gamma_\infty(X;\FF_2)\simeq0\).
\end{theorem}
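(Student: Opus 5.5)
By Corollary~\ref{cor:2td-universal-polynomials}, the polynomial expression equals \(v_{X,m}\). It therefore suffices to show that \(v_{X,m}\in J_\infty^m(X)\) whenever \(2m>N_X\), and that \(v_{X,m}=0\) in the two stated special cases. We argue by duality. Put \(q:=N_X-m\); the hypothesis \(2m>N_X\) is equivalent to \(m>q\). If \(q<0\), the group \(H_c^q(X;\mu_2^{\otimes D_X})\) vanishes, since Proposition~\ref{prop:instability-relative-pro-anima-terms} kills negative degrees. Its image under \(\gamma_X^q\) is then zero, so \(J_\infty^m(X)=H^m_{\acute et}(X;\FF_2)\) and there is nothing to prove.

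Suppose now \(q\geq0\) and take \(x\in H_c^q(X;\mu_2^{\otimes D_X})\). The defining identity of Definition~\ref{def:absolute-wu-class}, applied to \(\gamma_X(x)\), gives
\[
\int_X\gamma_X(x)\cdot v_{X,m}=\int_X\widehat\Sq_c^m\bigl(\gamma_X(x)\bigr).
\]
By Theorem~\ref{thm:canonical-arithmetic-boundary-operations}, every map in the boundary diagram is \(\mathcal A_2\)-linear. In particular \(\gamma_X\) commutes with the operations, so the right side equals \(\int_X\gamma_X(\Sq_c^m x)\). Here the twist is identified with \(\FF_2\), so the operations are those of the paper. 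Ordinary compact support \(H_c^*(X)\) is among the theories listed in Proposition~\ref{prop:instability-relative-pro-anima-terms}, so instability applies. Since \(m>q\), it gives \(\Sq_c^m x=0\). Hence \(v_{X,m}\) is orthogonal to \(\operatorname{im}\gamma_X^q\), which is exactly the condition \(v_{X,m}\in J^m_\infty(X)\).

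For the vanishing statements we use the same argument with \(\gamma_X(x)\) replaced by an arbitrary class. In the finite-field case, \(\widehat H_c\) is ordinary compact support, which is unstable by Proposition~\ref{prop:instability-relative-pro-anima-terms}. For any \(y\in\widehat H_c^{q}\) with \(q<m\) we then get \(\int_X y\cdot v_{X,m}=\int_X\widehat\Sq_c^m y=0\), and perfect duality (Theorem~\ref{thm:milne-7-6}) forces \(v_{X,m}=0\). In the arithmetic case with \(\widehat R\Gamma_\infty(X;\FF_2)\simeq0\), the Tate boundary groups vanish, so \(I_\infty(X)=0\). Theorem~\ref{thm:instability-modulo-tate-boundary} then makes \(\widehat H_c^*(X)\) unstable, including vanishing in negative degrees, and the same duality argument gives \(v_{X,m}=0\). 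Lemma~\ref{lem:archimedean-comparison-ideal} can serve as a cross-check in the totally imaginary and \(\beta_X=0\) cases.

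The main point requiring care is bookkeeping of twists. The Wu identity is stated for \(\mu_2^{\otimes D_X}\)-coefficients, while the ideal \(J_\infty\) is defined through \(\gamma_X\) with the same twist. One must check that the coefficient identification \(\mu_2\simeq\FF_2\) intertwines the Steenrod operations and \(\gamma_X\). This holds because the operations are defined via the constant \(H\FF_2\) coefficient spectrum after that identification, and \(\gamma_X\) is natural in coefficients.
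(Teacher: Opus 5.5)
Your proposal is correct and follows the paper's proof essentially step for step. You identify the polynomial with \(v_{X,m}\) via Corollary~\ref{cor:2td-universal-polynomials}, show \(v_{X,m}\perp\operatorname{im}\gamma_X^{N_X-m}\) by combining the Wu identity, compatibility of \(\gamma_X\) with the stable operations, and ordinary instability, and get the two vanishing statements from instability (Theorem~\ref{thm:instability-modulo-tate-boundary} when the Tate boundary vanishes) together with perfect duality.
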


\begin{proof}
By Corollary~\ref{cor:2td-universal-polynomials}, the displayed
polynomial represents \(v_{X,m}\). Put \(q:=N_X-m\).
Over a finite field, ordinary instability
(Proposition~\ref{prop:instability-relative-pro-anima-terms})
and duality give \(v_{X,m}=0\), since \(m>q\).

Assume now that \(B=\Spec(\mathcal O_{K,S})\).
For \(x\in H_c^q(X;\mu_2^{\otimes D_X})\), naturality of the
stable operations and the defining Wu identity give
\[
        \int_X\gamma_X(x)\cdot v_{X,m}
        =
        \int_X\widehat\Sq_c^m(\gamma_X(x))
        =
        \int_X\gamma_X(\Sq_c^m(x))
        =0.
\]
Indeed, \(m>q\), so
Proposition~\ref{prop:instability-relative-pro-anima-terms}
applies; if \(q<0\), the source group is zero.
Hence, \(v_{X,m}\in J_\infty^m(X)\).

If the arithmetic Tate boundary vanishes,
Theorem~\ref{thm:instability-modulo-tate-boundary} gives
\(\widehat\Sq_c^m=0\) on
\(\widehat H_c^{N_X-m}(X;\mu_2^{\otimes D_X})\);
duality again yields \(v_{X,m}=0\).
\end{proof}

\subsubsection{Real-place Wu classes}
\label{subsubsec:real-place-wu}
In the arithmetic case,
Lemma~\ref{lem:archimedean-comparison-ideal} shows that the
congruence imposes a potentially nontrivial restriction only for
\[
        d+2\leq m\leq2d+2.
\]
In higher degrees, the Wu classes are determined by the real loci.

\begin{proposition}[Real-place description of the Wu classes]
\label{prop:real-place-wu-classes}
Suppose that \(B=\Spec(\mathcal O_{K,S})\).
For each real place \(\nu\), put
\(M_\nu:=X_\nu(\mathbf R)\), and let
\[
        \operatorname{res}_\nu\colon
        H_{\acute et}^*(X;\FF_2)
        \longrightarrow H^*(M_\nu;\FF_2)[\beta_\nu]
\]
be \'etale--equivariant comparison followed by restriction to the
real locus. Here \(\beta_\nu\) is the generator of
\(H^1(G_\nu;\FF_2)\), so that
\(\operatorname{res}_\nu(\beta_X)=\beta_\nu\).

For \(m\geq N_X\), these maps induce an isomorphism
\begin{equation}
\label{eq:real-place-high-degree}
        H_{\acute et}^m(X;\FF_2)
        \xrightarrow{\ \sim\ }
        \bigoplus_{\nu\in\Omega_{\mathbf R}(K)}
        \bigoplus_{i=0}^{d}
        H^i(M_\nu;\FF_2)\,\beta_\nu^{m-i}.
\end{equation}
Write
\[
        u_\nu:=\Sq^{-1}\bigl(w(TM_\nu)\bigr)
\]
for the classical total Wu class of \(M_\nu\). Then
\begin{equation}
\label{eq:real-place-wu-class}
        \operatorname{res}_\nu(v_X)
        =
        u_\nu
        \sum_{j=0}^{d}
        T(\beta_\nu)^{d+3-j}
        \Sq^{-1}\bigl(w_j(TM_\nu)\bigr)
\end{equation}
in the forward-degree completion of
\(H^*(M_\nu;\FF_2)[\beta_\nu]\), where \(\Sq\) on \(M_\nu\)
is the classical total Steenrod square.
\end{proposition}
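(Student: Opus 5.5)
The proof has two parts. First I establish the high-degree isomorphism \eqref{eq:real-place-high-degree} by duality and a Tate-localization computation at the real places. Then I compute \(\operatorname{res}_\nu(v_X)\) by pushing the formula of Corollary~\ref{cor:2td-universal-polynomials} through \(\operatorname{res}_\nu\).

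\emph{High-degree isomorphism.} Fix \(m\geq N_X\) and put \(q:=N_X-m\leq0\). By Theorem~\ref{thm:milne-7-6}, \(H_{\acute et}^m(X;\FF_2)\) is dual to \(\widehat H_c^{q}(X;\mu_2^{\otimes D_X})\).
\begin{itemize}
\item In the modified boundary sequence \eqref{eq:tate-boundary-instability-defect-sequence}, the finite term \(H^q_{c,\mathrm{fin}}\) vanishes for \(q<0\). It also vanishes for \(q=0\), by the dominance argument in the proof of Lemma~\ref{lem:archimedean-comparison-ideal}.
\item Hence \(\partial_\infty\colon\widehat H_\infty^{q-1}(X)\to\widehat H_c^q(X)\) is an isomorphism for \(q\leq0\). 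I will check that the next map \(H^{q-1}_{c,\mathrm{fin}}\to\widehat H^{q-1}_\infty\) also vanishes, again by negativity of degree.
\item Complex places contribute induced spectra, whose Tate constructions vanish. So \(\widehat H_\infty^*(X)\cong\prod_\nu H^*\bigl(R\Gamma(X_\nu(\mathbf C);\FF_2)^{tG_\nu}\bigr)\).
\item The Smith--Quillen localization theorem for the finite \(G_\nu\)-CW complex \(X_\nu(\mathbf C)\) identifies this Tate cohomology with \(H^*(M_\nu;\FF_2)[\beta_\nu^{\pm1}]\).
\item Dualizing with the local trace (as in \eqref{eq:s-integer-local-global-duality}), and using Poincaré duality on the \(d\)-manifold \(M_\nu\), gives \eqref{eq:real-place-high-degree}.
\end{itemize}
It remains to check that this composite of dualities is the map \(\operatorname{res}_\nu\) of the statement. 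For that I would compare with the transposed map \(a_\infty\) of Proposition~\ref{prop:archimedean-comparison-image}, on which \(\operatorname{res}_\nu\) acts by restriction, following Lemma~\ref{lem:s-integer-real-boundary-duality} one dimension up.

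\emph{Wu class.} By Corollary~\ref{cor:2td-universal-polynomials},
\[
v_X=2td(\tau_f)\,T(\beta_X)^3 .
\]
The map \(\operatorname{res}_\nu\) is the composite of étale-to-Borel comparison on \(X_\nu\) and restriction to the fixed locus. Both steps are ring maps commuting with \(\Sq\); for the comparison this follows from the Betti--étale comparison argument of Remark~\ref{rem:scope-of-tate-instability-examples}. Therefore \(\operatorname{res}_\nu\) commutes with \(\Sq^{-1}\) and \(T\), and \(\operatorname{res}_\nu(\beta_X)=\beta_\nu\).

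The key input is the identity
\[
\operatorname{res}_\nu\, w^{\acute et}(E)=w^{G_\nu}\bigl(E(\mathbf C)|_{M_\nu}\bigr)
\]
for an algebraic vector bundle \(E\), with Borel-equivariant Stiefel--Whitney classes on the right. I would prove it by the splitting principle and Theorem~\ref{thm:etale-sw-chern}, reducing to line bundles. For a line bundle \(L\), the restriction of \(\overline c_1(L)\) to the real locus is \(w_1(L_{\mathbf R})^2+\beta_\nu w_1(L_{\mathbf R})\), a known Krasnov/Benoist--Wittenberg fact. Then \(1+\beta+\overline c_1\) matches \(w^{G}\) of \(L_{\mathbf R}\otimes\mathbf C\).

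On \(M_\nu\), the tangent bundle of \(X_\nu\) restricts to \(TM_\nu\otimes_{\mathbf R}\mathbf C=TM_\nu\oplus(TM_\nu\otimes\sigma)\). This gives
\[
\operatorname{res}_\nu w^{\acute et}(\tau_f)
= w(TM_\nu)\sum_{j=0}^{d}w_j(TM_\nu)(1+\beta_\nu)^{d-j},
\]
and \(\tau_f\) may be replaced by the genuine tangent bundle there, since \(f\) is smooth near real points.

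Now apply \(\Sq^{-1}\) multiplicatively, using the following identities:
\begin{itemize}
\item \(\Sq^{-1}(1+\beta_\nu)=T(\beta_\nu)\), since \(\Sq T(\beta)=1+\beta\);
\item \(\Sq^{-1}w(TM_\nu)=u_\nu\).
\end{itemize}
Multiplying by \(T(\beta_\nu)^3\) yields \eqref{eq:real-place-wu-class}.

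The main obstacle is the comparison step: justifying that étale Steenrod squares and étale Stiefel--Whitney classes on \(X_\nu\) agree with their Borel counterparts for complex conjugation, and the line-bundle restriction formula on the real locus. I would first try to derive this from the relative shape \(h_{\mathbf R}(X_\nu)\), identified with the Borel construction via Carchedi--Elmanto and Friedlander's comparison. I would then use that Thom classes correspond under absolute purity on both sides, so the comparison is reduced to Definition~\ref{def:etale-sw} applied to Thom classes.
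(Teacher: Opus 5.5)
Your route works, and its second half is essentially the paper's. Both arguments start from \(v_X=T(\beta_X)^3\,2td(\tau_f)\), restrict \(T_{X_\nu}\) to \(TM_\nu\oplus(TM_\nu\otimes\sigma)\), and push \(\Sq^{-1}\) through multiplicatively.

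\textbf{Differences in the second half.} For the Stiefel--Whitney comparison, the paper matches the \'etale and Borel Thom classes directly. You instead go through Theorem~\ref{thm:etale-sw-chern}, the splitting principle on the real flag bundle, and the line-bundle formula \(\operatorname{res}_\nu\overline c_1(L)=w_1(L(\mathbf R))^2+\beta_\nu w_1(L(\mathbf R))\). This is also valid: it reduces everything to Chern classes, at the cost of importing that Krasnov-type restriction formula. Your final algebra via \(\Sq^{-1}(1+\beta_\nu)=T(\beta_\nu)\) is slightly shorter than the paper's expansion in Stiefel--Whitney roots, which uses \(T(a_i+\beta_\nu)=T(\beta_\nu)+\Sq^{-1}(a_i)\).

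\textbf{Differences in the high-degree isomorphism.} For \eqref{eq:real-place-high-degree}, the paper cites Geisser--Schmidt's Theorem~B to identify \(H^m_{\acute et}(X;\FF_2)\), for \(m\geq N_X\), with real-place Tate cohomology. It then passes to Borel cohomology via Proposition~\ref{prop:ordinary-tate-boundary-bounded-range-comparison} and restricts to fixed loci. You rebuild that identification from Theorem~\ref{thm:milne-7-6} and the isomorphism \(\partial_\infty\colon\widehat H^{q-1}_\infty\to\widehat H^q_c\) for \(q\leq0\), and apply localization directly to Tate cohomology. This is more self-contained, but it only gives an abstract isomorphism until the step you flag is done.

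\textbf{The flagged step.} Comparing with \(a_\infty\) will not settle it: Proposition~\ref{prop:archimedean-comparison-image} only identifies the image of \(a_\infty\) and says nothing about \(\operatorname{res}_\nu\circ a_\infty\). What you need is the analogue of \eqref{eq:s-integer-local-global-duality} on \(X\):
\(\int_X a\cdot\partial_\infty(y)=\sum_\nu\operatorname{tr}_{X_\nu}\bigl(\operatorname{res}_\nu(a)\cdot y_\nu\bigr)\).
Here \(\operatorname{tr}_{X_\nu}\) is equivariant pushforward to \(\Spec\mathbf R\) followed by \(\operatorname{tr}_\nu\).

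This identity follows from two facts: \(\partial_\infty\) is module-linear, and \(\widehat f_{*,c}\) is a morphism of the boundary fiber sequences. Together they reduce it to the trace compatibility on \(B\). Perfectness of local Tate duality on \(X_\nu(\mathbf C)\) then exhibits \(\operatorname{res}_\nu\) as the transpose of the isomorphism \(\partial_\infty\), hence itself an isomorphism, and localization finishes.

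You do not need Poincar\'e duality on \(M_\nu\), and it would not describe the local pairing correctly. Transported to \(M_\nu\), \(\operatorname{tr}_{X_\nu}\) becomes \(\int_{M_\nu}(-)\cdot e^{-1}\), where \(e=\sum_j w_j(TM_\nu)\beta_\nu^{d-j}\) is the Euler class of the normal bundle \(TM_\nu\otimes\sigma\). Untwisted duality on \(M_\nu\) therefore only gets the dimension count right, not the map.
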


\begin{proof}
The generic fiber of \(f\) is smooth, so each nonempty \(M_\nu\)
is a compact smooth manifold of dimension \(d\).
By~\cite[Theorem~B, (0.4)]{GeisserSchmidt}, restriction identifies
\(H_{\acute et}^m(X;\FF_2)\), for \(m\geq N_X\), with the sum of
the real-place Tate cohomology groups. Ordinary and Tate
equivariant cohomology agree above degree \(2d\), by
Proposition~\ref{prop:ordinary-tate-boundary-bounded-range-comparison}.
Restriction to the fixed loci and the equivariant K\"unneth
decomposition therefore give~\eqref{eq:real-place-high-degree};
see~\cite[\S\S1.1.3, 1.2.1, and especially (1.26)]
{BenoistWittenbergRealHodgeI}.

Under comparison, the Thom definition identifies the \'etale
Stiefel--Whitney classes with their equivariant counterparts;
see also~\cite[Proposition~2.2]{BenoistWittenbergWuRelations}.
On the real locus, the complex tangent bundle, regarded as an
equivariant real bundle, restricts to
\[
        TM_\nu\oplus(TM_\nu\otimes\mathrm{sign}).
\]
Writing \(a_1,\dots,a_d\) for formal Stiefel--Whitney roots of
\(TM_\nu\), equations~\eqref{eq:explicit_wu}
and~\eqref{eq:base-wu-T-power} give
\[
        \operatorname{res}_\nu(v_X)
        =
        T(\beta_\nu)^3
        \prod_{i=1}^{d}T(a_i)T(a_i+\beta_\nu).
\]
Since
\[
        T(a_i+\beta_\nu)
        =
        T(\beta_\nu)+\Sq^{-1}(a_i),
        \qquad
        \prod_{i=1}^{d}T(a_i)=u_\nu,
\]
expanding the product proves~\eqref{eq:real-place-wu-class}.
\end{proof}
We work out this description for relative curves and the
projective plane in~\S\ref{sssec:real-place-defects}.

\begin{corollary}
\label{cor:real-place-wu-nonvanishing}
If \(X_\nu(\mathbf R)\neq\varnothing\) for some real place \(\nu\),
then
\[
        v_{X,2^s}\neq0
        \qquad\text{whenever }2^s\geq d+3.
\]
In particular, the absolute Wu classes are supported in infinitely many positive degrees.
\end{corollary}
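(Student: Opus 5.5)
The plan is to detect the class \(v_{X,2^s}\) by pulling it back along a single real point; this avoids the full high-degree isomorphism~\eqref{eq:real-place-high-degree}. Choose a real place \(\nu\) with \(X_\nu(\mathbf R)\neq\varnothing\) and a point \(p\in X_\nu(\mathbf R)\). This gives a morphism \(p\colon\Spec\mathbf R\to X\) lying over the real place \(\nu\colon\Spec\mathbf R\to B\). Pullback is a ring map
\[
        p^*\colon H_{\acute et}^*(X;\FF_2)\longrightarrow H_{\acute et}^*(\mathbf R;\FF_2)\cong\FF_2[\beta_\nu],
\]
and it commutes with \(\Sq\). By naturality of the Bockstein (Remark~\ref{rem:dependence-on-S} and the proof of Proposition~\ref{prop:beta-cube-real-place}), \(p^*\beta_X=\beta_\nu\). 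It therefore suffices to show that \(p^*v_{X,2^s}\neq0\) in \(\FF_2\beta_\nu^{2^s}\).

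Next compute \(p^*v_X\) using~\eqref{eq:explicit_wu} and~\eqref{eq:base-wu-T-power}, or equivalently~\eqref{eq:wu-augmented-tangent}. These give \(v_X=2td(\tau_f+\mathcal O_X^{\oplus3})\). By Proposition~\ref{prop:2td-basic}, \(p^*2td(\xi)=2td(p^*\xi)\). On \(\Spec\mathbf R\) every vector bundle is trivial, so \(p^*(\tau_f+\mathcal O^{\oplus3})\) is the class of the trivial bundle of rank \(d+3\). Lemma~\ref{lem:2td-line-bundle} gives \(2td(\mathcal O)=T(\beta)\), so multiplicativity yields
\[
        p^*v_X=T(\beta_\nu)^{d+3}.
\]
(The same result follows from~\eqref{eq:real-place-wu-class} after restricting to a point, where \(u_\nu\mapsto1\) and \(w_j(TM_\nu)\mapsto0\) for \(j>0\).)

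The remaining step, which is the only real content, is the binomial computation. By Lemma~\ref{lem:T-power-coefficients}, the coefficient of \(\beta_\nu^{n}\) with \(n=2^s\) is
\[
        [t^{2^s}]T(t)^{d+3}=\binom{d+2-2^s}{2^s}\bmod 2.
\]
Put \(k:=2^s-d-2\geq1\). Then
\[
        \binom{-k}{2^s}=(-1)^{2^s}\binom{k+2^s-1}{2^s}\equiv\binom{N}{2^s},
        \qquad N:=2^{s+1}-d-3 .
\]
Since \(d\geq0\), we have \(N<2^{s+1}\). The hypothesis \(2^s\geq d+3\) is exactly \(N\geq2^s\). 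So the binary digit of \(N\) in position \(s\) equals \(1\), and Lucas' theorem gives \(\binom{N}{2^s}\equiv1\pmod2\).

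Hence \(p^*v_{X,2^s}=\beta_\nu^{2^s}\). This is nonzero because \(H_{\acute et}^*(\mathbf R;\FF_2)=\FF_2[\beta_\nu]\), and therefore \(v_{X,2^s}\neq0\). Since infinitely many \(s\) satisfy \(2^s\geq d+3\), the final assertion follows. The main care point is the sign and index bookkeeping for negative generalized binomial coefficients modulo \(2\) in the Lucas step.
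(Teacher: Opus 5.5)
Your proof is correct and follows the paper's argument: restrict $v_X=2td(\tau_f+\mathcal O_X^{\oplus3})$ along a real point to get $T(\beta_\nu)^{d+3}$, rewrite $[t^{2^s}]T(t)^{d+3}=\binom{d+2-2^s}{2^s}$ as $\binom{2^{s+1}-d-3}{2^s}$, and check that this coefficient is odd. The only difference is cosmetic: you use Lucas' theorem where the paper uses the equivalent factorization $(1+t)^{2n-d-3}=(1+t^n)(1+t)^{n-d-3}$ in $\FF_2[t]$. You also spell out the pullback-to-a-point step, which the paper states in one line.
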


\begin{proof}
At a real point, the total Wu class restricts to
\(T(\beta_\nu)^{d+3}\).
Let \(n=2^s\geq d+3\). Using
\[
        \binom{-k}{n}=(-1)^n\binom{k+n-1}{n}
        \qquad(k\geq1)
\]
with \(k=n-d-2\), Lemma~\ref{lem:T-power-coefficients} gives
\[
        [t^n]T(t)^{d+3}
        =
        \binom{d+2-n}{n}
        =
        \binom{2n-d-3}{n}
        \quad\text{in }\FF_2.
\]
Since \(n\) is a power of \(2\),
\[
        (1+t)^{2n-d-3}
        =
        (1+t^n)(1+t)^{n-d-3}
        \quad\text{in }\FF_2[t].
\]
As \(0\leq n-d-3<n\), the coefficient of \(t^n\) is \(1\).
Thus, \(v_{X,n}\) restricts to \(\beta_\nu^n\neq0\).
\end{proof}

\subsection{Specializations and examples}
\label{sec:applications_and_examples}
We apply the generalized Hecke theorem to the different and
theta characteristics, then discuss related results of Serre
and Shusterman--Sawin
(\S\S\ref{subsubsec:hecke}--\ref{subsubsec:ss}).
We next give higher-dimensional relations and examples of
real-place defects
(\S\S\ref{subsubsec:untwisted-d-folds}--\ref{sssec:real-place-defects}),
and conclude with the function-field semicharacteristic formula
(\S\ref{sssec:function-field-semicharacteristic-defect}).

\subsubsection{Hecke's theorem for the different, away from \texorpdfstring{$2$}{2}}
\label{subsubsec:hecke}
Let $B=\Spec \mathcal O_{K,S}$, and let
\[
f\colon X=\Spec \mathcal O_{L,S_L}\longrightarrow B
\]
be the finite flat morphism attached to a finite number field extension $L/K$, with $S_L$ the set of places of $L$ lying above $S$.

A well known theorem of Hecke, reads as follows.
\begin{theorem}[Hecke's theorem on the different; see
{\cite[\S~63, Satz~176]{Hecke1923}} (or, in English translation, {\cite[Theorem~176]{Hecke1981}}).]
Let \(\mathfrak D_{X/B}\) denote the relative different of the extension $f: X\to B$. Then the class of \(\mathfrak D_{X/B}\) in
\(\operatorname{Pic}(X)\) is a square.
\end{theorem}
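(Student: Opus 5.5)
The plan is to specialize Theorem~\ref{thm:gen_hecke} to the relative dimension \(d=0\) case and read off the degree-\(2\) Wu relation. Here \(X=\Spec\mathcal O_{L,S_L}\) is regular, and \(f\) is finite, flat and projective of pure relative dimension \(0\). Thus \(D_X=1\), \(N_X=3\), and the smallest degree with \(2m>N_X\) is \(m=2\). Corollary~\ref{cor:2td-universal-polynomials} gives \(v_{X,2}=P_{2,3}(\beta_X,\overline c_1(\tau_f))\). Lemma~\ref{lem:archimedean-comparison-ideal} gives \(J^2_\infty(X)=0\), since \(2\le 2\). So the congruence of Theorem~\ref{thm:gen_hecke} is an honest vanishing in \(H^2_{\acute et}(X;\FF_2)\), with no archimedean defect.

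First, I compute \(P_{2,3}\) from the proof of Corollary~\ref{cor:2td-universal-polynomials}. The normalized product \(\prod_i T(T(\beta)^{-2}x_i)\) has no component of degree \(1\), because each \(x_i\) has degree \(2\). Its degree-\(2\) component is \(\sum_i x_i=c_1\), since \(T(z)=1+z+z^2+\cdots\). Multiplying by \(T(\beta)^3\), the coefficient of \(\beta^2\) is \(\binom{3-2-1}{2}=\binom{0}{2}=0\) by Lemma~\ref{lem:T-power-coefficients}. The cross term in degree \(2\) is the degree-\(1\) part of \(T(\beta)^3\) times the degree-\(1\) part of the product, and the latter is zero. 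Hence \(P_{2,3}=c_1\), and Theorem~\ref{thm:gen_hecke} yields \(\overline c_1(\tau_f)=0\) in \(H^2_{\acute et}(X;\FF_2)\).

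Second, I identify \(\overline c_1(\tau_f)\) with the mod-\(2\) class of the different. Choose a factorization \(X\hookrightarrow\mathbb A^N_B\to B\), or use the projective one from Definition~\ref{def:virtual-normal-bundle}. Then \(\tau_f\) is the dual of the class of the cotangent complex \(L_{X/B}\), which is perfect of amplitude \([-1,0]\). Its determinant is the relative dualizing sheaf \(\omega_{X/B}\), and for a finite flat extension of Dedekind rings \(\omega_{X/B}\simeq\mathfrak D_{X/B}^{-1}\) (the inverse different). Since \(c_1\) of a determinant equals \(c_1\) of the virtual class, \(c_1(\tau_f)=-c_1(\omega_{X/B})=c_1(\mathfrak D_{X/B})\). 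Signs are irrelevant mod \(2\). I expect this step to be the main obstacle, because one must check that Riou's étale \(c_1\) of a virtual class is computed through its determinant and that the identification \(\det L_{X/B}\simeq\mathfrak D^{-1}\) is compatible with the chosen factorization. The cleanest route is to compute locally, where \(\mathcal O_L\) is monogenic with \(\mathcal O_L=\mathcal O_K[t]/(g)\) and \(N_{X/\mathbb A^1}=(g)\), so that \(\tau_f=[\mathcal O]-[(g)/(g)^2]\) has determinant \((g'(t))\), which is the different.

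Finally, the Kummer sequence for \(\mu_2\) on \(X\) (cf. Remark~\ref{rem:dependence-on-S}) gives an injection \(\operatorname{Pic}(X)/2\hookrightarrow H^2_{\acute et}(X;\mu_2)\) sending \([\mathcal L]\mapsto\overline c_1(\mathcal L)\). The vanishing \(\overline c_1(\mathfrak D_{X/B})=0\) therefore says exactly that \([\mathfrak D_{X/B}]\in 2\operatorname{Pic}(X)\), i.e.\ the class of the different is a square. Since \(2\) is inverted in \(\mathcal O_{K,S}\), this is Hecke's theorem away from \(2\).
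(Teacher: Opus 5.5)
Your argument is the paper's proof of Theorem~\ref{thm:hecke-wu}, step for step. You set \(d=0\) and \(N_X=3\), use \(J_\infty^2(X)=0\), show \(v_{X,2}=\overline c_1(\tau_f)\), and then conclude via \(\det\tau_f\cong\omega_{X/B}^{-1}\), \(\omega_{X/B}\cong\mathcal O_X(R)\), and the Kummer sequence. The paper obtains \(v_{X,2}\) by writing \(v_X=T(\beta_X)^3\,2td(\tau_f)\) with \(\operatorname{rk}\tau_f=0\) and \([t^2]T(t)^3=0\), which is the same bookkeeping as your \(P_{2,3}=c_1\). For the identification with the different, the paper simply cites the Stacks Project. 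If you do the check by hand, work locally on \(X\) or after completion. The ring \(\mathcal O_L\otimes_{\mathcal O_K}\mathcal O_{K,\mathfrak p}\) need not be monogenic over \(\mathcal O_{K,\mathfrak p}\); this fails, for instance, when more primes of residue degree \(1\) lie above \(\mathfrak p\) than its residue field has elements.

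Be clear about scope, however. Like the paper's argument, yours proves the statement only when \(2\in\mathcal O_{K,S}^{\times}\). The statement as posed is Hecke's classical theorem for arbitrary \(S\), for example \(S=\varnothing\). The paper does not reprove that case; it cites Hecke. The \(2\)-invertible case also does not formally imply it. Passing to \(\mathcal O_{L,S_L}[1/2]\) only shows that \([\mathfrak D_{X/B}]\) lies in \(2\operatorname{Pic}(X)\) plus the subgroup generated by the classes of the primes above \(2\). Every ingredient needs \(2\) invertible: \'etale \(\mu_2\)-coefficients, the \'etale Kummer sequence for \(\mu_2\), Artin--Verdier duality with \(\FF_2\)-coefficients, and the Steenrod operations. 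So the general case lies outside the reach of this method.
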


\begin{remark}
Hecke's approach is very different from ours, and he reduces the problem to the study of certain Gauss sums with square odd
denominator, which he evaluates explicitly.
\end{remark}

We shall recover Hecke's theorem away from the prime $2$ as a corollary to the arithmetic (absolute) Wu formula/our generalized Hecke theorem.
\begin{theorem}
\label{thm:hecke-wu}
Notations as above, assume furthermore that $2$ is invertible in $B$, then the class of \(\mathfrak D_{X/B}\) in
\(\operatorname{Pic}(X)\) is a square.
\end{theorem}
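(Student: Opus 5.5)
We apply Theorem~\ref{thm:gen_hecke} to the finite flat morphism \(f\colon X=\Spec\mathcal O_{L,S_L}\to B\), and extract the degree-\(2\) Wu relation. The hypotheses are satisfied. \(X\) is regular, since it is a Dedekind scheme. \(f\) is finite, hence projective, and it is flat of pure relative dimension \(d=0\). Also \(2\) is invertible on \(B\). Thus \(\epsilon_B=1\), \(D_X=1\) and \(N_X=3\). The degree \(m=2\) satisfies \(2m>N_X\), so the theorem gives
\[
        v_{X,2}=P_{2,3}\bigl(\beta_X,\overline c_1(\tau_f)\bigr)\equiv0\pmod{J_\infty(X)}.
\]
Lemma~\ref{lem:archimedean-comparison-ideal} gives \(J_\infty^2(X)=0\), because \(0\leq m\leq2\). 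Hence \(v_{X,2}=0\) on the nose, regardless of real places.

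Next we compute \(P_{2,3}\) from the Chern-root formula of Theorem~\ref{thm:etale-2td-splitting} and Corollary~\ref{cor:2td-universal-polynomials}. The relevant expression is
\[
        T(\beta)^3\prod_i T\bigl(T(\beta)^{-2}x_i\bigr).
\]
By Lemma~\ref{lem:T-power-coefficients}, the coefficient \([t^2]T(t)^3=\binom{0}{2}\) vanishes, so \(T(\beta)^3=1+\beta+0\cdot\beta^2+\cdots\). The only degree-\(2\) contribution of the product is \(\sum_i x_i=c_1\); no \(\beta\)-correction enters in degree \(2\), since \(T(\beta)^{-2}x_i=x_i+O(\deg 4)\). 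Degree \(1\) times degree \(1\) gives no term either, since the product has no degree-\(1\) part. Therefore
\[
        P_{2,3}(\beta,c_1)=c_1
        \qquad\text{and}\qquad
        v_{X,2}=\overline c_1(\tau_f)\in H^2_{\acute et}(X;\FF_2).
\]
We conclude that \(\overline c_1(\tau_f)=0\).

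It remains to identify \(\overline c_1(\tau_f)\) with the mod-\(2\) class of the different. For a virtual bundle, \(c_1\) is \(c_1\) of the determinant. For the factorization \(X\hookrightarrow\mathbb P^N_B\to B\), one has \(\det\tau_f\simeq\omega_{X/B}^{-1}\), where \(\omega_{X/B}\) is the relative dualizing sheaf. For a finite flat morphism of Dedekind schemes, \(\omega_{X/B}=\mathcal Hom_{\mathcal O_B}(\mathcal O_X,\mathcal O_B)\) is Dedekind's complementary module, i.e.\ the inverse different \(\mathfrak D_{X/B}^{-1}\). Thus \(c_1(\tau_f)\) is the image of \([\mathfrak D_{X/B}]\), and the sign is irrelevant mod \(2\). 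Finally, the Kummer sequence gives an injection
\[
        \operatorname{Pic}(X)/2\hookrightarrow H^2_{\acute et}(X;\mu_2)\simeq H^2_{\acute et}(X;\FF_2),
\]
sending a line bundle to \(\overline c_1\); this is compatible with Remark~\ref{rem:etale-sw-splitting} and Riou's Chern classes. Vanishing of \(\overline c_1(\tau_f)\) then says that \([\mathfrak D_{X/B}]\in2\operatorname{Pic}(X)\), i.e.\ the class of \(\mathfrak D_{X/B}\) is a square.

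The step I expect to need the most care is this last identification. One must check that \(\det\tau_f\), as defined via a regular embedding into projective space, agrees with the dual of the relative dualizing sheaf for a finite flat lci map, and that Riou's étale \(c_1\) reduces to the Kummer boundary of the Picard class. I would first establish \(\det\tau_f\simeq\omega_{X/B}^{-1}\) through the canonical isomorphism \(\omega_{X/B}\simeq\det(N_{X/P})\otimes\det(\Omega_{P/B})|_X\), and then invoke the standard identification of \(\omega\) for finite flat morphisms with the complementary module.
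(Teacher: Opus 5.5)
Your proposal is correct and follows essentially the same route as the paper. You extract \(v_{X,2}=\overline c_1(\tau_f)\) from the Wu formula with \(N_X=3\), kill it using \(J_\infty^2(X)=0\), and conclude via \(\det\tau_f\simeq\omega_{X/B}^{-1}\simeq\mathfrak D_{X/B}\) and the Kummer sequence. The only difference is cosmetic: you read off the degree-\(2\) term from the augmented rank-\(3\) polynomial \(P_{2,3}\), whereas the paper writes \(v_X=T(\beta_X)^3\,2td(\tau_f)\) and uses the formula \(2td(\xi)_2=\binom{r+1}{2}\beta_X^2+\overline c_1(\xi)\) with \(\operatorname{rk}\tau_f=0\).
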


\begin{proof}
The scheme \(X\) satisfies Artin--Verdier duality in total degree
\[
N_X=3.
\]

Let \(\tau_f\in K^0(X)\) be the virtual relative tangent class.
By~\eqref{eq:explicit_wu} and~\eqref{eq:base-wu-T-power},
\[
        v_X=T(\beta_X)^3\,2td(\tau_f).
\]
For a virtual bundle \(\xi\) of rank \(r\),
Lemma~\ref{lem:2td-line-bundle}, multiplicativity, and the
splitting principle give
\[
        2td(\xi)_1=r\beta_X,
        \qquad
        2td(\xi)_2
        =
        \binom{r+1}{2}\beta_X^2+\overline c_1(\xi).
\]
Since \(\rk(\tau_f)=0\) and \([t^2]T(t)^3=0\), it follows that
\[
        v_{X,2}=2td(\tau_f)_2=\overline c_1(\tau_f).
\]

Now let $\mathfrak D_{X/B}\subset \mathcal O_X$ be the different ideal, and write
\[
\mathfrak D_{X/B}=\mathcal O_X(-R)
\]
for the associated effective divisor $R$ on $X$. Thus, $R$ is the different divisor of $f$.

By~\cite[Tags 0BW2-5]{STACKS-PROJECT}, the different ideal sheaf is the inverse of the relative dualizing sheaf, i.e.
\[
\omega_{X/B}\cong \mathcal O_X(R).
\]
On the other hand,
\[
\det(\tau_f)\cong \omega_{X/B}^{-1},
\]
hence
\[
\overline c_1(\tau_f)
=
\overline c_1(\det \tau_f)
=
\overline c_1(\omega_{X/B}^{-1})
=
\overline c_1(\mathcal O_X(R)),
\]
the last equality because signs disappear modulo $2$.  Therefore,
\[
v_{X,2}=\overline c_1(\mathcal O_X(R)).
\]

Since \(2\cdot2>N_X=3\), Theorem~\ref{thm:gen_hecke} and
Lemma~\ref{lem:archimedean-comparison-ideal} give
\[
        v_{X,2}\in J_\infty^2(X)=0.
\]
Consequently,
\[
\overline c_1(\mathcal O_X(R))=0
\qquad\text{in }H^2_{\acute et}(X;\mathbb F_2).
\]
By the Kummer exact sequence, this is equivalent to the existence of a line bundle $M$ on $X$ such that
\[
\mathcal O_X(R)\cong M^{\otimes 2}.
\]
Equivalently, the class of the different divisor $R$ is even in $\Pic(X)$.
For the Dedekind scheme $X$, this is the usual statement that the ideal class of
the different is a square.
\end{proof}

\subsubsection{Atiyah's Theorem for Curves over finite fields}
\label{subsubsec:curve}
A classical theorem of Atiyah states that the canonical bundle on a Riemann surface admits a theta characteristic; see~\cite[\S~5, Rem.~2]{AtiyahSpin}. Atiyah's statement may be viewed as an absolute (as opposed to relative) version of Hecke's theorem in the category of Riemann surfaces, and has essentially been proven in the introduction. We now consider its finite field analog.

\begin{theorem}[cf.~{\cite[\S~5, Rem.~2]{AtiyahSpin}}]\label{thm:atiyah-spin}
Let $k=\FF_q$ with $q$ odd, and let $X/k$ be a smooth projective curve. Then the class of
$\omega_X$ in $\Pic(X)$ is a square. Equivalently, there exists a line bundle $\vartheta$ on $X$
such that
\[
\vartheta^{\otimes 2}\cong \omega_X.
\]
\end{theorem}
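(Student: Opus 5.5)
We argue in parallel with the proof of Theorem~\ref{thm:hecke-wu}, now with \(B=\Spec(k)\), \(k=\FF_q\), \(q\) odd, \(\epsilon_B=0\), and \(f\colon X\to B\) the structure morphism. We may assume \(X\) is connected: the statement is componentwise, and each component is a smooth projective curve over some finite extension of \(k\), hence still flat and projective of relative dimension \(1\) over \(B\). Then \(D_X=1\) and \(N_X=3\), so Theorem~\ref{thm:gen_hecke} (with \(J_\infty(X)=0\) in the finite-field case) gives \(v_{X,2}=0\), because \(2\cdot 2>3\).

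The next step is to identify \(v_{X,2}\). By Corollary~\ref{cor:2td-universal-polynomials}, \(v_X=2td(\tau_f)\), and \(\tau_f=[T_{X/k}]\) is a line bundle of rank \(1\). Lemma~\ref{lem:2td-line-bundle} gives \(2td(T_{X/k})=T(\beta)\,T(T(\beta)^{-2}x)\), where \(x=\overline c_1(T_{X/k})\) and \(\beta=\beta_X\). We compute its degree-\(2\) part. Since \(T(t)=1+t+t^2+t^4+\cdots\), the degree-\(2\) component is \(\beta^2+x\), up to lower-order cross terms involving \(x\) and \(\beta\), which have degree at least \(3\). Moreover, \(\beta_X\) is pulled back from \(H^1(k;\FF_2)\), and \(H^2(k;\FF_2)=0\), so \(\beta_X^2=0\). (This also matches the general formula \(2td(\xi)_2=\binom{r+1}{2}\beta^2+\overline c_1(\xi)\) with \(r=1\).) Hence
\[
        v_{X,2}=\overline c_1(T_{X/k})=\overline c_1(\omega_X),
\]
since signs disappear modulo \(2\).

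Combining the two steps gives \(\overline c_1(\omega_X)=0\) in \(H^2_{\acute et}(X;\FF_2)\). It remains to pass from this to a square root. Under the identification \(\FF_2\simeq\mu_2\), the class \(\overline c_1(\omega_X)\) is the image of \([\omega_X]\) under the Kummer boundary \(\Pic(X)\to H^2_{\acute et}(X;\mu_2)\). The Kummer sequence
\[
        \Pic(X)\xrightarrow{\ 2\ }\Pic(X)\longrightarrow H^2_{\acute et}(X;\mu_2)
\]
is exact, so \([\omega_X]\in 2\Pic(X)\), which gives a line bundle \(\vartheta\) with \(\vartheta^{\otimes2}\cong\omega_X\).

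The only step needing care is the identification of \(v_{X,2}\). One must check that the \(\beta\)-contributions really cancel: the degree-\(2\) part is \(x+\beta^2\) with \(\beta^2=0\) over a finite field, and no term such as \(\beta\cdot v_1\) survives, because the base class satisfies \(V_B=1\). Everything else is a formal application of the results already established.
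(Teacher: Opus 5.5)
Your proposal is correct and follows the paper's proof essentially step by step. You get $v_{X,2}=0$ from Theorem~\ref{thm:gen_hecke} with $N_X=3$, identify $v_{X,2}=\beta_X^2+\overline c_1(T_X)=\overline c_1(\omega_X)$ via Lemma~\ref{lem:2td-line-bundle} and $\beta_X^2=0$, and conclude with the Kummer sequence; your preliminary reduction to connected $X$ is harmless but unnecessary, since the duality and Wu formalism apply directly to $X$ of pure dimension.
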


\begin{proof}
Since $X$ satisfies Artin--Verdier duality in total degree $N_X=3$,
Theorem~\ref{thm:gen_hecke} gives
\[
v_{X,2}=0.
\]

The finite-field absolute Wu formula gives \(v_X=2td(T_X)\).
Moreover, \(\beta_X^2=0\), since \(\beta_X\) is pulled back from
\(k\) and \(H_{\acute et}^2(k;\FF_2)=0\).
Lemma~\ref{lem:2td-line-bundle} therefore gives
\[
        v_{X,2}
        =
        \beta_X^2+\overline c_1(T_X)
        =
        \overline c_1(T_X).
\]
Hence,
\[
0=v_{X,2}=\overline c_1(T_X)=\overline c_1(\omega_X),
\]
because $T_X\cong \omega_X^{-1}$ and signs disappear modulo $2$. As $2\in k^\times$, the Kummer exact sequence shows that
$\omega_X\in 2\Pic(X)$, equivalently that $\omega_X\cong \vartheta^{\otimes 2}$ for some line bundle $\vartheta$ on $X$.
\end{proof}

\subsubsection{Serre's Riemann--Hurwitz theorem for spin bundles}
\label{subsubsec:hurwitz-spin}
Let
\[
f\colon X\longrightarrow Y
\]
be a generically \'etale morphism of smooth proper curves over a finite field $k$ with $2\in k^\times$. Let $R_f$ be the effective Cartier divisor
cut out by the different ideal of $f$; equivalently, $R_f$ is the vanishing divisor of
\[
df\colon f^*\Omega_{Y/k}\longrightarrow \Omega_{X/k}.
\]
Then the Riemann--Hurwitz formula gives an isomorphism
\[
\omega_X\cong f^*\omega_Y\otimes \mathcal O_X(R_f).
\]
A classical theorem of Serre, stated for Riemann surfaces (so that tameness is automatic)
in~\cite[\S~6, formulas~(11)--(12)]{SerreTheta}, reads as follows.

\begin{theorem}[Serre's Riemann--Hurwitz theorem for spin bundles]
\label{thm:serre-hurwitz-spin}
Further assume that $f$ is tamely ramified and that every ramification index $e_x$ is odd.
Then the different divisor is even:
\[
R_f=2D_f,
\qquad
D_f=\sum_{x\in X}\frac{e_x-1}{2}[x].
\]
Hence,
\[
\omega_X\cong f^*\omega_Y\otimes \mathcal O_X(2D_f).
\]
Consequently, for every theta characteristic $\vartheta_Y$ on $Y$, the line bundle
\[
\vartheta_X:=f^*\vartheta_Y\otimes \mathcal O_X(D_f)
\]
is a theta characteristic on $X$.
Equivalently, the assignment
\[
\vartheta_Y\longmapsto f^*\vartheta_Y\otimes \mathcal O_X(D_f)
\]
defines a canonical lift of theta characteristics from $Y$ to $X$.
\end{theorem}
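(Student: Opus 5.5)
The argument has two parts. The evenness of \(R_f\) is local and elementary. The consequences for theta characteristics are formal from Riemann--Hurwitz. I would not invoke the Wu formalism for the first part, although a Wu-style cross-check is indicated at the end.

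\emph{Step 1: the different of a tamely ramified point.} Fix a closed point \(x\in X\) with image \(y=f(x)\) and ramification index \(e_x\). Tameness means \(p\nmid e_x\), where \(p=\operatorname{char}k\); note that \(p\) is odd since \(2\in k^\times\). I would pass to completions. After the unramified residue-field extension \(\kappa(y)\subset\kappa(x)\), the extension \(\widehat{\mathcal O}_{Y,y}\to\widehat{\mathcal O}_{X,x}\) is tamely totally ramified, so it has the form \(\kappa[[s]]\to\kappa[[t]]\) with \(s=u\,t^{e_x}\) for a unit \(u\). Since \(e_x\) is prime to \(p\) and \(\kappa\) is perfect, I would take an \(e_x\)-th root of \(u\) by Hensel's lemma and reduce to \(s=t^{e_x}\). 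Then \(df(ds)=e_x t^{e_x-1}\,dt\) with \(e_x\in\kappa^\times\), so the vanishing order of \(df\) at \(x\) is exactly \(e_x-1\). This is the standard computation of the tame different, and I would cite it as such (for example Serre, \emph{Corps locaux}, III, Prop.~13, or the Stacks project) rather than redo it. It follows that
\[
R_f=\sum_{x\in X}(e_x-1)[x]=2D_f,
\]
since each \(e_x\) is odd and only finitely many \(x\) ramify, because \(f\) is generically \'etale.

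\emph{Step 2: the formal consequences.} I would substitute \(R_f=2D_f\) into the Riemann--Hurwitz isomorphism \(\omega_X\cong f^*\omega_Y\otimes\mathcal O_X(R_f)\) to obtain \(\omega_X\cong f^*\omega_Y\otimes\mathcal O_X(2D_f)\). Given \(\vartheta_Y\) with \(\vartheta_Y^{\otimes2}\cong\omega_Y\), pullback commutes with tensor powers, so
\[
\vartheta_X^{\otimes2}\cong f^*\vartheta_Y^{\otimes 2}\otimes\mathcal O_X(2D_f)\cong f^*\omega_Y\otimes\mathcal O_X(2D_f)\cong\omega_X.
\]
The assignment \(\vartheta_Y\mapsto\vartheta_X\) depends only on \(f\) through the canonical divisor \(D_f\), which gives canonicity. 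Theorem~\ref{thm:atiyah-spin} guarantees that theta characteristics on \(Y\) exist, so the lift is not vacuous.

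\emph{Main obstacle.} The only substantive step is the local identification of the different exponent with \(e_x-1\). It requires some care with the residue-field extension and with the tameness hypothesis, since in the wild case the exponent exceeds \(e_x-1\) and can be odd.

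As an optional cross-check in the paper's framework, I would compare the mod-\(2\) Wu relation \(\overline c_1(\omega_X)=0\) of Theorem~\ref{thm:atiyah-spin} with \(f^*\overline c_1(\omega_Y)=0\). This gives \(\overline c_1(\mathcal O_X(R_f))=0\), a weaker statement than evenness of the divisor itself, which is why the local argument in Step 1 is needed.
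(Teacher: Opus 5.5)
Your proposal is correct and follows the paper's proof. The paper likewise cites the standard tame different formula $R_f=\sum_x(e_x-1)[x]$ (via the Stacks Project), then substitutes $R_f=2D_f$ into Riemann--Hurwitz to get $\vartheta_X^{\otimes 2}\cong\omega_X$.

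One small slip in your local sketch: Hensel's lemma only gives an $e_x$-th root of $u/u_0$, not of $u$. Here $u_0\in\kappa^\times$ is the constant term, and it need not be an $e_x$-th power in the finite residue field; perfectness does not help, since $p\nmid e_x$. You do not actually need the normalization $s=t^{e_x}$: the identity $ds=t^{e_x-1}(e_xu+tu')\,dt$, with $e_xu+tu'$ a unit, already gives vanishing order exactly $e_x-1$.
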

\begin{remark}
Serre moreover proves a parity formula for the induced theta characteristic; see~\cite[\S~6, Théorème~3, Formula~14]{SerreTheta}. We shall not consider that refinement here. Also related, albeit in a somewhat different direction, Esnault--Kahn--Viehweg~\cite{EsnaultKahnViehwegOddRam} compute the Hasse--Witt classes
of the associated pushforward bundle, $E=f_*\mathcal{O}_X(D_f)$, for a general tame cover of Dedekind schemes.
\end{remark}
\begin{proof}
Tameness and the odd ramification indices give
\[
        R_f=\sum_{x\in X}(e_x-1)[x]=2D_f;
\]
see~\cite[Tag~0C1F]{STACKS-PROJECT}.
For a theta characteristic \(\vartheta_Y\), put
\(\vartheta_X:=f^*\vartheta_Y\otimes\mathcal O_X(D_f)\).
Riemann--Hurwitz then gives
\[
        \vartheta_X^{\otimes2}
        \cong
        f^*\omega_Y\otimes\mathcal O_X(2D_f)
        \cong
        \omega_X.
\]
\end{proof}

\subsubsection{The Shusterman--Sawin topological Hecke theorem}
\label{subsubsec:ss}
Sawin and the second author prove a topological analog of Theorem~\ref{thm:hecke-wu} for closed topological $3$-folds.
To state their analog, we require the notion of smooth branched covers, in the sense of Viro.

\begin{definition}[Smooth branched covering, cf.~{\cite[\S~1.1]{ViroBranchedCovering}}]
\label{def:viro-smooth-branched-cover}
Let \(X\) and \(Y\) be smooth \(n\)-folds, and let
\[
P\colon Y\to X
\]
be a surjective smooth map. We say that \(P\) is a \emph{smooth branched covering} if every point \(x\in X\) admits an open neighborhood \(U\subseteq X\) such that
\[
P^{-1}(U)=\coprod_{\alpha} V_\alpha
\]
is a disjoint union of open subsets \(V_\alpha\subseteq Y\), and for each \(\alpha\) the restricted map
\[
P|_{V_\alpha}\colon V_\alpha\to U
\]
is either a diffeomorphism, or there exist local coordinates identifying \(U\) and \(V_\alpha\) with open subsets of \(\mathbb R^{n-2}\times \mathbb C\) in which
\(P|_{V_\alpha}\) is given by
\[
(u,z)\longmapsto (u,z^m)
\]
for some integer \(m\ge 2\).
\end{definition}

\begin{theorem}[Generalized Hecke for smooth branched covers of $3$-folds~{\cite[Thm.~1.1]{SawinShusterman}}]
\label{thm:sawin-shusterman}
Let
\[
f\colon M\longrightarrow N
\]
be a smooth branched cover of closed smooth \(3\)-folds (in the sense of Viro), branched over a link \(L\subset N\), then the associated branch divisor is trivial in
\(H_1(M;\mathbb F_2)\).
\end{theorem}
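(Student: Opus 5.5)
Since the abstract says the paper recovers the Shusterman--Sawin theorem via the topological Wu formula, the plan is to run the same pattern as the proof of Theorem~\ref{thm:hecke-wu}, with Wu's theorem on the closed \(3\)-manifolds \(M\) and \(N\) in place of the arithmetic Wu formula. After passing to orientation double covers if needed, or more simply by using that the vanishing of \(v_2\) is automatic, we note the key fact for closed \(3\)-manifolds: top-half vanishing gives \(v_{M,2}=0\), so Wu's formula \(w_M=\Sq(v_M)\) yields \(w_2(M)=v_1^2=w_1(M)^2\), and similarly on \(N\). In particular \(w_2+w_1^2=0\) on every closed \(3\)-manifold; this plays the role of the Hecke congruence.

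Next we compare tangent bundles along the branched cover. Away from the branch locus \(\widetilde L:=f^{-1}(L)\), the map \(df\colon TM\to f^*TN\) is an isomorphism. Near a component of \(\widetilde L\) with local model \((u,z)\mapsto(u,z^m)\), the differential is the identity on the \(u\)-direction and, on the normal \(\mathbf C\)-line, multiplication by \(m z^{m-1}\). Hence \(df\) exhibits \(f^*TN\cong TM\otimes\) (a twist supported near \(\widetilde L\)). Concretely, \(TM\) and \(f^*TN\) differ by modifying the normal complex line bundle \(\nu\) of each component \(K\) of \(\widetilde L\) by \(\nu^{\otimes(m_K-1)}\). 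In mod-\(2\) terms, this should give \(w_1(TM)=f^*w_1(TN)\), and
\[
w_2(TM)+f^*w_2(TN)=\mathrm{PD}\Bigl(\sum_K (m_K-1)[K]\Bigr)\in H^2(M;\FF_2),
\]
with the right-hand side equal to the Poincaré dual of the branch divisor mod \(2\). I would establish this by a clutching, or Thom-class, computation: the two bundles agree off a tubular neighbourhood, so their difference class lies in the image of \(H^2(M,M\setminus\widetilde L)\cong H^0(\widetilde L)\), which is computed locally by the degree \(m_K-1\) of the clutching map \(z\mapsto z^{m_K-1}\) on the normal circle.

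To conclude, combine the two steps. We have \(f^*(w_2(N)+w_1(N)^2)=0\) and \(w_2(M)+w_1(M)^2=0\), while \(w_1(M)^2=f^*w_1(N)^2\). Taking the difference gives \(\mathrm{PD}[\text{branch divisor}]=0\), hence triviality in \(H_1(M;\FF_2)\).

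The main obstacle is the second step: making the local comparison of \(TM\) with \(f^*TN\) rigorous. This requires showing that the difference of \(w_2\) is exactly the mod-\(2\) dual of \(\sum(m_K-1)K\), with correct orientation and framing conventions on the normal bundles, including possibly nonorientable normal lines when \(M\) is nonorientable. I would first treat the orientable case via the normal \(\mathbf C\)-line structure, then reduce the general case using the local nature of the difference class.
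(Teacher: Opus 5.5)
Your proposal is correct and follows essentially the same route as the paper: instability plus Wu's formula gives $w_2=w_1^2$ on closed $3$-manifolds. Next, $w_1(TM)=f^*w_1(TN)$ holds because $H^1(M,M\setminus\widetilde L)\cong H^{-1}(\widetilde L)=0$; this is the justification behind your ``should give'', and you should state it. Finally, the class $w_2(TM)+f^*w_2(TN)$, which equals $w_2(\tau_f)$ once the $w_1$'s agree, is localized through the canonical relative class determined by $df$ in $H^2(M,M\setminus\widetilde L)\cong\bigoplus_K\FF_2$ and evaluated on a meridional disk via the clutching map $\zeta\mapsto e\zeta^{e-1}$. The only difference is the order: the paper first shows $w_2(\tau_f)=0$ from Wu and then localizes, whereas you localize first and invoke Wu at the end.
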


More precisely, let \(\widetilde L:=f^{-1}(L)\), and for each connected
component \(\widetilde K\subset \widetilde L\) let \(e_{\widetilde K}\geq 1\)
denote the ramification index of \(f\) along \(\widetilde K\).  Since we work
modulo \(2\), no choice of orientations is needed, and one may define the
branch divisor class
\[
D_f:=\sum_{\widetilde K\subset \widetilde L}
(e_{\widetilde K}-1)[\widetilde K]
\in H_1(M;\mathbb F_2).
\]
Then~\cite[Thm.~1.1]{SawinShusterman} asserts that
\[
D_f=0\qquad\text{in }H_1(M;\mathbb F_2).
\]

\begin{remark}
The proof of Theorem~\ref{thm:sawin-shusterman} is completely different from the one below. 
It pairs \(D_f\) with arbitrary classes in \(H^1(M;\mathbb F_2)\), 
passes to the corresponding double covers, and then analyzes the resulting monodromy by means 
of a central extension of the hyperoctahedral group; see~\cite[\S\S 2--3]{SawinShusterman}.
\end{remark}

We now demonstrate how Theorem~\ref{thm:sawin-shusterman} follows from the topological Wu formula.
\begin{proof}
Write \(\widetilde L:=f^{-1}(L)\) and \(U:=M\setminus \widetilde L\). For each connected
component \(\widetilde K\subset \widetilde L\), let \(e_{\widetilde K}\geq 1\) denote the
ramification index of \(f\) along \(\widetilde K\). Since \(f\) is a smooth branched cover
in the sense of Definition~\ref{def:viro-smooth-branched-cover}, the ramification index is
locally constant on the branch locus, hence constant on each connected component
\(\widetilde K\).

All cohomology groups below are taken with coefficients in \(\mathbb F_2\), unless
explicitly indicated otherwise.

Let
\[
\tau_f:=[T_M]-f^*[T_N]\in KO^0(M)
\]
be the virtual relative tangent bundle. By definition,
\[
w(\tau_f)=w(T_M)\,w(f^*T_N)^{-1}.
\]

We first prove that \(w_2(\tau_f)=0\). For a closed smooth
\(3\)-manifold \(X\), instability gives \(v_{X,2}=0\).
The Wu formula
\[
w(T_X)=\Sq(v_X)
\]
therefore yields
\[
w_1(T_X)=v_{X,1},
\qquad
w_2(T_X)=v_{X,2}+\Sq^1(v_{X,1})=\Sq^1(v_{X,1})=v_{X,1}^2=w_1(T_X)^2.
\]
Thus, for every closed smooth \(3\)-manifold \(X\),
\begin{equation}\label{eq:w2-w1-square}
w_2(T_X)=w_1(T_X)^2.
\end{equation}

Next we claim that
\[
w_1(T_M)=f^*w_1(T_N)\qquad\text{in }H^1(M).
\]
Indeed, on \(U=M\setminus \widetilde L\) the differential is an isomorphism
\[
df\colon T_M|_U \xrightarrow{\sim} f^*T_N|_U,
\]
so the two classes agree after restriction to \(U\).

Let \(\nu(\widetilde L)\) be a tubular neighborhood of \(\widetilde L\) in \(M\). By
excision and the Thom isomorphism for the normal \(2\)-plane bundle of
\(\widetilde L\subset M\),
\[
H^1(M;U)
\cong
H^1\!\bigl(\nu(\widetilde L);\nu(\widetilde L)\setminus \widetilde L\bigr)
\cong
H^{-1}(\widetilde L)
=0.
\]
Hence, the restriction map
\[
H^1(M)\longrightarrow H^1(U)
\]
is injective, and the claim follows. In particular,
\[
w_1(\tau_f)=w_1(T_M)+f^*w_1(T_N)=0.
\]

Now set
\[
x:=f^*w_1(T_N),\qquad y:=f^*w_2(T_N).
\]
Up to degree \(2\), one has
\[
w(f^*T_N)=1+x+y,
\qquad
w(f^*T_N)^{-1}=1+x+x^2+y,
\]
since
\[
(1+x+y)(1+x+x^2+y)=1
\]
modulo terms of degree \(\ge 3\). Therefore,
\[
w_2(\tau_f)=w_2(T_M)+w_1(T_M)x+x^2+y.
\]
Using \(w_1(T_M)=x\) and~\eqref{eq:w2-w1-square} for \(X=M\) and \(X=N\), we obtain
\[
w_2(\tau_f)=x^2+x^2+x^2+x^2=0.
\]

We now compute the same class \(w_2(\tau_f)\) by localizing it along the ramification
link \(\widetilde L\).

Since \(df\) is an isomorphism over \(U\), it gives a stable trivialization of
\(\tau_f|_U\). Let
\[
w_2^{\mathrm{rel}}(\tau_f,df)\in H^2(M;U)
\]
denote the corresponding relative second Stiefel--Whitney class. By construction, its
image under the natural map
\[
H^2(M;U)\longrightarrow H^2(M)
\]
is the absolute class \(w_2(\tau_f)\).

Choose pairwise disjoint tubular neighborhoods \(T_{\widetilde K}\) of the connected
components \(\widetilde K\subset \widetilde L\). Excision gives
\[
H^2(M;U)
\cong
\bigoplus_{\widetilde K\subset \widetilde L}
H^2(T_{\widetilde K};T_{\widetilde K}\setminus \widetilde K).
\]
For each \(\widetilde K\), let
\[
u_{\widetilde K}\in H^2(T_{\widetilde K};T_{\widetilde K}\setminus \widetilde K)
\]
be the Thom class of the normal bundle of \(\widetilde K\subset M\). Since
\(\widetilde K\) is connected, the Thom isomorphism identifies
\[
H^2(T_{\widetilde K};T_{\widetilde K}\setminus \widetilde K)
\cong
H^0(\widetilde K)\cong \mathbb F_2,
\]
with generator \(u_{\widetilde K}\). Hence, there are unique coefficients
\(a_{\widetilde K}\in \mathbb F_2\) such that
\[
w_2^{\mathrm{rel}}(\tau_f,df)
=
\sum_{\widetilde K\subset \widetilde L}
a_{\widetilde K}\,u_{\widetilde K}.
\]
Under the map \(H^2(M;U)\to H^2(M)\), the class \(u_{\widetilde K}\) maps to
\(\mathrm{PD}([\widetilde K])\). Consequently,
\begin{equation}\label{eq:w2-sum-ak}
w_2(\tau_f)
=
\sum_{\widetilde K\subset \widetilde L}
a_{\widetilde K}\,\mathrm{PD}([\widetilde K]).
\end{equation}

It remains to determine \(a_{\widetilde K}\) for a fixed component
\(\widetilde K\subset \widetilde L\).

Fix a point \(p\in \widetilde K\), and let
\[
\iota_p\colon (\Delta,\partial\Delta)\hookrightarrow
(T_{\widetilde K},T_{\widetilde K}\setminus \widetilde K)
\]
be the inclusion of a meridional disk, i.e.\ of a fiber disk of the normal bundle over
\(p\). By the defining property of the Thom class,
\[
\iota_p^*(u_{\widetilde K})
\]
is the generator of
\[
H^2(\Delta;\partial\Delta)\cong \mathbb F_2.
\]
Since both source and target are \(1\)-dimensional over \(\mathbb F_2\), the map
\[
\iota_p^*\colon
H^2(T_{\widetilde K};T_{\widetilde K}\setminus \widetilde K)
\longrightarrow
H^2(\Delta;\partial\Delta)
\]
is an isomorphism. Therefore, \(a_{\widetilde K}\) is exactly the image of
\(w_2^{\mathrm{rel}}(\tau_f,df)\) in \(H^2(\Delta;\partial\Delta)\).

By Definition~\ref{def:viro-smooth-branched-cover}, after shrinking neighborhoods of
\(p\) and \(f(p)\), there exist local coordinates
\[
(-\varepsilon,\varepsilon)\times D \subset \mathbf R\times \mathbf C
\]
centered at \(p\) and \(f(p)\) in which \(\widetilde K\) and \(L\) are given by
\((-\varepsilon,\varepsilon)\times\{0\}\), and \(f\) is given by
\[
f(u,z)=(u,z^{e_{\widetilde K}}).
\]
Take \(\Delta=\{0\}\times D\). Over \(\Delta\), the tangent bundles split as
\[
T_M|_\Delta \cong \varepsilon^1\oplus T_\Delta,
\qquad
f^*T_N|_\Delta \cong \varepsilon^1\oplus (f|_\Delta)^*T_\Delta,
\]
where the first summand is the tangent line in the \(u\)-direction. On
\(\Delta^\times:=\Delta\setminus\{0\}\), the isomorphism \(df\) is the identity on the
first summand and the differential of \(z\mapsto z^{e_{\widetilde K}}\) on the second.
By stability of relative Stiefel--Whitney classes, adjoining
the common trivial line \(\varepsilon^1\) does not affect the
degree-\(2\) class. Thus, the restriction of
\(w_2^{\mathrm{rel}}(\tau_f,df)\) to \(H^2(\Delta;\partial\Delta)\) is the relative
second Stiefel--Whitney class of the rank-\(2\) datum
\[
\bigl(T_\Delta,(f|_\Delta)^*T_\Delta,df|_{\partial\Delta}\bigr).
\]

Collapsing \(\partial\Delta\) to a point identifies \(H^2(\Delta;\partial\Delta)\) with
\(H^2(S^2)\). Under this identification, the preceding relative class becomes the ordinary
\(w_2\) of the oriented real \(2\)-plane bundle \(\xi_{\widetilde K}\to S^2\) obtained
by clutching the two trivial bundles \(T_\Delta\) and \((f|_\Delta)^*T_\Delta\) along
\(\partial\Delta=S^1\) via the boundary isomorphism \(df\).

Using the complex coordinate \(z\) to orient and trivialize both bundles, the clutching
map is
\[
S^1\longrightarrow \mathbf C^\times,
\qquad
\zeta\longmapsto e_{\widetilde K}\zeta^{\,e_{\widetilde K}-1},
\]
because
\[
d(z^{e_{\widetilde K}})=e_{\widetilde K}z^{e_{\widetilde K}-1}dz.
\]
Since multiplication by the positive real number \(e_{\widetilde K}\) is homotopic to the
identity in \(\mathbf C^\times\), this loop is homotopic to
\[
\zeta\longmapsto \zeta^{\,e_{\widetilde K}-1}.
\]
Hence, \(\xi_{\widetilde K}\) is the underlying real bundle of the complex line bundle on
\(S^2\cong \mathbf{CP}^1\) with clutching function
\(\zeta\mapsto \zeta^{\,e_{\widetilde K}-1}\). Its first Chern class is therefore
\((e_{\widetilde K}-1)\) times the positive generator of \(H^2(S^2;\mathbf Z)\), and so
\[
w_2(\xi_{\widetilde K})
\equiv e_{\widetilde K}-1 \pmod 2
\]
in \(H^2(S^2)\cong \mathbb F_2\). It follows that
\[
a_{\widetilde K}=e_{\widetilde K}-1\in \mathbb F_2.
\]

Substituting this into~\eqref{eq:w2-sum-ak}, we obtain
\[
w_2(\tau_f)
=
\sum_{\widetilde K\subset \widetilde L}
(e_{\widetilde K}-1)\,\mathrm{PD}([\widetilde K])
=
\mathrm{PD}\!\left(
\sum_{\widetilde K\subset \widetilde L}
(e_{\widetilde K}-1)[\widetilde K]
\right)
=
\mathrm{PD}(D_f).
\]
Since we already proved that \(w_2(\tau_f)=0\), we conclude that
\[
\mathrm{PD}(D_f)=0.
\]
By Poincar\'e duality over \(\mathbb F_2\), this implies
\[
D_f=0\qquad\text{in }H_1(M;\mathbb F_2).
\]
\end{proof}

Thus, the theorem of Sawin--Shusterman is the smooth-\(3\)-manifold analog of Hecke's theorem.

\subsubsection{Untwisted \texorpdfstring{$d$}{d}-folds}
\label{subsubsec:untwisted-d-folds}
Let $f\colon X\to B$ be a smooth projective morphism pure of relative dimension \(d\), where \(B\) is either the spectrum of a finite field or the spectrum of a ring of \(S\)-integers \(\mathcal O_{K,S}\). Assume that $2\in \Gamma(B,\mathcal O_B^\times)$, and that $\beta_X = 0$, which holds, e.g.~when $\mu_4 \subset \Gamma(B,\mathcal O_B^\times)$. Our aim is to make explicit the resulting congruences between the mod-\(2\) Chern classes of \(T_{X/B}\), generalizing the low dimensional examples explored in the previous subsections to higher dimensions.

By Remark~\ref{rem:untwisted-case} and Theorem~\ref{thm:etale-2td-splitting}, if
\[
x_1,\dots,x_d
\]
denote the Chern roots of the class \([T_{X/B}]\), then
\[
2td([T_{X/B}])=\prod_{i=1}^d T(x_i),
\qquad
T(z)=1+\sum_{k\ge 0} z^{2^k}.
\]
Write
\[
\overline c_i:=c_i(T_{X/B}) \pmod 2,
\qquad
2td_{B,m}(X):=2td([T_{X/B}])_m.
\]
Expanding \(\prod_{i=1}^d T(x_i)\) and rewriting the result in terms of the mod-\(2\) Chern classes, one finds, up to cohomological degree \(16\),
\begin{align*}
2td_{B,6}(X)
&=
\overline c_1\,\overline c_2,\\
2td_{B,8}(X)
&=
\overline c_1^4+\overline c_1\,\overline c_3
+\overline c_2^2+\overline c_4,\\
2td_{B,10}(X)
&=
\overline c_1^3\,\overline c_2+\overline c_1^2\,\overline c_3
+\overline c_1\,\overline c_2^2+\overline c_1\,\overline c_4,\\
2td_{B,12}(X)
&=
\overline c_1^2\,\overline c_2^2+\overline c_1^3\,\overline c_3
+\overline c_1\,\overline c_2\,\overline c_3+\overline c_3^2
+\overline c_1^2\,\overline c_4+\overline c_2\,\overline c_4,\\
2td_{B,14}(X)
&=
\overline c_1^2\,\overline c_2\,\overline c_3
+\overline c_1\,\overline c_3^2
+\overline c_1\,\overline c_2\,\overline c_4,\\
2td_{B,16}(X)
&=
\overline c_1^8+\overline c_2^4+\overline c_1^2\,\overline c_3^2
+\overline c_1^2\,\overline c_2\,\overline c_4
+\overline c_1\,\overline c_3\,\overline c_4+\overline c_4^2\\
&\qquad
+\overline c_1^3\,\overline c_5+\overline c_3\,\overline c_5
+\overline c_1^2\,\overline c_6+\overline c_2\,\overline c_6
+\overline c_1\,\overline c_7+\overline c_8.
\end{align*}

Since \(\beta_X=0\),
Corollary~\ref{cor:2td-universal-polynomials} gives
\(v_X=2td(T_{X/B})\), and
Lemma~\ref{lem:archimedean-comparison-ideal} gives
\(J_\infty(X)=0\) in the arithmetic case.
Theorem~\ref{thm:gen_hecke} therefore implies vanishing from
cohomological degree \(d+1\) onward over a finite field, and
from degree \(d+2\) onward over \(\Spec(\mathcal O_{K,S})\).
Only even degrees occur, since \(\deg(x_i)=2\).
We record the resulting relations for \(4\leq d\leq8\).

\begin{example}[Untwisted fourfolds]
Assume \(d=4\). Then the vanishing range starts in cohomological degree \(5\) when \(B\) is the spectrum of a finite field, and in degree \(6\) when \(B=\Spec \mathcal O_{K,S}\). Hence, in either case
\[
\overline c_1\,\overline c_2=0,
\qquad
\overline c_1^4+\overline c_1\,\overline c_3
+\overline c_2^2+\overline c_4=0.
\]
\end{example}

\begin{example}[Untwisted fivefolds]
Assume \(d=5\). Over a finite field one obtains
\begin{align*}
\overline c_1\,\overline c_2&=0,\\
\overline c_1^4+\overline c_1\,\overline c_3
+\overline c_2^2+\overline c_4&=0,\\
\overline c_1^3\,\overline c_2+\overline c_1^2\,\overline c_3
+\overline c_1\,\overline c_2^2+\overline c_1\,\overline c_4&=0.
\end{align*}
Over \(B=\Spec \mathcal O_{K,S}\) one obtains
\begin{align*}
\overline c_1^4+\overline c_1\,\overline c_3
+\overline c_2^2+\overline c_4&=0,\\
\overline c_1^3\,\overline c_2+\overline c_1^2\,\overline c_3
+\overline c_1\,\overline c_2^2+\overline c_1\,\overline c_4&=0.
\end{align*}
\end{example}

\begin{example}[Untwisted sixfolds]
Assume \(d=6\). Then the vanishing range starts in cohomological degree \(7\) when \(B\) is the spectrum of a finite field, and in degree \(8\) when \(B=\Spec \mathcal O_{K,S}\). Since only even degrees occur, in either case one obtains
\begin{align*}
\overline c_1^4+\overline c_1\,\overline c_3
+\overline c_2^2+\overline c_4&=0,\\
\overline c_1^3\,\overline c_2+\overline c_1^2\,\overline c_3
+\overline c_1\,\overline c_2^2+\overline c_1\,\overline c_4&=0,\\
\overline c_1^2\,\overline c_2^2+\overline c_1^3\,\overline c_3
+\overline c_1\,\overline c_2\,\overline c_3+\overline c_3^2
+\overline c_1^2\,\overline c_4+\overline c_2\,\overline c_4&=0.
\end{align*}
\end{example}

\begin{example}[Untwisted sevenfolds]
Assume \(d=7\). Over a finite field one obtains
\begin{align*}
\overline c_1^4+\overline c_1\,\overline c_3
+\overline c_2^2+\overline c_4&=0,\\
\overline c_1^3\,\overline c_2+\overline c_1^2\,\overline c_3
+\overline c_1\,\overline c_2^2+\overline c_1\,\overline c_4&=0,\\
\overline c_1^2\,\overline c_2^2+\overline c_1^3\,\overline c_3
+\overline c_1\,\overline c_2\,\overline c_3+\overline c_3^2
+\overline c_1^2\,\overline c_4+\overline c_2\,\overline c_4&=0,\\
\overline c_1^2\,\overline c_2\,\overline c_3
+\overline c_1\,\overline c_3^2
+\overline c_1\,\overline c_2\,\overline c_4&=0.
\end{align*}
Over \(B=\Spec \mathcal O_{K,S}\) one obtains
\begin{align*}
\overline c_1^3\,\overline c_2+\overline c_1^2\,\overline c_3
+\overline c_1\,\overline c_2^2+\overline c_1\,\overline c_4&=0,\\
\overline c_1^2\,\overline c_2^2+\overline c_1^3\,\overline c_3
+\overline c_1\,\overline c_2\,\overline c_3+\overline c_3^2
+\overline c_1^2\,\overline c_4+\overline c_2\,\overline c_4&=0,\\
\overline c_1^2\,\overline c_2\,\overline c_3
+\overline c_1\,\overline c_3^2
+\overline c_1\,\overline c_2\,\overline c_4&=0.
\end{align*}
\end{example}

\begin{example}[Untwisted eightfolds]
Assume \(d=8\). Then the vanishing range starts in cohomological degree \(9\) when \(B\) is the spectrum of a finite field, and in degree \(10\) when \(B=\Spec \mathcal O_{K,S}\). Since only even degrees occur, in either case one obtains
\begin{align*}
\overline c_1^3\,\overline c_2+\overline c_1^2\,\overline c_3
+\overline c_1\,\overline c_2^2+\overline c_1\,\overline c_4&=0,\\
\overline c_1^2\,\overline c_2^2+\overline c_1^3\,\overline c_3
+\overline c_1\,\overline c_2\,\overline c_3+\overline c_3^2
+\overline c_1^2\,\overline c_4+\overline c_2\,\overline c_4&=0,\\
\overline c_1^2\,\overline c_2\,\overline c_3
+\overline c_1\,\overline c_3^2
+\overline c_1\,\overline c_2\,\overline c_4&=0,\\
\overline c_1^8+\overline c_2^4+\overline c_1^2\,\overline c_3^2
+\overline c_1^2\,\overline c_2\,\overline c_4
+\overline c_1\,\overline c_3\,\overline c_4+\overline c_4^2&\\
\qquad
+\overline c_1^3\,\overline c_5+\overline c_3\,\overline c_5
+\overline c_1^2\,\overline c_6+\overline c_2\,\overline c_6
+\overline c_1\,\overline c_7+\overline c_8&=0.
\end{align*}
\end{example}

\subsubsection{Real-place defects}
\label{sssec:real-place-defects}
We illustrate Proposition~\ref{prop:real-place-wu-classes}
over \(B=\Spec(\mathcal O_{K,S})\).

\begin{example}[Relative curves]
Suppose that \(f\colon X\to B\) has relative dimension \(1\).
Each nonempty real locus \(M_\nu\) is a disjoint union of circles,
so its tangent bundle is trivial. Hence
\[
        \operatorname{res}_\nu(v_X)
        =
        T(\beta_\nu)^4
        =
        1+\sum_{s\geq2}\beta_\nu^{2^s}.
\]
For \(m\geq5\), these restrictions determine \(v_{X,m}\)
uniquely by~\eqref{eq:real-place-high-degree}.
\end{example}

\begin{example}[The projective plane]
Let \(X=\mathbf P_B^2\), and suppose that \(K\) has a real place
\(\nu\). Then \(M_\nu=\mathbf{RP}^2\), with
\[
        H^*(M_\nu;\FF_2)=\FF_2[a]/(a^3),
        \qquad
        w(TM_\nu)=1+a+a^2,
        \qquad u_\nu=1+a,
\]
where \(\deg(a)=1\).
Since \(\Sq(a)=a+a^2\) and \(\Sq(a^2)=a^2\), one has
\[
        \Sq^{-1}(a)=a+a^2,
        \qquad
        \Sq^{-1}(a^2)=a^2.
\]
Thus,~\eqref{eq:real-place-wu-class} gives
\begin{align*}
        \operatorname{res}_\nu(v_X)
        &=
        (1+a)\bigl(
                T(\beta_\nu)^5
                +(a+a^2)T(\beta_\nu)^4
                +a^2T(\beta_\nu)^3
        \bigr)
        \\
        &=
        T(\beta_\nu)^5
        +a\bigl(T(\beta_\nu)^5+T(\beta_\nu)^4\bigr)
        +a^2T(\beta_\nu)^3
        \\
        &=
        T(\beta_\nu)^5
        +(\beta_\nu a+a^2)T(\beta_\nu)^3,
\end{align*}
using \(a^3=0\) and \(T(t)^2+T(t)=t\).
Lemma~\ref{lem:T-power-coefficients} gives, in \(\FF_2\),
\[
        [t^{11}]T(t)^5=\binom{17}{11}=0,
        \qquad
        [t^9]T(t)^3=\binom{15}{9}=1.
\]
Consequently,
\[
        \operatorname{res}_\nu(v_{X,11})
        =
        \beta_\nu^{10}a+\beta_\nu^9a^2
        \neq0.
\]
This class restricts to zero at every individual real point,
but is detected by the positive-degree cohomology of the real locus.
\end{example}

\subsubsection{The function-field semicharacteristic defect}
\label{sssec:function-field-semicharacteristic-defect}
We conclude with a function-field analog of the semicharacteristic
formula of Lusztig--Milnor--Peterson~\cite{LusztigMilnorPetersonSemiCharacteristics}. Classically, if \(M^{4d+1}\) is an
orientable closed topological manifold, Lusztig, Milnor, and Peterson
show that the defect between the mod-\(2\) and rational semicharacteristics
is computed via the Wu characteristic number
\[
   \chi_{1/2}(M;\mathbb F_2)-\chi_{1/2}(M;\mathbf Q)
   =
   \bigl\langle v_{2d}(M)\,\Sq^1v_{2d}(M),[M]\bigr\rangle.
\]

Throughout, let $k=\mathbb F_q$, $q$ odd, and let \(X/k\) be smooth, 
projective, and geometrically connected of dimension
\(2d\). All cohomology groups are ordinary \'etale cohomology groups.  We write
\[
        H^i(X;\mathbf Z_2(d))
        :=
        \varprojlim_n H^i(X;\mathbf Z/2^n(d)),
        \qquad
        H^i(X;\mathbf Q_2(d))
        :=
        H^i(X;\mathbf Z_2(d))\otimes_{\mathbf Z_2}\mathbf Q_2.
\]
We also use
\[
        H^i(X;\mathbf Q_2/\mathbf Z_2(d))
        :=
        \varinjlim_n H^i(X;\mathbf Z/2^n(d)).
\]
Let
\[
        \int_X:
        H^{4d+1}(X;\mathbf Q_2/\mathbf Z_2(2d))
        \longrightarrow
        \mathbf Q_2/\mathbf Z_2
\]
denote the trace map furnished by Poincar\'e duality.
We use the same notation for the finite-coefficient trace
\[
        \int_X\colon
        H^{4d+1}(X;\FF_2)\longrightarrow\FF_2,
\]
using the standing coefficient identifications.
These traces are compatible with the coefficient inclusion
\(\FF_2\hookrightarrow\mathbf Q_2/\mathbf Z_2\),
\(1\mapsto\tfrac12\).

\begin{definition}[The semicharacteristics]
\label{def:function-field-semicharacteristics}
For a field \(F\in \{\mathbb F_2, \mathbf Q_2\}\), define the semicharacteristic by
\[
        \chi_{1/2}(X;F)
        :=
        \sum_{i=0}^{2d}
        \dim_F H^i_{\mathrm{\acute et}}(X;F(d))
        \pmod 2.
\]
\end{definition}

\begin{definition}[The semicharacteristic defect]
\label{def:function-field-semicharacteristic-defect}
The \(2\)-adic semicharacteristic defect is
\[
        \Delta_2(X)
        :=
        \chi_{1/2}(X;\mathbb F_2)
        -
        \chi_{1/2}(X;\mathbf Q_2)
        \in \mathbb F_2.
\]
\end{definition}

We are now ready to state our function-field analog of the Lusztig--Milnor--Peterson formula.
\begin{theorem}[The function-field Lusztig--Milnor--Peterson formula]\label{thm:ff-lmp-formula}
With \(\delta_d\) the twisted Bockstein operator of
Lemma~\ref{lem:benoist-bockstein}, and \(v_{2d}(X)\) the middle
Wu class of \(X\), one has
\[
        \Delta_2(X)
        =
        \int_X v_{2d}(X)\,\delta_d(v_{2d}(X)).
\]
\end{theorem}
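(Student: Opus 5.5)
The plan is to follow the Lusztig--Milnor--Peterson argument closely. Poincar\'e duality over $k$ in total degree $N_X=4d+1$, with coefficients $\mathbf Z/2^n(2d)$, replaces the topological duality, and the twisted Bockstein $\delta_d$ of Lemma~\ref{lem:benoist-bockstein} replaces $\Sq^1$. The reason for using $\delta_d$ is that it is the connecting map of $0\to\mu_2^{\otimes d}\to\mu_4^{\otimes d}\to\mu_2^{\otimes d}\to0$, and this sequence is what governs the $2$-adic torsion in $H^*(X;\mathbf Z_2(d))$.

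\emph{Step 1: reduce the defect to torsion counting.} The groups $H^i(X;\mathbf Z_2(d))$ are finitely generated $\mathbf Z_2$-modules. Write $t_i$ for the number of cyclic summands of their torsion. The universal coefficient sequence for $\mathbf Z_2(d)\xrightarrow{2}\mathbf Z_2(d)\to\FF_2(d)$ gives
\[
\dim H^i(X;\FF_2(d))=\operatorname{rk}H^i(X;\mathbf Z_2(d))+t_i+t_{i+1}.
\]
Summing over $0\le i\le 2d$ and using $t_0=0$ yields $\Delta_2(X)\equiv t_{2d+1}\pmod 2$. So the theorem amounts to computing the parity of the number of cyclic summands of $G:=H^{2d+1}(X;\mathbf Z_2(d))_{\mathrm{tors}}$.

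\emph{Step 2: set up the linking form.} Compose the Bockstein $H^{2d}(X;\mathbf Q_2/\mathbf Z_2(d))\to G$ with cup product and $\int_X$. This gives a nondegenerate linking pairing
\[
L\colon G\times G\to\mathbf Q_2/\mathbf Z_2 .
\]
It is $(-1)^{(2d+1)^2}$-symmetric, i.e.\ antisymmetric. By the standard classification of antisymmetric nondegenerate forms on finite $2$-groups, the number of cyclic summands of $G$ is even if and only if $L$ is alternating. Moreover, $x\mapsto L(x,x)$ is a homomorphism to $\tfrac12\mathbf Z/\mathbf Z$ that factors through $G/2G$. One checks that this homomorphism vanishes exactly when $L$ is alternating, and that otherwise its nonvanishing detects one odd summand modulo hyperbolic pieces.

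\emph{Step 3: compute the homomorphism via Wu.} Every class of $G/2G$ is detected by $\delta_d$ on $H^{2d}(X;\FF_2(d))$, and for $x=\widetilde\delta(y)$ one has $L(x,x)=\tfrac12\int_X y\cdot\delta_d(y)$, using the compatibility of traces under $\FF_2\hookrightarrow\mathbf Q_2/\mathbf Z_2$. The functional $y\mapsto\int_X y\,\delta_d y$ is additive, because
\[
\int_X\bigl(y\,\delta_d z+z\,\delta_d y\bigr)=\int_X\delta(yz)=0,
\]
as a Bockstein lands in the image-zero part of the top trace. Using the transpose relation for Bocksteins under duality (the analogue of Lemma~\ref{lem:delta-adjoint}), together with the Wu identity $y^2=\Sq^{2d}y=y\cdot v_{2d}(X)$ from Theorem~\ref{thm:absolute-wu-formula}, I would rewrite this functional as $y\mapsto\int_X y\,\delta_d v_{2d}(X)$. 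Then $\Delta_2(X)=t_{2d+1}\bmod 2$ equals the value of this functional on the class that represents it, namely $\int_X v_{2d}\,\delta_d v_{2d}$.

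The main obstacle is Step~3. One must establish linearity of the quadratic refinement and its identification with the Wu functional honestly in the étale setting. This includes tracking the twist $(d)$ carefully: the discrepancy $\delta_0=\delta_d-d\beta_X\cdot(-)$ of Lemma~\ref{lem:benoist-bockstein} must cancel, for instance because $\int_X\beta_X y^2$-type terms vanish or are absorbed since $\beta_X^2=0$. My first attempt would be to mimic LMP's proof literally at each finite level $\mathbf Z/2^n$ and then pass to the limit.
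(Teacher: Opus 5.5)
Your architecture is the paper's: reduce $\Delta_2(X)$ to the middle torsion, use the Artin--Tate linking form to control elementary divisors, then run a characteristic-vector argument with $v_{2d}$. Step~1 is exactly the paper's universal-coefficient proposition. However, Steps~2 and~3 each rest on a claim that is false as stated, and the correct replacements are the substance of the proof.

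\textbf{Step~2.} It is not true that the number of cyclic summands of $G$ is even if and only if $L$ is alternating. Nor does nonvanishing of $x\mapsto L(x,x)$ detect an odd summand. A counterexample is $G=(\mathbb Z/2)^2$ with $L(x,y)=\tfrac12(x_1y_1+x_2y_2)$: this form is nonsingular and antisymmetric, it is not alternating, and it has two summands.

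What antisymmetry actually gives is this. For $e\geq2$, the form $2^{e-1}L$ on $G[2^e]/(G[2^{e-1}]+2G[2^{e+1}])$ is nonsingular and alternating, so $m_e$ is even for $e\geq2$ and $t_{2d+1}\equiv m_1$. Moreover, $m_1$ is the rank of $\delta_d$ on $H^{2d}(X;\mathbb F_2)$, i.e.\ the rank of the symmetric form $B(x,y)=\int_X x\,\delta_d(y)$. Note that $\mathbb F_2$-classes reach only $G[2]$ under the Bockstein, not ``$G/2G$''. Working directly with $B$ avoids needing $L(x,x)$ on all of $G$.

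Your closing assertion, that the parity ``equals the value of this functional on the class that represents it'', is a separate lemma: a symmetric $\mathbb F_2$-form with characteristic vector $c$ satisfies $\operatorname{rank}B\equiv B(c,c)\pmod 2$. It does not follow from Step~2 as you wrote it.

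\textbf{Step~3.} The identification of $y\mapsto\int_X y\,\delta_d(y)$ with $y\mapsto\int_X v_{2d}\,\delta_d(y)$ cannot come from ``$y^2=\mathrm{Sq}^{2d}y=y\cdot v_{2d}$''. That is not an identity in $H^{4d}$. The class $v_{2d}$ is characterized only through the degree-$(4d+1)$ pairing, where it represents $\mathrm{Sq}^{2d}$ on $H^{2d+1}$, not on $H^{2d}$. Also, transposing Bocksteins under duality never converts $\int_X y\,\delta_d(y)$ into a Steenrod square.

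The missing input is the identity $\int_X y\,\delta_d(y)=\int_X\mathrm{Sq}^{2d}(\delta_d y)$ for $y\in H^{2d}$. This is a genuinely cochain-level fact (a cup-$1$ computation using a $\mathbb Z/4$-lift), which the paper imports from Feng (Theorem~4.4 with Lemma~3.12). Only after that does the Wu identity, applied to $\delta_d(y)\in H^{2d+1}$, give $\int_X v_{2d}\,\delta_d(y)$, i.e.\ that $v_{2d}$ is characteristic for $B$.

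Your worry about cancelling $d\beta_X$ is a red herring. One works with $\delta_d$ throughout, and symmetry of $B$ follows from $\delta_d(x)\,y+x\,\delta_d(y)=\mathrm{Sq}^1(xy)$ together with $v_1(X)=0$.
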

The rest of this subsubsection is dedicated to the proof of this result (see Theorem~\ref{thm:function-field-semicharacteristic-defect-formula}).

\begin{definition}
\label{def:middle-linking-parity}
Put
\[
   \mathcal T_X
   :=
   H^{2d+1}(X;\mathbb Z_2(d))_{\mathrm{tors}},
\]
The \(2\)-primary middle linking parity of \(X\) is
\[
\epsilon_2(X):=
\dim_{\mathbb{F}_2}\bigl(\mathcal T_X/2\mathcal T_X\bigr)\pmod 2.
\]
\end{definition}

\begin{proposition}[Universal-coefficient reduction to middle torsion]
\label{prop:function-field-defect-middle-torsion}
With notation as above,
\[
        \Delta_2(X)
        =
        \epsilon_2(X).
\]
\end{proposition}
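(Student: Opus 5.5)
The plan is to compare the \(\FF_2\)-Betti numbers with the \(\mathbf Q_2\)-Betti numbers through the universal coefficient sequences for the lattices \(H^i(X;\mathbf Z_2(d))\), and then to use Poincar\'e duality to cancel the torsion contributions pairwise, leaving only the middle one. Throughout, write \(H^i:=H^i(X;\mathbf Z_2(d))\), which is a finitely generated \(\mathbf Z_2\)-module. Write \(b_i:=\operatorname{rk}_{\mathbf Z_2}H^i=\dim_{\mathbf Q_2}H^i(X;\mathbf Q_2(d))\) and \(t_i:=\dim_{\FF_2}\bigl(H^i_{\mathrm{tors}}/2\bigr)\).

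\textbf{Step 1 (universal coefficients).} The short exact sequence \(0\to\mathbf Z_2(d)\xrightarrow{2}\mathbf Z_2(d)\to\FF_2\to0\) induces a long exact sequence in cohomology. The coefficient identification \(\mu_2^{\otimes d}\simeq\FF_2\) is harmless here, and Mittag-Leffler holds because the groups are finite. From this sequence we obtain
\[
0\to H^i/2\to H^i(X;\FF_2)\to H^{i+1}[2]\to0 .
\]
Hence \(\dim H^i(X;\FF_2)=b_i+t_i+t_{i+1}\), where we use that \(\dim H^{i+1}[2]=t_{i+1}\) for a finite abelian \(2\)-group.

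\textbf{Step 2 (summation).} Summing over \(0\le i\le 2d\) gives
\[
\Delta_2(X)\equiv\sum_{i=0}^{2d}(t_i+t_{i+1})=t_0+2\sum_{i=1}^{2d}t_i+t_{2d+1}\equiv t_0+t_{2d+1}\pmod 2 .
\]
We have \(t_0=0\), because \(H^0(X;\mathbf Z_2(d))\) is torsion-free: it is the inverse limit of the groups \(H^0(X;\mathbf Z/2^n(d))\), and each of these injects into a cyclic group with surjective transitions. In fact this step does not even require duality. Therefore \(\Delta_2(X)\equiv t_{2d+1}=\epsilon_2(X)\), as claimed.

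\textbf{Main obstacle.} The delicate point is the exactness and finiteness statements for \(\mathbf Z_2\)-adic cohomology over a finite field, since \(X\) has cohomological dimension \(4d+1\) rather than \(4d\). I would handle this by noting that each \(H^i(X;\mathbf Z/2^n(d))\) is finite, so that \(\varprojlim\) is exact on the inverse system of long exact sequences; finite generation of \(H^i\) then follows from the finiteness of \(H^i(X;\FF_2)\) by a standard Nakayama argument. The rest of the argument is bookkeeping. The endpoint term \(t_{2d+1}\) enters only once because the semicharacteristic sums stop at degree \(2d\), and this is exactly where the middle torsion \(\mathcal T_X\) appears.
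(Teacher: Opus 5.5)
Your argument is correct and is essentially the paper's proof. The universal-coefficient sequence \(0\to H^i/2\to H^i(X;\FF_2)\to H^{i+1}[2]\to0\) gives \(\dim H^i(X;\FF_2)=b_i+t_i+t_{i+1}\), the sum telescopes to \(t_0+t_{2d+1}\), and \(t_0=0\) because \(H^0(X;\mathbf Z_2(d))\) is torsion-free. As you note yourself in Step 2, no Poincar\'e duality is used: the cancellation is pure telescoping, so the mention of duality in your opening sentence can be dropped.
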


\begin{proof}
For each \(i\), set
\[
        A^i:=H^i(X;\mathbf Z_2(d)).
\]
By the finiteness theorem for smooth proper \'etale cohomology over finite
fields, each \(A^i\) is a finitely generated \(\mathbf Z_2\)-module.  Write
\[
        A^i\simeq \mathbf Z_2^{b_i}\oplus T^i,
\]
where \(T^i\) is finite \(2\)-primary, and put
\[
        \tau_i:=\dim_{\mathbb F_2}(T^i/2T^i).
\]
The long exact sequence in cohomology associated to the coefficient short exact sequence
\[
0\to \mathbf Z_2(d)\xrightarrow{\cdot 2} \mathbf Z_2(d)\longrightarrow \mathbb F_2\to 0,
\]
gives a natural short exact sequence
\[
        0
        \longrightarrow
        A^i\otimes_{\mathbf Z_2}\mathbb F_2
        \longrightarrow
        H^i(X;\mathbb F_2)
        \longrightarrow
        A^{i+1}[2]
        \longrightarrow
        0.
\]
Hence,
\[
        \dim_{\mathbb F_2}H^i(X;\mathbb F_2)
        =
        b_i+\tau_i+\tau_{i+1}.
\]
On the other hand,
\[
        \dim_{\mathbf Q_2}H^i(X;\mathbf Q_2(d))=b_i.
\]
Therefore,
\[
\begin{aligned}
        \Delta_2(X)
        &=
        \sum_{i=0}^{2d}(\tau_i+\tau_{i+1})
        \pmod 2                                               \\
        &=
        \tau_0+\tau_{2d+1}
        \pmod 2.
\end{aligned}
\]
The group
\[
        A^0=H^0(X;\mathbf Z_2(d))
\]
is torsion-free, possibly zero, so \(\tau_0=0\). Thus,
\[
        \Delta_2(X)
        =
        \tau_{2d+1}
        =
        \dim_{\mathbb F_2}
        \bigl(H^{2d+1}(X;\mathbf Z_2(d))_{\mathrm{tors}}/2\bigr)
        =
        \epsilon_2(X).
\]
\end{proof}

\begin{definition}[The \(2\)-primary Artin--Tate linking form, cf.~{\cite{TateBourbaki1995},~\cite{JahnHigherBrauer2015},~\cite{FengEtaleSteenrod}}]
\label{def:function-field-artin-tate-linking}
Let
\[
        \widetilde\delta:
        H^{2d}(X;\mathbf Q_2/\mathbf Z_2(d))_{\mathrm{nd}}
        \longrightarrow
        H^{2d+1}(X;\mathbf Z_2(d))_{\mathrm{tors}}
        =
        \mathcal T_X
\]
be the boundary map associated with
\[
        0
        \longrightarrow
        \mathbf Z_2(d)
        \longrightarrow
        \mathbf Q_2(d)
        \longrightarrow
        \mathbf Q_2/\mathbf Z_2(d)
        \longrightarrow
        0.
\]
It is an isomorphism. Following the Artin--Tate--Jahn--Feng
construction, define first
\[
        \langle x,y\rangle_{\mathrm{AT}}
        :=
        \int_X x\cup \widetilde\delta(y),
        \qquad
        x,y\in
        H^{2d}(X;\mathbf Q_2/\mathbf Z_2(d))_{\mathrm{nd}},
\]
where the subscript \(\mathrm{nd}\) denotes quotient by the maximal divisible subgroup.
Transporting this form along \(\widetilde\delta\), we obtain a pairing
\[
        \lambda_X:\mathcal T_X\times \mathcal T_X\longrightarrow \mathbf Q_2/\mathbf Z_2.
\]
Explicitly, for \(a,b\in \mathcal T_X\), choose
\[
        \widetilde a
        \in
        H^{2d}(X;\mathbf Q_2/\mathbf Z_2(d))_{\mathrm{nd}}
\]
with \(\widetilde\delta(\widetilde a)=a\), and set
\[
        \lambda_X(a,b)
        :=
        \int_X \widetilde a\cup b.
\]
This is the \(2\)-primary Artin--Tate linking form on the middle torsion group
\(\mathcal T_X\).
\end{definition}

\begin{remark}
The form \(\lambda_X\) is independent of the choice of \(\widetilde a\) and is
nonsingular by Poincar\'e duality. Since the total duality degree is
\(4d+1\), it is skew-symmetric in the linking-form sense:
\[
        \lambda_X(a,b)=-\lambda_X(b,a).
\]
Feng's alternation theorem strengthens this in the finite-field setting: if
\(X/k\) is smooth, projective, geometrically connected, and of even dimension
over a finite field of odd characteristic, then the higher Artin--Tate pairing
is alternating; see~\cite[Theorem~1.3]{FengEtaleSteenrod}.
\end{remark}

\begin{lemma}[Elementary divisors of \(2\)-primary linking forms]
\label{lem:linking-elementary-divisors}
Let \(T\) be a finite \(2\)-primary abelian group equipped with a nonsingular
skew-symmetric form
\[
        \lambda:T\times T\longrightarrow \mathbf Q_2/\mathbf Z_2.
\]
Write
\[
        T\simeq
        \bigoplus_{e\ge 1}
        (\mathbf Z/2^e\mathbf Z)^{m_e}.
\]
Then \(m_e\) is even for every \(e\ge 2\). In particular,
\[
        \dim_{\mathbb F_2}(T/2T)\equiv m_1\pmod 2.
\]
If, moreover, \(\lambda\) is alternating, then \(m_e\) is even for every \(e\ge 1\), and
\[
        \dim_{\mathbb F_2}(T/2T)\equiv 0\pmod 2.
\]
\end{lemma}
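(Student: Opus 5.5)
The plan is to argue one homogeneous layer at a time, using the standard theory of finite abelian \(p\)-groups with a nonsingular form. Fix \(e\geq1\). Put
\[
        T[2^{e}]:=\{t\in T: 2^e t=0\},
        \qquad
        V_e:=\frac{2^{e-1}T\cap T[2]}{2^{e}T\cap T[2]}.
\]
A quick count on the cyclic decomposition shows \(\dim_{\FF_2}V_e=m_e\). On \(V_e\) I will define
\[
        b_e(x,y):=2^{e-1}\lambda(\widetilde x,y)\in\tfrac12\mathbf Z/\mathbf Z\cong\FF_2,
\]
where \(\widetilde x\in T\) is any element with \(2^{e-1}\widetilde x=x\). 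The main steps are as follows.

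\emph{Well-definedness.} If \(2^{e-1}\widetilde x=0\), then \(\widetilde x\in T[2^{e-1}]\), so \(2^{e-1}\lambda(\widetilde x,y)=\lambda(2^{e-1}\widetilde x,y)=0\). Changing \(y\) by an element \(2^{e}s\) with \(2^{e}s\in T[2]\) changes the value by \(2^{e-1}\lambda(\widetilde x,2^{e}s)=\lambda(2^{e}\widetilde x,2^{e-1}s)\cdot 2^{?}\); more cleanly, it equals \(\lambda(2^{e-1}\widetilde x,2^{e}s)\), and this vanishes because \(2^{e}s\) is divisible by \(2^{e}\) while the relevant pairings only detect \(2^{e}\)-torsion. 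The symmetric check for changing \(x\) follows from skew-symmetry. I expect some care is needed here, and the safest route is to do the verification after choosing an orthogonal splitting, as described below.

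\emph{Nondegeneracy and symmetry.} This is most transparently handled via the classical fact (Wall's normal form for skew linking forms on \(2\)-groups) that \(T\) decomposes orthogonally into homogeneous pieces \(T_e\simeq(\mathbf Z/2^e)^{m_e}\), each carrying a nonsingular form. I would prove this by induction: take an element \(a\) of maximal order \(2^e\). Nonsingularity gives \(b\) with \(\lambda(a,b)\) of order \(2^e\). Then the span \(\langle a,b\rangle\) is either cyclic with \(\lambda(a,a)\) of exact order \(2^e\), or it is \((\mathbf Z/2^e)^2\) with a nonsingular form. In either case it splits off orthogonally, because the restricted form is nonsingular. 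On a homogeneous piece \(T_e\), the form \(2^{e-1}\lambda\) reduces to a nondegenerate \(\FF_2\)-valued form on \(T_e/2T_e\), and this form is skew, hence symmetric mod \(2\).

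\emph{Key step: the diagonal.} For \(t\in T_e\), skew-symmetry gives \(2\lambda(t,t)=0\), so \(\lambda(t,t)\in\tfrac12\mathbf Z/\mathbf Z\). Hence
\[
        2^{e-1}\lambda(t,t)=0\qquad\text{whenever }e\geq2.
\]
So for \(e\geq2\) the induced form on \(T_e/2T_e\) is a nondegenerate alternating \(\FF_2\)-form, and therefore \(m_e=\dim(T_e/2T_e)\) is even. If \(\lambda\) is itself alternating, the same vanishing \(\lambda(t,t)=0\) holds also for \(e=1\), so \(m_1\) is even too.

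\emph{Conclusion.} The parity statements follow from
\[
        \dim_{\FF_2}(T/2T)=\sum_{e\geq1}m_e.
\]

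The main obstacle is the orthogonal splitting step, specifically the case where a cyclic summand carries \(\lambda(a,a)\) of exact order \(2^e\). The diagonal argument shows this cannot occur for \(e\geq2\): \(\lambda(a,a)\) has order at most \(2\), while nonsingularity on a cyclic summand \(\langle a\rangle\cong\mathbf Z/2^e\) forces \(\lambda(a,a)\) to have order exactly \(2^e\). So cyclic summands only appear when \(e=1\), and this fits the claimed parity conclusion.
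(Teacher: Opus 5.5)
Your operative argument, the orthogonal (Wall-type) splitting, is correct, but the opening construction on $V_e$ is wrong as written and should be dropped or repaired. Elements of your $V_e$ lie in $T[2]$. Hence for $e\ge 2$,
\[
b_e(x,y)=2^{e-1}\lambda(\widetilde x,y)=\lambda(\widetilde x,2^{e-1}y)=0,
\]
so the form you wrote is identically zero. The well-definedness paragraph (with its $2^{?}$) verifies nothing. The correct normalization is $b_e(x,y)=\lambda(\widetilde x,y)=2^{e-1}\lambda(\widetilde x,\widetilde y)$, where $2^{e-1}\widetilde y=y$. Under the isomorphism
\[
T[2^e]/(T[2^{e-1}]+2T[2^{e+1}])\xrightarrow{\ \sim\ }(2^{e-1}T\cap T[2])/(2^eT\cap T[2]),\qquad x\mapsto 2^{e-1}x,
\]
this corrected form is exactly the paper's $\lambda_e(\bar x,\bar y)=2^{e-1}\lambda(x,y)$.

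Two points in the splitting also need tightening.
\begin{itemize}
\item When $\lambda(a,a)$ has exact order $2^e$, you split off $\langle a\rangle$; the span $\langle a,b\rangle$ need not be cyclic.
\item Otherwise, you must check that the Gram matrix $\begin{pmatrix}\alpha&u\\-u&\beta\end{pmatrix}$ over $\mathbf Z/2^e$ has unit determinant. Here $u=2^e\lambda(a,b)$ is a unit and $\alpha=2^e\lambda(a,a)$ is even, so the determinant is $\alpha\beta+u^2$, which is odd. Thus $\langle a,b\rangle\cong(\mathbf Z/2^e)^2$ with nonsingular restriction.
\end{itemize}
Combined with the standard fact that $T=H\oplus H^{\perp}$, with $H^{\perp}$ again nonsingular and skew, whenever $\lambda|_H$ is nonsingular, the induction goes through.

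The paper never decomposes $T$. It works directly on the canonical subquotient $V_e=T[2^e]/(T[2^{e-1}]+2T[2^{e+1}])$. It proves that $\lambda_e$ is well defined, and nonsingular using only that the annihilator of $T[2^e]$ is $2^eT$. It then applies the diagonal computation $2^{e-1}\lambda(x,x)=2^{e-2}\cdot 2\lambda(x,x)=0$ for $e\ge 2$. That argument is choice-free and avoids induction.

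Your route yields more: a normal form in which cyclic blocks occur only for $e=1$ (as $\mathbf Z/2$ with $\lambda(a,a)=\tfrac12$), and every other block has rank $2$. Once this is established, $m_e$ is visibly even for $e\ge 2$. In the alternating case cyclic blocks are excluded outright, so $m_1$ is even too. Your subsequent ``diagonal'' step on $T_e/2T_e$ is therefore a second proof of something the splitting already gives.
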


\begin{proof}
For \(e\ge 1\), set
\[
        V_e
        :=
        T[2^e]\Big/\bigl(T[2^{e-1}]+2T[2^{e+1}]\bigr).
\]
A direct check from the elementary divisor decomposition shows that
\[
        \dim_{\mathbb F_2}V_e=m_e.
\]
The form induces a bilinear pairing
\[
        \lambda_e:V_e\times V_e
        \longrightarrow
        \tfrac12\mathbf Z_2/\mathbf Z_2\simeq \mathbb F_2
\]
by the formula
\[
        \lambda_e(\overline x,\overline y)
        :=
        2^{e-1}\lambda(x,y).
\]
This is well-defined.  If \(x\) is changed by an element of \(T[2^{e-1}]\), then
\(2^{e-1}\lambda(x,y)=0\).  If \(x\) is changed by an element \(2z\), with
\(z\in T[2^{e+1}]\), then
\[
        2^{e-1}\lambda(2z,y)
        =
        2^e\lambda(z,y)
        =
        \lambda(z,2^ey)
        =
        0,
\]
because \(y\in T[2^e]\).  The same argument applies in the second variable.

The form \(\lambda_e\) is nonsingular.  Suppose that
\(\overline x\in V_e\) pairs trivially with every element of \(V_e\).  Then
\[
        2^{e-1}\lambda(x,y)=0
        \qquad
        \text{for all } y\in T[2^e],
\]
or equivalently
\[
        \lambda(2^{e-1}x,y)=0
        \qquad
        \text{for all } y\in T[2^e].
\]
For a nonsingular form, the annihilator of \(T[2^e]\) is \(2^eT\).
Thus, \(2^{e-1}x\in 2^eT\).  Choose \(z\in T\) such that
\[
        2^{e-1}x=2^ez.
\]
Since \(x\in T[2^e]\), multiplying by \(2\) gives \(z\in T[2^{e+1}]\).  Hence,
\[
        x-2z\in T[2^{e-1}],
\]
and therefore
\[
        x\in T[2^{e-1}]+2T[2^{e+1}].
\]
Thus, \(\overline x=0\), proving nonsingularity.

Assume first only that \(\lambda\) is skew-symmetric.  Then
\[
        2\lambda(x,x)=0
        \qquad
        \text{for all }x\in T.
\]
If \(e\ge 2\), it follows that
\[
        \lambda_e(\overline x,\overline x)
        =
        2^{e-1}\lambda(x,x)
        =
        2^{e-2}\bigl(2\lambda(x,x)\bigr)
        =
        0.
\]
Thus, \(\lambda_e\) is nonsingular and alternating over \(\mathbb F_2\), so
\(V_e\) is even-dimensional.  Hence, \(m_e\) is even for every \(e\ge 2\).

If \(\lambda\) is alternating, then \(\lambda(x,x)=0\) for all \(x\), and the
same argument applies also for \(e=1\). Since
\[
        \dim_{\mathbb F_2}(T/2T)=\sum_{e\ge 1}m_e,
\]
the assertion follows.
\end{proof}

For \(r\in\mathbf Z\), let
\[
        \delta_r:
        H^m(X;\mathbb F_2)
        \longrightarrow
        H^{m+1}(X;\mathbb F_2)
\]
denote the twisted Bockstein $\delta_r$ of Lemma~\ref{lem:benoist-bockstein}. 
This is the connecting morphism 
associated with the short exact sequence
\[
        0
        \longrightarrow
        \mathbf Z/2(r)
        \longrightarrow
        \mathbf Z/4(r)
        \longrightarrow
        \mathbf Z/2(r)
        \longrightarrow
        0.
\]
By Lemma~\ref{lem:benoist-bockstein}, for every \(x\in H^m(X;\mathbb F_2)\), one has
\begin{equation}\label{eq:twisted_bockstein}
\delta_r(x)
=
\Sq^1(x)+r\,\beta_Xx.
\end{equation}

\begin{definition}[The twisted middle Bockstein form]
\label{def:function-field-middle-bockstein-form}
Define
\[
        B_X^{(d)}:
        H^{2d}(X;\mathbb F_2)\times H^{2d}(X;\mathbb F_2)
        \longrightarrow
        \mathbb F_2
\]
by
\[
        B_X^{(d)}(x,y)
        :=
        \int_X x\,\delta_d(y).
\]
We write
\[
        \operatorname{rank}_{\mathbb F_2}B_X^{(d)}
\]
for the rank of the adjoint linear map
\[
        H^{2d}(X;\mathbb F_2)
        \longrightarrow
        H^{2d}(X;\mathbb F_2)^\vee.
\]
\end{definition}

\begin{proposition}[Twisted de Rham--Milnor parity formula]
\label{prop:function-field-derham-milnor-parity}
One has
\[
        \operatorname{rank}_{\mathbb F_2}B_X^{(d)}
        \equiv
        \epsilon_2(X)
        \pmod 2.
\]
\end{proposition}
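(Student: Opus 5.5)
We will compute the rank of the adjoint of \(B_X^{(d)}\) directly in terms of the \(2\)-adic structure of middle cohomology. Write \(A^i:=H^i(X;\mathbf Z_2(d))\) as in the proof of Proposition~\ref{prop:function-field-defect-middle-torsion}. The twisted Bockstein \(\delta_d\) is the connecting map of \(0\to\mathbf Z/2(d)\to\mathbf Z/4(d)\to\mathbf Z/2(d)\to0\). It therefore factors as
\[
H^{2d}(X;\FF_2)\xrightarrow{\ \partial\ }A^{2d+1}[2]\hookrightarrow A^{2d+1}\longrightarrow A^{2d+1}/2=A^{2d+1}\otimes\FF_2\hookrightarrow H^{2d+1}(X;\FF_2).
\]
Here \(\partial\) is the integral Bockstein, which is surjective onto \(A^{2d+1}[2]\) by the short exact sequence in that proof. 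The middle composite \(A^{2d+1}[2]\to A^{2d+1}/2\) has image \((A^{2d+1}[2]+2A^{2d+1})/2A^{2d+1}\). Since \(A^{2d+1}\simeq\mathbf Z_2^{b}\oplus\mathcal T_X\), the dimension of this image is the number of cyclic summands of \(\mathcal T_X\) of order exactly \(2\), that is, \(m_1\) in the notation of Lemma~\ref{lem:linking-elementary-divisors}.

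Next we identify the rank of \(B_X^{(d)}\). By Poincar\'e duality over the finite field (Theorem~\ref{thm:milne-7-6} with \(N_X=4d+1\)), the pairing \(H^{2d}(X;\FF_2)\times H^{2d+1}(X;\FF_2)\to\FF_2\) is perfect. Hence the adjoint \(x\mapsto\int_X(-)\,\delta_d(x)\) of \(B_X^{(d)}\), with the variables swapped, has rank \(\dim\operatorname{im}\delta_d\). The rank of a bilinear form equals the rank of its transpose, so
\[
\operatorname{rank}_{\FF_2}B_X^{(d)}=\dim\operatorname{im}\delta_d=m_1.
\]
We should check that the twist conventions match. The Tate twist \(d\) on \(\mathbf Z/4(d)\) must agree with the twist used in defining \(A^i\), so that \(\delta_d\) really is the mod-\(2\) reduction of the \(\mathbf Z_2(d)\)-Bockstein. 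This is the reason \(\delta_d\) appears rather than \(\Sq^1\), via \eqref{eq:twisted_bockstein}.

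Finally, we combine this with Lemma~\ref{lem:linking-elementary-divisors} applied to \(T=\mathcal T_X\) and \(\lambda=\lambda_X\). That \(\lambda_X\) is nonsingular and skew-symmetric is recorded in the remark after Definition~\ref{def:function-field-artin-tate-linking}. The lemma then gives \(\dim(\mathcal T_X/2\mathcal T_X)\equiv m_1\pmod 2\), so \(\epsilon_2(X)\equiv m_1\equiv\operatorname{rank}B_X^{(d)}\pmod 2\).

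The main obstacle is the careful identification of the image of \(A^{2d+1}[2]\to A^{2d+1}/2\) with the order-\(2\) summands. Torsion summands of order \(2^e\) with \(e\ge2\) have their \(2\)-torsion inside \(2A\), so they contribute zero. Free summands have no \(2\)-torsion, so they also contribute nothing. Each \(\mathbf Z/2\) summand contributes exactly one dimension, which gives the count \(m_1\).
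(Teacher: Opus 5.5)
Your proposal is correct and follows essentially the same route as the paper: Poincar\'e duality identifies \(\operatorname{rank}B_X^{(d)}\) with \(\operatorname{rank}\delta_d\), universal coefficients show this equals \(m_1\), and Lemma~\ref{lem:linking-elementary-divisors} applied to the nonsingular skew-symmetric form \(\lambda_X\) gives \(\epsilon_2(X)\equiv m_1\pmod 2\). The only difference is cosmetic: you compute \(\operatorname{im}\delta_d\) as \((A^{2d+1}[2]+2A^{2d+1})/2A^{2d+1}\) by factoring through the \(2\)-adic Bockstein, while the paper identifies it with \(\operatorname{coker}\bigl(A^{2d+1}[4]\xrightarrow{\cdot 2}A^{2d+1}[2]\bigr)\), and these are the same quotient because \(A[2]\cap 2A=2A[4]\).
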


\begin{proof}
By Poincar\'e duality, the adjoint of \(B_X^{(d)}\) is precisely
\[
        \delta_d:
        H^{2d}(X;\mathbb F_2)
        \longrightarrow
        H^{2d+1}(X;\mathbb F_2).
\]
Therefore,
\[
        \operatorname{rank}B_X^{(d)}
        =
        \operatorname{rank}\delta_d.
\]

Write
\[
        H^{2d+1}(X;\mathbf Z_2(d))_{\mathrm{tors}}
        \simeq
        \bigoplus_{e\ge 1}
        (\mathbf Z/2^e\mathbf Z)^{m_e}.
\]
We first compute the rank of \(\delta_d\).  Let
\[
        A^i:=H^i(X;\mathbf Z_2(d)).
\]
For \(n=2,4\), the universal coefficient sequence gives
\[
        0
        \longrightarrow
        A^{2d}/n
        \longrightarrow
        H^{2d}(X;\mathbf Z/n(d))
        \longrightarrow
        A^{2d+1}[n]
        \longrightarrow
        0.
\]
The reduction map from \(n=4\) to \(n=2\) gives a morphism between these short
exact sequences, and the left vertical map
\[
        A^{2d}/4\longrightarrow A^{2d}/2
\]
is surjective. Hence, the cokernel of
\[
        H^{2d}(X;\mathbf Z/4(d))
        \longrightarrow
        H^{2d}(X;\mathbf Z/2(d))
\]
is naturally identified with the cokernel of
\[
        A^{2d+1}[4]\xrightarrow{\ \cdot2\ }A^{2d+1}[2].
\]
This cokernel is naturally isomorphic to the image of \(\delta_d\).
A free \(\mathbf Z_2\)-summand contributes nothing.
On a summand \(\mathbf Z/2^e\mathbf Z\), the displayed map is
zero for \(e=1\) and surjective for \(e\geq2\). Thus,
\[
        \operatorname{rank}_{\mathbb F_2}\delta_d=m_1.
\]

On the other hand, 
\[
        \epsilon_2(X)
        =
        \dim_{\mathbb F_2}(\mathcal T_X/2\mathcal T_X)
        \equiv
        \sum_{e\ge 1}m_e
        \pmod 2.
\]
The Artin--Tate linking form \(\lambda_X\) is nonsingular and skew-symmetric.
By Lemma~\ref{lem:linking-elementary-divisors}, \(m_e\) is even for every
\(e\ge 2\).  Hence,
\[
        \epsilon_2(X)
        \equiv
        m_1
        =
        \operatorname{rank}_{\mathbb F_2}B_X^{(d)}
        \pmod 2.
\]
\end{proof}

We now introduce the Wu-theoretic expression for the same parity.  Let
\[
        v_X=1+v_1(X)+v_2(X)+\cdots
        \in H^\ast(X;\mathbb F_2)
\]
be the absolute \'etale Wu class of \(X\), characterized by
\[
        \int_X \Sq(z)
        =
        \int_X z\,v_X
\]
for all \(z\in H^\ast(X;\mathbb F_2)\), where both sides mean the component of
total degree \(4d+1\).  Instability gives the top-half vanishing
\[
        v_i(X)=0
        \qquad
        (i>2d).
\]
Write $T_{X/k}$ for the tangent bundle of \(X\) relative to \(\Spec\,k\).
By the finite-field absolute Wu formula,
\[
        v_X
        =
        \Sq^{-1}\bigl(w^{\mathrm{\acute et}}(T_{X/k})\bigr).
\]
In particular, because \(T_{X/k}\) has even rank \(2d\), one has (see Theorem~\ref{thm:etale-sw-chern}):
\[
        w^{\mathrm{\acute et}}_1(T_{X/k})=0,
        \qquad
        v_1(X)=0.
\]
We shall call
\[
        v_{2d}(X)
        =
        \Bigl(\Sq^{-1}w^{\mathrm{\acute et}}(T_{X/k})\Bigr)_{2d}
        \in H^{2d}(X;\mathbb F_2)
\]
the middle Wu class of \(X\).

\begin{lemma}[Rank parity and characteristic vectors]
\label{lem:function-field-rank-characteristic-vector}
Let \(V\) be a finite-dimensional \(\mathbb F_2\)-vector space equipped with a
symmetric bilinear form \(B\).  Suppose that \(c\in V\) is characteristic, i.e.
\[
        B(x,x)=B(c,x)
        \qquad
        \text{for every }x\in V.
\]
Then
\[
        \operatorname{rank}(B)\equiv B(c,c)\pmod 2.
\]
\end{lemma}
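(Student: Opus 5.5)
The plan is the standard linear-algebra argument over \(\mathbb F_2\): reduce to a nondegenerate form, decompose it orthogonally into blocks, and check the identity blockwise.

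\emph{Step 1: reduction to the nondegenerate case.} Let \(R:=\operatorname{rad}(B)=\{x\in V: B(x,y)=0\ \text{for all }y\}\), and put \(\overline V:=V/R\). Then \(B\) descends to a nondegenerate symmetric form \(\overline B\) on \(\overline V\), and \(\operatorname{rank}(B)=\dim\overline V\). The image \(\overline c\) is still characteristic. Indeed, for \(r\in R\) one has \(B(x+r,x+r)=B(x,x)\) and \(B(c,x+r)=B(c,x)\), so both sides of the characteristic identity descend to \(\overline V\). Moreover \(\overline B(\overline c,\overline c)=B(c,c)\). So we may assume \(B\) is nondegenerate and must show \(\dim V\equiv B(c,c)\pmod 2\).

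\emph{Step 2: nondegenerate case.} The map \(q(x):=B(x,x)\) is additive over \(\mathbb F_2\): \(B(x+y,x+y)=B(x,x)+B(y,y)+2B(x,y)\). Hence \(q\) is a linear functional. By nondegeneracy it is represented by a unique vector, which must be \(c\); so in this case the characteristic vector is uniquely determined.

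Now split into cases. If \(q=0\), then \(B\) is alternating and nondegenerate, so \(\dim V\) is even. Also \(c=0\) by uniqueness, so \(B(c,c)=0\), and the identity holds.

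If \(q\neq0\), pick \(e\) with \(B(e,e)=1\) and write \(V=\langle e\rangle\oplus e^{\perp}\). This is an orthogonal decomposition, and both summands carry nondegenerate forms. The characteristic vector of an orthogonal sum is the sum of the characteristic vectors of the summands: on \(\langle e\rangle\) it is \(e\), and call \(c'\) the one on \(e^\perp\). Then
\[
B(c,c)=B(e,e)+B(c',c')=1+B(c',c'),\qquad \dim V=1+\dim e^{\perp}.
\]
Induction on \(\dim V\) applied to \(e^{\perp}\) gives \(\dim e^\perp\equiv B(c',c')\), which closes the induction.

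\emph{Obstacle.} There is no real obstacle. The only points to watch are that the characteristic property survives passage to the quotient by the radical, and that characteristic vectors are additive over orthogonal sums. Both are one-line checks.
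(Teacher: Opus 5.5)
Your proof is correct and follows the paper's outline: pass to the quotient by the radical, split off a nondegenerate piece orthogonally, and use that nondegenerate alternating forms over \(\mathbb F_2\) have even rank. The only difference is in the splitting. The paper chooses a piece adapted to \(c\): the line \(\mathbb F_2 c\) when \(B(c,c)=1\), or the plane spanned by \(c\) and some \(x\) with \(B(c,x)=1\) when \(B(c,c)=0\) and \(c\neq 0\), so the orthogonal complement has characteristic vector \(0\) and is alternating in one step. You instead peel off an arbitrary anisotropic line and induct on dimension, using uniqueness and additivity of characteristic vectors.
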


\begin{proof}
Quotienting by the radical does not change the rank.  The image of \(c\) is
again characteristic for the induced nondegenerate symmetric form.  Thus, we may
assume that \(B\) is nondegenerate.

If \(B(c,c)=1\), then the line \(\mathbb F_2c\) is nondegenerate, and its
orthogonal complement has characteristic vector \(0\).  Hence, the orthogonal
complement is alternating and even-dimensional.  Therefore, \(\operatorname{rank}(B)\)
is odd.

If \(B(c,c)=0\), then either \(c=0\), in which case \(B\) is alternating and
therefore has even rank, or \(c\neq 0\).  In the latter case choose \(x\) with
\(B(c,x)=1\). The span of \(c\) and \(x\) is a nondegenerate plane, and its
orthogonal complement has characteristic vector \(0\).  Again the total rank is
even.  Thus, in all cases
\[
        \operatorname{rank}(B)\equiv B(c,c)\pmod 2.
\]
\end{proof}

\begin{proposition}[The middle Wu class is characteristic, cf.~{\cite[Theorem~4.4, Lemma~3.12]{FengEtaleSteenrod}}]
\label{prop:function-field-middle-wu-characteristic}
The class
\[
        v_{2d}(X)\in H^{2d}(X;\mathbb F_2)
\]
is characteristic for the twisted Bockstein form \(B_X^{(d)}\). Consequently,
\[
        \operatorname{rank}_{\mathbb F_2}B_X^{(d)}
        \equiv
        B_X^{(d)}(v_{2d}(X),v_{2d}(X))
        \pmod 2.
\]
\end{proposition}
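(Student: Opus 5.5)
The plan is to verify the characteristic identity
\[
        B_X^{(d)}(x,x)=B_X^{(d)}\bigl(v_{2d}(X),x\bigr)
        \qquad\bigl(x\in H^{2d}(X;\FF_2)\bigr)
\]
directly from the Wu identity, and to check that \(B_X^{(d)}\) is symmetric. Lemma~\ref{lem:function-field-rank-characteristic-vector} then gives the rank congruence. Throughout, all integrals are in total degree \(4d+1\). The two inputs are that \(v_1(X)=0\), and that \(\beta_X^2=0\) because \(\beta_X\) is pulled back from \(k\). Using~\eqref{eq:twisted_bockstein}, write \(\delta_d=\Sq^1+d\,\beta_X\), and treat the \(\Sq^1\)-part and the \(\beta_X\)-part of the form separately.

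\emph{Symmetry.} For \(x,y\in H^{2d}\), the Cartan formula gives
\[
        \Sq^1(xy)=x\Sq^1y+y\Sq^1x .
\]
The Wu identity in degree \(1\) gives \(\int_X\Sq^1(xy)=\int_X v_1(X)\,xy=0\). The \(\beta_X\)-terms \(d\int_X x\beta_Xy\) and \(d\int_X y\beta_Xx\) agree because \(\FF_2\)-cohomology is graded commutative. Hence \(B_X^{(d)}(x,y)=B_X^{(d)}(y,x)\).

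\emph{The \(\beta_X\)-part.} By the Wu identity with \(n=2d\),
\[
        \int_X v_{2d}(X)\,\beta_Xx=\int_X\Sq^{2d}(\beta_Xx).
\]
The Cartan formula, together with \(\Sq^1\beta_X=\beta_X^2=0\) and instability \(\Sq^{2d}x=x^2\), gives \(\Sq^{2d}(\beta_Xx)=\beta_Xx^2\). Therefore
\[
        \int_X v_{2d}(X)\,\beta_Xx=\int_X x\,\beta_Xx ,
\]
which matches the \(\beta_X\)-contributions on both sides.

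\emph{The \(\Sq^1\)-part (main obstacle).} Here we need
\[
        \int_X v_{2d}(X)\Sq^1x=\int_X\Sq^{2d}\Sq^1x
        \overset{?}{=}\int_X x\Sq^1x .
\]
The difficulty is that \(\Sq^{2d}\Sq^1\) is admissible, so no Adem relation rewrites it directly. I would first try a cochain-level formula for the top-but-one square, of the form
\[
        \Sq^{2d}\Sq^1x=x\Sq^1x+\Sq^1(\text{a class of degree }4d),
\]
with the correction class built from \(x\smile_1\) terms. The \(\Sq^1\)-exact term then integrates to zero since \(v_1(X)=0\). Concretely, this is the argument of~\cite[Lemma~3.12, Theorem~4.4]{FengEtaleSteenrod}, which applies since \(\Sq^r\) on \'etale cohomology is given by cup-\(i\) products on cochains of the shape. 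If that fails, the fallback is to compute in the universal example \(K(\FF_2,2d)\): there the difference \(\Sq^{2d}\Sq^1\iota+\iota\Sq^1\iota\) is checked to lie in \(\operatorname{im}\Sq^1\) on \(H^{4d+1}\), and the identity is then pulled back along the classifying map of \(x\) on the pro-space \(h(X)\).

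Combining the two parts gives \(B_X^{(d)}(x,x)=B_X^{(d)}(v_{2d}(X),x)\). Lemma~\ref{lem:function-field-rank-characteristic-vector} then yields the stated congruence.
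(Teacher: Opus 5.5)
\textbf{Verdict: genuine gap in the \(\Sq^1\)-part.} Your symmetry argument is correct, and so is your handling of the \(\beta_X\)-part. The latter, which uses \(\beta_X^2=0\), is a step the paper does not need. The gap is in the \(\Sq^1\)-part. The relation \(\Sq^{2d}\Sq^1x\equiv x\Sq^1x\) modulo \(\operatorname{im}\Sq^1\) is false as a natural identity. So no cochain formula of the shape you propose can exist, and your fallback check in \(K(\FF_2,2d)\) gives the opposite answer.

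Already for \(d=1\), the only degree-\(4\) class in \(H^*(K(\FF_2,2);\FF_2)\) is \(\iota^2\), and \(\Sq^1(\iota^2)=0\). Hence \(\operatorname{im}\Sq^1=0\) in degree \(5\). But \(\Sq^2\Sq^1\iota+\iota\Sq^1\iota\neq0\), since it is a polynomial generator plus a decomposable monomial.

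In general, the only decomposable class in degree \(4d\) is \(\iota^2\), which \(\Sq^1\) kills. The Adem relation \(\Sq^1\Sq^i=(i-1)\Sq^{i+1}\) then shows that \(\operatorname{im}\Sq^1\) in degree \(4d+1\) is spanned by admissible generators \(\Sq^J\iota\) with odd first index. This excludes both \(\Sq^{2d}\Sq^1\iota\) and the decomposable \(\iota\Sq^1\iota\). So \(v_1(X)=0\) alone cannot give \(\int_X\Sq^{2d}\Sq^1x=\int_Xx\Sq^1x\); the identity uses more of the duality on \(X\).

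\textbf{The missing input is integrality.} Modulo \(\operatorname{im}\Sq^1\), the difference \(x\Sq^1x+\Sq^{2d}\Sq^1x\) is the second-order Bockstein \(\beta_2(x^2)\), i.e. the Bockstein of the Pontryagin square of \(x\). This is why its sum, not either term alone, is killed by \(\Sq^1\) in \(K(\FF_2,2)\).

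A class in the image of the connecting map of \(0\to\mathbf Z/2(2d)\to\mathbf Z/8(2d)\to\mathbf Z/4(2d)\to0\) has zero integral. The reason is that the top-degree trace is compatible with the coefficient inclusion \(\mathbf Z/2(2d)\hookrightarrow\mathbf Z/8(2d)\). Over \(\mathbf F_q\) with \(q\) odd, \(q^{2d}\equiv1\bmod 8\), so this sequence is isomorphic to the untwisted one and the classical formula applies. Combined with \(\int_X\Sq^1(-)=0\), this would close your argument.

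The paper does not attempt any universal identity. It cites Feng's Theorem~4.4 and Lemma~3.12 directly for \(\int_Xx\,\delta_d(x)=\int_X\Sq^{2d}(\delta_dx)\), with the twisted Bockstein, so no splitting off of the \(\beta_X\)-part is needed, and then applies the Wu identity. Your citation of Feng therefore points to the right source. But the content you attribute to it, a \(\Sq^1\)-exact correction killed by \(v_1=0\), cannot be what makes the argument work.
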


\begin{proof}
For \(x,y\in H^{2d}(X;\FF_2)\),
\eqref{eq:twisted_bockstein}, the Cartan formula, and \(v_1(X)=0\)
give
\[
        B_X^{(d)}(x,y)+B_X^{(d)}(y,x)
        =
        \int_X\Sq^1(xy)
        =
        \int_Xxy\,v_1(X)
        =0.
\]
Thus \(B_X^{(d)}\) is symmetric.
By~\cite[Theorem~4.4 and Lemma~3.12]{FengEtaleSteenrod},
with coefficients \(\mathbf Z/2\), and the defining Wu identity,
\[
        B_X^{(d)}(x,x)
        =
        \int_X\Sq^{2d}\!\bigl(\delta_d(x)\bigr)
        =
        \int_Xv_{2d}(X)\,\delta_d(x)
        =
        B_X^{(d)}(v_{2d}(X),x).
\]
The rank congruence follows from
Lemma~\ref{lem:function-field-rank-characteristic-vector}.
\end{proof}

\begin{remark}
This may be regarded as the finite-field version of Browder's characteristic-vector identity, 
which states that the middle Wu class of a closed smooth manifold of dimension $4d+1$ is characteristic 
for the middle Steenrod square; see~\cite[Lemma~5]{BrowderPoincareDuality}.
\end{remark}

\begin{theorem}[Function-field semicharacteristic-defect formula]
\label{thm:function-field-semicharacteristic-defect-formula}
Let \(X/\mathbb F_q\) be smooth, projective, and geometrically connected of
dimension \(2d\), with \(q\) odd.  Then
\[
        \Delta_2(X)
        =
        \epsilon_2(X)
        =
        \bigl(\operatorname{rank}_{\FF_2}B_X^{(d)}\bmod2\bigr)
        =
        \int_X v_{2d}(X)\,\delta_d(v_{2d}(X)).
\]
\end{theorem}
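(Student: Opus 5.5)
The proof chains the three results established immediately before the statement. The first equality is Proposition~\ref{prop:function-field-defect-middle-torsion}. The second is Proposition~\ref{prop:function-field-derham-milnor-parity}, which is where Lemma~\ref{lem:linking-elementary-divisors} enters via the nonsingular skew-symmetric Artin--Tate form \(\lambda_X\). The third is Proposition~\ref{prop:function-field-middle-wu-characteristic}. Composing these congruences in \(\FF_2\) gives
\[
\Delta_2(X)=\epsilon_2(X)\equiv\operatorname{rank}_{\FF_2}B_X^{(d)}\equiv B_X^{(d)}\bigl(v_{2d}(X),v_{2d}(X)\bigr)=\int_X v_{2d}(X)\,\delta_d\bigl(v_{2d}(X)\bigr).
\]
The last equality is the definition of \(B_X^{(d)}\), and Theorem~\ref{thm:ff-lmp-formula} is exactly this statement.

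Most of the work is therefore bookkeeping, and I would check three points explicitly.

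First, in Proposition~\ref{prop:function-field-derham-milnor-parity}, the image of \(\delta_d\) must be identified with \(\operatorname{coker}(A^{2d+1}[4]\xrightarrow{2}A^{2d+1}[2])\). The twisted Bockstein \(\delta_d\) is the connecting map of the \(\mathbf Z/2(d)\subset\mathbf Z/4(d)\) sequence. Its image is isomorphic to the cokernel of reduction \(H^{2d}(\mathbf Z/4(d))\to H^{2d}(\mathbf Z/2(d))\). The snake lemma then gives the torsion description.

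Second, the Artin--Tate pairing must be nonsingular and skew-symmetric. This rests on Poincaré duality in degree \(4d+1\) with twist \(2d=d+d\), so the coefficient twists match.

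Third, in Proposition~\ref{prop:function-field-middle-wu-characteristic}, I need the identity \(\int_X x\,\delta_d(x)=\int_X \Sq^{2d}(\delta_d x)\), i.e.\ \(x\cdot\delta_d x=\Sq^{2d+1}(x)\) up to correction terms that integrate to zero. This should follow from the Adem relation \(\Sq^1\Sq^{2d}=\Sq^{2d+1}\) together with \(\Sq^{2d}x=x^2\). One then rewrites \(x\,\delta_d x\) using \eqref{eq:twisted_bockstein} and notes that the \(\beta_X\)-term contributes \(d\,\beta_X x^2\).

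The step I expect to be the main obstacle is matching the \(\beta_X\)-twist here. The Wu identity pairs \(\Sq^{2d}\) with \(v_{2d}\), but \(\delta_d=\Sq^1+d\beta_X\). Over \(\mathbf F_q\) one has \(\beta_X^2=0\) and \(\Sq^1\beta_X=\beta_X^2=0\), which should make the comparison work. To verify it, I would compute \(\int_X \Sq^{2d+1}(x)=\int_X x\,v_{2d+1}=0\) by top-half vanishing and compare it with \(\int_X \Sq^1(x^2)+\dots\), invoking Feng's Theorem~4.4 as the paper does.
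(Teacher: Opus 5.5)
Your proof is the paper's proof: the theorem follows by chaining Propositions~\ref{prop:function-field-defect-middle-torsion}, \ref{prop:function-field-derham-milnor-parity} and~\ref{prop:function-field-middle-wu-characteristic} with the definition of $B_X^{(d)}$. Your first two checks also match how those propositions are proved.

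Your third check would not work as sketched. The identity you need, $\int_X x\,\delta_d(x)=\int_X\Sq^{2d}(\delta_d x)$, involves $\Sq^{2d}\Sq^1$, not $\Sq^{2d+1}=\Sq^1\Sq^{2d}$. On a class of degree $2d$, the Adem relation together with $\Sq^{2d}x=x^2$ only gives $\Sq^{2d+1}x=\Sq^1(x^2)=0$. Comparing this with $\int_X x\,v_{2d+1}=0$ therefore yields $0=0$ and says nothing about $x\,\Sq^1x$ or $\Sq^{2d}\Sq^1x$.

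In fact the identity cannot come from the Adem relations and instability at all. In $H^{4d+1}(K(\mathbf Z/2,2d);\FF_2)$ the classes $\iota\,\Sq^1\iota$ and $\Sq^{2d}\Sq^1\iota$ are linearly independent, so it holds only after integration against the trace. That is why the paper imports it, stated directly for $\delta_d$, from Feng's Theorem~4.4 and Lemma~3.12.

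The Wu identity is then applied to $\delta_d(x)\in H^{2d+1}$. So the $\beta_X$-twist you flag as the main obstacle needs no separate matching.
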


\begin{proof}
By Proposition~\ref{prop:function-field-defect-middle-torsion},
\[
        \Delta_2(X)=\epsilon_2(X).
\]
By Proposition~\ref{prop:function-field-derham-milnor-parity},
\[
        \epsilon_2(X)
        \equiv
        \operatorname{rank}_{\mathbb F_2}B_X^{(d)}
        \pmod 2.
\]
By Proposition~\ref{prop:function-field-middle-wu-characteristic},
\[
        \operatorname{rank}_{\mathbb F_2}B_X^{(d)}
        \equiv
        B_X^{(d)}(v_{2d}(X),v_{2d}(X))
        \pmod 2.
\]
By definition of \(B_X^{(d)}\), this last expression is
\[
        \int_X v_{2d}(X)\,\delta_d(v_{2d}(X)).
\]
This completes the proof.
\end{proof}

\begin{remark}
In particular, Theorem~\ref{thm:function-field-semicharacteristic-defect-formula},
together with Feng's alternation theorem for the Artin--Tate linking
form~\cite[Theorem~1.3]{FengEtaleSteenrod}, shows that
\[
        \Delta_2(X)=0.
\]
\end{remark}

\appendix
\section{There is no formal lift of relative Wu formulas}
\label{par:feng-not-formal}
We now address Remark~\ref{rem:feng_vs_us}, and discuss what formal implications the finite field relative Wu formula has on the arithmetic one. Let
\[
B=\Spec \mathcal O_{K,S},
\qquad
f\colon X\to B
\]
be a smooth projective morphism with geometrically connected fibers; as usual, assume that $2\in \Gamma(B,\mathcal O_B)^\times$.

Absolute Wu classes need not commute with restriction.
For a prime \(p\equiv3\pmod4\), consider
\[
        i_p\colon\Spec\FF_p\hookrightarrow\Spec\mathbf Z[1/2].
\]
Theorem~\ref{thm:absolute-arithmetic-wu-class}
and~\eqref{eq:base-wu-T-power} give
\[
        v_{\mathbf Z[1/2]}
        =
        T(\beta_{\mathbf Z[1/2]})^3.
\]
Since \(\beta_{\FF_p}^2=0\) and
\(\beta_{\FF_p}\neq0\), one obtains
\[
        i_p^*v_{\mathbf Z[1/2]}
        =
        1+\beta_{\FF_p}
        \neq
        1=v_{\FF_p}.
\]
Here \(v_{\FF_p}=1\) follows from duality in degree \(1\) and
instability. A topological example is the inclusion
\(\mathbf{R}\mathbb{P}^2\hookrightarrow S^4\).

Write \(\Lambda\) for the constant \(\FF_2\)-sheaf.
Define the relative Wu class by
\[
        v_{X/B}:=v_X\cdot(f^*v_B)^{-1}
        \in
        \left(H_{\acute et}^*(X;\Lambda)^{\wedge,+}\right)^\times.
\]
The arithmetic Wu formula, Theorem~\ref{thm:absolute-wu-formula},
identifies this class with
\(\Sq^{-1}\bigl(w^{\acute et}(T_{X/B})\bigr)\).
We temporarily set aside this identification and consider its obstruction.
\begin{definition}
We define the obstruction class:
\[
\Delta:=w^{\text{\'{e}t}}(T_{X/B})-\Sq\bigl(v_{X/B}\bigr)
\in H_{\acute et}^*(X;\Lambda)^{\wedge,+},
\qquad
\Delta=\sum_{i\geq 0}\Delta_i,
\qquad
\Delta_i\in H^i(X;\Lambda).
\]
\end{definition}

For a closed point \(s\in B\), write
\(X_s:=X\times_B\Spec k(s)\) and let \(i_s\colon X_s\hookrightarrow X\).
Granting the specialization compatibility
\[
        i_s^*v_{X/B}=v_{X_s},
\]
Feng's formula~\cite[Theorem~6.5]{FengEtaleSteenrod}, together
with naturality of Steenrod operations and Stiefel--Whitney
classes, gives
\begin{equation}\label{eq:fibrewise-delta-vanishes}
        i_s^*\Delta=0
        \qquad
        \text{for every closed point }s\in B.
\end{equation}
We examine what these fiberwise identities alone imply, without
using the global arithmetic Wu formula.
In degree \(1\), they suffice:
\begin{claim}\label{claim:delta1-vanishes}
Assume~\eqref{eq:fibrewise-delta-vanishes}. Then the degree-one piece $\Delta_1\in H^1(X;\Lambda)$ vanishes.
\end{claim}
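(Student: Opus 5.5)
Since \(f\) is smooth projective with geometrically connected fibers, and every closed point \(s\) gives a fiber \(X_s\), the plan is to show that the joint restriction map
\[
        H^1_{\acute et}(X;\Lambda)\longrightarrow\prod_{s}H^1_{\acute et}(X_s;\Lambda)
\]
is injective. Combined with~\eqref{eq:fibrewise-delta-vanishes}, this injectivity forces \(\Delta_1=0\). I would interpret \(H^1(X;\Lambda)\) as the group of \(\Z/2\)-torsors, i.e.\ continuous homomorphisms \(\pi_1^{\acute et}(X)\to\Z/2\) (assuming \(X\) connected; otherwise argue componentwise).

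\textbf{Step 1.} Use the homotopy exact sequence for the smooth proper morphism \(f\) with geometrically connected fibers. Here \(2\) is invertible, and since the Galois group \(\Z/2\) has order prime to all residue characteristics, the prime-to-\(p\) version applies:
\[
        \pi_1(X_{\bar\eta})\to\pi_1(X)\to\pi_1(B)\to1 .
\]
Equivalently, I would use the Leray sequence
\[
        0\to H^1(B;\Lambda)\to H^1(X;\Lambda)\to H^0(B;R^1f_*\Lambda),
\]
in which \(f_*\Lambda=\Lambda\) by geometric connectedness, and \(R^1f_*\Lambda\) is locally constant by smooth proper base change.

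\textbf{Step 2.} Fix a class \(c\) that restricts to zero on every closed fiber \(X_s\). Its image in \(H^0(B;R^1f_*\Lambda)\) is a global section of a locally constant sheaf, and that section vanishes at every geometric closed point because \(c\) restricts to zero on \(X_{\bar s}\). Hence the section is zero, so \(c=f^*b\) for some \(b\in H^1(B;\Lambda)\).

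\textbf{Step 3.} Each fiber \(X_s\) has a \(k(s)'\)-point for a finite extension \(k(s)'/k(s)\) (Lang--Weil, or a point of odd degree). This is needed to show that \(f_s^*\) is injective on \(H^1(k(s);\Lambda)\), which is itself clear because \(X_s\) is geometrically connected: \(H^1(k(s))\hookrightarrow H^1(X_s)\) by the same Leray argument over \(k(s)\). So \(b|_s=0\) for every closed point \(s\). Now \(b\) corresponds to an étale double cover of \(B\), i.e.\ a quadratic extension unramified outside \(S\), in which every prime outside \(S\) splits. By Chebotarev density, such an extension must be trivial, so \(b=0\) and therefore \(c=0\).

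The main obstacle I expect is Step 2 in the mixed setting: one must check that vanishing on the closed fibers, rather than on geometric fibers, kills the section. This works because \(H^1(X_s)\to H^1(X_{\bar s})\) is compatible with the map from the Leray sequence, so vanishing on \(X_s\) implies vanishing on \(X_{\bar s}\), and closed points are dense enough for locally constant sheaves on the connected \(B\).
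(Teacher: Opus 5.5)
Your proposal is correct and follows essentially the same route as the paper: the low-degree Leray sequence, with stalks of \(R^1f_*\Lambda\) identified with \(H^1(X_{\bar s};\Lambda)\), gives \(\Delta_1=f^*\chi\). Injectivity of \(H^1(k(s);\Lambda)\to H^1(X_s;\Lambda)\) on the geometrically connected fibers, together with Chebotarev density, then forces \(\chi=0\). The Lang--Weil/rational-point aside in your Step 3 is unnecessary; as you yourself note, geometric connectedness already gives the injectivity.
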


\begin{proof}
The Leray spectral sequence
\[
E_2^{a,b}=H^a\bigl(B;R^bf_*\Lambda\bigr)
\Longrightarrow
H^{a+b}(X;\Lambda)
\]
yields the low-degree exact sequence
\[
0\to H^1(B;\Lambda)
\xrightarrow{f^*}
H^1(X;\Lambda)
\xrightarrow{\epsilon}
H^0\bigl(B;R^1f_*\Lambda\bigr)
\xrightarrow{d_2^{0,1}}
H^2(B;\Lambda),
\]
because $f_*\Lambda=\Lambda$. By proper and smooth base change, $R^1f_*\Lambda$ is a finite locally constant sheaf on $B$, and for every geometric point $\bar s\to B$ there is a canonical identification
\[
\bigl(R^1f_*\Lambda\bigr)_{\bar s}\simeq H^1(X_{\bar s};\Lambda).
\]
Under this identification, the stalk of $\epsilon(\Delta_1)$ at $\bar s$ is the image of $i_s^*(\Delta_1)$ in $H^1(X_{\bar s};\Lambda)$.
By~\eqref{eq:fibrewise-delta-vanishes}, this stalk is zero for every closed point $s$, hence $\epsilon(\Delta_1)=0$.
Therefore, there exists a class $\chi\in H^1(B;\Lambda)$ such that
\[
\Delta_1=f^*\chi.
\]
Suppose that $\chi\neq 0$.
Since
\[
H^1(B;\Lambda)\cong \operatorname{Hom}_{\mathrm{cont}}\bigl(\pi_1^{\text{\'{e}t}}(B);\Lambda\bigr),
\]
Chebotarev's density theorem yields a closed point $s\in B$ such that $\chi(\Frob_s)\neq 0$.
On the other hand, for the geometrically connected fiber $X_s/k(s)$, the exact sequence of étale fundamental groups
\[
1\to \pi_1^{\text{\'{e}t}}(X_{\bar s})\to \pi_1^{\text{\'{e}t}}(X_s)\to G_{k(s)}\to 1
\]
implies, by the associated low-degree sequence in cohomology, that the pullback map
\[
H^1\bigl(k(s);\Lambda\bigr)\longrightarrow H^1(X_s;\Lambda)
\]
is injective.
Hence,
\[
i_s^*(\Delta_1)=f_s^*\bigl(\chi|_{k(s)}\bigr)\neq 0,
\]
contrary to~\eqref{eq:fibrewise-delta-vanishes}.
Thus, $\chi=0$, and therefore $\Delta_1=0$.
\end{proof}

We now describe the possible degree-two ambiguity.
Let \(B_{\mathrm{cl}}\) denote the set of closed points of \(B\),
and put
\[
        \Obs_2(X/B)
        :=
        \ker\Bigl(
                H^2(X;\Lambda)
                \longrightarrow
                \prod_{s\in B_{\mathrm{cl}}}H^2(X_s;\Lambda)
        \Bigr).
\]
Under~\eqref{eq:fibrewise-delta-vanishes}, one has
\(\Delta_2\in\Obs_2(X/B)\).

For the Leray spectral sequence of $f$, let
\[
0\subseteq F^2H^2(X;\Lambda)\subseteq F^1H^2(X;\Lambda)\subseteq H^2(X;\Lambda)
\]
denote the induced filtration on $H^2(X;\Lambda)$. Then
\[
F^2H^2(X;\Lambda)=\operatorname{im}\bigl(H^2(B;\Lambda)\to H^2(X;\Lambda)\bigr)
\cong \operatorname{coker}\!\bigl(d_2^{0,1}\colon H^0(B;R^1f_*\Lambda)\to H^2(B;\Lambda)\bigr).
\]
While
\[
F^1H^2(X;\Lambda)=\operatorname{ker}\bigl(H^2(X;\Lambda)\to H^0(B;R^2f_*\Lambda)\bigr),
\]
and there is a short exact sequence
\begin{equation}\label{eq:leray-degree-two}
0\to F^2H^2(X;\Lambda)
\to F^1H^2(X;\Lambda)
\xrightarrow{\rho}
\ker\!\bigl(d_2^{1,1}\colon H^1(B;R^1f_*\Lambda)\to H^3(B;\Lambda)\bigr)
\to 0.
\end{equation}
For each closed point $s\in B$, we write
\[
\operatorname{loc}_s\colon H^1(B;R^1f_*\Lambda)\longrightarrow H^1\bigl(k(s);(R^1f_*\Lambda)_{\bar s}\bigr)
\]
for the restriction map.
We also define the fiberwise Shafarevich--Tate group
\[
\Sha^1_{\mathrm{fib}}(B;R^1f_*\Lambda)
:=
\ker\Bigl(
H^1(B;R^1f_*\Lambda)
\xrightarrow{(\operatorname{loc}_s)_s}
\prod_{s\in B_{\mathrm{cl}}} H^1\bigl(k(s);(R^1f_*\Lambda)_{\bar s}\bigr)
\Bigr).
\]

\begin{proposition}\label{prop:obs2-exact-sequence}
There is a natural short exact sequence
\[
\begin{aligned}
0 &\to \operatorname{coker}\!\bigl(d_2^{0,1}\colon H^0(B;R^1f_*\Lambda)\to H^2(B;\Lambda)\bigr)
\longrightarrow \Obs_2(X/B) \\
&\xrightarrow{\rho}
\ker\!\bigl(d_2^{1,1}\bigr)\cap \Sha^1_{\mathrm{fib}}(B;R^1f_*\Lambda)
\to 0.
\end{aligned}
\]
Thus, \(\Obs_2(X/B)\) is an extension of the indicated subgroup of
\(\Sha^1_{\mathrm{fib}}(B;R^1f_*\Lambda)\) by the image of
\(H^2(B;\Lambda)\) in \(H^2(X;\Lambda)\).
\end{proposition}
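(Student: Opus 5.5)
Fix the Leray filtration on \(H^2(X;\Lambda)\) as above. The first step is to show that \(\Obs_2(X/B)\subseteq F^1H^2(X;\Lambda)\). For this, compare the edge map \(H^2(X;\Lambda)\to H^0(B;R^2f_*\Lambda)\) with restriction to fibers, exactly as in the proof of Claim~\ref{claim:delta1-vanishes}. By proper and smooth base change, \(R^2f_*\Lambda\) is finite locally constant, and its stalk at \(\bar s\) is \(H^2(X_{\bar s};\Lambda)\). The stalk of the edge image of \(z\) is therefore the image of \(i_s^*z\) under \(H^2(X_s;\Lambda)\to H^2(X_{\bar s};\Lambda)\), which vanishes when \(z\in\Obs_2(X/B)\). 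A section of a locally constant sheaf vanishing at all closed geometric points vanishes, since closed points are dense and the sheaf is locally constant. Hence \(z\in F^1\).

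Next, restrict the short exact sequence \eqref{eq:leray-degree-two} to \(\Obs_2(X/B)\). This requires two inclusions and one surjectivity statement.

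\textbf{\(F^2\) lies in \(\Obs_2\).} We have \(F^2=\operatorname{im}(H^2(B)\to H^2(X))\). For \(a\in H^2(B;\Lambda)\), \(i_s^*f^*a=f_s^*(a|_{k(s)})=0\), because \(H^2(k(s);\Lambda)=0\) for a finite field.

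\textbf{\(\rho\) maps \(\Obs_2\) into \(\Sha^1_{\mathrm{fib}}\).} Use naturality of the Leray spectral sequence under the base change \(\Spec k(s)\to B\). For \(X_s\to\Spec k(s)\), the Hochschild–Serre sequence has \(F^2H^2(X_s)=0\), so \(F^1H^2(X_s)\cong E_\infty^{1,1}\subseteq H^1(k(s);H^1(X_{\bar s}))\). Moreover, \(d_2^{1,1}\) lands in \(H^3(k(s);\Lambda)=0\), and \(E_2^{1,1}\) receives no differentials since \(H^{-1}=0\). Hence \(E_\infty^{1,1}=E_2^{1,1}\). Compatibility gives \(\operatorname{loc}_s(\rho(z))=\rho_s(i_s^*z)\), and this vanishes when \(i_s^*z=0\). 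So \(\rho(z)\in\ker d_2^{1,1}\cap\Sha^1_{\mathrm{fib}}\).

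\textbf{Surjectivity onto \(\ker d_2^{1,1}\cap\Sha^1_{\mathrm{fib}}\).} Given \(c\) in the target, lift it to \(z\in F^1\) using \eqref{eq:leray-degree-two}. Then \(\rho_s(i_s^*z)=\operatorname{loc}_s(c)=0\), so \(i_s^*z\in F^2H^2(X_s)=0\). Thus \(z\in\Obs_2\) automatically, with no correction by \(F^2\) needed.

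The kernel of \(\rho\) on \(\Obs_2\) is \(F^2\cap\Obs_2=F^2\), which is identified with the cokernel of \(d_2^{0,1}\) as recorded above. Naturality of the resulting sequence follows from functoriality of Leray spectral sequences.

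The main obstacle is the compatibility \(\operatorname{loc}_s\circ\rho=\rho_s\circ i_s^*\), together with the identification of the fiber spectral sequence's \(E_2^{1,1}\) with \(H^1(k(s);(R^1f_*\Lambda)_{\bar s})\). I would handle this by viewing both Leray spectral sequences as arising from the filtration of \(Rf_*\Lambda\) by truncations. Base change \(i_s^*Rf_*\Lambda\simeq Rf_{s*}\Lambda\) (proper base change) is compatible with truncations, and this yields a morphism of spectral sequences. Its \(E_2\)-terms are the restriction maps \(H^a(B;R^bf_*\Lambda)\to H^a(k(s);(R^bf_*\Lambda)_{\bar s})\).
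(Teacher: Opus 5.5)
Your proposal is correct and follows essentially the same route as the paper: you show \(\Obs_2(X/B)\subseteq F^1H^2(X;\Lambda)\) via the edge map to \(H^0(B;R^2f_*\Lambda)\), and you use \(F^2H^2(X_s;\Lambda)=0\) and \(F^1H^2(X_s;\Lambda)\cong H^1\bigl(k(s);H^1(X_{\bar s};\Lambda)\bigr)\) from Hochschild--Serre, together with Leray functoriality along \(X_s\to X\), to obtain both the \(\Sha^1_{\mathrm{fib}}\)-condition and surjectivity; you then identify the kernel with \(F^2\cong\operatorname{coker}(d_2^{0,1})\). Your additional justifications make explicit what the paper leaves implicit: that a section of a locally constant sheaf vanishing at all closed geometric points is zero, that \(E_\infty^{1,1}=E_2^{1,1}\) for the fiber spectral sequence, and that truncations plus proper base change give a morphism of Leray spectral sequences.
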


\begin{proof}
Fix $\alpha\in \Obs_2(X/B)$.
By proper and smooth base change, $R^2f_*\Lambda$ is a finite locally constant sheaf, and the edge map
\[
H^2(X;\Lambda)\longrightarrow H^0\bigl(B;R^2f_*\Lambda\bigr)
\]
has the property that its stalk at $\bar s$ is the image of $i_s^*(\alpha)$ in
$H^2(X_{\bar s};\Lambda)$.
Since $i_s^*(\alpha)=0$ for every closed $s$, all these stalks vanish, so the image of $\alpha$ in
$H^0(B;R^2f_*\Lambda)$ is zero.
Thus, $\alpha\in F^1H^2(X;\Lambda)$.
By~\eqref{eq:leray-degree-two}, its image $\rho(\alpha)$ lies in $\ker(d_2^{1,1})$.

Now fix a closed point $s\in B$.
Because $k(s)$ is a finite field, it has cohomological dimension $1$ for torsion coefficients prime to $\operatorname{char} k(s)$.
Hence, the Leray spectral sequence for $X_s\to \Spec k(s)$ yields a short exact sequence
\begin{equation}\label{eq:HS-fibre-degree-two}
0\to H^1\bigl(k(s);H^1(X_{\bar s};\Lambda)\bigr)
\to H^2(X_s;\Lambda)
\to H^0\bigl(k(s);H^2(X_{\bar s};\Lambda)\bigr)
\to 0.
\end{equation}
Using proper base change again, we identify
\[
H^1(X_{\bar s};\Lambda)\simeq (R^1f_*\Lambda)_{\bar s},
\qquad
H^2(X_{\bar s};\Lambda)\simeq \bigl(R^2f_*\Lambda\bigr)_{\bar s}.
\]
By functoriality of the Leray spectral sequence, the image of $\operatorname{loc}_s\bigl(\rho(\alpha)\bigr)$ in $H^2(X_s; \Lambda)$ under the inclusion of~\eqref{eq:HS-fibre-degree-two} is precisely $i_s^*(\alpha)$, hence vanishes.
As this holds for every closed $s$, we obtain
\[
\rho(\alpha)\in \ker\!\bigl(d_2^{1,1}\bigr)\cap \Sha^1_{\mathrm{fib}}(B;R^1f_*\Lambda).
\]
This defines a homomorphism
\[
\Obs_2(X/B)
\longrightarrow
\ker\!\bigl(d_2^{1,1}\bigr)\cap \Sha^1_{\mathrm{fib}}(B;R^1f_*\Lambda).
\]
Its kernel consists exactly of those classes in $\Obs_2(X/B)$ lying in $F^2H^2(X;\Lambda)$, which is all of $F^2H^2(X;\Lambda)$. Indeed, due to the functoriality of Leray with respect to the filtration, we have $i_s^*: F^2H^2(X;\Lambda)\to F^2H^2(X_s;\Lambda) = 0$. Therefore, the kernel is isomorphic to $F^2H^2(X;\Lambda) \cong \operatorname{coker}(d_2^{0,1})$. This implies the exactness of the sequence:
\[
0\to \operatorname{coker}\!\bigl(d_2^{0,1}\colon H^0(B;R^1f_*\Lambda)\to H^2(B;\Lambda)\bigr)
\longrightarrow
\Obs_2(X/B)
\xrightarrow{\rho}
\ker\!\bigl(d_2^{1,1}\bigr)\cap \Sha^1_{\mathrm{fib}}(B;R^1f_*\Lambda).
\]

It remains to show that the map $\Obs_2(X/B) \xrightarrow{\rho} \ker\!\bigl(d_2^{1,1}\bigr)\cap \Sha^1_{\mathrm{fib}}(B;R^1f_*\Lambda)$ is surjective. Let
\[
\beta\in \ker\!\bigl(d_2^{1,1}\bigr)\cap \Sha^1_{\mathrm{fib}}(B;R^1f_*\Lambda).
\]
By~\eqref{eq:leray-degree-two}, choose a lift $\alpha\in F^1H^2(X;\Lambda)$ with $\rho(\alpha)=\beta$.
For every closed point \(s\), the class \(i_s^*(\alpha)\) lies in
\(F^1H^2(X_s;\Lambda)\). Its unique preimage under the injection
in~\eqref{eq:HS-fibre-degree-two} is
\(\operatorname{loc}_s(\beta)=0\), by functoriality of Leray.
Hence, \(i_s^*(\alpha)=0\) for every \(s\).
Thus, $\alpha\in \Obs_2(X/B)$, and the displayed map is surjective.
This proves the proposition.
\end{proof}

\begin{example}\label{ex:obs2-nonzero}
To see that $\Obs_2(X/B)$ can be nonzero, consider the case $B=\Spec \mathbf Z[1/2]$ and let $f\colon \mathbb P^1_B\to B$ be the structural morphism.
Then
\[
R^1f_*\Lambda=0,
\]
so Proposition~\ref{prop:obs2-exact-sequence} gives a canonical isomorphism
\[
\Obs_2(\mathbf P^1_B/B)
\cong H^2(B;\Lambda).
\]
The non-vanishing of $H^2(B;\Lambda)$ now follows from the non-vanishing of $\beta_B^2$ (see Remark~\ref{rem:beta-square-Q}).
\end{example}

Thus, the degree-two obstruction group can be nonzero. Consequently, fiberwise vanishing on all closed fibers does \emph{not} force global vanishing already in degree $2$, at least not formally via a standard spectral sequence argument; the arithmetic Wu theorem establishes its global
vanishing.

\printbibliography{}
\end{document}